\numberwithin{equation}{section}
\newtheorem{theorem}{Theorem}[section]
\newtheorem{corollary}[theorem]{Corollary}
\newtheorem{lemma}[theorem]{Lemma}
\newtheorem{proposition}[theorem]{Proposition}
\newtheorem{example}[theorem]{Example}
\theoremstyle{definition}
\newtheorem{definition}[theorem]{Definition}
\newtheorem{remark}[theorem]{Remark}
\newenvironment{customthm}[1]
{\innercustomthm}
{\endinnercustomthm}
\newenvironment{customprop}[1]
{\innercustomprop}
{\endinnercustomthm}
\newcommand{\Hom}{\operatorname{Hom}}
\newcommand{\A}{\mathbf{A}}
\newcommand{\B}{\mathbf{B}}
\newcommand{\C}{\mathbb{C}}
\newcommand{\Q}{\mathbf{Q}}
\newcommand{\Z}{\mathbf{Z}}
\newcommand{\h}{\mathfrak{h}}
\newcommand{\g}{\mathfrak{g}}
\title[Crystal bases and canonical bases]
{Crystal bases and canonical bases  for \\ quantum Borcherds-Bozec algebras}
\author[Zhaobing Fan]{Zhaobing Fan}
\address{Harbin Engineering University,
Harbin, China}
\email{fanzhaobing@hrbeu.edu.cn}
\thanks{ }
\author[Shaolong Han]{Shaolong Han${}^*$}
\address{Beijing International Center for Mathematical Research, Peking University, Beijing 100871, China}
\email{algebra@hrbeu.edu.cn}
\thanks{${}^*$The corresponding author.}
\author[Seok-Jin Kang]{Seok-Jin Kang}
\address{Korea Research Institute of Arts and Mathematics,
Asan-si, Chungcheongnam-do, 31551, Korea}
\email{soccerkang@hotmail.com}
\thanks{}
\author[Young Rock Kim]{Young Rock Kim}
\address{Graduate School of Education, Hankuk University of Foreign Studies, Seoul, 02450,  Korea}
\email{rocky777@hufs.ac.kr} %
\keywords{quantum Borcherds-Bozec algebra, crystal basis, global basis, primitive canonical basis}
\begin{document}

\begin{abstract}
Let $U_{q}^{-}(\g)$ be the negative half of a quantum Borcherds-Bozec algebra $U_{q}(\g)$
and $V(\lambda)$ be the irreducible highest weight module with $\lambda \in P^{+}$.
In this paper, we investigate the structures, properties and their close connections 
between crystal bases and canonical bases of $U_{q}^{-}(\g)$ and $V(\lambda)$.
We first re-construct crystal basis theory with modified Kashiwara operators.
While going through Kashiwara's grand-loop argument, we prove several important lemmas,
which play crucial roles in the later developments of the paper.
Next, based on the theory of canonical bases on quantum Bocherds-Bozec algebras, we
introduce the notion of primitive canonical bases and  prove that
primitive canonical bases coincide with lower global bases.
\end{abstract}

\maketitle
\tableofcontents
\section{Introduction}
\vskip 2mm
\subsection{Background}
In representation theory, it is always an important task to construct explicit bases of algebraic objects
because those bases provide a deep insight in studying the various features and properties
of these algebraic objects.
The {\it quantum groups}, as a new class of non-commutative, non-cocommutative Hopf algebras, were
discovered independently by Drinfeld and Jimbo in their study of quantum Yang-Baxter equation
and 2-dimensional solvable lattice model \cite{Dr85, Jimbo85}.
For the past forty years, the quantum groups have attracted a lot of research activities due to their
close connection with representation theory, combinatorics, knot theory, mathematical physics, etc.
Among others, Lusztig's {\it canonical basis theory} and Kashiwara's {\it crystal basis theory}
are regarded as one of the most prominent achievements in the representation theory of quantum groups
\cite{Lus90a, Lus91, Kashi90, Kashi91}.
The canonical basis theory was developed in a geometric way, while the crystal  basis theory was
constructed using algebraic methods.

\vskip 3mm

From geometric point of view, Lusztig's canonical basis theory is closely related to the theory of
perverse sheaves on the representation variety of quivers without loops.
In \cite{Bozec2014b, Bozec2014c}, Bozec extended Lusztig's theory to the study of perverse sheaves
for the quivers with loops, thereby introduced the notion of {\it quantum Borcherds-Bozec algebras}.
From algebraic point of view, the quantum Borcherds-Bozec algebras can be regarded as a huge
generalization of quantum groups and quantum Borcherds algebras \cite{Dr85, Jimbo85, Kang95}.

\vskip 3mm

The theory of  canonical bases, crystal bases and global bases for
quantum Borcherds-Bozec algebras have been
developed and investigated in  \cite{Bozec2014b, Bozec2014c,FKKT22}.
For the case of quantum groups associated with symmetric Cartan matrices,
Grojnowski and Lusztig discovered that the canonical bases coincide with global bases \cite{GL93}.
Moreover, for the case of quantum Borcherds algebras associated with symmetric Borcherds-Cartan
matrices without isotropic simple roots, Kang and Schiffmann showed that
the canonical bases coincide with the global bases \cite{KS06}.

\vskip 3mm

The aim of this paper is to investigate the deep connections
between most significant bases for quantum Borcherds-Bozec algebras:
canonical bases and crystal/global bases.
We will show that the canonical bases coincide with global bases. 
Moreover, we expect there are much more to be explored in the theory of
quantum Borcherds-Bozec algebras from various points of view.

\vskip 3mm

\subsection{New crystal basis theory}
Let $U_{q}^{-}(\g)$ be the negative half of a quantum Borcherds-Bozec algebra $U_{q}(\g)$
associated with a Borcherds-Cartan datum $(A, P, P^{\vee}, \Pi, \Pi^{\vee})$
and let $V(\lambda)$ be the irreducible highest weight module with $\lambda \in P^{+}$.
For our purpose, we re-construct the crystal basis theory for $V(\lambda)$ and $U_{q}^{-}(\g)$.
More precisely, we first define a new class of Kashiwara operators on $V(\lambda)$
and  $U_{q}^{-}(\g)$ which is a modified version of the ones
given in \cite{Bozec2014c}. The main difference from Bozec's definition is the case of $i\in I^{\text{iso}}$, 
where we define the Kashiwara operators as follows (Definition \ref{def:Kashiwara operator}, Definition \ref{def:Kashiwara operators}):
\begin{equation*}
 \widetilde{e}_{il} u = \sum_{\mathbf{c} \in {\mathcal C}_{i}} \,  \mathbf{c}_{l} \, {\mathtt b}_{i, \mathbf{c} \setminus \{l\}} u_{\mathbf{c}},\quad
  \widetilde{f}_{il}  u = \sum_{\mathbf{c} \in {\mathcal C}_{i}} \, \frac{1} {\mathbf{c}_{l} + 1} \,
		{\mathtt b}_{i,\{l\} \cup \mathbf{c})} u_{\mathbf{c}}.
\end{equation*}

We use these new Kashiwara operators to define the pairs $(L(\lambda), B(\lambda))$ and $(L(\infty), B(\infty))$ for $V(\lambda)$ and $U_{q}^{-}(\g)$, respectively. Then we prove that all the interlocking, inductive statements in Kashiwara's grand-loop argument are true,
thereby proving the existence and uniqueness of these crystal bases:
\begin{customthm}{\bf A}[Theorem \ref{thm:crystal basis of V}, Theorem \ref{thm:crystal basis of U}]\hfill
	\begin{enumerate}
\item The pair $(L(\lambda),B(\lambda))$ is a crystal basis of $V(\lambda)$.
	\vskip 2mm
\item The pair $(L(\infty),B(\infty))$ is a crystal basis of $U_{q}^{-}(\g)$.
\end{enumerate}
	\end{customthm}

We further use these new crystal bases to construct global bases for $V(\lambda)$ and $U_{q}^{-}(\g)$
and then verify that the global basis theory developed in \cite{FKKT22}
remains true with an appropriate modification.

\vskip 3mm
\subsection{Canonical bases and global bases}
In order to study the connection between canonical bases and global bases,
we define the notion of {\it primitive canonical bases}.
Recall that in \cite{FKKT22}, we gave an alternative presentation of $U_{q}(\g)$ in terms of
primitive generators which arise naturally from Bozec's algebra isomorphism
$\phi:U_{q}^{-}(\g) \rightarrow U_{q}^{-}(\g)$ \cite{Bozec2014b, Bozec2014c}
(See Proposition \ref{prop:primitive} in this paper).
The primitive canonical bases are defined as the image of canonical bases
under the isomorphism $\phi$.

\vskip 3mm

In Proposition \ref{prop:Bozec geometry}, we recall Bozec's geometric results on
canonical basis $\B$ and in Corollary \ref{cor:BB-B}, we rewrite them in an algebraic way.
Thus in Corollary \ref{cor:betabeta-beta}, we obtain an interpretation of Bozec's results
on the primitive canonical basis $\B_{\Q}$. Using some critical properties of  Lusztig's bilinear form $(\  , \  )_{L}$ and Kashiwara's bilinear form $(\  ,  \  )_{K}$, we prove the following  Propositions which play an important role in the later development.
\begin{customprop}{\bf B}[Proposition \ref{prop:compareLK}]
 For all $x, y \in U_{q}^-(\g)$, we have 	
\begin{equation*}
	(x, y)_{L} = (x, y)_{K} \ \ \text{mod} \ \, q\, \A_{0}.
\end{equation*}
	\end{customprop}

\begin{customprop}{\bf C}[Proposition \ref{prop:prim bilinear}]
 For all $x, y \in U_{q}^-(\g)$, we have
\begin{equation*}
	(\phi(x), \phi(y))_{L} = (x, y)_{L}.
\end{equation*}
\end{customprop}	
Combining all these results, we can apply Grojnowski-Lusztig's argument to our setting,
from which we conclude that the primitive canonical basis $\B_{\Q}$ coincides with the lower
global basis $\B(\infty)$.
As an immediate consequence, we deduce that the primitive canonical basis $\B_{\Q}^{\lambda}$
coincides with the lower global basis $\B(\lambda)$.

\vskip 3mm
\subsection{Organization}
This paper is organized as follows.

In the first part, we focus on the re-construction
of crystal basis theory for quantum Borcherds-Bozec algebras.
More precisely, in Section \ref{sec:qBBalg}, we recall the original definition of
quantum Borcherds-Bozec algebras and their alternative presentation
in terms of primitive generators.
In Section \ref{sec:crystal}, we define a new class of Kashiwara operators
and construct the crystal bases $(L(\lambda), B(\lambda))$ for $V(\lambda)$
and $(L(\infty), B(\infty))$ for $U_{q}^{-}(\g)$.
We also review some of the basic theory of abstract crystals and
give a simplified description of tensor product rule for quantum Borcherds-Bozec algebras.
In Section \ref{sec:grand-loop}, with the new class of Kashiwara operators,
we go through all the interlocking, inductive statements in Kashiwara's grand-loop argument
and show that all of them are still true in our much more general setting.
Hence we prove the existence and uniqueness of the crystal bases
$(L(\lambda), B(\lambda))$ and $(L(\infty), B(\infty))$.
As by-products, we obtain several important lemmas which will be used in later
parts of this work in a critical way (for example, Lemma \ref{lem:LLA0}).
In Section \ref{sec:global bases}, we study the lower global bases $\B(\lambda)$
and $\B(\infty)$ following the outline given in \cite{FKKT22}.

\vskip 3mm

The second part of this paper is devoted to the study of relations between canonical bases
and global bases.
More precisely, in Section \ref{sec:primitive},
we recall the geometric construction of canonical basis $\B$
and define the notion of primitive canonical basis $\B_{\Q}$.
We then give a very brief review of some homological formulas,
which leads to defining geometric bilinear form $( \ , \  )_{G}$
on perverse sheaves \cite{Lus10}.
The geometric results  proved by Bozec \cite{Bozec2014b, Bozec2014c}
are expressed in algebraic language and then translated
to the corresponding statements for primitive canonical bases.
We close this section with several important key  lemmas
on global bases which are necessary to apply Grojnowski-Lusztig's argument.

\vskip 3mm

In Section \ref{sec:Primlobal}, we first identify the geometric bilinear form
and Lusztig's bilinear form using the fact that both of them are Hopf pairings.
We then show that Lusztig's bilinear form and Kashiwara's bilinear form
are equivalent to each other up to $\text{mod} \ q \, \A_{0}$.
Using the key lemmas proved in Section \ref{sec:primitive},
we can apply Grojnowski-Lusztig's argument to conclude
the primitive canonical basis $\B_{\Q}$ coincides with the lower global basis $\B(\infty)$.
It follows immediately that the primitive canonical basis $\B^{\lambda}_{\Q}$ is identical
to the lower global basis $\B(\lambda)$.

\vskip 3mm


\vskip 3mm

\noindent
{\bfseries  Acknowledgements.}
Z. Fan was partially supported by the NSF of China grant 12271120, the NSF of Heilongjiang Province grant JQ2020A001, and the Fundamental Research Funds for the central universities. S.-J. Kang was supported by China grant YZ2260010601. Young Rock Kim was supported by the National Research Foundation
of Korea grant 2021R1A2C1011467 and
 Hankuk University of Foreign Studies Research Fund.

\section{Quantum Borcherds-Bozec algebras} \label{sec:qBBalg}

\vskip 2mm

Let $I$ be an index set which can be countably infinite. An integer-valued matrix $A=(a_{ij})_{i,j \in I}$ is called an {\it
even symmetrizable Borcherds-Cartan matrix} if it satisfies the following conditions:
\begin{itemize}
\item[(i)] $a_{ii}=2, 0, -2, -4, ...$,

\item[(ii)] $a_{ij}\le 0$ for $i \neq j$,

\item[(iii)] there exists a diagonal matrix $D=\text{diag} (s_{i} \in \mathbf Z_{>0} \mid i \in I)$ such that $DA$ is symmetric.
\end{itemize}

\vskip 2mm

\noindent Set $I^{\text{re}}=\{i \in I \mid a_{ii}=2 \}$,
$I^{\text{im}}=\{i \in I \mid a_{ii} \le 0\}$ and
$I^{\text{iso}}=\{i \in I \mid a_{ii}=0 \}$.

\vskip 3mm

A {\it Borcherds-Cartan datum} consists of :

\begin{itemize}

\item[(a)] an even symmetrizable Borcherds-Cartan matrix $A=(a_{ij})_{i,j \in I}$,

\item[(b)] a free abelian group $P$, the {\it weight lattice},

\item[(c)] $P^{\vee} := \Hom(P, \Z)$, the {\it dual weight lattice},

\item[(d)] $\Pi=\{\alpha_{i} \in P  \mid i \in I \}$, the set of {\it simple roots},

\item[(e)] $\Pi^{\vee}=\{h_i \in P^{\vee} \mid i \in I \}$, the set of {\it simple coroots}

\end{itemize}

\vskip 2mm

\noindent satisfying the following conditions

\vskip 1mm

\begin{itemize}

\item[(i)] $\langle h_i, \alpha_j \rangle = a_{ij}$ for all $i, j \in I$,

\item[(ii)] $\Pi$ is linearly independent over $\mathbf Q$,

\item[(iii)] for each $i \in I$, there exists an element $\Lambda_{i} \in\mathrm P$ such that $$\langle h_j , \Lambda_i
\rangle = \delta_{ij} \ \ \text{for all} \ i, j \in I.$$
\end{itemize}

\vskip 2mm
\noindent
The elements $\Lambda_i$  $(i \in I)$ are called the {\it fundamental weights}.

\vskip 3mm

Given an even symmetrizable Borcherds-Cartan matrix, it can be shown that such a Borcherds-Cartan datum always exists, which
is not necessarily unique.

\vskip 3mm

We denote by
$$P^{+}:=\{\lambda \in P \mid \langle h_i, \lambda \rangle \ge 0 \ \text{for all} \ i \in I \},$$
the set of {\it dominant integral weights}. The free abelian group $R:= \bigoplus_{i \in I} \Z \, \alpha_i$ is called the {\it root lattice}.
Set $R_{+}: = \sum_{i \in I} \Z_{\ge 0}\, \alpha_{i}$ and $R_{-}: = -R_{+}$.
 Let ${\mathfrak h} := \Q \otimes_{\Z} P^{\vee}$ be the {\it Cartan subalgebra}.

\vskip 3mm

Since $A$ is symmetrizable and $\Pi$  is linearly independent over $\Q$,
there exists a non-degenerate symmetric bilinear
form $( \ , \ )$ on ${\mathfrak h}^{*}$ satisfying
$$(\alpha_{i}, \lambda) = s_{i} \langle h_{i}, \lambda \rangle \quad
\text{for all}  \ \lambda \in {\mathfrak h}^{*}.$$

\vskip 3mm

For each  $i \in I^{\text{re}}$, we define the {\it simple reflection} $r_{i} \in GL(\h^{*})$ by
$$r_{i}(\lambda)= \lambda - \langle h_{i}, \lambda \rangle \,  \alpha_{i} \ \ \text{for} \ \lambda \in {\mathfrak h}^{*}.$$

\noindent
The subgroup $W$ of $GL({\mathfrak h}^{*})$ generated by the simple reflections $r_{i}$ $(i \in I_{\text{re}})$ is called the {\it Weyl group} of the Borcherds-Cartan datum given above.  It is easy to check that $(\ , \ )$ is $W$-invariant.

\vskip 3mm

Let $q$ be an indeterminate.
For $i\in I$ and $n \in \mathbf Z_{> 0}$, we define
$$q_{i}  = q^{s_i}, \quad q_{(i)} = q^{\frac{(\alpha_{i}, \alpha_{i})}{2}},
\quad [n]_{i} = \dfrac{q_{i}^{n} - q_{i}^{-n}} {q_{i} - q_{i}^{-1}}, \quad [n]_{i} ! = [n]_{i} [n-1]_{i} \cdots [1]_{i}.$$

\vskip 3mm

Set $I^\infty:= I^{\text{re}} \cup (I^{\text{im}}
\times \Z_{>0})$ 
and let
$\mathscr{F} = \Q(q) \langle f_{il} \mid (i,l) \in I^{\infty} \rangle$ be the free associative algebra generated by the
formal symbols $f_{il}$ with $(i, l) \in I^{\infty}$.
By setting $\deg f_{il} = - l \alpha_{i}$, then $\mathscr{F}$ becomes a $R_{-}$-graded algebra.
For a homogeneous element $x \in \mathscr{F}$, we denote by $|x|$ the degree of $x$
and for a subset $A \subset R_{-}$, we define
$$\mathscr{F}_{A} = \{x \in \mathscr{F} \mid |x| \in A \}.$$

\vskip 2mm

Following \cite{Ringel90},
we define a {\it twisted multiplication} on $\mathscr{F} \otimes \mathscr{F}$ by
\begin{equation*}
(x_{1} \otimes x_{2}) (y_{1} \otimes y_{2})
= q^{-(|x_{2}|, |y_{1}|)} x_{1} y_{1} \otimes x_{2} y_{2}
\end{equation*}
for all homogeneous elements $x_{1}, x_{2}, y_{1}, y_{2} \in \mathscr{F}$.

\vskip 2mm

We also define a $\Q(q)$-algebra homomorphism
$\delta: \mathscr{F} \longrightarrow \mathscr{F} \otimes \mathscr{F}$
given by
\begin{equation} \label{eq:comult}
\delta(f_{il}) = \sum_{m+n = l} q_{(i)}^{-mn} f_{im} \otimes f_{in} \ \ \text{for} \ (i,l)\in I^{\infty},
\end{equation}
where we understand $f_{i0}=1$ and $f_{il}=0$ for $l<0$.
Then $\mathscr{F}$ becomes a $\Q(q)$-bialgebra.

\vskip 3mm

\begin{proposition} \cite{Lus10, Bozec2014b, Bozec2014c} \label{prop:Lusztig}
{\rm
Let $\nu = (\nu_{il})_{(i,l) \in I_{\infty}}$ be a family of non-zero elements in $\Q(q)$.
Then there exists a symmetric bilinear form $(\ , \ )_{L} : \mathscr{F} \times  \mathscr{F} \longrightarrow \Q(q)$
such that
\begin{itemize}
\item[(a)] $(\mathbf{1}, \mathbf{1})_{L} = 1$,

\item[(b)] $(f_{il}, f_{il})_{L} = \nu_{il}$ for $(i,l) \in I^{\infty}$,

\item[(c)] $(x, y)_{L} = 0$ if $|x| \neq |y|$,

\item[(d)] $(x, yz)_{L} = (\delta(x), y \otimes z)_{L}$ for all $x, y, z \in \mathscr{F}$.

\end{itemize}

}
\end{proposition}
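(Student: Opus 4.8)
The plan is to carry Lusztig's construction of the bilinear form on the free algebra (\cite{Lus10}, the origin of this statement for quantum groups) through the Borcherds--Bozec coproduct \eqref{eq:comult}, whose only genuinely new feature is that $\delta(f_{il})$ carries the ``divided power'' terms $\sum_{m+n=l} q_{(i)}^{-mn} f_{im}\otimes f_{in}$ rather than the primitive expression $f_{il}\otimes\mathbf{1}+\mathbf{1}\otimes f_{il}$. I first record how (a)--(d) together with symmetry force the form, since this dictates the construction: for $x,y\in\mathscr{F}_{-\nu}$ one induces on $\operatorname{ht}(\nu)$, and when $\nu\neq 0$ one writes $y$ uniquely as $\sum_{(i,l)\in I^{\infty}} f_{il}\, y_{il}$ with $y_{il}\in\mathscr{F}_{-\nu+l\alpha_i}$ according to its leading generator (legitimate since $\mathscr{F}$ is a free associative algebra), and applies (d) to $(x, f_{il}\,y_{il})_L$. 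By (c) only the part of $\delta(x)$ whose left tensor factor has degree $-l\alpha_i$ contributes; that factor lies in $\mathscr{F}_{-l\alpha_i}$, which by linear independence of the $\alpha_j$ is spanned by words in the $f_{ia}$, so symmetry together with the ``left-hand'' form of (d) evaluates the pairings against $f_{il}$ as explicit products of the $\nu_{ia}$'s and powers of $q_{(i)}$, while the remaining tensor factor has strictly smaller height and is handled inductively.

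For existence I would take these forced expressions as a definition. For $(i,l)\in I^{\infty}$ let $\phi_{il}\colon\mathscr{F}\to\Q(q)$ be the linear functional that vanishes off $\mathscr{F}_{-l\alpha_i}$ and sends a word $f_{ia_1}\cdots f_{ia_p}$ with $a_1+\cdots+a_p=l$ to $\nu_{ia_1}\cdots\nu_{ia_p}\, q_{(i)}^{-\sum_{s<t} a_s a_t}$ (well defined, as words form a basis of $\mathscr{F}$). Set ${}_{il}\rho:=(\phi_{il}\otimes\operatorname{id})\circ\delta\colon\mathscr{F}_{-\nu}\to\mathscr{F}_{-\nu+l\alpha_i}$, and define $(\ ,\ )_L$ by $(\mathbf{1},\mathbf{1})_L=1$ and, for $\nu\neq 0$ and $x,y\in\mathscr{F}_{-\nu}$ with leading-generator expansion $y=\sum_{(i,l)} f_{il}\, y_{il}$,
\[
	(x,y)_L:=\sum_{(i,l)\in I^{\infty}} \big({}_{il}\rho(x),\ y_{il}\big)_L .
\]
This sum is finite (only finitely many $(i,l)$ satisfy $\nu-l\alpha_i\in R_+$) and the recursion terminates since each $y_{il}$ has strictly smaller height; uniqueness of the leading-generator expansion makes the definition unambiguous. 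Properties (a) and (c) are built in, and (b) is immediate: the top-left term of $\delta(f_{il})$ is $f_{il}\otimes\mathbf{1}$, hence ${}_{il}\rho(f_{il})=\phi_{il}(f_{il})\,\mathbf{1}=\nu_{il}\,\mathbf{1}$ and so $(f_{il},f_{il})_L=\nu_{il}$.

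The hard part is proving property (d) for arbitrary $y,z$ (not just $y$ a single generator) and proving symmetry $(x,y)_L=(y,x)_L$; I would establish both by a simultaneous induction on the relevant heights. The engine is a pair of commutation identities: a ``twisted divided-power Leibniz rule'' for the operators ${}_{il}\rho$ relative to the multiplication of $\mathscr{F}$, which ultimately rests on the computation $\phi_{i(a+b)}(uv)=q_{(i)}^{-ab}\,\phi_{ia}(u)\,\phi_{ib}(v)$ for $u\in\mathscr{F}_{-a\alpha_i}$, $v\in\mathscr{F}_{-b\alpha_i}$; and a commutation, up to an explicit monomial in $q$, between ${}_{il}\rho$ and the operation of stripping the \emph{trailing} generator. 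Coassociativity of $\delta$ and the twisted Leibniz rule forced by $\delta$ being a homomorphism for the twisted product $(x_1\otimes x_2)(y_1\otimes y_2)=q^{-(|x_2|,|y_1|)}x_1y_1\otimes x_2y_2$ are exactly what make these identities hold; symmetry then comes down to the evident symmetry of the data $(f_{il},f_{il})_L=\nu_{il}$ (equivalently, the invariance of $\phi_{il}$ under reversal of words). What makes this genuinely more involved than the quantum-group case, and is the step I expect to be most laborious, is keeping track of the interaction of the two families of exponents --- the $q^{-(\mu,\nu)}$ from the twisted multiplication and the $q_{(i)}^{-mn}$ from $\delta(f_{il})$ for imaginary $i$ with $l\ge 2$ --- so that, for instance, the recursion reproduces $(f_{ia_1}\cdots f_{ia_p},\,f_{il})_L=\nu_{ia_1}\cdots\nu_{ia_p}\,q_{(i)}^{-\sum_{s<t}a_sa_t}$ whenever $a_1+\cdots+a_p=l$, in agreement with $\phi_{il}$. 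Passing from homogeneous to general elements is then just bilinearity.
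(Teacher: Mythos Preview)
The paper does not prove this proposition at all: it is stated with the citation \cite{Lus10, Bozec2014b, Bozec2014c} and no argument is given, so there is no ``paper's own proof'' to compare against. Your proposal is therefore not competing with anything in the text; it is an attempt to supply what the authors chose to import from the literature.

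As a sketch of Lusztig's construction adapted to the Borcherds--Bozec coproduct \eqref{eq:comult}, your outline is sound. The leading-generator decomposition $y=\sum_{(i,l)} f_{il}\,y_{il}$ is indeed unique in the free algebra, the recursion is well founded by height, and your functional $\phi_{il}$ is exactly what is forced by (b)--(d) on $\mathscr{F}_{-l\alpha_i}$. You correctly identify symmetry and the general form of (d) as the nontrivial content, and the tools you name --- coassociativity of $\delta$, the twisted Leibniz rule coming from the fact that $\delta$ is an algebra map for the twisted tensor product, and the word-reversal invariance of $\phi_{il}$ --- are the right ones. The only caution is that your write-up remains a sketch at precisely the point you flag as laborious: you have not actually carried out the simultaneous induction, so the bookkeeping of the two exponent families ($q^{-(|x_2|,|y_1|)}$ versus $q_{(i)}^{-mn}$) is asserted to work rather than verified. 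If you intend this as a complete proof rather than a roadmap, that induction needs to be written out.
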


\vskip 3mm

Let $\mathscr{R}$ be the radical of $(\ , \ )_{L}$ on $\mathscr{F}$.  Assume that
\begin{equation} \label{eq:nu}
\nu_{il} \equiv 1\!\!\!\! \mod q\, \Z_{\ge 0}[[q]] \ \ \text{for all} \ i \in I^{\text{im}} \setminus I^{\text{iso}}
\ \text{and} \ l>0.
\end{equation}

\vskip 2mm

Then it was shown in \cite{Bozec2014b, Bozec2014c} that the radical $\mathscr{R}$ is generated by the elements
\begin{equation} \label{eq:radical}
\begin{aligned}
& \sum_{r+s = 1 - la_{ij}} (-1)^r f_{i}^{(r)} f_{jl} f_{i}^{(s)} \ \text{for} \ i \in I^{\text{re}}, \ i \neq (j,l) \in I^{\infty}, \\
& f_{il} f_{jk} - f_{jk} f_{il} \ \text{for all} \ (i,l), (j,k) \in I^{\infty} \ \text{and} \ a_{ij} =0,
\end{aligned}
\end{equation}
where $f_{i}^{(n)} = f_{i}^n / [n]_{i}!$ for $i \in I^{\text{re}}$.

\vskip 3mm

Given a Borcherds-Cartan datum $(A, P, P^{\vee}, \Pi,  \Pi^{\vee})$,
we define $\widehat {U}$ to be  the associative algebra over $\Q(q)$ with $\mathbf 1$,
generated by the elements $q^h$ $(h\in P^{\vee})$ and $e_{il},
f_{il}$ $((i,l) \in I^{\infty})$ with defining relations
\begin{equation} \label{eq:rels}
\begin{aligned}
& q^0=\mathbf 1,\quad q^hq^{h'}=q^{h+h'} \ \ \text{for} \ h,h' \in P^{\vee} \\
& q^h e_{jl}q^{-h} = q^{l \langle h, \alpha_j \rangle} e_{jl},
\ \ q^h f_{jl}q^{-h} = q^{-l \langle h, \alpha_j \rangle} f_{jl}\ \ \text{for} \ h \in P^{\vee},\ (j,l)\in I^{\infty}, \\
& \sum_{r+s=1-la_{ij}}(-1)^r
{e_i}^{(r)}e_{jl}e_i^{(s)}=0 \ \ \text{for} \ i\in
I^{\text{re}},\ (j,l)\in I^{\infty} \ \text {and} \ i \neq (j,l), \\
& \sum_{r+s=1-la_{ij}}(-1)^r
{f_i}^{(r)}f_{jl}f_i^{(s)}=0 \ \ \text{for} \ i\in
I^{\text{re}},\ (j,l)\in I^{\infty} \ \text {and} \ i \neq (j,l), \\
& e_{ik}e_{jl}-e_{jl}e_{ik} = f_{ik}f_{jl}-f_{jl}f_{ik} =0 \ \ \text{for} \ a_{ij}=0.
\end{aligned}
\end{equation}

\noindent
We extend the grading on $\widehat{U}$ by setting $|q^h|=0$ and $|e_{il}|= l \alpha_{i}$.

\vskip 3mm

The algebra $\widehat{U}$ is endowed with a comultiplication
$\Delta: \widehat{U} \rightarrow \widehat{U} \otimes \widehat{U}$
given by
\begin{equation} \label{eq:Comult}
\begin{aligned}
& \Delta(q^h) = q^h \otimes q^h, \\
& \Delta(e_{il}) = \sum_{m+n=l} q_{(i)}^{mn}e_{im}\otimes K_{i}^{-m}e_{in}, \\
& \Delta(f_{il}) = \sum_{m+n=l} q_{(i)}^{-mn}f_{im}K_{i}^{n}\otimes f_{in},
\end{aligned}
\end{equation}
where $K_i= q_{i}^{h_{i}} =q^{s_ih_i}$ $(i \in I)$.

\vskip 3mm

Let $\widehat{U}^+$ (resp. $\widehat{U}^{-}$) be the subalgebra of $\widehat{U}$
generated by $e_{il}$ (resp. $f_{il}$) for all $(i,l) \in I^{\infty}$.
In particular, $\widehat{U}^- \cong \mathscr{F} / \mathscr{R}$.

\vskip 3mm

We denote by $\widehat{U}^{\leq0}$ be the subalgebra of $\widehat{U}$
generated by $q^h$ $(h \in P^{\vee})$ and $f_{il}$ $((i,l) \in I^{\infty})$.
We extend $( \ , \ )_L$ to a symmetric bilinear form $( \ , \ )_L$ on $\widehat{U}^{\leq 0}$  by setting
\begin{equation} \label{eq:biliearUminus}
\begin{aligned}
& (q^h,\mathbf 1)_L=1,\ (q^h,f_{il})_L=0, \\
& (q^h,K_j)_L=q^{-\langle h, \alpha_j \rangle}.
\end{aligned}
\end{equation}

\noindent
Moreover, we define $(\ , \ )_{L}$ on $\widehat{U}^{+}$ by
\begin{equation} \label{eq:bilinearUplus}
(x, y)_{L} = (\omega(x), \omega(y))_{L} \ \ \text{for} \ x, y \in \widehat{U}^{+},
\end{equation}
where $\omega:\widehat{U} \longrightarrow \widehat{U}$  is the involution defined by
$$\omega(q^h)=q^{-h},\ \omega(e_{il})=f_{il},\ \omega(f_{il})=e_{il}\ \
\text{for}\ h \in P^{\vee},\ (i,l)\in I^{\infty}.$$

\vskip 3mm

For any $x\in \widehat{U}$, we will  use the Sweedler notation to write
$$\Delta(x)=\sum x_{(1)}\otimes x_{(2)}.$$
Following the Drinfeld double construction, the quantum Borcherds-Bozec aalgebra
is defined as follows.

\vskip 3mm

\begin{definition} The {\it quantum Borcherds-Bozec algebra} $U_q(\g)$ associated with a Borcherds-Cartan datum $(A, P,P^{\vee}, \Pi, \Pi^{\vee})$ is the quotient algebra of $\widehat{U}$ defined by relations
\begin{equation}\label{drinfeld}
\sum(a_{(1)},b_{(2)})_L\omega(b_{(1)})a_{(2)}=\sum(a_{(2)},b_{(1)})_La_{(1)}\omega(b_{(2)})\ \ \text{for all}\ a,b \in \widehat{U}^{\leq0}.
\end{equation}
\end{definition}


\vskip 3mm

Let $U^+_q(\g)$ (resp. $U^-_q(\g)$) be the subalgebra of $U_q(\g)$ generated by $e_{il}$
(resp. $f_{il}$) for $(i,l)\in I^{\infty}$
and let $U^{0}_q(\g)$ be the subalgebra of $U_q(\g)$ generated by $q^h$ $(h\in P^{\vee})$.
Then we have the {\it triangular decomposition} \cite{KK20}
$$U_{q}(\g) \cong U_{q}^-(\g) \otimes U_{q}^0(\g) \otimes U_{q}^+(\g).$$

\vskip 3mm
\noindent
For simplicity, we often write
 $U$ (resp. $U^+$ and $U^-$) for $U_q(\g)$ (resp. $U^+_q(\g)$ and $U^-_q(\g)$).

 \vskip 3mm

Let $^{-}:U_q(\g)\rightarrow U_q(\g)$ be the $\Q$-linear involution given by
\begin{equation}\label{eq:bar}
\overline{e_{il}}=e_{il},\quad \overline{f_{il}}=f_{il},\quad \overline{K_i}=K_i^{-1},\quad \overline{q}=q^{-1}
\end{equation} 	
for $(i,l)\in I^{\infty}$ and $i\in I$.

 \vskip 3mm

 The following proposition will play an extremely important role in our work.

 \vskip 2mm

 \begin{proposition} \cite{Bozec2014b, Bozec2014c} \label{prop:primitive}
 {\rm
 For each $i \in I^{\text{im}}$ and $l>0$, there exist unique elements
 ${\mathtt a}_{il}$, ${\mathtt b}_{il} = \omega({\mathtt a}_{il})$ satisfying the
 following conditions.

 \begin{itemize}

 \item[(a)] $\Q(q) \langle e_{i1}, e_{i2}, \ldots, e_{il} \rangle
 = \Q(q) \langle {\mathtt a}_{i1}, {\mathtt a}_{i2}, \ldots, {\mathtt a}_{il} \rangle$,

\vskip 2mm

 \hspace{-4mm}$\Q(q) \langle f_{i1}, f_{i2}, \ldots, f_{il} \rangle
 = \Q(q) \langle {\mathtt b}_{i1}, {\mathtt b}_{i2}, \ldots, {\mathtt b}_{il} \rangle$,

 \vskip 2mm

 \item[(b)] $({\mathtt a}_{il}, u)_{L} = 0$ for all $u \in \Q(q) \langle e_{ik} \mid k < l \rangle$,

\vskip 2mm

 \hspace{-4mm}$({\mathtt b}_{il}, w)_{L} = 0$ for all $w \in \Q(q) \langle f_{ik} \mid k<l \rangle$,

 \vskip 2mm

 \item[(c)] ${\mathtt a}_{il} - e_{il} \in \Q(q) \langle e_{ik} \mid k<l \rangle$, \
 ${\mathtt b}_{il} - f_{il} \in \Q(q) \langle f_{ik} \mid k<l \rangle$,

 \vskip 2mm

 \item[(d)] $\overline{{\mathtt a}_{il}} = {\mathtt a}_{il}$, \ $\overline{{\mathtt b}_{il}} = {\mathtt b}_{il}$,

 \vskip 2mm

 \item[(g)] $\delta({\mathtt a}_{il}) = {\mathtt a}_{il} \otimes\mathbf 1 +\mathbf 1 \otimes {\mathtt a}_{il}$, \
 $\delta({\mathtt b}_{il}) = {\mathtt b}_{il} \otimes\mathbf 1 +\mathbf 1 \otimes {\mathtt b}_{il}$.

 \end{itemize}
 }
 \end{proposition}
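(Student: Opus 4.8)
\emph{Proof idea.} The plan is to deduce the proposition from Bozec's analysis of the subalgebra of $U_q^-(\g)$ generated by the $f_{ik}$ attached to a single imaginary simple root $\alpha_i$. I would construct the elements $\mathtt b_{il}$ directly and then set $\mathtt a_{il}:=\omega(\mathtt b_{il})$; since $\omega$ is an involution interchanging $e_{ik}\leftrightarrow f_{ik}$, commuting with $^-$, preserving $(\ ,\ )_L$ by \eqref{eq:bilinearUplus}, and intertwining the relevant comultiplications, each of (a)--(d), (g) for $\mathtt a_{il}$ will follow formally from the corresponding statement for $\mathtt b_{il}$ together with uniqueness. So it is enough to produce, for each $l\ge1$, an element $\mathtt b_{il}\in U_q^-(\g)_{-l\alpha_i}$ with properties (a)--(d), (g) and to prove it is unique.

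Fix $i\in I^{\text{im}}$, write $W_l:=U_q^-(\g)_{-l\alpha_i}$ (finite-dimensional), and let $\mathrm{Dec}_l:=\sum_{m+n=l,\ m,n\ge1}W_mW_n\subseteq W_l$ be the span of the \emph{decomposables}. I would first record two preliminary facts. (i) Since $U_q^-(\g)$ is generated by the $f_{jk}$ and the simple roots are $\Q$-linearly independent, every weight vector of weight $-l\alpha_i$ is a polynomial in $f_{i1},\dots,f_{il}$; and, inspecting the explicit generators \eqref{eq:radical} of the radical $\mathscr R$, no relation of $\mathscr R$ rewrites $f_{il}$ in terms of strictly shorter monomials, so $W_l=\Q(q)f_{il}\oplus\mathrm{Dec}_l$ and $\mathrm{Dec}_l=\Q(q)\langle f_{i1},\dots,f_{i,l-1}\rangle\cap W_l$ has codimension $1$. (ii) Because, under \eqref{eq:nu}, $\mathscr R$ is exactly the radical of $(\ ,\ )_L$ on $\mathscr F$, the form is non-degenerate on each $W_m$; combining this with property (d) of Proposition \ref{prop:Lusztig}, the vanishing of $(\ ,\ )_L$ between different weight spaces, and the fact that $\delta(x)\in\bigoplus_{m+n=l}W_m\otimes W_n$ for $x\in W_l$, I obtain the equivalence: $x\perp\mathrm{Dec}_l$ if and only if $\delta(x)=x\otimes\mathbf 1+\mathbf 1\otimes x$ (that is, $\mathrm{Dec}_l^{\perp}$ is the primitive subspace of $W_l$). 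Here $\delta$ is the map \eqref{eq:comult}, which descends to $U_q^-(\g)=\mathscr F/\mathscr R$ since $\mathscr R$ is a coideal for $\delta$.

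Then I would run an induction on $l$. For $l=1$ take $\mathtt b_{i1}:=f_{i1}$; all of (a)--(d), (g) are immediate from \eqref{eq:comult}. For the inductive step, given $\mathtt b_{i1},\dots,\mathtt b_{i,l-1}$ with the stated properties, note that $\Q(q)\langle\mathtt b_{i1},\dots,\mathtt b_{i,l-1}\rangle=\Q(q)\langle f_{i1},\dots,f_{i,l-1}\rangle$ by (a), (c), so $\mathrm{Dec}_l$ is unchanged on replacing the $f$'s by the $\mathtt b$'s. The crux is the \emph{claim} that there is a primitive $p_l\in W_l$ with $p_l-f_{il}\in\mathrm{Dec}_l$. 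Granting it, put $\mathtt b_{il}:=p_l$: then (c), (g) hold by construction; (b) follows from (g) via the equivalence in (ii), since orthogonality to $\mathrm{Dec}_l$ is orthogonality to $\Q(q)\langle f_{ik}\mid k<l\rangle$ by the weight-space vanishing; and (a) follows by triangularity, as $f_{il}\equiv\mathtt b_{il}$ modulo $\mathrm{Dec}_l\subseteq\Q(q)\langle\mathtt b_{i1},\dots,\mathtt b_{i,l-1}\rangle$. Uniqueness is then automatic: two such elements differ by a primitive lying in $\mathrm{Dec}_l$, i.e. by an element of $\mathrm{Dec}_l\cap\mathrm{Dec}_l^{\perp}$, which is $0$ by the claim. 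For (d), I would note that $^-$ preserves weights, fixes each $f_{ik}$, hence stabilizes $\mathrm{Dec}_l$, so $\overline{\mathtt b_{il}}-f_{il}\in\mathrm{Dec}_l$; as $\overline{\mathtt b_{il}}$ is again a primitive element with leading term $f_{il}$ — this being where the (Verdier) self-duality underlying Bozec's construction of the primitive generators enters — uniqueness forces $\overline{\mathtt b_{il}}=\mathtt b_{il}$.

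The main obstacle is the \emph{claim} — equivalently, non-degeneracy of $(\ ,\ )_L$ on $\mathrm{Dec}_l$, equivalently that the primitive subspace of $W_l$ is one-dimensional and not contained in $\mathrm{Dec}_l$ — together with the self-duality needed for (d); these are the genuinely non-formal ingredients, everything else being the triangularity-and-orthogonality bookkeeping above. When $a_{ii}=0$ the claim is the classical statement that in degree $l$ the space of primitives of the ring of symmetric functions is spanned by the power sum $p_l$ (and the $\mathtt b_{il}$ are then the power-sum-type generators); in general both ingredients are supplied by Bozec's geometric description of the composition (sub)algebra of the quiver with loops at the vertex $i$, cf. \cite{Bozec2014b, Bozec2014c}.
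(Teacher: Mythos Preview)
The paper does not give its own proof of this proposition: it is simply quoted from \cite{Bozec2014b, Bozec2014c} and used as a black box. So there is no ``paper's proof'' to compare against; your sketch is in fact more detailed than what the paper provides.

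Your outline is a faithful reconstruction of the argument: the triangular orthogonalisation of $f_{il}$ against the decomposable subspace $\mathrm{Dec}_l$, the identification $\mathrm{Dec}_l^{\perp}=\{\text{primitives in }W_l\}$ via the Hopf-pairing property (d) of Proposition~\ref{prop:Lusztig}, and the induction on $l$ are exactly the right scaffolding. You are also right that the only genuinely non-formal inputs are (1) that the primitive subspace of $W_l$ is one-dimensional and transversal to $\mathrm{Dec}_l$ (equivalently, that $(\ ,\ )_L$ is non-degenerate on $\mathrm{Dec}_l$), and (2) the bar-invariance in (d). One small caution on (d): primitivity of $\overline{\mathtt b_{il}}$ does not follow just from ``$^-$ fixes $f_{ik}$ and stabilises $\mathrm{Dec}_l$'', because $\delta$ and $^-$ do not commute on the nose (the coefficients $q_{(i)}^{-mn}$ in \eqref{eq:comult} are conjugated). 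One needs either an identity of the type $(\overline x,\overline y)_L=\overline{(x,y)_L}$ (so that orthogonality to the bar-stable subspace $\mathrm{Dec}_l$ is preserved), or the geometric Verdier-duality argument you allude to; you correctly flag this as an external input from Bozec.
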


\vskip 3mm

Let $\tau_{il} = ({\mathtt a}_{il}, {\mathtt a}_{il})_{L} =  ({\mathtt b}_{il}, {\mathtt b}_{il})_{L}.$
In \cite{FKKT22}, we obtain a new presentation of the quantum Borcherds-Bozec algebra $U_{q}(\g)$
in terms of {\it primitive generators} $q^{h}$ $(h \in P^{\vee})$,
${\mathtt a}_{il}$, ${\mathtt b}_{il}$ $((i,l) \in I^{\infty})$.

\vskip 3mm

\begin{theorem}\cite[Theorem 2.5]{FKKT22}
{\rm The quantum Borcherds-Bozec algebra $U_{q}(\g)$ is equal to the
associative algebra over $\Q(q)$ with $\mathbf{1}$ generated by
 $q^{h}$ $(h \in P^{\vee})$, ${\mathtt a}_{il}$, ${\mathtt b}_{il}$ $((i,l) \in I^{\infty})$
 with the defining relations
 \begin{equation} \label{eq:primitive}
\begin{aligned}
& q^0=\mathbf 1,\quad q^hq^{h'}=q^{h+h'} \ \ \text{for} \ h,h' \in P^{\vee}, \\
& q^h \mathtt a_{jl}q^{-h} = q^{l \langle h, \alpha_j \rangle} \mathtt a_{jl},
\ \ q^h \mathtt b_{jl}q^{-h} = q^{-l \langle h, \alpha_j \rangle } \mathtt b_{jl}
\ \ \text{for} \ h \in P^{\vee}\ \text{and}\ (j,l)\in I^{\infty}, \\
& \sum_{r + s = 1 - l a_{ij}} (-1)^r
{\mathtt a}_i^{(r)}\mathtt a_{jl}\mathtt a_i^{(s)}=0
\ \ \text{for} \ i\in 	I^{\text{re}},\ (j,l)\in I^{\infty} \ \text {and} \ i \neq (j,l), \\
& \sum_{r + s = 1 - l a_{ij}} (-1)^r
{\mathtt b}_i^{(r)}\mathtt b_{jl}\mathtt b_i^{(s)}=0
\ \ \text{for} \ i\in 	I^{\text{re}},\ (j,l)\in I^{\infty} \ \text {and} \ i \neq (j,l), \\
& \mathtt a_{il}\mathtt b_{jk} - \mathtt b_{jk}\mathtt a_{il}=\delta_{ij}\delta_{kl}\tau_{il}(K_{i}^{l} - K_{i}^{-l}), \\
& \mathtt a_{il}\mathtt a_{jk}-\mathtt a_{jk}\mathtt a_{il} = \mathtt b_{il}\mathtt b_{jk}-\mathtt b_{jk}\mathtt b_{il} =0 \ \ \text{for} \ a_{ij}=0.
\end{aligned}
\end{equation}

}
\end{theorem}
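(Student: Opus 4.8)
The plan is to realize the quantum Borcherds--Bozec algebra $U_{q}(\g)$ as a quotient of the algebra $\widetilde{U}$ presented by the generators and relations \eqref{eq:primitive}, and then to prove that the quotient map is an isomorphism by a triangular-decomposition argument. First I would build a surjective algebra homomorphism $\Psi\colon\widetilde{U}\twoheadrightarrow U_{q}(\g)$. By Proposition~\ref{prop:primitive}(a) the elements $q^{h}$, ${\mathtt a}_{il}$, ${\mathtt b}_{il}$ already generate $U_{q}(\g)$, so it suffices to verify that they satisfy the relations in \eqref{eq:primitive}. The relations involving $q^{h}$ are immediate from the weight grading, since $|{\mathtt a}_{jl}|=l\alpha_{j}$ and $|{\mathtt b}_{jl}|=-l\alpha_{j}$ by Proposition~\ref{prop:primitive}(c) (note also $\mathtt a_{i}=e_{i}$, $\mathtt b_{i}=f_{i}$ for $i\in I^{\text{re}}$). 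The quantum Serre relations and the commutators for $a_{ij}=0$ among the ${\mathtt a}_{il}$ (resp. the ${\mathtt b}_{il}$) hold in $U^{+}_{q}(\g)$ (resp. $U^{-}_{q}(\g)$): by \eqref{eq:radical} the radical $\mathscr{R}$ of $(\ ,\ )_{L}$ is generated by the corresponding elements in the $f_{il}$, and, using Proposition~\ref{prop:primitive}(b),(c),(g) together with property (d) of Lusztig's form in Proposition~\ref{prop:Lusztig}, one reduces the verification that the primitive analogues lie in $\mathscr{R}$ to the already known single-generator relations that generate $\mathscr{R}$; this is a reformulation of Bozec's presentation of $U^{\mp}_{q}(\g)$ in terms of primitive generators, and may also be deduced from the isomorphism $\phi$.

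The one genuinely new relation to check is the mixed one, ${\mathtt a}_{il}{\mathtt b}_{jk}-{\mathtt b}_{jk}{\mathtt a}_{il}=\delta_{ij}\delta_{kl}\tau_{il}(K_{i}^{l}-K_{i}^{-l})$. To obtain it I would specialize the Drinfeld double relation \eqref{drinfeld} to $a={\mathtt b}_{il}$ and $b={\mathtt b}_{jk}$, both of which lie in $\widehat{U}^{\le 0}$. From Proposition~\ref{prop:primitive}(g) and the comparison of \eqref{eq:comult} with \eqref{eq:Comult} one has $\Delta({\mathtt b}_{il})={\mathtt b}_{il}\otimes\mathbf{1}+K_{i}^{l}\otimes{\mathtt b}_{il}$; substituting this Sweedler expansion into \eqref{drinfeld} and using $(q^{h},\mathbf{1})_{L}=1$ and $(q^{h},f_{il})_{L}=0$ from \eqref{eq:biliearUminus}, together with $({\mathtt b}_{il},{\mathtt b}_{jk})_{L}=\delta_{ij}\delta_{kl}\tau_{il}$ (which follows from Proposition~\ref{prop:primitive}(b) and Proposition~\ref{prop:Lusztig}(c)), all but two terms on each side vanish and the asserted identity drops out after relabeling. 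This makes $\Psi$ a well-defined surjection.

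For injectivity I would use the triangular decomposition. Let $\widetilde{U}^{-}$, $\widetilde{U}^{0}$, $\widetilde{U}^{+}$ be the subalgebras of $\widetilde{U}$ generated respectively by the ${\mathtt b}_{il}$, by the $q^{h}$, and by the ${\mathtt a}_{il}$. The relations \eqref{eq:primitive} --- the $q^{h}$-commutations and the mixed relation, whose right-hand side lies in $\widetilde{U}^{0}$ --- permit a straightforward normal-ordering (diamond-lemma) argument rewriting every element of $\widetilde{U}$ as a $\Q(q)$-linear combination of products of a monomial in the ${\mathtt b}_{il}$, an element $q^{h}$, and a monomial in the ${\mathtt a}_{il}$; hence the multiplication map $m\colon\widetilde{U}^{-}\otimes\widetilde{U}^{0}\otimes\widetilde{U}^{+}\to\widetilde{U}$ is surjective. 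Now $\widetilde{U}^{0}\cong\Q(q)[P^{\vee}]$, its only relations being the group relations, and $\widetilde{U}^{\mp}$ is a quotient of the algebra presented by the ${\mathtt b}_{il}$ (resp. the ${\mathtt a}_{il}$) with the Serre and $a_{ij}=0$ commutator relations alone, which by Bozec's primitive presentation equals $U^{\mp}_{q}(\g)$. Since $\Psi$ carries $\widetilde{U}^{-},\widetilde{U}^{0},\widetilde{U}^{+}$ onto $U^{-}_{q}(\g),U^{0}_{q}(\g),U^{+}_{q}(\g)$ and the composites $U^{\mp}_{q}(\g)\twoheadrightarrow\widetilde{U}^{\mp}\xrightarrow{\ \Psi\ }U^{\mp}_{q}(\g)$ are the identity on generators, each of $\Psi|_{\widetilde{U}^{-}},\Psi|_{\widetilde{U}^{0}},\Psi|_{\widetilde{U}^{+}}$ is an isomorphism. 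Finally, from $\Psi\circ m=m_{U}\circ(\Psi|_{\widetilde{U}^{-}}\otimes\Psi|_{\widetilde{U}^{0}}\otimes\Psi|_{\widetilde{U}^{+}})$, where $m_{U}\colon U^{-}_{q}(\g)\otimes U^{0}_{q}(\g)\otimes U^{+}_{q}(\g)\xrightarrow{\sim}U_{q}(\g)$ is the triangular-decomposition isomorphism of \cite{KK20}, the composite $\Psi\circ m$ is injective; since $m$ is surjective, $\Psi$ is injective, hence an isomorphism.

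I expect the main obstacle to be the completeness assertion used in the last step: that $U^{-}_{q}(\g)$ is presented by the primitive generators ${\mathtt b}_{il}$ subject only to the quantum Serre relations and the $a_{ij}=0$ commutators, with no further relations. This is precisely Bozec's primitive presentation of $U^{-}_{q}(\g)$, which rests on the description of the radical $\mathscr{R}$ and the defining properties of the ${\mathtt b}_{il}$ in Proposition~\ref{prop:primitive}. By contrast, the Sweedler computation yielding the mixed relation and the normal-ordering argument are routine; the only delicate point there is that the normalized monomials must stay inside $\widetilde{U}^{-}\widetilde{U}^{0}\widetilde{U}^{+}$, which holds exactly because the mixed relation feeds back only elements of the torus part $\widetilde{U}^{0}$.
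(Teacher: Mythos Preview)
The paper does not supply its own proof of this theorem; it is quoted verbatim from \cite[Theorem~2.5]{FKKT22} and used without further argument. So there is no in-paper proof to compare against.

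That said, your outline is the standard route and is essentially how the result is proved in \cite{FKKT22}. Two remarks. First, your computation of the mixed relation from \eqref{drinfeld} is correct: with $a=\mathtt b_{il}$, $b=\mathtt b_{jk}$ and $\Delta(\mathtt b_{il})=\mathtt b_{il}\otimes 1+K_i^l\otimes\mathtt b_{il}$, the four Sweedler terms on each side collapse exactly as you describe, yielding $\mathtt a_{jk}\mathtt b_{il}-\mathtt b_{il}\mathtt a_{jk}=\delta_{ij}\delta_{kl}\tau_{il}(K_i^l-K_i^{-l})$. Second, the ``completeness'' step you flag --- that $U_q^-(\g)$ is presented by the $\mathtt b_{il}$ with only the Serre and $a_{ij}=0$ commutator relations --- is not an obstacle here: the paper records (see the discussion around \eqref{eq:primitive iso}) that $\phi\colon U_q^-(\g)\to U_q^-(\g)$, $f_{il}\mapsto\mathtt b_{il}$, is an algebra automorphism. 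Since $U_q^-(\g)=\mathscr F/\mathscr R$ with $\mathscr R$ generated by exactly those relations in the $f_{il}$ (see \eqref{eq:radical}), transporting along $\phi$ gives the desired presentation in the $\mathtt b_{il}$ immediately, and your $\widetilde U^-\to U_q^-(\g)$ is then an isomorphism on the nose. The triangular-decomposition comparison with \cite{KK20} finishes the argument as you wrote it.
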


\vskip 3mm

\noindent
Note that $U^+ = \langle {\mathtt a}_{il} \mid (i, l) \in I_{\infty} \rangle$
and $U^- = \langle {\mathtt b}_{il} \mid (i, l) \in I_{\infty} \rangle$.

\vskip 3mm

The algebra $U_{q}(\g)$ has a comultiplication induced by \eqref{eq:comult} and Proposition \ref{prop:primitive}.
\begin{equation} \label{eq:Comult}
\begin{aligned}
& \Delta(q^h) = q^h \otimes q^h, \\
& \Delta({\mathtt a}_{il}) = {\mathtt a}_{il} \otimes K_{i}^{-l}  + \mathbf 1 \otimes {\mathtt a}_{il}, \\
& \Delta({\mathtt b}_{il}) = {\mathtt b}_{il} \otimes \mathbf 1 + K_{i}^l  \otimes {\mathtt b}_{il}.
\end{aligned}
\end{equation}

\vskip 2mm

\noindent
Moreover, we define the counit and antipode by
\begin{equation}
\begin{aligned}
& \epsilon(q^h) = 1, \ \ \epsilon({\mathtt a}_{il}) =  \epsilon({\mathtt b}_{il}) = 0, \\
& S({\mathtt a}_{il}) = - {\mathtt a}_{il} K_{i}^l, \ \ S({\mathtt b}_{il}) = - K_{i}^{-l} {\mathtt b}_{il},
\end{aligned}
\end{equation}
then the quantum Borcherds-Bozec algebra $U_{q}(\g)$ becomes a Hopf algebra.

\vskip 3mm

From now on, we will take
$$\tau_{il} = (1 - q_{i}^{2l})^{-1} \ \ \text{for} \ (i, l) \in I^{\infty}.$$

\vskip 2mm

Set $A_{il}:=-q^l_i\mathtt a_{il}$ and $E_{il}:=-K_i^l\mathtt a_{il}$. Then we have
\begin{align}
&A_{il}\mathtt b_{jk}-\mathtt b_{jk}A_{il}=\delta_{ij}\delta_{kl}\frac{K_i^l-K_i^{-l}}{q_i^l-q_i^{-l}},\\
&E_{il}\mathtt b_{jk}-q_i^{-kla_{ij}}\mathtt b_{jk}E_{il}=\delta_{ij}\delta_{kl}\frac{1-K_i^{2l}}{1-q_i^{2l}}.\label{Eb}
\end{align}

\vskip 5mm

We now briefly review some of the basic properties of the category $\mathcal{O}_{\text{int}}$.
Let $U_{q}(\g)$ be a quantum Borcherds-Bozec algebra and let $M$ be a $U_{q}(\g)$-module. We say that $M$ has
a {\it weight space decomposition} if
$$M = \bigoplus_{\mu \in P} M_{\mu}, \ \ \text{where}
 \ M_{\mu} = \{ m \in M \mid q^{h} \,  m = q^{\langle h, \mu \rangle} m \
\text{for all} \ h \in P^{\vee} \}.$$
\noindent
We denote  $\text{wt}(M):=\{\mu \in
\h^* \mid M_{\mu} \neq 0 \}$.

\vskip 3mm

A $U_{q}(\g)$-module $V$ is called a {\it highest weight module with highest weight $\lambda$} if there is a non-zero vector $v_{\lambda}$ in $V$ such that
\begin{enumerate}
\item[(i)] $q^{h} \,  v_{\lambda} = q^{\langle h, \lambda \rangle} v_{\lambda}$ for all $h \in P^{\vee}$,

\item[(ii)] $e_{il} \, v_{\lambda} = 0$ for all $(i,l) \in I^{\infty}$,

\item[(iii)] $V=U_{q}(\g) v_{\lambda}$.
\end{enumerate}

\vskip 2mm

Such a vector $v_{\lambda}$ is called a {\it highest weight vector} with highest weight $\lambda$.
Note that $V_{\lambda} = \Q(q) v_{\lambda}$ and $V$ has a weight space decomposition
 $V = \bigoplus_{\mu \le \lambda} V_{\mu}$,
 where $\mu \le \lambda$ means $\lambda - \mu \in R_{+}$. For each $\lambda \in P$, there exists a unique irreducible highest weight module,
 which is denoted by $V(\lambda)$.

 \vskip 3mm

 \begin{proposition}  \cite{KK20} \label{prop:hw}
{\rm Let $\lambda \in P^{+}$ be a dominant integral weight and let $V(\lambda) = U_{q}(\g) \, v_{\lambda}$ be the irreducible highest weight module with highest weight $\lambda$ and highest weight vector $v_{\lambda}$.
Then the following statements hold.

\vskip 2mm
\begin{enumerate}
\item[(a)] If $i \in I^{\text{re}}$, then ${\mathtt b}_{i}^{\langle h_{i}, \lambda \rangle +1} v_{\lambda} =0$.

\vskip 2mm

\item[(b)] If $i \in I^{\text{im}}$ and $\langle h_{i}, \lambda \rangle =0$, then ${\mathtt b}_{il} \, v_{\lambda}=0$ for all $l>0$.
\end{enumerate}

\vskip 2mm

Moreover, if $i \in I^{\text{im}}$ and $\mu \in \text{wt}(V(\lambda))$, we have

\vskip 2mm

\begin{enumerate}

\item[(i)] $\langle h_{i}, \mu \rangle \ge 0$,

\vskip 2mm

\item[(ii)] if $\langle h_{i}, \mu \rangle  =0$, then $V(\lambda)_{\mu - l \alpha_{i}} =0$  for all $l>0$,

\vskip 2mm

\item[(iii)] if $\langle h_{i}, \mu \rangle  =0$, then $f_{il}(V(\lambda)_{\mu})=0$,

\vskip 2mm

\item[(iv)] if $\langle h_{i} , \mu \rangle \le -l a_{ii}$, then $e_{il}(V(\lambda)_{\mu})=0$.

\end{enumerate}

}
\end{proposition}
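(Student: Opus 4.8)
The plan is to prove the six assertions in the order (a), (b); then the easy (i); then (iii) and (iv); and finally (ii), with each earlier part feeding the later ones. For (a) and (b) the idea is to produce a singular vector $w$ (a weight vector killed by every $e_{jk}$) of weight strictly below $\lambda$, and then invoke irreducibility: if $w$ is such a vector, then $U_q(\g)w=U_q^{-}(\g)w$ by the triangular decomposition, a submodule all of whose weights are $<\lambda$, whence $w=0$. For (a) take $w={\mathtt b}_i^{\langle h_i,\lambda\rangle+1}v_\lambda=f_i^{\langle h_i,\lambda\rangle+1}v_\lambda$ (here ${\mathtt b}_{i1}=f_{i1}=f_i$ for $i\in I^{\text{re}}$). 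For $j\ne i$, the element $e_{jk}$ lies in the subalgebra generated by ${\mathtt a}_{j1},\dots,{\mathtt a}_{jk}$ (Proposition \ref{prop:primitive}(a)) and each ${\mathtt a}_{jm}$ commutes with $f_i={\mathtt b}_{i1}$ by \eqref{eq:primitive}, so $e_{jk}w=f_i^{\langle h_i,\lambda\rangle+1}e_{jk}v_\lambda=0$; and $e_iw=0$ by a direct computation in the rank-one subalgebra generated by $e_i,f_i,K_i^{\pm1}$. For (b) take $w={\mathtt b}_{il}v_\lambda$, where $\langle h_i,\lambda\rangle=0$; again $e_{jk}w=0$ for $j\ne i$, while ${\mathtt a}_{im}{\mathtt b}_{il}v_\lambda={\mathtt b}_{il}{\mathtt a}_{im}v_\lambda+\delta_{ml}\tau_{il}(K_i^{l}-K_i^{-l})v_\lambda=0$ since ${\mathtt a}_{im}\in U^{+}$ has positive weight and $(K_i^{l}-K_i^{-l})v_\lambda=0$ on the $i$-wall; as $e_{ik}$ is a noncommutative polynomial without constant term in the ${\mathtt a}_{im}$'s, it too annihilates $w$.

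Part (i) is immediate: if $\mu\in\text{wt}(V(\lambda))$ then $\lambda-\mu=\sum_j c_j\alpha_j$ with $c_j\in\Z_{\ge0}$, and $\langle h_i,\mu\rangle=\langle h_i,\lambda\rangle-\sum_j c_ja_{ij}\ge0$ because $\langle h_i,\lambda\rangle\ge0$ and $a_{ij}\le0$ for \emph{every} $j$ (including $j=i$, as $i\in I^{\text{im}}$). For (iii) I would prove the stronger statement that ${\mathtt b}_{ik}$ annihilates $V(\lambda)_\mu$ for all $k>0$ whenever $\langle h_i,\mu\rangle=0$, by induction on $\mathrm{ht}(\lambda-\mu)$; the base case $\mu=\lambda$ is (b). For the step, write $v\in V(\lambda)_\mu$ as a sum of terms ${\mathtt b}_{jm}v_{jm}$ with $v_{jm}\in V(\lambda)_{\mu+m\alpha_j}$ (possible since $U_q^{-}(\g)$ is generated by the ${\mathtt b}_{jm}$). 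Part (i) forces $v_{jm}=0$ unless $a_{ij}=0$; in the surviving case $\langle h_i,\mu+m\alpha_j\rangle=0$, the inductive hypothesis gives ${\mathtt b}_{ik}v_{jm}=0$, and ${\mathtt b}_{ik}$ commutes with ${\mathtt b}_{jm}$ by \eqref{eq:primitive} (as $a_{ij}=0$), so ${\mathtt b}_{ik}{\mathtt b}_{jm}v_{jm}=0$. Since $f_{il}$ is a polynomial in ${\mathtt b}_{i1},\dots,{\mathtt b}_{il}$ with no constant term, (iii) follows. For (iv) I would use the nondegenerate contravariant form $(\ ,\ )_\lambda$ on $V(\lambda)$: from $(e_{il}v,u)_\lambda=(v,f_{il}u)_\lambda$ one gets that $e_{il}(V(\lambda)_\nu)=0$ is equivalent to $f_{il}(V(\lambda)_{\nu+l\alpha_i})=0$, and $\langle h_i,\nu+l\alpha_i\rangle=\langle h_i,\nu\rangle+la_{ii}\le0$, which by (i) is either negative, so $V(\lambda)_{\nu+l\alpha_i}=0$, or zero, so (iii) applies.

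Part (ii) is the crux. By (a) and (b), the surjection $U_q^{-}(\g)\twoheadrightarrow V(\lambda)$, $x\mapsto xv_\lambda$, annihilates the left ideal $J$ generated by $\{{\mathtt b}_i^{\langle h_i,\lambda\rangle+1}:i\in I^{\text{re}}\}\cup\{{\mathtt b}_{il}:i\in I^{\text{im}},\ \langle h_i,\lambda\rangle=0,\ l>0\}$; hence it suffices to show $U_q^{-}(\g)_{-\beta}\subseteq J$ for $\beta=\lambda-(\mu-l\alpha_i)$. Writing $\lambda-\mu=\sum_j c_j\alpha_j$, the hypothesis $\langle h_i,\mu\rangle=0$ together with $a_{ij}\le0$, $c_j\ge0$ and $\langle h_i,\lambda\rangle\ge0$ forces $\langle h_i,\lambda\rangle=0$ and $c_ja_{ij}=0$ for all $j$, so $c_j=0$ whenever $a_{ij}\ne0$. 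Thus $\beta=(c_i+l)\alpha_i+\sum_{j:\,a_{ij}=0}c_j\alpha_j$ with $c_i+l\ge1$, so in any monomial ${\mathtt b}_{j_1m_1}\cdots{\mathtt b}_{j_pm_p}$ of weight $-\beta$ every index $j_t$ is either $i$ or satisfies $a_{ij_t}=0$, and at least one equals $i$; commuting the rightmost ${\mathtt b}_{i\bullet}$ to the end (legitimate by \eqref{eq:primitive}, since it commutes with all later factors) exhibits the monomial in $U_q^{-}(\g)\cdot{\mathtt b}_{im}\subseteq J$. Hence $V(\lambda)_{\mu-l\alpha_i}=0$.

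The step I expect to be the most delicate is Part (ii): one must be careful that this ``commute a ${\mathtt b}_{i\bullet}$ to the right'' manipulation is valid at the level of a spanning set of monomials of $U_q^{-}(\g)$, and that the forced relation $c_j=0$ for $a_{ij}\ne0$ is used exactly where needed. A secondary point is that (iv) as sketched relies on the nondegeneracy of the contravariant form on the irreducible module $V(\lambda)$; if one prefers not to presuppose it, (iv) can instead be proved by an induction on $\mathrm{ht}(\lambda-\nu)$ parallel to the one used for (iii), using the commutation relation \eqref{Eb}.
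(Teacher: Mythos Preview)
The paper does not prove Proposition~\ref{prop:hw}; it is quoted from \cite{KK20} without argument, so there is no in-paper proof to compare against.  Your sketch is the standard line of reasoning and is correct.  Two minor points worth tightening:

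\smallskip
\textbf{(ii)} In your formula $\beta=(c_i+l)\alpha_i+\sum_{j:a_{ij}=0}c_j\alpha_j$, the sum should run over $j\neq i$; otherwise when $i\in I^{\text{iso}}$ (so $a_{ii}=0$) you count $c_i\alpha_i$ twice.  This does not affect the logic.

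\smallskip
\textbf{(iv)} The contravariant form in this paper (and in \cite{KK20}) is set up via the \emph{primitive} generators, with adjointness $({\mathtt a}_{il}u,v)_K=-(u,K_i^{-l}{\mathtt b}_{il}v)_K$ rather than $(e_{il}v,u)=(v,f_{il}u)$.  The cleanest way to carry out your idea is therefore in two steps.  First, by the form and your strengthened (iii), show ${\mathtt a}_{ik}(V(\lambda)_\nu)=0$ whenever $\langle h_i,\nu\rangle\le -ka_{ii}$ (the adjoint side is ${\mathtt b}_{ik}$ acting on weight $\nu+k\alpha_i$, where $\langle h_i,\nu+k\alpha_i\rangle\le0$, hence is either empty by (i) or killed by (iii)).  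Second, since $e_{il}$ is a linear combination of monomials ${\mathtt a}_{ik_1}\cdots{\mathtt a}_{ik_p}$ with $k_1+\cdots+k_p=l$, the rightmost-applied factor ${\mathtt a}_{ik_1}$ hits weight $\mu+(l-k_1)\alpha_i$, and
\[
\langle h_i,\mu+(l-k_1)\alpha_i\rangle=\langle h_i,\mu\rangle+(l-k_1)a_{ii}\le -la_{ii}+(l-k_1)a_{ii}=-k_1a_{ii},
\]
so each such monomial annihilates $V(\lambda)_\mu$.  This fills the gap you flagged without needing a separate induction.
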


 \vskip 3mm

Motivated by Proposition \ref{prop:hw}, we define the category ${\mathcal O}_{\text{int}}$ as follows.

\vskip 2mm

\begin{definition} \label{def:Oint}

The {\it category} $\mathcal O_{\text{int}}$ consists of $U_{q}(\g)$-modules $M$ such that

 \begin{enumerate}

 \item[(a)] $M$ has a weight space decomposition $M = \oplus_{\mu \in P} M_{\mu}$
 with $\dim M_{\mu} < \infty$ for all $\mu \in \text{wt}(M)$,

 \vskip 2mm

 \item[(b)] there exist finitely many wrights $\lambda_{1}, \ldots, \lambda_{s} \in P$ such that
 $$\text{wt}(M) \subset \cup_{j=1}^s (\lambda_j - R_{+}),$$

 \vskip 2mm

\item[(c)] if $i \in I^{\text{re}}$, ${\mathtt b}_{i}$ is locally nilpotent
on $M$,

\vskip 2mm

\item[(d)] if $i \in I^{\text{im}}$, we have $\langle h_i, \mu
\rangle \ge 0$ for all $\mu \in \text{wt}(M)$,

\vskip 2mm

\item[(e)] if $i \in I^{\text{im}}$ and $\langle h_{i}, \mu \rangle =0$,
then ${\mathtt b}_{il}(M_{\mu})=0$,

\vskip 2mm

\item[(f)] if $i \in I^{\text{im}}$ and $\langle h_i, \mu \rangle
\le - l a_{ii}$, then ${\mathtt a}_{il}(M_{\mu})=0$.

 \end{enumerate}
 \end{definition}

 \vskip 3mm

\begin{remark} \hfill

\begin{itemize}

\item[(i)] By (b), ${\mathtt a}_{il}$ is locally nilpotent on $M$.

\vskip 2mm

\item[(ii)] If $i \in I^{\text{im}}$, then ${\mathtt b}_{il}$ are not necessarily locally nilpotent.

\vskip 2mm

\item[(iii)] The irreducible highest weight $U_{q}(\g)$-module
$V(\lambda)$ with $\lambda \in P^{+}$ is an object of the category
${\mathcal O}_{\text{int}}$.

\vskip 2mm

\item[(iv)] A submodule or a quotient module of a $U_{q}(\g)$-module in  the category
${\mathcal O}_{\text{int}}$ is again an object of ${\mathcal O}_{\text{int}}$.

\vskip 2mm

\item[(v)] A finite number of direct sums or a finite number of tensor products of $U_{q}(\g)$-modules in the
category ${\mathcal O}_{\text{int}}$ is again an object of ${\mathcal O}_{\text{int}}$.

\end{itemize}
\end{remark}

\vskip 3mm

The fundamental properties of the category ${\mathcal O}_{\text{int}}$ are given below.

\vskip 3mm

\begin{proposition} \hfill

{\rm
\begin{itemize}

\vskip 2mm

\item[(a)] If a highest weight module $V = U_{q}(\g) v_{\lambda}$ satisfies the
conditions (a) and (b) in Proposition \ref{prop:hw}, then $V \cong V(\lambda)$ with $\lambda \in P^{+}$.

\vskip 2mm

\item[(b)] The category ${\mathcal O}_{\text{int}}$ is semisimple.

\vskip 2mm

\item[(c)] Every simple object in the category ${\mathcal O}_{\text{int}}$ has the form
$V(\lambda)$ for some $\lambda \in P^{+}$.

\end{itemize}

}

\end{proposition}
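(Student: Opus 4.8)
The plan is to establish (a) directly and then to deduce (b) and (c) from it, along the lines familiar from the symmetrizable Kac--Moody and quantum Borcherds cases; the substance is (a). To prove (a), I would realize $V$ as a quotient of the Verma module $M(\lambda)$, which by the triangular decomposition is free of rank one over $U_q^-(\g)$, and equip $M(\lambda)$ and all its quotients with the standard contravariant symmetric bilinear form $(\ ,\ )_\lambda$ (normalized by $(v_\lambda,v_\lambda)_\lambda=1$), whose radical is the unique maximal proper submodule, so $M(\lambda)/\mathrm{rad}\cong V(\lambda)$. By Proposition \ref{prop:hw}(a)--(b), the submodule $\mathcal J\subseteq M(\lambda)$ generated by ${\mathtt b}_i^{\langle h_i,\lambda\rangle+1}v_\lambda$ $(i\in I^{\text{re}})$ and by ${\mathtt b}_{il}v_\lambda$ $(i\in I^{\text{im}},\ \langle h_i,\lambda\rangle=0,\ l>0)$ lies in $\mathrm{rad}$ and is killed in $V$, so $V$ is a quotient of $V':=M(\lambda)/\mathcal J$; it therefore suffices to show $V'\cong V(\lambda)$, i.e.\ that $V'$ is already irreducible. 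First I would observe that the imposed relations make $V'$ integrable --- for $i\in I^{\text{re}}$ the $U_{q_i}(\mathfrak{sl}_2)$-theory and the quantum Serre relations force ${\mathtt b}_i$ (hence $e_i$) to act locally nilpotently, and for $i\in I^{\text{im}}$ they yield Proposition \ref{prop:hw}(i)--(iv) --- so $V'\in\mathcal O_{\text{int}}$.

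It then remains to rule out a singular vector $w\in V'_\mu$ with $\mu=\lambda-\sum_i k_i\alpha_i<\lambda$ (any such $w$ lies in $\mathrm{rad}$). I would pair $w$ with a monomial ${\mathtt b}_{i_1l_1}\cdots{\mathtt b}_{i_rl_r}v_\lambda$ of weight $\mu$, move the corresponding raising operators back across $w$ using the commutation relations \eqref{eq:primitive} and the primitive coproducts of Proposition \ref{prop:primitive}(g), and check that the pairing reduces to a product of nonzero diagonal factors: quantum factorials $[n]_i!$ for the real indices, and, for the imaginary indices, scalars assembled from $\tau_{il}=(1-q_i^{2l})^{-1}$ and $K_i^l-K_i^{-l}$ evaluated on weight spaces $V'_{\mu'}$ with $\langle h_i,\mu'\rangle>0$ --- the complementary directions $\langle h_i,\mu'\rangle=0$ being excluded by condition (e) of Definition \ref{def:Oint}. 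This gives $(w,\,\cdot\,)_\mu\ne0$, a contradiction, so $V'$ (hence $V$) is irreducible and $\cong V(\lambda)$. The hard part will be exactly this nondegeneracy computation --- the bookkeeping for the imaginary simple roots, and above all for the non-isotropic imaginary ones, for which the ${\mathtt b}_{il}$ with $i$ fixed do not commute among themselves. I would organize it by descending, via the coproduct, to the rank-one imaginary subalgebra generated by $\{{\mathtt a}_{il},{\mathtt b}_{il},K_i^{\pm1}\}$ with its relation ${\mathtt a}_{il}{\mathtt b}_{jk}-{\mathtt b}_{jk}{\mathtt a}_{il}=\delta_{ij}\delta_{kl}\tau_{il}(K_i^l-K_i^{-l})$, where the nonvanishing of $\tau_{il}$ and of the $K_i$-factor on $V'_{\mu'}$ is visible directly.

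Granting (a), statements (c) and (b) follow by the now-standard complete-reducibility argument. For (c): a simple $S\in\mathcal O_{\text{int}}$ has a maximal weight $\mu$ by Definition \ref{def:Oint}(b), any $0\ne v\in S_\mu$ satisfies $e_{il}v\in S_{\mu+l\alpha_i}=0$ and so is a highest weight vector, and conditions (c)--(f) of Definition \ref{def:Oint} together with the $U_{q_i}(\mathfrak{sl}_2)$-theory give $\mu\in P^+$ and the hypotheses of Proposition \ref{prop:hw}(a)--(b); hence $S=U_q(\g)v\cong V(\mu)$ by (a). For (b): using the quantum Casimir element $\Omega$, which is central and acts on each $V(\nu)$ by an explicit scalar $\chi(\nu)$, I would decompose any $M\in\mathcal O_{\text{int}}$ into generalized $\Omega$-eigenspaces; within each summand the linkage identity $(\nu-\mu,\nu+\mu+2\rho)=0$ (with $\rho\in\h^*$, $\langle h_i,\rho\rangle=\frac12 a_{ii}$), whose imaginary contributions $s_i(\langle h_i,\nu\rangle+\langle h_i,\mu\rangle+a_{ii})$ are controlled by Definition \ref{def:Oint}(d)--(f) and Proposition \ref{prop:hw}(ii), forces the highest weights of the simple subquotients to be pairwise equal or incomparable, so that all the relevant groups $\mathrm{Ext}^1(V(\mu),V(\nu))$ vanish (a maximal-weight splitting argument in each case) and the summand, hence $M$, is semisimple. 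Combined with (c), this shows the simple objects of $\mathcal O_{\text{int}}$ are precisely the $V(\lambda)$, $\lambda\in P^+$.
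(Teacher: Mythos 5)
The paper states this proposition without proof (it is recalled from the representation-theoretic groundwork, cf.\ [KK20]), so there is no in-paper argument to compare against; I will therefore assess your proposal on its own. The overall architecture --- Verma quotient plus contravariant form for (a), maximal-weight extraction for (c), Casimir/linkage for (b) --- is the right template, and (c) is fine once (a) is granted. But the step you yourself call ``the hard part'' of (a) has a genuine gap. You want to rule out a singular vector $w\in V'_\mu$, $\mu<\lambda$, by pairing $w$ with monomials $\mathtt b_{i_1l_1}\cdots\mathtt b_{i_rl_r}v_\lambda$ and showing the result is a ``product of nonzero diagonal factors'', hence nonzero, contradicting $w\in\mathrm{rad}$. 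This cannot work as stated: if $w$ is singular, moving the $\mathtt a$'s onto $w$ via contravariance makes every such pairing vanish identically, so there is no nonzero quantity to exhibit. What one actually needs is nondegeneracy of the contravariant form on $V'_\mu$, i.e.\ that the Gram matrix of the monomial spanning vectors of $V'_\mu$ is nonsingular; once $\mu$ lies below more than one simple root this matrix has genuine off-diagonal entries and possible cancellations, and it does not collapse to a product of diagonal factors. The rank-one coproduct reduction you propose controls a single $i$-string, not a whole weight space. Absent a Shapovalov-determinant identity, the clean way to finish (a) is the same Casimir/linkage device you invoke for (b): a singular $w\in V'_\mu$ generates a highest weight submodule of highest weight $\mu$, forcing $\chi(\mu)=\chi(\lambda)$ and hence $(\lambda+\rho,\lambda+\rho)=(\mu+\rho,\mu+\rho)$; with $\lambda\in P^{+}$, $\lambda-\mu\in R_{+}$, and the sign conditions of Definition \ref{def:Oint}(d)--(f), this forces $\mu=\lambda$.

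Relatedly, your (b) (and the Casimir route just described for (a)) presuppose the existence of a central quantum Casimir $\Omega$ acting on each $V(\nu)$ by an explicit scalar $\chi(\nu)$. This is the right tool, but its existence and centrality are not automatic in the Borcherds--Bozec setting: it has to be built from the Drinfeld double / Hopf pairing structure (as in Lusztig for quantum Kac--Moody, and in [KK20] in the present generality), and you should either sketch or cite that construction. Without it, the linkage identity and the $\mathrm{Ext}^1$-vanishing step in (b) are left unsupported.
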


\vskip 3mm

\section{Crystal bases}\label{sec:crystal}

\vskip 2mm

Let  $\mathbf{c} = (c_1, \ldots, c_r) \in \Z_{\ge 0}^r$ be a
sequence of non-negative integers.
We define $|\mathbf{c}|:= c_1 + \cdots + c_r$.
We say that $\mathbf{c}$ is a {\it composition} of $l$, denoted  by
$\mathbf{c} \vdash l$, if $|\mathbf{c}| = l$.
If $c_1 \ge c_2 \ge \ldots \ge c_r$, we say that $\mathbf{c}$ is a {\it partition} of $l$.
For each $i\in I^{\text{im}} \setminus I^{\text{iso}}$ (resp. $i \in I^{\text{iso}}$),
we denote by $\mathcal{C}_{i, l}$ the set of compositions (resp. partitions) of $l$
and set $\mathcal{C}_{i} = \bigsqcup_{l \ge 0} \mathcal{C}_{i,l}$.
For $i \in I^{\text{re}}$, we define $\mathcal{C}_{i,l} = \{ l \}$.

\vskip 3mm

For $\mathbf{c} = (c_1, \ldots, c_r)$, we define
$${\mathtt a}_{i, \mathbf{c}} = {\mathtt a}_{i c_1} {\mathtt a}_{i c_2} \cdots {\mathtt a}_{i c_r},
\quad {\mathtt b}_{i, \mathbf{c}} = {\mathtt b}_{i c_1}{\mathtt b}_{i c_2} \cdots {\mathtt b}_{i c_r}.$$

\noindent
Note that $\{ {\mathtt a}_{i, \mathbf{c}} \mid \mathbf{c} \vdash l \}$
(resp. $\{ {\mathtt b}_{i, \mathbf{c}} \mid \mathbf{c} \vdash l \}$)
forms a basis of $U_{q}(\g)_{l \alpha_i}$ (resp. $U_{q}(\g)_{- l \alpha_i}$).

\vskip 5mm

\subsection{Crystal bases for $V(\lambda)$} \label{sub:Vlambda} \hfill

\vskip 2mm

Let $M = \oplus_{\mu \in P} M_{\mu}$ be a $U_{q}(\g)$-module in the category ${\mathcal O}_{\text{int}}$
and let  $u \in M_{\mu}$ for $\mu \in \text{wt}(M)$.

\vskip 3mm

For $i \in I^{\text{re}}$, by \cite{Kashi91}, the vector $u$ can be written uniquely as
\begin{equation} \label{eq:real string}
u = \sum_{k \ge 0} {\mathtt b}_{i}^{(k)} u_k
\end{equation}
such that

\vskip  2mm

\begin{enumerate}

\item[(i)] ${\mathtt a}_{i} u_k = 0$ for all $k \ge 0$,

\vskip 2mm

\item[(ii)] $u_k \in M_{\mu + k \alpha_{i}}$,

\vskip 2mm

\item[(iii)] $u_{k} = 0$ if $\langle h_{i}, \mu + k \alpha_{i} \rangle = 0 $.

\end{enumerate}

\vskip 3mm

For $i \in I^{\text{im}}$, by \cite{Bozec2014b, Bozec2014c}, the vector 
$u$ can be written uniquely as
\begin{equation} \label{eq:imaginary string}
u = \sum_{\mathbf c\in\mathcal C_i} {\mathtt b}_{i, \mathbf{c}} u_{\mathbf{c}}
\end{equation}
such that

\vskip 2mm

\begin{enumerate}

\item[(i)] ${\mathtt a}_{ik} u_{\mathbf{c}} = 0$ for all $k >0$,

\vskip 2mm

\item[(ii)] $u_{\mathbf{c}} \in M_{\mu+ |\mathbf{c}| \alpha_{i}}$,

\vskip 2mm

\item[(iii)] $u_{\mathbf{c}} = 0$ if $\langle h_{i}, \mu + |\mathbf{c}| \alpha_{i} \rangle = 0$.

\end{enumerate}

\vskip 2mm

The expressions \eqref{eq:real string} and \eqref{eq:imaginary string} are called the
{\it $i$-string decomposition} of $u$.
Note that (i) is equivalent to saying that
$A_{ik} u_{\mathbf{c}} = E_{ik} u_{\mathbf{c}} = 0$ for all  $k>0$.

\vskip 3mm



\vskip 2mm



\vskip 3mm

Given the $i$-string decompositions \eqref{eq:real string} and \eqref{eq:imaginary string},
we define the {\it Kashiwara operators} on $M$ as follows.

\vskip 3mm

\begin{definition} \label{def:Kashiwara operator} \hfill

\vskip 3mm

(a) For $i \in I^{\text{re}}$, we define
\begin{equation} \label{eq:real Kas}
\begin{aligned}
& \widetilde{e}_{i} u = \sum_{k \ge 1} {\mathtt b}_{i}^{(k-1)} u_{k}, \\
& \widetilde{f}_{i}  u = \sum_{k \ge 0} {\mathtt b}_{i}^{(k+1)} u_{k}.
\end{aligned}
\end{equation}

\vskip 2mm

(b) For $i \in I^{\text{im}}\setminus I^{\text{iso}}$ and $l>0$, we define
\begin{equation} \label{eq: nonisoKas}
\begin{aligned}
& \widetilde{e}_{il} u = \sum_{\mathbf{c} \in {\mathcal C}_{i}:  c_1 = l} \,
{\mathtt b}_{i, \mathbf{c} \setminus c_1} u_{\mathbf{c}},\\
& \widetilde{f}_{il}  u = \sum_{\mathbf{c} \in {\mathcal C}_{i}}
{\mathtt b}_{i,(l, \mathbf{c})} u_{\mathbf{c}}.
\end{aligned}
\end{equation}

\vskip 2mm

(c) For $i \in I^{\text{iso}}$ and $l>0$, we define
\begin{equation} \label{eq:isoKas}
\begin{aligned}
& \widetilde{e}_{il} u = \sum_{\mathbf{c} \in {\mathcal C}_{i}} \,  \mathbf{c}_{l} \,
{\mathtt b}_{i, \mathbf{c} \setminus \{l\}} u_{\mathbf{c}},\\
& \widetilde{f}_{il}  u = \sum_{\mathbf{c} \in {\mathcal C}_{i}} \, \frac{1} {\mathbf{c}_{l} + 1} \,
{\mathtt b}_{i,\{l\} \cup \mathbf{c}} u_{\mathbf{c}},
\end{aligned}
\end{equation}
where ${\mathbf c}_{l}$ denotes the number of $l$ in $\mathbf{c}$.

\end{definition}

\vskip 2mm

It is easy to see that $\widetilde{e}_{il}\circ\widetilde{f}_{il}=\text{id}_{M_{\mu}}$ for
$(i,l) \in I^{\infty}$ and $\langle h_i,\mu \rangle>0$.

\vskip 3mm

Let $\A_{0} = \{f \in \Q(q) \mid \text{$f$ is regular at $q=0$} \}$.
Then we have an isomorphism
$$\A_{0} / q \A_{0} \cong \Q, \quad f + q \A_{0} \longmapsto f(0).$$

\vskip 3mm

\begin{definition} \label{def:crystal lattice Llambda} \hfill

\vskip 2mm

Let $M$ be a $U_{q}(\g)$-module in the category ${\mathcal O}_{\text{int}}$
and let $L$ be a free $\A_{0}$-submodule of $M$.
The submodule $L$ is called a {\it crystal lattice} of $M$ if the following conditions hold.

\begin{enumerate}

\vskip 2mm

\item[(a)] $\Q \otimes_{\A_{0}} L \cong M$,

\vskip 2mm

\item[(b)] $L =\oplus_{\mu \in P} L_{\mu}$, where $L_{\mu} = L \cap M_{\mu}$,

\vskip 2mm

\item[(c)] $\widetilde{e}_{il} L \subset L$, $\widetilde{f}_{il} L \subset L$
for $(i, l) \in I^{\infty}$.

\end{enumerate}

\end{definition}

\vskip 3mm

Since the operators $\widetilde e_{il}$, $\widetilde f_{il}$ preserve $L$,
they induce the operators
$$\widetilde{e}_{il}, \, \widetilde{f}_{il}: L/qL \longrightarrow L/qL.$$

\vskip 3mm

\begin{definition} \label{def:crystal basis Blambda} \hfill

\vskip 2mm

Let $M$ be a $U_{q}(\g)$-module in the category ${\mathcal O}_{\text{int}}$.
A {\it crystal basis} of $M$ is a pair $(L, B)$ such that
\begin{enumerate}

\vskip 2mm

\item[(a)] $L$ is a crystal lattice of $M$,

\vskip 2mm

\item[(b)] $B$ is a $\Q$-basis of $L/qL$,

\vskip 2mm

\item[(c)] $B=\sqcup_{\mu \in P}\ B_\mu$, where
$B_\mu=B\cap{(L/qL)}_\mu$,

\vskip 2mm

\item[(d)] $\widetilde{e}_{il}B\subset B\cup\{0\}$, $\widetilde{f}_{il} B\subset B\cup\{0\}$
for $(i, l) \in I^{\infty}$,

\vskip 2mm

\item[(e)] for any $b,b'\in B$ and $(i,l)\in I^{\infty}$, we have
$\widetilde{f}_{il}b=b'$ if and only if $b=\widetilde{e}_{il}b'$.

\end{enumerate}
\end{definition}

\vskip 3mm

\begin{lemma}\label{euEuqL}
{\rm
Let $M$ be a $U_q(\g)$-module in the category ${\mathcal O}_{\text{\rm int}}$
and $(L, B)$ be a crystal basis of $M$.
For any $u\in M_{\mu}$, we have
\begin{equation*}
\widetilde{e}_{il}\,u \equiv E_{il}\,u \ \text{mod} \ qL
\ \ \text{for} \ (i,l) \in I^{\infty}.
\end{equation*}
}
\end{lemma}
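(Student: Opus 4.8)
The plan is to compute both sides directly from the $i$-string decomposition of $u$, commuting $E_{il}$ rightward with the $q$-commutation relation \eqref{Eb} and reducing modulo $q\A_{0}$; throughout I take $u\in L_{\mu}$, which is the case in which the congruence carries content. For $i\in I^{\text{re}}$ one has $(i,l)=(i,1)$, $E_{i1}=-K_{i}\mathtt a_{i}$ and the string decomposition \eqref{eq:real string}, and here the assertion reduces to Kashiwara's classical computation for a quantized $\mathfrak{sl}_{2}$-string, the normalization $E_{i1}=-K_{i}\mathtt a_{i}$ being chosen precisely so that it holds; I will therefore only sketch this case and concentrate on $i\in I^{\text{im}}$.

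So let $i\in I^{\text{im}}$ and write \eqref{eq:imaginary string} as $u=\sum_{\mathbf{c}\in\mathcal{C}_{i}}\mathtt b_{i,\mathbf{c}}u_{\mathbf{c}}$, where $E_{ik}u_{\mathbf{c}}=0$ for all $k>0$. Applying $E_{il}$ and pushing it to the right through each monomial $\mathtt b_{i,\mathbf{c}}=\mathtt b_{ic_{1}}\cdots\mathtt b_{ic_{r}}$ by \eqref{Eb}, the terminal term $\mathtt b_{i,\mathbf{c}}E_{il}u_{\mathbf{c}}$ vanishes, and what survives is the sum, over the positions $j$ with $c_{j}=l$, of
\[
\Big(\prod_{j'=1}^{j-1}q_{i}^{-c_{j'}la_{ii}}\Big)\,\mathtt b_{ic_{1}}\cdots\mathtt b_{ic_{j-1}}\,\frac{1-K_{i}^{2l}}{1-q_{i}^{2l}}\,\mathtt b_{ic_{j+1}}\cdots\mathtt b_{ic_{r}}u_{\mathbf{c}}.
\]
I then reduce modulo $q\A_{0}$. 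If $i\in I^{\text{im}}\setminus I^{\text{iso}}$, then $a_{ii}\le-2$, so every transport factor $q_{i}^{-c_{j'}la_{ii}}$ lies in $q\A_{0}$ and only the leading position $j=1$ (which forces $c_{1}=l$) survives, collapsing the sum to $\sum_{\mathbf{c}: c_{1}=l}\mathtt b_{i,\mathbf{c}\setminus c_{1}}u_{\mathbf{c}}=\widetilde e_{il}u$ up to the residual scalar from $(1-K_{i}^{2l})/(1-q_{i}^{2l})$. If $i\in I^{\text{iso}}$, then $a_{ii}=0$, every transport factor equals $1$, and the $\mathtt b_{il}$ commute with one another and, by the weight relations, with $K_{i}^{2l}$, so each of the $\mathbf{c}_{l}$ occurrences of $l$ in $\mathbf{c}$ contributes equally and one gets $E_{il}u=\big((1-K_{i}^{2l})/(1-q_{i}^{2l})\big)\sum_{\mathbf{c}}\mathbf{c}_{l}\,\mathtt b_{i,\mathbf{c}\setminus\{l\}}u_{\mathbf{c}}=\big((1-K_{i}^{2l})/(1-q_{i}^{2l})\big)\widetilde e_{il}u$ exactly. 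In either case $(1-K_{i}^{2l})/(1-q_{i}^{2l})$ acts on a weight space of weight $\mu+l\alpha_{i}$, where $\langle h_{i},\mu+l\alpha_{i}\rangle\ge0$ by the conditions defining $\mathcal{O}_{\text{int}}$ (Definition \ref{def:Oint}); when this pairing is $\ge1$ it equals $1+q_{i}^{2l}+\cdots\equiv1\bmod q\A_{0}$, and when it is $0$ it equals $0$, but in that case $\langle h_{i},\mu+|\mathbf{c}|\alpha_{i}\rangle=0$ as well (using $|\mathbf{c}|\ge l$, $a_{ii}\le0$ and $\langle h_{i},-\rangle\ge0$ on $\text{wt}(M)$), so clause (iii) of \eqref{eq:imaginary string} kills $u_{\mathbf{c}}$ and the matching term of $\widetilde e_{il}u$ is zero too. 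Hence $E_{il}u\equiv\widetilde e_{il}u$ up to terms in $qL$.

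Finally, I must check that the discarded pieces genuinely lie in $qL$. For $i\in I^{\text{iso}}$ there is nothing to do: the identity $E_{il}u=\big((1-K_{i}^{2l})/(1-q_{i}^{2l})\big)\widetilde e_{il}u$ is exact, so $\widetilde e_{il}u-E_{il}u$ is a $q\A_{0}$-multiple of $\widetilde e_{il}u\in L$. For $i\in I^{\text{im}}\setminus I^{\text{iso}}$ the error terms (the positions $j\ge2$ and the correction to the $j=1$ scalar) are $q\A_{0}$-combinations of vectors $\mathtt b_{ic_{1}}\cdots\mathtt b_{ic_{j-1}}\big((1-K_{i}^{2l})/(1-q_{i}^{2l})\big)\mathtt b_{ic_{j+1}}\cdots\mathtt b_{ic_{r}}u_{\mathbf{c}}$, and each such vector lies in $L$: left multiplication by $\mathtt b_{ik}$ preserves $L$, since comparing \eqref{eq: nonisoKas} with the effect of $\mathtt b_{ik}$ on an $i$-string decomposition shows $\mathtt b_{ik}=\widetilde f_{ik}$ on $M$ for $i\in I^{\text{im}}\setminus I^{\text{iso}}$, whence $\mathtt b_{ik}L\subseteq L$ by Definition \ref{def:crystal lattice Llambda}, while $(1-K_{i}^{2l})/(1-q_{i}^{2l})$ acts on each weight space of non-negative $h_{i}$-value by a scalar in $\A_{0}$. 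So all error terms lie in $q\A_{0}\cdot L=qL$ and the lemma follows. The step I expect to be the main obstacle is exactly this bookkeeping: keeping straight the composition-versus-partition combinatorics of the two imaginary subcases, ensuring the intermediate $\mathtt b$-monomials stay inside $L$, and matching the normalizations ($E_{il}=-K_{i}^{l}\mathtt a_{il}$, $\tau_{il}=(1-q_{i}^{2l})^{-1}$) so that the surviving coefficient comes out exactly $1$; the degenerate weight-zero case, by contrast, is absorbed cleanly by clause (iii) of the string decomposition.
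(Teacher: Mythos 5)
Your proof is correct and takes essentially the same approach as the paper's: both commute $E_{il}$ through the $i$-string decomposition via the relation \eqref{Eb}, note that the transport factors $q_i^{-c_{j'}la_{ii}}$ lie in $q\A_0$ when $i\in I^{\text{im}}\setminus I^{\text{iso}}$ and are trivial when $i\in I^{\text{iso}}$, and identify the surviving $\delta$-term with $\widetilde e_{il}u$ modulo $qL$. Your version is slightly more careful on two points the paper treats tacitly — the degenerate boundary case $\langle h_i,\mu+l\alpha_i\rangle=0$ (which you dispatch via Definition \ref{def:Oint}(d) and clause (iii) of \eqref{eq:imaginary string}) and the verification that the intermediate $\mathtt b$-monomial vectors remain in $L$ — but the underlying argument is the same.
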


\begin{proof}
Let $u=\mathtt b_{i,\mathbf c}u_0$ such that $E_{ik}u_0=0$ for any $k>0$. Let $m:=\langle h_i,\text{wt}(u_0)\rangle$.

\vskip 2mm

\begin{enumerate}
	\item[(a)] Suppose $i\notin I^{\text{iso}}$ and let $\mathbf c=(c_1,\cdots,c_r)\in\mathcal C_{i,l}$.
	
	\vskip 2mm
	
	\begin{enumerate}
	\item[(i)] If $c_1=l$, by \eqref{Eb}, we have
	\begin{align*}
		E_{il}(u)&=	E_{il}(\mathtt b_{i,\mathbf c}u_0)=E_{il}\mathtt b_{il}(\mathtt b_{i,\mathbf c'}u_0)\\
		&=(q_i^{l^2a_{ii}}\mathtt b_{il}E_{il}+\frac{1-K_i^{2l}}{1-q_i^{2l}})\mathtt b_{i,\mathtt c'}u_0\\
		&\equiv\mathtt b_{i,\mathbf c'}u_0\equiv\widetilde{e}_{il}u\!\!\!\mod{q L}.
	\end{align*}
	
\vskip 2mm

\item[(ii)] If $c_1=k\neq l$, we have
\begin{align*}
	E_{il}(u)&=E_{il}(\mathtt b_{i,\mathbf c}u_0)=E_{il}\mathtt b_{ik}(\mathtt b_{i,\mathbf c'}u_0)\\
	&=q_i^{-kla_{ii}}\mathtt b_{ik}E_{il}(\mathtt b_{i,\mathbf c'}u_0)\equiv 0 \equiv \widetilde{e}_{il}u\!\!\! \mod{q L}.
\end{align*}
\end{enumerate}	
	
\vskip 2mm

\item[(b)] If $i\in I^{\text{iso}}$, we have
\begin{align}
&\langle h_i,\text{wt}(u_0)-\alpha\rangle=m\ \text{for any}\ \alpha\in R_+,\notag\\
&E_{il}\mathtt b_{il}-\mathtt b_{il}E_{il}=\frac{1-K_i^{2l}}{1-q_i^{2l}},\label{Eb-bE}\\
&E_{il}\mathtt b_{ik}-\mathtt b_{ik}E_{il}=0 \ \text{if}\ k\neq l.\notag
\end{align}

\vskip 2mm

\begin{enumerate}

\vskip 2mm

	\item[(iii)] By induction on \eqref{Eb-bE}, one can prove:
	\begin{align*}
	E_{il}(\mathtt b_{il}^ku_0)=k\frac{1-q_i^{2lm}}{1-q_i^{2l}}\mathtt b_{il}^{k-1}u_0\equiv k\mathtt b_{il}^{k-1}u_0 \equiv \widetilde{e}_{il}(\mathtt b_{il}^ku_0)\!\!\!  \mod{q L}.
	\end{align*}
	
\vskip 2mm

    \item[(iv)] We may write
    \begin{align*}
    u=\mathtt b_{i,\mathbf c}u_0=\mathtt b_{ic_1}^{a_1}\mathtt b_{ic_2}^{a_2}\cdots\mathtt b_{il}^k\cdots\mathtt b_{ic_r}^{a_r}u_0,
    \end{align*}
where $c_1>c_2>\cdots>l>\cdots>c_r$.
Then we have
\begin{align*}
E_{il}u=\mathtt b_{ic_1}^{a_1}\cdots E_{il}(\mathtt b_{il}^k)\cdots\mathtt b_{ic_r}^{a_r}u_0.
\end{align*}

\vskip 2mm

Let $u'=\mathtt b_{ic_t}^{a_t}\cdots\mathtt b_{ic_r}^{a_r}u_0$.
By the same argument as that in (iii), we can show that
\begin{align*}
E_{il}(\mathtt b_{il}^ku')\equiv k\mathtt b_{il}^{k-1}u'\!\!\! \mod{q L}.
\end{align*}
Hence we have
\begin{align*}
E_{il}(u)= \mathbf c_l\mathtt b_{i,\mathbf c\setminus\{l\}}u_0\equiv \widetilde{e}_{il}(u)\!\!\! \mod{q L}.
\end{align*}
\end{enumerate}
\end{enumerate}	
\end{proof}

\vskip 3mm

Let $V(\lambda)=U_q(\mathfrak g)v_\lambda$ be the irreducible highest weight $U_q(\g)$-module
with highest weight $\lambda \in P^+$.
Let $L(\lambda)$ be the free $\A_0$-submodule of $V(\lambda)$
spanned by $\widetilde{f}_{i_1l_1}\cdots\widetilde{f}_{i_rl_r} v_\lambda$ $(r\geq 0, (i_k,l_k)\in I^{\infty})$
and let
\begin{align*}
B(\lambda):=\{\widetilde{f}_{i_1l_1}\cdots\widetilde{f}_{i_rl_r}v_\lambda+qL(\lambda)\}\setminus\{0\}.
\end{align*}

\vskip 3mm

\begin{theorem}\label{thm:crystal basis of V}

{\rm
The pair $(L(\lambda),B(\lambda))$ is a crystal basis of $V(\lambda)$.
}
\end{theorem}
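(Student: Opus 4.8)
The plan is to follow Kashiwara's grand-loop strategy, adapted to the quantum Borcherds--Bozec setting with the new (modified) Kashiwara operators. The theorem asserts that $(L(\lambda), B(\lambda))$ is a crystal basis of $V(\lambda)$, which by Definition \ref{def:crystal basis Blambda} requires: $L(\lambda)$ is a crystal lattice, $B(\lambda)$ is a $\Q$-basis of $L(\lambda)/qL(\lambda)$, compatibility with the weight decomposition, stability of $B(\lambda) \cup \{0\}$ under all $\widetilde{e}_{il}, \widetilde{f}_{il}$, and the ``seesaw'' condition $\widetilde{f}_{il} b = b'$ iff $b = \widetilde{e}_{il} b'$. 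The serious content is hidden in the fact that $L(\lambda)$, defined as the $\A_0$-span of monomials $\widetilde{f}_{i_1 l_1}\cdots \widetilde{f}_{i_r l_r} v_\lambda$, is stable under the \emph{raising} operators $\widetilde{e}_{il}$ and that $B(\lambda)$ is actually a basis (not just a spanning set) of $L(\lambda)/qL(\lambda)$ with the stated stability properties. This cannot be proved directly; it must be bootstrapped by a simultaneous induction on the weight (i.e.\ on the height of $\lambda - \mu$).

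The skeleton of the grand-loop argument I would set up is the usual package of mutually entangled statements, proved by induction on $N = \mathrm{ht}(\lambda-\mu)$ (and, for the $U_q^-(\g)$ side, an analogous package feeding into it). Schematically, at level $N$ one proves, for each $(i,l)$: (1) $\widetilde{e}_{il} L(\lambda) \subset L(\lambda)$ and $\widetilde{f}_{il} L(\lambda)\subset L(\lambda)$ on the weight spaces of height $\le N$; (2) $L(\lambda) = \bigoplus_{\mathbf c} \widetilde{f}_{i,\mathbf c}\, (L(\lambda)\cap \ker \widetilde e_{i})$ in each such weight space (an $i$-string decomposition at the lattice level, using the $i$-string decompositions \eqref{eq:real string}, \eqref{eq:imaginary string} and Lemma \ref{euEuqL} to identify $\widetilde e_{il}$ with $E_{il}$ mod $qL$); (3) $B(\lambda)$ is a basis of $(L(\lambda)/qL(\lambda))_\mu$; (4) $\widetilde e_{il} B(\lambda) \subset B(\lambda)\cup\{0\}$, $\widetilde f_{il} B(\lambda)\subset B(\lambda)\cup\{0\}$; (5) the seesaw condition. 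One deduces level $N$ from level $N-1$ by decomposing a weight vector along its $i$-string for a suitable $i$, applying the inductive hypotheses on the strictly lower strata $M_{\mu + k\alpha_i}$, and invoking the crucial lemmas obtained along the way (in particular the $U_q^-(\g)$-analogue of Lemma \ref{lem:LLA0}, which controls how the global lattice sits inside the module). The reduction to $i\in I^{\mathrm{re}}$ proceeds exactly as in \cite{Kashi91}; the genuinely new cases are $i\in I^{\mathrm{im}}$, where the imaginary $i$-string decomposition \eqref{eq:imaginary string} and the modified operators \eqref{eq: nonisoKas}, \eqref{eq:isoKas} replace the $\mathfrak{sl}_2$-theory.

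**The main obstacle** I expect is the $i\in I^{\mathrm{iso}}$ case. Here the ``string'' is indexed by \emph{partitions} rather than by a single integer, the Kashiwara operators carry the combinatorial weights $\mathbf c_l$ and $1/(\mathbf c_l+1)$, and there is no $\mathfrak{sl}_2$ acting — instead one has a Heisenberg-type algebra governed by the relation \eqref{Eb-bE}, namely $E_{il}\mathtt b_{il} - \mathtt b_{il} E_{il} = (1-K_i^{2l})/(1-q_i^{2l})$, and the commuting relations among the $\mathtt b_{ik}$ for different $k$. The delicate point is showing that applying $\widetilde e_{il}$ and then $\widetilde f_{il}$ (or vice versa) returns a basis element of $B(\lambda)\cup\{0\}$ and not merely an $\A_0$-combination: one must check that the rational coefficients $\mathbf c_l$ and $(\mathbf c_l+1)^{-1}$ interact correctly with reduction mod $qL(\lambda)$, using that $u_{\mathbf c}=0$ whenever $\langle h_i, \mathrm{wt}(u_{\mathbf c})\rangle = 0$ (condition (iii) of \eqref{eq:imaginary string}) to kill the boundary terms, just as $u_k=0$ kills the boundary in the real case. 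Once these isotropic computations are in hand — essentially a partition-indexed bookkeeping on top of Lemma \ref{euEuqL} — the remaining bookkeeping (weight compatibility, the seesaw, finiteness from $V(\lambda)\in\mathcal O_{\mathrm{int}}$) is routine, and the induction closes.
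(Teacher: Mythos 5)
Your high-level strategy — a grand-loop induction with the modified Kashiwara operators, driven by the $i$-string decompositions \eqref{eq:real string}, \eqref{eq:imaginary string} and by the identification $\widetilde e_{il}\equiv E_{il}\bmod qL$ of Lemma~\ref{euEuqL} — is the correct one, and you have rightly singled out the isotropic case (partition-indexed strings, the factors $\mathbf c_l$ and $(\mathbf c_l+1)^{-1}$, and condition (iii) of \eqref{eq:imaginary string} killing boundary terms) as the genuinely new difficulty. But your account of how the induction is supposed to close has a real gap, and it is exactly in the place where the argument would otherwise be circular.

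You say one deduces level $N$ from level $N-1$ ``by decomposing a weight vector along its $i$-string for a suitable $i$, applying the inductive hypotheses on the strictly lower strata $M_{\mu+k\alpha_i}$.'' This does not by itself prove $\widetilde e_{il}\,L(\lambda)_\mu\subset L(\lambda)$. The $i$-string components of $u\in L(\lambda)_\mu$ sit at higher weights, so to know that they lie in $L(\lambda)$ (Lemma~\ref{uc in L}) already requires $\mathbf A(r-1)$; but $L(\lambda)$ is defined only through $\widetilde f$-monomials, and showing that a raising operator sends a $\widetilde f$-monomial back into $L(\lambda)$ cannot be read off from any single $V(\lambda)$. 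What actually drives the bootstrap is the tensor-product machinery that your outline omits entirely: the comparison maps $\Phi_{\lambda,\mu}\colon V(\lambda+\mu)\to V(\lambda)\otimes V(\mu)$ and $\Psi_{\lambda,\mu}\colon V(\lambda)\otimes V(\mu)\to V(\lambda+\mu)$, the $U_q^-(\g)$-module projection $\pi_\lambda\colon U_q^-(\g)\to V(\lambda)$ with its induced map on crystal lattices, and above all the ``$\mu\gg 0$'' device. These feed the interlocked package $\mathbf C(r)$–$\mathbf K(r)$; in particular, the crude a priori bound $\widetilde e_{il}L(\lambda)_{\lambda-\alpha}\subset q^{-N}L(\lambda)$ is improved to $q^{1-N}L(\lambda)$ precisely by comparing with $L(\lambda_0)\otimes L(\mu')$, $L(\tau)\otimes L(\mu)$ and $L(\infty)v_\mu$ for $\mu\gg 0$ (Lemma~\ref{eLL}, Lemma~\ref{lemma for Ar and Hr}), and iterating this improvement is what finally yields $\widetilde e_{il}L(\lambda)\subset L(\lambda)$ in $\mathbf A(r)$ and $\mathbf E(r)$. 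Without the $\Phi/\Psi/\pi_\lambda$ comparison and the $\mu\gg 0$ limit, the induction has no motor; saying that an ``analogous package for $U_q^-(\g)$ feeds in'' names the output but not the mechanism.

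One further caution: you invoke Lemma~\ref{lem:LLA0} (you call it the $U_q^-$-analogue and describe it as ``controlling how the global lattice sits inside the module''), but it is in fact the bilinear-form statement on $L(\lambda)$, namely $(\widetilde f_{il}Q_{il}u,v)_K\equiv(u,\widetilde e_{il}v)_K\bmod q\A_0$ and $(L(\lambda),L(\lambda))_K\subset\A_0$. That lemma and its $U_q^-$ counterpart Lemma~\ref{(P,Q)} are indeed crucial, but for a different reason than you suggest: they give the orthogonality needed in Lemma~\ref{piL=L} and Lemma~\ref{PsiLL}, which is what allows one to push $\Psi_{\lambda,\mu}$ through the lattice in $\mathbf D(r)$. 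So the bilinear form is one more pillar of the tensor-product mechanism, not a substitute for it.
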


\vskip 2mm

We will prove this theorem in Section \ref{sec:grand-loop}.

\vskip 3mm

\begin{example}\label{ex: basis for Vi}
{\rm
Let $I=I^{\text{\rm im}}=\{i\}$ and
\begin{align*}
	U=\Q(q)\langle \mathtt a_{il},~\mathtt b_{il},~K_i^{\pm l}\mid l>0    \rangle
	=\Q(q)\langle E_{il},~\mathtt b_{il},~K_i^{\pm l}\mid l>0 \rangle.
\end{align*}

\vskip 2mm

Let $V=\bigoplus_{\mathbf c\in\mathcal C_i}{\Q(q)\mathtt b_{i,\mathbf c}u_0}$ such that
\begin{align*}
	V=U u_0,\quad
	\langle  h_i,\mathrm{wt}(u_0)\rangle =m,\quad
	K_i^{\pm l}u_0=q_{i}^{\pm lm}u_0,\quad E_{ik}u_0=0~ \text{for~any~}k>0,
\end{align*}
and $L=\bigoplus_{\mathbf c\in\mathcal C_i}{\A_0(\mathtt b_{i,\mathbf c}u_0)}$.

\vskip 3mm

If $i \in I^{\text{im}} \setminus I^{\text{\rm iso}}$, for $\mathbf c\in\mathcal C_i$,
let $B_{i,\mathbf c}=\{\mathtt b_{i,\mathbf c}u_0\}$ and $B=\coprod_{\mathbf c\in\mathcal C_i}{B_{i,\mathbf c}}$.
Define
\begin{align*}
	&\widetilde{e}_{il}(\mathtt b_{i,\mathbf c}u_0)=	\begin{cases}	
		\mathtt b_{i,\mathbf c\backslash c_1}u_0,&\text{if}~c_1=l,\\
		0,&\text{otherwise},\\
	\end{cases}\\
	&\widetilde{f}_{il}(\mathtt b_{i,\mathbf c}u_0)=\mathtt b_{i,(l,\mathbf c)}u_0.
\end{align*}

If $i\in I^{\text{\rm iso}}$, for $\mathbf c\in \mathcal C_i$, let  $B_{i,\mathbf c}
=\{ \frac{1} {{\mathbf c}_{l} !} \, \mathtt b_{i,\mathbf c}u_0\}$
and set $B=\coprod_{\mathbf c\in\mathcal C_i}{B_{i,\mathbf c}}$.
Define
\begin{align*}
	&\widetilde{e}_{il}\, (\frac{1} {{\mathbf c}_{l} !} \, \mathtt b_{i,\mathbf c} u_0) \
	=\frac{1}{({\mathbf c}_{l}-1)!} \, \mathtt b_{i,\mathbf c\backslash\{l\}}u_0,\\
	&\widetilde{f}_{il} \, (\frac{1}{{\mathbf c}_{l} !} \, \mathtt b_{i,\mathbf c}u_0)
	\ =\frac{1} {({\mathbf c}_{l} + 1)!} \, \mathtt b_{i,\mathbf c\cup\{l\}}u_0.
\end{align*}

We can verify that the pair $(L,B)$ is a crystal basis of $V$.
}
	
\end{example}


\vskip 5mm


\subsection{Crystal bases for $U_{q}^{-}(\g)$} \label{sub:Uminus} \hfill

\vskip 3mm

Now we will discuss the crystal basis for $U_{q}^-(\g)$.

\vskip 3mm

Let $(i, l) \in I^{\infty}$ and $S\in U_{q}^-(\g)$.
Then there exist unique elements $T, W \in U_{q}^-(\g)$ such that
\begin{equation*}
\mathtt a_{il}S-S\mathtt a_{il}=\frac{K_i^{l}T-K_i^{-l}W}{1-q_i^{2l}}.
\end{equation*}

\vskip 2mm

\noindent
Equivalently, there are uniquely determined elements $T, W \in U_{q}^-(\g)$ such that
\begin{equation}\label{eq:AP-PA}
	A_{il}S-SA_{il}=\frac{K_i^lT-K_i^{-l}W}{q_i^l-q_i^{-l}}.
\end{equation}

\vskip 3mm

We  define the operators $e'_{il}, e''_{il}:U_{q}^-(\g) \longrightarrow U_{q}^-(\g)$ by
\begin{equation}\label{eq:eP=R}
e'_{il}(S)=W,\quad e''_{il}(S)=T.
\end{equation}

\vskip 3mm

\noindent
By \eqref{eq:AP-PA} and \eqref{eq:eP=R}, we have
\begin{equation}\label{eq:AP-PA=}
	A_{il}S-SA_{il}=\frac{K_i^l(e''_{il}(S))-K_i^{-l}(e'_{il}(S))}{q_i^l-q_i^{-l}}.
\end{equation}

\vskip 3mm

Therefore we obtain
\begin{equation}\label{eq:commute}
\begin{aligned}
&e'_{il}\mathtt b_{jk}=\delta_{ij}\delta_{kl}+q_i^{-kla_{ij}}\mathtt b_{jk}e'_{il},\\
&e''_{il}\mathtt b_{jk}=\delta_{ij}\delta_{kl}+q_i^{kla_{ij}}\mathtt b_{jk}e''_{il},\\
&e'_{il}e''_{jk}=q_i^{kla_{ij}}e''_{jk}e'_{il}.
\end{aligned}
\end{equation}

\vskip 3mm

Let $*:U_q(\g)\rightarrow U_q(\g)$ be the $\Q(q)$-linear  anti-involution given by

\begin{align}\label{*}
(q^h)^*=q^{-h},\quad \mathtt a^*_{il}=\mathtt a_{il},
\quad \mathtt b_{il}^*=\mathtt b_{il}.
\end{align}

By \eqref{eq:bar} and Proposition \ref{prop:primitive}, we have $**={\rm id}$, $--={\rm id}$ and $*-=-*$.

\vskip 3mm

By \eqref{eq:AP-PA=}, we have
\begin{equation}
	S^*A_{il}-A_{il}S^*=\frac{{(e''_{il}(S))}^*K_i^{-l}-{(e'_{il}(S))}^*K_i^{l}}{q_i^l-q_i^{-l}}.
\end{equation}

\vskip 2mm

Therefore we obtain
\begin{equation}\label{eq:ePstar}
e'_{il}(S^*)=K_i^l{(e''_{il}S)}^*K_i^{-l},\quad e''_{il}(S^*)=K_i^{-l}{(e'_{il}S)}^*K_i^{l}.
\end{equation}


\vskip 3mm

Let $u \in U_{q}^-(\g)_{-\alpha}$ with $\alpha \in R_{+}$.
For $i \in I^{\text{re}}$, by \cite{Kashi91}, the vector $u$ can be written uniquely as
\begin{equation} \label{eq:real string U}
u = \sum_{k \ge 0} {\mathtt b}_{i}^{(k)} u_k
\end{equation}
such that

\vskip  2mm

\begin{enumerate}

\item[(i)] $e_{i}'u_k = 0$ for all $k \ge 0$,

\vskip 2mm

\item[(ii)] $u_k \in U_{q}^-(\g)_{-\alpha + k \alpha_{i}}$,

\vskip 2mm

\item[(iii)] $u_{k} = 0$ if $\langle h_{i}, -\alpha + k \alpha_{i} \rangle = 0 $.

\end{enumerate}

\vskip 3mm

For $i \in I^{\text{im}}$, by \cite{Bozec2014b, Bozec2014c}, the vector
$u$ can be written uniquely as
\begin{equation} \label{eq:imaginary string U}
u = \sum_{\mathbf c\in \mathcal C_i} {\mathtt b}_{i, \mathbf{c}} u_{\mathbf{c}}
\end{equation}
such that

\vskip 2mm

\begin{enumerate}

\item[(i)] $e_{ik}' u_{\mathbf{c}} = 0$ for all $k >0$,

\vskip 2mm

\item[(ii)] $u_{\mathbf{c}} \in U_{q}^-(\g)_{- \alpha+ |\mathbf{c}| \alpha_{i}}$,

\vskip 2mm

\item[(iii)] $u_{\mathbf{c}} = 0$ if $\langle h_{i}, - \alpha+ |\mathbf{c}| \alpha_{i} \rangle = 0$.

\end{enumerate}

\vskip 2mm

The expressions \eqref{eq:real string U} and \eqref{eq:imaginary string U} are called the
{\it $i$-string decomposition} of $u$.

\vskip 3mm

Given the $i$-string decompositions \eqref{eq:real string U} and \eqref{eq:imaginary string U},
we define the {\it Kashiwara operators} on $U_{q}^-(\g)$ as follows.

\vskip 3mm

\begin{definition} \label{def:Kashiwara operators} \hfill

\vskip 2mm

(a) For $i \in I^{\text{re}}$, we define
\begin{equation} \label{eq:real Kas U}
\begin{aligned}
& \widetilde{e}_{i} u = \sum_{k \ge 1} {\mathtt b}_{i}^{(k-1)} u_{k}, \\
& \widetilde{f}_{i}  u = \sum_{k \ge 0} {\mathtt b}_{i}^{(k+1)} u_{k}.
\end{aligned}
\end{equation}

\vskip 2mm

(b) For $i \in I^{\text{im}}\setminus I^{\text{iso}}$ and $l>0$, we define
\begin{equation} \label{eq: nonisoKas U}
\begin{aligned}
& \widetilde{e}_{il} u = \sum_{\mathbf{c} \in {\mathcal C}_{i}:  c_1 = l} \,
{\mathtt b}_{i, \mathbf{c} \setminus c_1} u_{\mathbf{c}},\\
& \widetilde{f}_{il}  u = \sum_{\mathbf{c} \in {\mathcal C}_{i}}
{\mathtt b}_{i,(l, \mathbf{c})} u_{\mathbf{c}}.
\end{aligned}
\end{equation}

\vskip 2mm

(c) For $i \in I^{\text{iso}}$ and $l>0$, we define
\begin{equation} \label{eq:isoKas U}
\begin{aligned}
& \widetilde{e}_{il} u = \sum_{\mathbf{c} \in {\mathcal C}_{i}} \, \mathbf{c}_{l} \,
{\mathtt b}_{i, \mathbf{c} \setminus \{l\}} u_{\mathbf{c}},\\
& \widetilde{f}_{il}  u = \sum_{\mathbf{c} \in {\mathcal C}_{i}} \, \frac{1}{ \mathbf{c}_{l} + 1} \,
{\mathtt b}_{i,\{l\} \cup \mathbf{c}} u_{\mathbf{c}},
\end{aligned}
\end{equation}
where ${\mathbf c}_{l}$ denotes the number of $l$ in $\mathbf{c}$.

\end{definition}

\vskip 2mm

It is easy to see that $\widetilde{e}_{il}\circ\widetilde{f}_{il}=\text{id}_{U_{q}^-(\g)_{-\alpha}}$ for
$(i,l) \in I^{\infty}$ and $\langle h_i, - \alpha \rangle>0$.

\vskip 3mm

\begin{definition} \label{def:crystal lattice U}

\vskip 2mm

 A free $\A_0$-submodule $ L$ of $U_{q}^-(\g)$ is called a {\it crystal lattice} if the following conditions hold.

 \vskip 2mm

\begin{enumerate}

\vskip 2mm

\item[{\rm (a)}] $\Q(q)\otimes_{\mathbf A_0} L\cong U_q^-(\g)$,

\vskip 2mm

\item[{\rm (b)}] $ L=\oplus_{\alpha\in R_+} L_{-\alpha}$, where $ L_{-\alpha}= L\cap {U_{q}^-(\g)}_{-\alpha}$,

\vskip 2mm

\item[{\rm (c)}] $\widetilde e_{il} L\subset  L$, \ $\widetilde f_{il} L\subset  L$ for all $(i,l)\in I^{\infty}$.

\end{enumerate}

\end{definition}

\vskip 3mm

The condition (c) yields  the $\Q$-linear maps
$${\widetilde e}_{il}, \, {\widetilde f}_{il}: L/qL \longrightarrow L/qL.$$

\vskip 3mm

\begin{definition}

A {\it crystal basis} of $U_{q}^-(\g)$ is a pair $( L, B)$ such that
	
\vskip 2mm
	
\begin{enumerate}
	
\vskip 2mm
	
\item[{\rm (a)}] $L$ is a crystal lattice of $U_{q}^-(\g)$,
		
\vskip 2mm

\item[{\rm (b)}] $B$ is a $\Q$-basis of $L/q L$,

\vskip 2mm

\item[{\rm (c)}] $ B=\sqcup_{\alpha \in R_+} B_{-\alpha}$, where
$ B_{-\alpha}= B\cap{( L/q L)}_{-\alpha}$,

\vskip 2mm

\item[{\rm (d)}] $\widetilde{e}_{il} B\subset B\cup\{0\}$, \ $\widetilde{f}_{il} B \subset B\cup\{0\}$ 
for $(i,l) \in I^{\infty}$,

\vskip 2mm

\item[{\rm (e)}] for any $b,b'\in B$ and $(i,l) \in I^{\infty}$,
we have $\widetilde{f}_{il}b=b'$ if and only if $b=\widetilde{e}_{il}b'$.

\end{enumerate}

\end{definition}

\vskip 3mm

Let $L(\infty)$ be the $\A_0$-submodule of $U_{q}^-(\g)$ spanned by $\widetilde{f}_{i_1l_1}\cdots\widetilde{f}_{i_rl_r}\mathbf 1$ $(r\geq 0, (i_j,l_j)\in I_{\infty})$,
and $B(\infty)=\{\widetilde{f}_{i_1l_1}\cdots\widetilde{f}_{i_rl_r}\mathbf 1+qL(\infty)\}$.

\vskip 3mm

\begin{theorem}\label{thm:crystal basis of U}
{\rm
The pair  $(L(\infty),B(\infty))$ is a crystal basis of  $U^-_q(\mathfrak g)$.
}
\end{theorem}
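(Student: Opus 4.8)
The plan is to prove Theorem \ref{thm:crystal basis of U} in parallel with Theorem \ref{thm:crystal basis of V}, following Kashiwara's grand-loop scheme but with the new imaginary Kashiwara operators of Definition \ref{def:Kashiwara operators}; this is precisely what Section \ref{sec:grand-loop} will carry out, and the present statement is deduced as its conclusion. The core idea is to bootstrap the crystal basis of $U_q^-(\g)$ from the crystal bases of the irreducible highest weight modules $V(\lambda)$. First I would recall the projection $\pi_\lambda:U_q^-(\g)\twoheadrightarrow V(\lambda)$, $u\mapsto u\,v_\lambda$, which intertwines $\widetilde f_{il}$ on both sides and satisfies $\pi_\lambda(\widetilde e_{il}u)\equiv\widetilde e_{il}\pi_\lambda(u)$ modulo the kernel in high weight; here the key technical point is that for $\langle h_i,-\alpha+\lambda\rangle$ large enough the map $\pi_\lambda$ is an isomorphism onto $V(\lambda)$ in degree $-\alpha$ and commutes with both $\widetilde e_{il}$ and $\widetilde f_{il}$ on the nose. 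Using this, one shows $\pi_\lambda(L(\infty))=L(\lambda)$ and $\pi_\lambda$ induces a bijection $B(\infty)\setminus(\text{stuff killed})\to B(\lambda)$, and conversely $L(\infty)=\varprojlim L(\lambda)$ in an appropriate sense, so the properties of $(L(\lambda),B(\lambda))$ established in Theorem \ref{thm:crystal basis of V} transfer to $(L(\infty),B(\infty))$.

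Concretely, the grand-loop argument runs a simultaneous induction on $\operatorname{ht}(\alpha)$ (equivalently on $N$ where one works in weights $-\alpha$ with $\operatorname{ht}(\alpha)\le N$), proving for each step the following interlocking statements: (1) $\widetilde e_{il}L(\infty)\subset L(\infty)$ and $\widetilde f_{il}L(\infty)\subset L(\infty)$, and likewise for $L(\lambda)$; (2) $\widetilde e_{il}B(\infty)\subset B(\infty)\cup\{0\}$ and $\widetilde f_{il}B(\infty)\subset B(\infty)$; (3) the operators $\widetilde e_{il}$, $\widetilde f_{il}$ on $L(\infty)/qL(\infty)$ are mutually inverse on the obvious subsets (property (e) of the crystal basis definition); and (4) the compatibility $\pi_\lambda(L(\infty))=L(\lambda)$, $\pi_\lambda$ induces $B(\infty)\to B(\lambda)\cup\{0\}$, together with the decomposition of $L(\infty)$ (resp.\ $L(\lambda)$) as direct sum over the $i$-strings. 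The real-index cases $i\in I^{\text{re}}$ are Kashiwara's original arguments \cite{Kashi91}, essentially unchanged; the cases $i\in I^{\text{im}}\setminus I^{\text{iso}}$ are handled as in \cite{Bozec2014c}; the genuinely new work is the case $i\in I^{\text{iso}}$, where the modified operators \eqref{eq:isoKas U} carry the combinatorial factors $\mathbf c_l$ and $1/(\mathbf c_l+1)$. A crucial input is Lemma \ref{euEuqL} (its $U_q^-$ analogue), which identifies $\widetilde e_{il}$ with the algebraically defined operator $e'_{il}$ (or $E_{il}$) modulo $qL$, converting crystal-theoretic statements into computations with the commutation relations \eqref{eq:commute} and the $*$-operator identities \eqref{eq:ePstar}; these, combined with the explicit description of a crystal basis in the rank-one imaginary case (Example \ref{ex: basis for Vi}), let one verify the isotropic step. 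One also needs the almost-orthogonality of $\{\widetilde f_{i_1l_1}\cdots\widetilde f_{i_rl_r}\mathbf 1\}$ with respect to Kashiwara's bilinear form, which forces $B(\infty)$ to be linearly independent mod $q$ and hence a basis of $L(\infty)/qL(\infty)$.

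I expect the main obstacle to be the isotropic case of the inductive step, specifically showing that $\widetilde f_{il}$ maps $B(\infty)$ into $B(\infty)$ (not merely into $L(\infty)/qL(\infty)$) and that $\widetilde e_{il}\widetilde f_{il}=\mathrm{id}$ on the nose at the level of $L(\infty)/qL(\infty)$. The difficulty is that, unlike in \cite{Bozec2014c}, an $i$-string of an element of $L(\infty)$ for $i\in I^{\text{iso}}$ need not again be an $i$-string after applying $\widetilde f_{il}$ (as flagged in the Remark following \eqref{eq:imaginary string}); one must re-decompose and control the resulting denominators $\mathbf c_l+1$, checking they do not introduce poles at $q=0$ nor destroy the $\A_0$-lattice structure. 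This is exactly why the modified operators were introduced — the factor $1/(\mathbf c_l+1)$ is designed so that the partition-indexed basis vectors $\frac{1}{\mathbf c_l!}\mathtt b_{i,\mathbf c}u_0$ of Example \ref{ex: basis for Vi} transform cleanly — and the bulk of Section \ref{sec:grand-loop} will be verifying that this design choice makes every link of the grand loop close. Once all the interlocking statements are established for all $N$, properties (a)--(e) of the crystal basis definition for $(L(\infty),B(\infty))$ follow immediately, completing the proof.
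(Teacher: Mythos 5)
Your proposal is correct and matches the paper's approach closely: Section~\ref{sec:grand-loop} runs exactly the grand-loop induction you describe, proving the interlocking statements $\mathbf{A}(r)$--$\mathbf{K}(r)$ simultaneously for $V(\lambda)$ and $U_q^-(\g)$, with $\mathbf{E}(r)$, $\mathbf{F}(r)$, $\mathbf{G}(r)$ giving precisely the conditions \eqref{eq:cond2} required for $(L(\infty),B(\infty))$ to be a crystal basis. You correctly identify the two load-bearing mechanisms: (i) the stabilization $L(\infty)_{-\alpha}\xrightarrow{\sim}L(\lambda)_{\lambda-\alpha}$ for $\lambda\gg 0$ (Corollary~\ref{pi}(c), driven by Lemma~\ref{fP and eP} which shows $\pi_\lambda$ intertwines $\widetilde e_{il}$ and $\widetilde f_{il}$ modulo $qL(\lambda)$ for large $\lambda$), which is how the $U_q^-$ statements are pulled back from the $V(\lambda)$ statements; and (ii) the near-orthogonality of the Kashiwara form (Lemma~\ref{lem:LLA0}, Lemma~\ref{(P,Q)}, Lemma~\ref{piL=L}), which controls the lattice $L(\infty)$ and forces linear independence of $B(\infty)$ in $\mathbf{G}(r)$. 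You also correctly single out the isotropic case as the new content and identify the purpose of the factors $\mathbf{c}_l$, $1/(\mathbf{c}_l+1)$: the paper does indeed spend its new effort (e.g.\ the isotropic branches of Lemma~\ref{uc in L}, Lemma~\ref{uc=0}, Lemma~\ref{fP and eP}, Lemma~\ref{lem:LLA0}) verifying that these factors keep the $\A_0$-lattice stable and make $\widetilde e_{il}\widetilde f_{il}=\mathrm{id}$ hold on the nose. No gaps.
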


\vskip 3mm

We will prove this theorem in Section \ref{sec:grand-loop}.

\vskip 3mm

\begin{example}\label{ex: basis for Ui}
{\rm
Let $I=I^{\text{\rm im}}=\{i\}$ and let
	\begin{align*}
		U^-=\mathbf Q(q)\langle \mathtt b_{il}\mid l>0    \rangle,\quad L:=\bigoplus_{\mathbf c\in\mathcal C_i}{\mathbf A_0(\mathtt b_{i,\mathbf c}\mathbf 1)}.
	\end{align*}

	If $i\notin I_{\text{\rm iso}}$, for $\mathbf c\in\mathcal C_i$, define $B_{i,\mathbf c}:=\{\mathtt b_{i,\mathbf c}\mathbf 1\}$ and set $B=\coprod_{\mathbf c\in\mathcal C_i}{B_{i,\mathbf c}}$. Define
	\begin{align*}
		&\widetilde{e}_{il}(\mathtt b_{i,\mathbf c}\mathbf 1)=	\begin{cases}	
			\mathtt b_{i,\mathbf c\backslash c_1}\mathbf 1,&\text{if}~c_1=l,\\
			0,&\text{otherwise},\\
		\end{cases}\\
		&\widetilde{f}_{il}(\mathtt b_{i,\mathbf c}\mathbf 1)=\mathtt b_{i,(l,\mathbf c)}\mathbf 1.
	\end{align*}

	If $i\in I_{\text{\rm iso}}$, for $\mathbf c\in \mathcal C_i$, define
	$B_{i,\mathbf c}:=\{ \frac{1} {{\mathbf c}_{l} ! } \,\mathtt b_{i,\mathbf c}\mathbf 1\}$ and set $B=\coprod_{\mathbf c\in\mathcal C_i}{B_{i,\mathbf c}}$. Define
	\begin{align*}
		&\widetilde{e}_{il} \, ( \frac{1} {{\mathbf c}_{l} !} \,\mathtt b_{i,\mathbf c}\mathbf 1)
		= \frac{1} {(\mathbf c_l-1)!} \, \mathtt b_{i,\mathbf c\backslash\{l\}}\mathbf 1,\\
		&\widetilde{f}_{il} \, (\frac{1}{{\mathbf c}_{l} !}  \, \mathtt b_{i,\mathbf c}\mathbf 1)
		= \frac{1}{(\mathbf c_l+1)!} \, \mathtt b_{i,\mathbf c\cup\{l\}}\mathbf 1.
	\end{align*}

\noindent	
	We can verify that the pair $(L,B)$ is a crystal basis of $U^-$.
	
}
\end{example}


\vskip 5mm

\subsection{Abstract crystals} \label{sub:abstract crystals} \hfill

\vskip 3mm

By extracting the fundamental properties of the crystal bases of $V(\lambda)$
and $U_{q}^-(\g)$, we define the notion of abstract crystals as follows.

\vskip 3mm

\begin{definition}\cite[Definition 2.1]{FKKT21}\label{def: abstract crystal} \hfill

\vskip 2mm

	An {\it abstract crystal}  is a set $B$ together with the maps ${\rm wt}\colon  B \rightarrow P$,
	$\varphi_i,\varepsilon_i\colon  B\rightarrow \Z\cup \{-\infty\}$ $(i\in I)$ and $\widetilde{e}_{il},\widetilde{f}_{il}\colon  B\rightarrow   B\cup \{0\}$ $((i,l)\in I^\infty)$ satisfying the following conditions:
	\vskip 2mm
	\begin{enumerate}
		\item[{\rm (a)}] $\text{wt}(\widetilde{f}_{il}b)=\text{wt}(b)-l\alpha_i$ if $\widetilde{f}_{il}b\neq 0$,\quad $\text{wt}(\widetilde{e}_{il}b)=\text{wt}(b)+l\alpha_i$ if $\widetilde{e}_{il}b\neq 0$.
		
		\vskip 2mm
		
		\item[{\rm (b)}] $\varphi_i(b)= \langle h_i, \text{wt}(b) \rangle+\varepsilon_i(b)$ for $i\in I$ and $b\in  B$.
		
		\vskip 2mm
		
		\item[{\rm (c)}] $\widetilde{f}_{il}b=b'$ if and only if $b=\widetilde{e}_{il}b'$ for $(i,l)\in I^\infty$ and $b,b'\in  B$.
		
		\vskip 2mm
		
		\item[{\rm (d)}]  For any $i\in I^{\text{re}}$ and $b\in  B$, we have
		\vskip 2mm
		\begin{itemize}
			\item [(1)] $\varepsilon_i(\widetilde{f}_ib)=\varepsilon_i(b)+1$, $\varphi_i(\widetilde{f}_ib)=\varphi_i(b)-1$ if $\widetilde{f}_ib\neq 0$,
			\vskip 2mm
			\item [(2)] $\varepsilon_i(\widetilde{e}_ib)=\varepsilon_i(b)-1$, $\varphi_i(\widetilde{e}_ib)=\varphi_i(b)+1$ if $\widetilde{e}_ib\neq 0$.
		\end{itemize}
		
		\vskip 2mm
		
		\item[{\rm (e)}] For any $i\in I^{\text{im}}$, $l>0$ and $b\in B$, we have
		\vskip 2mm
		\begin{itemize}
			\item [($1'$)] $\varepsilon_i(\widetilde{f}_{il}b)=\varepsilon_i(b)$, $\varphi_i(\widetilde{f}_{il}b)=\varphi_i(b)-la_{ii}$ if $\widetilde{f}_{il}b\neq 0$,
			\vskip 2mm
			\item [($2'$)] $\varepsilon_i(\widetilde{e}_{il}b)=\varepsilon_i(b)$, $\varphi_i(\widetilde{e}_{il}b)=\varphi_i(b)+la_{ii}$ if $\widetilde{e}_{il}b\neq 0$.
		\end{itemize}
		
		\vskip 2mm
		
		\item[{\rm (f)}]  For any $(i,l)\in I^\infty$ and $b\in B$ such that $\varphi_i(b)=-\infty$, we have $\widetilde{e}_{il}b=\widetilde{f}_{il}b=0$.
	\end{enumerate}
\end{definition}

\vskip 2mm

\begin{remark} \hfill

\vskip 2mm

(a) In Example \ref{ex: basis for Vi},  define
\begin{equation*}
\begin{aligned}
& \mathrm{wt}(\mathtt b_{i,\mathbf c}u_0)=\mathrm{wt}(u_0)-|\mathbf c|\alpha_i,\quad
	\varepsilon_i(\mathtt b_{i,\mathbf c}u_0)=0, \\
&\varphi_i(\mathtt b_{i,\mathbf c}u_0)=
	\langle h_i,\mathrm{wt}(\mathtt b_{i,\mathbf c}u_0)   \rangle
	=
	\langle  h_i,\mathrm{wt}(u_0)-|\mathbf c|\alpha_i \rangle=m-|\mathbf c|a_{ii}.
\end{aligned}
\end{equation*}

Then the set $B$ together with the maps $\widetilde{e}_{il}$, $\widetilde{f}_{il}$, $\mathrm{wt}$, $\varepsilon_i$, $\varphi_i$  is an abstract crystal.

\vskip 3mm

(b) In Example \ref{ex: basis for Ui},  define
\begin{equation*}
 \mathrm{wt}(\mathtt b_{i,\mathbf c}\mathbf 1)=-|\mathbf c|\alpha_i,\quad
\varepsilon_i(\mathtt b_{i,\mathbf c}\mathbf 1)=0,\quad
\varphi_i(\mathtt b_{i,\mathbf c}\mathbf 1)=-|\mathbf c|a_{ii}.
\end{equation*}

Then the set $B$ together with the maps $\widetilde{e}_{il}$, $\widetilde{f}_{il}$, $\mathrm{wt}$, $\varepsilon_i$, $\varphi_i$  is an abstract crystal.

\end{remark}

\vskip 3mm

\begin{definition} \hfill

\vskip 2mm

	\begin{enumerate}
		\item[{\rm (a)}] A {\it crystal morphism} $\psi$ between two abstract crystals $ B_1$ and $ B_2$ is a map from $ B_1$ to
		$ B_2\sqcup\{0\}$ satisfying the following conditions:
		
		\vskip 2mm
		
		\begin{enumerate}
			\item[{\rm (i)}] for $b\in  B_1$ and $i\in I$, we have $\text{wt}(\psi(b))=\text{wt}(b)$,  $\varepsilon_i(\psi(b))=\varepsilon_i(b)$, $\varphi_i(\psi(b))=\varphi_i(b)$,
			\item[{\rm (ii)}] for $b\in B_1$ and $(i,l)\in I^\infty$ satisfying $\widetilde{f}_{il}b\in B_1$, we have $\psi(\widetilde{f}_{il}b)=\widetilde{f}_{il}\psi(b)$.
		\end{enumerate}	
		
		\vskip 2mm
		
		\item[{\rm (b)}] A crystal morphism $\psi: B_1\to  B_2$ is called {\it strict} if
		\begin{equation*}
			\psi(\widetilde{e}_{il} b) = \widetilde{e}_{il}(\psi(b)), \  \psi(\widetilde{f}_{il} b) = \widetilde{f}_{il}(\psi(b))	
		\end{equation*}	
		for all $(i,l)\in I^\infty$ and $b \in  B_1$.	
	\end{enumerate}	
\end{definition}

\vskip 3mm

We recall  the {\it tensor product rule} from \cite[Section 3]{FKKT21}.
Let $ B_1$ and $ B_2$ be abstract crystals and let
$ B_1 \otimes B_2 = \{ b_1 \otimes b_2 \mid b_1 \in B_1, b_2 \in B_2 \}$.  Define the maps $\text{wt}$, $\varepsilon_i$, $\varphi_i$ $(i \in I)$, $\widetilde{e}_{il}$, $\widetilde{f}_{il}$ $((i,l) \in I^{\infty})$ as follows.
\begin{equation} \label{eq:wt}
	\begin{aligned}
		& \text{wt}(b_1\otimes b_2)=\text{wt}(b_1)+\text{wt}(b_2),\\
		& \varepsilon_i(b_1\otimes b_2)=\text{max}(\varepsilon_i(b_1),\varepsilon_i(b_2)- \langle h_i, \text{wt} (b_1) \rangle),\\
		& \varphi_i(b_1\otimes b_2)=\text{max}(\varphi_i(b_1)+\langle h_i, \text{wt}(b_2) \rangle,\varphi_i(b_2)).
	\end{aligned}
\end{equation}

If $i \in I^{\text{re}}$,
\begin{equation} \label{eq:real}
	\begin{aligned}
		& \widetilde{e}_{i}(b_1 \otimes b_2) = \begin{cases} \widetilde{e}_{i} b_1 \otimes b_2  & \text{if} \  \varphi_{i}(b_1) \ge \varepsilon_{i}(b_2), \\
			b_1 \otimes \widetilde{e}_{i} b_2  & \text{if} \ \varphi_{i}(b_1) < \varepsilon_{i}(b_2),
		\end{cases} \\
		& \widetilde{f}_{i}(b_1 \otimes b_2) = \begin{cases} \widetilde{f}_{i} b_1 \otimes b_2  & \text{if} \ \varphi_{i}(b_1) > \varepsilon_{i}(b_2), \\
			b_1 \otimes \widetilde{f}_{i} b_2  & \text{if} \ \varphi_{i}(b_1) \le \varepsilon_{i}(b_2).
		\end{cases}
	\end{aligned}
\end{equation}

If $i \in I^{\text{im}}$,
\begin{equation} \label{eq:im}
	\begin{aligned}
		& \widetilde{e}_{il}(b_1 \otimes b_2) =  \begin{cases} \widetilde{e}_{il} b_1 \otimes b_2  & \text{if} \ \varphi_{i}(b_1) > \varepsilon_{i}(b_2) - l a_{ii}, \\
			0  & \text{if} \ \varepsilon_{i}(b_{2}) < \varphi_{i}(b_{1}) \le \varepsilon_{i}(b_{2}) - l a_{ii}, \\
			b_{1} \otimes \widetilde{e}_{il} b_2 \ \ & \text{if} \ \varphi_{i}(b_1) \le \varepsilon_{i}(b_{2}),
		\end{cases}\\
		& \widetilde{f}_{il}(b_1 \otimes  b_2) =\begin{cases} \widetilde{f}_{il} b_1 \otimes b_2  & \text{if} \ \varphi_{i}(b_1) > \varepsilon_{i}(b_2), \\
			b_{1} \otimes \widetilde{f}_{il} b_2  & \text{if} \ \varphi_{i}(b_1) \le \varepsilon_{i}(b_2).
		\end{cases}
	\end{aligned}
\end{equation}

\vskip 3mm

\begin{proposition}\cite[Proposition 3.1]{FKKT21}\label{tensor product} \hfill
\vskip 2mm
{\rm
If $B_1$ and $B_2$ are abstract crystals, then $B_1\otimes B_2$ defined in \eqref{eq:wt}--\eqref{eq:im}	is also an abstract crystal.}
	\end{proposition}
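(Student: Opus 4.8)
The statement is proved by a direct verification that the set $B_1 \otimes B_2$, equipped with the maps $\text{wt}$, $\varepsilon_i$, $\varphi_i$, $\widetilde{e}_{il}$, $\widetilde{f}_{il}$ defined in \eqref{eq:wt}--\eqref{eq:im}, satisfies conditions (a)--(f) of Definition \ref{def: abstract crystal}; I would organize the check by increasing difficulty. Conditions (a), (b), (f) are the easy ones. For (a): whenever $\widetilde{f}_{il}(b_1 \otimes b_2) \neq 0$, each branch of \eqref{eq:real}--\eqref{eq:im} applies $\widetilde{f}_{il}$ to exactly one tensor factor and leaves the other unchanged, so $\text{wt}$ decreases by $l\alpha_i$ by additivity of $\text{wt}$ and axiom (a) for $B_1, B_2$; the case of $\widetilde{e}_{il}$ is symmetric. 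For (b): substitute the formulas of \eqref{eq:wt} into $\langle h_i, \text{wt}(b_1\otimes b_2)\rangle + \varepsilon_i(b_1\otimes b_2)$, distribute $\langle h_i, -\rangle$ across the maximum, and apply axiom (b) for $B_1$ and $B_2$; the result is exactly the displayed formula for $\varphi_i(b_1 \otimes b_2)$. For (f): if $\varphi_i(b_1\otimes b_2) = -\infty$ then the maximum in \eqref{eq:wt} forces both $\varphi_i(b_2) = -\infty$ and $\varphi_i(b_1) = -\infty$, hence $\varepsilon_i(b_1) = \varepsilon_i(b_2) = -\infty$ by (b), so axiom (f) for $B_1$ and $B_2$ annihilates every term appearing on the right-hand sides of \eqref{eq:real}--\eqref{eq:im}, with the usual conventions for $-\infty$ resolving the case distinctions.

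Conditions (d) and (e) are handled by fixing $i$ and $b = b_1 \otimes b_2$ and going branch by branch through \eqref{eq:real} (for $i \in I^{\text{re}}$) or \eqref{eq:im} (for $i \in I^{\text{im}}$). In each branch the relevant operator acts on one factor, say the left, under an explicit inequality such as $\varphi_i(b_1) > \varepsilon_i(b_2)$; one then recomputes $\varepsilon_i$ and $\varphi_i$ of the result from \eqref{eq:wt} using axiom (d) or (e) for that factor, and the branch inequality is precisely what is needed to evaluate the relevant maximum. This is routine bookkeeping, but it must be carried out for every branch and for both $\widetilde{e}_{il}$ and $\widetilde{f}_{il}$.

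The core of the argument is condition (c): $\widetilde{f}_{il}b = b'$ if and only if $b = \widetilde{e}_{il}b'$. Writing $b = b_1 \otimes b_2$ and $b' = b_1' \otimes b_2'$, one compares the branch of \eqref{eq:real} (in the real case) producing $b' = \widetilde{f}_{il}b$ with the branch producing $b = \widetilde{e}_{il}b'$, and checks that the governing inequalities are logically equivalent once axioms (b), (d) for $B_1, B_2$ are invoked: for instance $\varphi_i(b_1) > \varepsilon_i(b_2)$, which selects $\widetilde{f}_i b = \widetilde{f}_i b_1 \otimes b_2$, is equivalent to $\varphi_i(\widetilde{f}_i b_1) \ge \varepsilon_i(b_2)$, which selects $\widetilde{e}_i b' = \widetilde{e}_i b_1' \otimes b_2'$, because $\varphi_i$ drops by $1$; and $\widetilde{e}_i\widetilde{f}_i b_1 = b_1$ by (c) for $B_1$. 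The complementary branch is symmetric using $\varepsilon_i(\widetilde{f}_i b_2) = \varepsilon_i(b_2)+1$.

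The genuinely delicate point, which I expect to be the main obstacle, is condition (c) in the imaginary case. Here $\widetilde{e}_{il}$ carries the three-way split of \eqref{eq:im} with a middle range $\varepsilon_i(b_2) < \varphi_i(b_1) \le \varepsilon_i(b_2) - l a_{ii}$ on which it vanishes, whereas $\widetilde{f}_{il}$ has only the two-way split. One must verify that $\widetilde{f}_{il}b$ never lands in that middle range, and conversely that an element lying in that range is never of the form $\widetilde{f}_{il}b$, keeping careful track of the shifts $\varepsilon_i \mapsto \varepsilon_i$ and $\varphi_i \mapsto \varphi_i \mp l a_{ii}$ dictated by axiom (e) (and the fact that $a_{ii}\le 0$ for $i\in I^{\text{im}}$, which controls the relative position of the thresholds). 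Everything else reduces to a finite, if tedious, case analysis; since all of these verifications were already carried out in the group-algebra framework of \cite{FKKT21}, one may alternatively simply appeal to \cite[Proposition 3.1]{FKKT21}.
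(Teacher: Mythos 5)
Your outline is correct, and it accurately identifies where the real work lies. In fact the paper under review gives no proof at all here: Proposition \ref{tensor product} is cited verbatim from \cite[Proposition 3.1]{FKKT21}, so the ``paper's own proof'' is the citation. Your verification sketch matches the standard argument carried out there: axioms (a), (b), (f) follow from the explicit formulas in \eqref{eq:wt}, axioms (d), (e) are a branch-by-branch recomputation of $\varepsilon_i$ and $\varphi_i$, and the one nontrivial check is axiom (c) for $i\in I^{\text{im}}$.

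You have correctly isolated the delicate point. In the notation of \eqref{eq:im}, if $\widetilde{f}_{il}(b_1\otimes b_2)=\widetilde{f}_{il}b_1\otimes b_2$ (branch $\varphi_i(b_1)>\varepsilon_i(b_2)$), then by axiom (e) for $B_1$ one has $\varphi_i(\widetilde{f}_{il}b_1)=\varphi_i(b_1)-la_{ii}>\varepsilon_i(b_2)-la_{ii}$, which puts the image strictly above the middle window $(\varepsilon_i(b_2),\,\varepsilon_i(b_2)-la_{ii}]$; and in the other branch $\varphi_i(b_1)\le\varepsilon_i(b_2)$ is preserved since $\varepsilon_i(\widetilde{f}_{il}b_2)=\varepsilon_i(b_2)$, so the image lies below that window. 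Thus the image of $\widetilde{f}_{il}$ never meets the vanishing range of $\widetilde{e}_{il}$, and the two-way split for $\widetilde{f}_{il}$ is compatible with the three-way split for $\widetilde{e}_{il}$ exactly as you describe; the sign condition $a_{ii}\le 0$ for $i\in I^{\text{im}}$ is what keeps the window on the correct side. The only cosmetic caveat is that for axiom (f) one should also spell out that when $\varphi_i(b_1)=\varepsilon_i(b_2)=-\infty$ the branch inequalities in \eqref{eq:real}--\eqref{eq:im} are resolved consistently (e.g.\ $-\infty>-\infty$ is false, $-\infty\le-\infty$ is true), so that the chosen factor is always one on which the operator is annihilating; you gesture at ``the usual conventions,'' which suffices.
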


\vskip 3mm

From now on, we shall only consider the case with $i\in I^{\text{im}}$,
because the case with $i \in I^{\text{re}}$ has already been
studied in \cite{Kashi91}.

\vskip 3mm

Let $M$ be an object in ${\mathcal O}_{\text{\rm int}}$ and let $( L, B)$ be a crystal basis of $M$.
We already have the maps
\begin{align}\label{def: wt and ef}
\text{wt}: B\rightarrow P,\quad \widetilde{e}_{il}, \widetilde{f}_{il}: B\rightarrow B\cup\{0\}.
\end{align}

 Define
\begin{align}\label{def:epsilon and phi}
	\varepsilon_i(b)=0,\quad \varphi_i(b)=\langle h_i,\text{wt}(b)\rangle\ \text{for any}\ b\in B.
\end{align}

\begin{lemma} {\rm
The set $B$ together with the maps defined in \eqref{def: wt and ef}--\eqref{def:epsilon and phi} is an abstract crystal.}
\end{lemma}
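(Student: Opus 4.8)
The plan is to verify each of the defining conditions (a)--(f) of Definition~\ref{def: abstract crystal} directly, using the crystal basis axioms for $(L,B)$ together with the explicit definitions in \eqref{def: wt and ef}--\eqref{def:epsilon and phi}. Since we are only treating $i\in I^{\text{im}}$ (the real case being known from \cite{Kashi91}), conditions (d) are vacuous, and (f) is vacuous as well because $\varphi_i(b)=\langle h_i,\text{wt}(b)\rangle$ is always finite (indeed $\geq 0$ by the defining properties of the category $\mathcal O_{\text{int}}$). So the real content is in (a), (b), (c) and (e).

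First I would dispose of (a) and (c). Condition (a) asks that $\text{wt}(\widetilde f_{il}b)=\text{wt}(b)-l\alpha_i$ and $\text{wt}(\widetilde e_{il}b)=\text{wt}(b)+l\alpha_i$ whenever these are nonzero; this is immediate from the weight-homogeneity part of the definition of a crystal basis (condition (c) of Definition~\ref{def:crystal basis Blambda}, that $B=\sqcup B_\mu$ is compatible with the $\widetilde e_{il},\widetilde f_{il}$) combined with the fact that on the module level $\widetilde e_{il}$ raises weight by $l\alpha_i$ and $\widetilde f_{il}$ lowers it by $l\alpha_i$, which one reads off from the $i$-string decomposition formulas in Definition~\ref{def:Kashiwara operator}. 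Condition (c), that $\widetilde f_{il}b=b'\iff b=\widetilde e_{il}b'$, is exactly axiom (e) of the crystal basis $(L,B)$, so there is nothing to prove.

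Next, condition (b) says $\varphi_i(b)=\langle h_i,\text{wt}(b)\rangle+\varepsilon_i(b)$; with our choices $\varepsilon_i(b)=0$ and $\varphi_i(b)=\langle h_i,\text{wt}(b)\rangle$ this is a tautology. Finally condition (e): for $\widetilde f_{il}b\neq 0$ we must check $\varepsilon_i(\widetilde f_{il}b)=\varepsilon_i(b)$, i.e. $0=0$, which is trivial; and $\varphi_i(\widetilde f_{il}b)=\varphi_i(b)-la_{ii}$, which unravels via (a) to $\langle h_i,\text{wt}(b)-l\alpha_i\rangle=\langle h_i,\text{wt}(b)\rangle-l\langle h_i,\alpha_i\rangle=\langle h_i,\text{wt}(b)\rangle-la_{ii}$, true since $\langle h_i,\alpha_i\rangle=a_{ii}$. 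The statement for $\widetilde e_{il}$ is symmetric, using $\text{wt}(\widetilde e_{il}b)=\text{wt}(b)+l\alpha_i$.

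There is essentially no obstacle here: the lemma is a bookkeeping verification recording that the maps extracted from a crystal basis satisfy the abstract axioms, and the only mild point requiring attention is checking that $\varphi_i$ is genuinely finite so that (f) is vacuously satisfied --- this follows because every weight $\mu$ of an object of $\mathcal O_{\text{int}}$ with $i\in I^{\text{im}}$ has $\langle h_i,\mu\rangle\geq 0$, as recorded in Definition~\ref{def:Oint}(d). I would present the proof as a short itemized check of (a)--(f) in that order.
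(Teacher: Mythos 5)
Your proof is correct and follows the same approach as the paper: the paper's proof likewise reduces to the direct computation of $\varepsilon_i$ and $\varphi_i$ under $\widetilde e_{il}$, $\widetilde f_{il}$ (condition (e)), treating the remaining axioms as immediate from the crystal basis properties and the definitions. Your write-up is merely more explicit about why the other axioms hold; there is no substantive difference.
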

\begin{proof}
By Definition \ref{def:Kashiwara operator} and \eqref{def:epsilon and phi}, we have
\begin{align*}
\varepsilon_i(\widetilde{e}_{il}b)=\varepsilon_i(b)=0\ \text{and} \ \varepsilon_i(\widetilde{f}_{il}b)=\varepsilon_i(b)=0,
\end{align*}
and
\begin{align*}
&\varphi_i(\widetilde{f}_{il}b)=\langle h_i,\text{wt}(\widetilde{f}_{il}b)\rangle=\langle h_i,\text{wt}(b)-l\alpha_i\rangle=\varphi_i(b)-la_{ii},\\
&\varphi_i(\widetilde{e}_{il}b)=\langle h_i,\text{wt}(\widetilde{e}_{il}b)\rangle=\langle h_i,\text{wt}(b)+l\alpha_i\rangle=\varphi_i(b)+la_{ii}.
\end{align*}
Thus our assertion  follows.
\end{proof}

\vskip 3mm

Let $M_1,M_2\in{\mathcal O}_{\text{\rm int}}$ and $( L_1, B_1)$, $( L_2, B_2)$  be their crystal bases, respectively. Set
\begin{equation*}
M=M_1\otimes_{\mathbf Q(q)}M_2,\quad  L= L_1\otimes_{\mathbf A_0} L_2,\quad  B= B_1\otimes B_2.
\end{equation*}

\vskip 2mm

By Proposition \ref{tensor product}, $ B_1\otimes B_2$ is an abstract crystal. The tensor product rule on $ B_1\otimes B_2$ can be simplified as follows.

\vskip 3mm

Set $m_1:=\langle h_i,\text{wt}(b_1)\rangle$ and $m_2:=\langle h_i,\text{wt}(b_2)\rangle$. Then we have
\begin{equation}\label{eq:simplified}
\begin{aligned}
&\text{wt}(b_1\otimes b_2)=\text{wt}(b_1)+\text{wt}(b_2),\\
&\varepsilon_i(b_1\otimes b_2)=0,\quad \varphi_i(b_1\otimes b_2)=m_1+m_2,\\
&\widetilde{f}_{il}(b_1\otimes b_2)=
        \begin{cases}
	\widetilde{f}_{il}b_1\otimes b_2, &\text{if}\ m_1>0,\\
	b_1\otimes\widetilde{f}_{il} b_2, &\text{if}\ m_1=0,
	\end{cases}\\
&\widetilde{e}_{il}(b_1\otimes b_2)=
        \begin{cases}
	\widetilde{e}_{il}b_1\otimes b_2, &\text{if}\ m_1>-la_{ii},\\
	0\ &\text{if}\ 0<m_1\leq-la_{ii},\\
	b_1\otimes\widetilde{e}_{il} b_2,&\text{if}\ m_1=0.
\end{cases}
\end{aligned}
\end{equation}

Note that $m_1\geq 0$ because $\mathrm{wt}(b_1)\in {P}^+$.

\vskip 3mm

Let $V$, $V'$ be $U$-modules as in Example \ref{ex: basis for Vi} and
 let $(L, B)$, $(L', B')$ be their crystal bases, respectively.
Then $B \otimes B'$ is an abstract crystal under the simplified tensor product rule
given in \eqref{eq:simplified}.

\vskip 3mm

\section{Grand-loop argument}\label{sec:grand-loop}

In this section, we will give the proofs of Theorem \ref{thm:crystal basis of V}
and Theorem \ref{thm:crystal basis of U}
following the frame work of Kashiwara's grand-loop argument \cite{JKK05, Kashi91}.
For this purpose, we need to prove the statements given below.
\begin{equation} \label{eq:cond1}
\begin{aligned}
	&\widetilde{e}_{il}L(\lambda)\subset L(\lambda),\quad
	\widetilde{e}_{il}B(\lambda)\subset B(\lambda)\cup\{0\},\\
	&\widetilde{f}_{il}b=b'\ \text{if and only if}\ \widetilde{e}_{il}b'=b\ \text{for any}\ b, b'\in B(\lambda),\\
	&B(\lambda)\ \text{is a $\Q$-basis of}\ L(\lambda)/qL(\lambda),
\end{aligned}
\end{equation}
and
\begin{equation} \label{eq:cond2}
\begin{aligned}
	&\widetilde{e}_{il}L(\infty)\subset L(\infty),\quad
	\widetilde{e}_{il}B(\infty)\subset B(\infty)\cup\{0\},\\
	&\widetilde{f}_{il}b=b'\ \text{if and only if}\ \widetilde{e}_{il}b'=b\ \text{for any}\ b, b'\in B(\infty),\\
	&B(\infty)\ \text{is a $\Q$-basis of}\ L(\infty)/qL(\infty).
\end{aligned}
\end{equation}

\vskip 3mm

To apply the grand-loop argument, we need Kashiwara's
bilinear forms $(\ , \ )_{K}$ defined as follows.

\vskip 3mm

Let $V(\lambda)=U_{q}(\g) v_{\lambda}$ be an irreducible highest weight module
with $\lambda \in P^{+}$.
By a standard argument, one can show that there exists a unique non-degenerate symmetric
bilinear form $(\ , \ )_{K}$ on $V(\lambda)$ given by
\begin{equation} \label{eq:bilinearV}
\begin{aligned}
& (v_{\lambda}, v_{\lambda})_{K} = 1, \ \
(q^{h} u, v)_{K} = (u , q^{h} v)_{K},\\
& (\mathtt{b}_{il} u, v)_{K} = - (u, K_{i}^{l} \mathtt{a}_{il}v)_{K}, \\
& (\mathtt{a}_{il} u, v)_{K} = - (u, K_{i}^{-l} \mathtt{b}_{il} v)_{K},
\end{aligned}
\end{equation}
where $u,v\in V(\lambda)$ and $h \in P^{\vee}$.

\vskip 3mm

Similarly, there exists a unique
non-degenerate symmetric bilinear form $( \ , \ )_K$ on $U_{q}^-(\g)$ satisfying

\begin{equation} \label{eq:bilinearU}
(\mathbf{1}, \mathbf{1})_{K} = 1, \ \
(\mathtt{b}_{il}S, T)_{K} = (S, e_{il}'T)_{K} \ \ \text{for} \ S, T \in U_{q}^{-}(\g).
\end{equation}

\vskip 3mm

Now we begin to follow the grand-loop argument.

\vskip 3mm

For $\lambda \in P^+$, we define a $U^-_q(\g)$-module homomorphism given by
\begin{align}\label{eq:pilambda}
\pi_\lambda\colon U^-_q(\g)\rightarrow V(\lambda),\quad \mathbf 1\mapsto v_\lambda.
\end{align}
Then we obtain $\pi_\lambda(L(\infty))=L(\lambda)$.
 The map $\pi_\lambda$ induces a homomorphism
\begin{align}\label{eq:barpilambda}
\overline{\pi}_\lambda\colon L(\infty)/qL(\infty)\rightarrow L(\lambda)/qL(\lambda),\quad \mathbf 1+qL(\infty)\mapsto v_\lambda+qL(\lambda).
\end{align}

For $\lambda,\mu\in P^+$, there exist  unique $U_q(\mathfrak g)$-module homomorphisms
\begin{align*}
&\Phi_{\lambda,\mu}\colon V(\lambda+\mu)\rightarrow V(\lambda)\otimes V(\mu),\quad v_{\lambda+\mu}\mapsto v_\lambda\otimes v_\mu,\\
&\Psi_{\lambda,\mu}\colon V(\lambda)\otimes V(\mu)\rightarrow V(\lambda+\mu),\quad v_\lambda\otimes v_\mu\mapsto v_{\lambda+\mu}.
\end{align*}
It is easy to verify that $\Psi_{\lambda,\mu}\circ\Phi_{\lambda,\mu}={\rm id}_{V(\lambda+\mu)}$.

\vskip 2mm

On $V(\lambda)\otimes V(\mu)$, we define
\begin{align*}
(u_1\otimes u_2,v_1\otimes v_2)_K=(u_1,v_1)_K(u_2,v_2)_K,
\end{align*}
where $(\ , \ )_{K}$ is the non-degenerate symmetric bilinear form defined in \eqref{eq:bilinearV}.
It is straightforward to verify that
\begin{equation*}
(\Psi_{\lambda,\mu}(u),v)_K=(u,\Phi_{\lambda,\mu}(v))_K \ \ \text{for} \ u\in V(\lambda)\otimes V(\mu),
\ v \in V(\lambda + \mu).
\end{equation*}

\vskip 2mm

We now prove Theorem \ref{thm:crystal basis of V} and Theorem \ref{thm:crystal basis of U} using
Kashiwara's grand-loop argument as follows.

\vskip 2mm

Let $(i,l) \in I^{\infty}$, $\lambda,\mu\in P^+$ and  $\alpha\in  R_+(r)$, where
$R_+(r)=\{\alpha\in R_+\mid |\alpha|\leq r \}$.
\vskip 2mm
\begin{itemize}

\item[$\mathbf{A}(r)$:]  $\widetilde{e}_{il}L(\lambda)_{\lambda-\alpha} \subset L(\lambda)$, \
 $\widetilde{e}_{il}B(\lambda)_{\lambda-\alpha} \subset B(\lambda) \cup \{0\}$.

 \vskip 2mm

\item[$\mathbf{B}(r)$:] For $b \in B(\lambda)_{\lambda - \alpha + l \alpha_{i}}$,
$b' \in B(\lambda)_{\lambda - \alpha}$,\,  $\widetilde{f}_{il} b = b'$ if and only if
$\widetilde{e}_{il} b' = b$.

\vskip 2mm

\item[$\mathbf{C}(r)$:] $\Phi_{\lambda, \mu} (L(\lambda + \mu)_{\lambda + \mu - \alpha})
\subset L(\lambda) \otimes L(\mu)$.

\vskip 2mm

\item[$\mathbf{D}(r)$:] $\Psi_{\lambda, \mu}((L(\lambda) \otimes L(\mu))_{\lambda + \mu - \alpha})
\subset L(\lambda + \mu)$, \
 $\Psi_{\lambda, \mu}((B(\lambda) \otimes B(\mu))_{\lambda + \mu - \alpha})\subset B(\lambda + \mu) \cup \{0\}.$

\vskip 2mm

\item[$\mathbf{E}(r)$:] $\widetilde{e}_{il}L(\infty)_{-\alpha} \subset L(\infty)$, \
 $\widetilde{e}_{il}B(\infty)_{-\alpha} \subset B(\infty) \cup \{0\}$.

\vskip 2mm

\item[$\mathbf{F}(r)$:] For $b \in B(\infty)_{ - \alpha + l \alpha_{i}}$,
$b' \in B(\infty)_{- \alpha}$, \, $\widetilde{f}_{il} b = b'$ if and only if
$\widetilde{e}_{il} b' = b$.

\vskip 2mm

\item[$\mathbf{G}(r)$:] $B(\lambda)_{\lambda - \alpha}$ is a $\Q$-basis of
$(L(\lambda) / q L(\lambda))_{\lambda - \alpha}$, \  $B(\infty)_{- \alpha}$ is a $\Q$-basis of
$(L(\infty) / q L(\infty))_{ - \alpha}$.

\vskip 2mm

\item[$\mathbf{H}(r)$:] $\pi_{\lambda}(L(\infty)_{-\alpha}) = L(\lambda)_{\lambda - \alpha}$.

\vskip 2mm

\item[$\mathbf{I}(r)$:] For $S \in L(\infty)_{-\alpha + l \alpha_{i}}$,\,
$\widetilde{f}_{il}(S\, v_{\lambda}) \equiv (\widetilde{f}_{il} S)\, v_{\lambda}\ \text{mod} \ q L(\lambda)$.

\vskip 2mm

\item[$\mathbf{J}(r)$:] If $B^{\lambda}_{-\alpha}:= \{ b\in B(\infty)_{-\alpha}
\mid \overline{\pi}_{\lambda}(b) \neq 0 \}$,
then $B^{\lambda}_{-\alpha}  \cong B(\lambda)_{\lambda - \alpha}$.

\vskip 2mm

\item[$\mathbf{K}(r)$:] If $b \in B^{\lambda}_{-\alpha}$, then
$\widetilde{e}_{il} \, \overline{\pi}_{\lambda} (b) = \overline{\pi}_{\lambda}\, \widetilde{e}_{il} (b)$.

\end{itemize}

\vskip 3mm
We shall prove the statements ${\bf A}(r),  \ldots, {\bf K}(r)$ by induction.

\vskip 3mm

When $r=0$, $r=1$, our assertions are true. We now assume that ${\bf A}(r-1), \ldots, {\bf K}(r-1)$ are true.

\vskip 3mm

\begin{lemma}\label{alpha and b}
{\rm
Let $\alpha\in R_+(r-1)$ and $b\in {B(\lambda)}_{\lambda-\alpha}$.
If  $\widetilde{e}_{il}b=0$ for any $(i,l)\in I^{\infty}$, then we have $\alpha=0$ and $b=v_\lambda$.
}
\end{lemma}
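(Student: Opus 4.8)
The plan is to argue by contradiction, peeling off the outermost Kashiwara operator in a defining word for $b$. Suppose $\alpha\neq 0$. By definition of $B(\lambda)$ we may write $b=\widetilde{f}_{i_1l_1}\cdots\widetilde{f}_{i_sl_s}v_\lambda+qL(\lambda)$, and since $\text{wt}(b)=\lambda-\alpha\neq\lambda$ we must have $s\geq 1$. Put $b':=\widetilde{f}_{i_2l_2}\cdots\widetilde{f}_{i_sl_s}v_\lambda+qL(\lambda)\in L(\lambda)/qL(\lambda)$, so that $b=\widetilde{f}_{i_1l_1}b'$ in $L(\lambda)/qL(\lambda)$. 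Because $\widetilde{f}_{i_1l_1}$ is $\Q$-linear on $L(\lambda)/qL(\lambda)$ and $b\neq 0$, the first observation is that $b'\neq 0$.

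The core step is to show $\widetilde{e}_{i_1l_1}b=b'$, which contradicts the hypothesis $\widetilde{e}_{il}b=0$ for all $(i,l)$ (applied to $(i,l)=(i_1,l_1)$). For $i_1\in I^{\text{re}}$ one has $\widetilde{e}_{i_1}\circ\widetilde{f}_{i_1}=\text{id}$ on all of $V(\lambda)$ by \cite{Kashi91}. For $i_1\in I^{\text{im}}$ I would first check $\langle h_{i_1},\text{wt}(b')\rangle>0$: since $V(\lambda)\in\mathcal{O}_{\text{int}}$ this pairing is $\geq 0$, and were it $0$ then $V(\lambda)_{\text{wt}(b')-l_1\alpha_{i_1}}=0$ by Proposition \ref{prop:hw}, so every lift of $b'$ would be annihilated by $\widetilde{f}_{i_1l_1}$, forcing $b=0$; hence $\langle h_{i_1},\text{wt}(b')\rangle>0$ and $\widetilde{e}_{i_1l_1}\circ\widetilde{f}_{i_1l_1}=\text{id}$ on $V(\lambda)_{\text{wt}(b')}$ by the identity recorded right after Definition \ref{def:Kashiwara operator}. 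Since $|\alpha|\leq r-1$, statement $\mathbf{A}(r-1)$ gives $\widetilde{e}_{i_1l_1}L(\lambda)_{\lambda-\alpha}\subset L(\lambda)$, so this identity descends modulo $qL(\lambda)$ and yields $\widetilde{e}_{i_1l_1}b=b'\neq 0$ --- the desired contradiction. Hence $\alpha=0$, and then $b\in(L(\lambda)/qL(\lambda))_\lambda$; as the empty word is the only $\widetilde{f}$-word of weight $\lambda$, we have $L(\lambda)_\lambda=\A_0\, v_\lambda$ and $B(\lambda)_\lambda=\{v_\lambda+qL(\lambda)\}$, so $b=v_\lambda$.

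The main obstacle, modest in this case, is the nonvanishing bookkeeping in the middle paragraph: one must squeeze both $b'\neq 0$ and (in the imaginary case) $\langle h_{i_1},\text{wt}(b')\rangle>0$ out of the single hypothesis $b\neq 0$, using the weight-space vanishing of $V(\lambda)$ from Proposition \ref{prop:hw}. Everything else is formal; the only inductive input used is $\mathbf{A}(r-1)$, which is in any case needed merely for $\widetilde{e}_{il}$ to act on $L(\lambda)/qL(\lambda)$ in this weight range, so the lemma is effectively a ``base case'' statement rather than one requiring the full strength of the grand-loop hypotheses.
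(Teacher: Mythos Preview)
Your proof is correct and is precisely the standard argument that the paper defers to by citing \cite[Lemma 7.2]{JKK05}: peel off the outermost $\widetilde f_{i_1l_1}$, use $\widetilde e_{il}\circ\widetilde f_{il}=\mathrm{id}$ on the appropriate weight space (invoking $\mathbf A(r-1)$ so this descends modulo $qL(\lambda)$), and contradict the hypothesis. Your handling of the imaginary case---checking $\langle h_{i_1},\mathrm{wt}(b')\rangle>0$ via Proposition~\ref{prop:hw}---is exactly the extra bookkeeping needed in the Borcherds--Bozec setting, and your observation that only $\mathbf A(r-1)$ is required is accurate.
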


\begin{proof}
The same argument in \cite[Lemma 7.2]{JKK05}, gives our claim.
	\end{proof}

\vskip 3mm

\begin{lemma}\label{uc in L}

{\rm 	Let $\alpha \in R_{+}(r-1)$, $i\in I^{\text{\rm im}}$, and
$u = \sum_{\mathbf{c} \in \mathcal{C}_{i}} {\mathtt b}_{i, \mathbf{c}}
u_{\mathbf{c}} \in V(\lambda)_{\lambda-\alpha}$
be the $i$-string decomposition of $u$.
If $u\in L(\lambda)$, then $u_{\mathbf c}\in L(\lambda)$ for any $\mathbf c\in\mathcal C_i$.	
}
\end{lemma}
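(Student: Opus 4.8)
The plan is to run an induction on $|\mathbf{c}|$, peeling off one generator at a time and using the commutation relation \eqref{Eb} together with the inductive hypotheses $\mathbf{A}(r-1),\dots,\mathbf{K}(r-1)$ and Lemma \ref{euEuqL}. Write $u=\sum_{\mathbf c\in\mathcal C_i}\mathtt b_{i,\mathbf c}u_{\mathbf c}$ for the $i$-string decomposition, so that $E_{ik}u_{\mathbf c}=A_{ik}u_{\mathbf c}=0$ for all $k>0$, and $u_{\mathbf c}\in V(\lambda)_{\lambda-\alpha+|\mathbf c|\alpha_i}$. The terms with $|\mathbf c|=0$ are handled directly: here $u_{\mathbf 0}$ is the ``$i$-highest'' part, and one shows $u_{\mathbf 0}\in L(\lambda)$ by applying a suitable word in the $\widetilde e_{ik}$'s (using Lemma \ref{euEuqL} to convert $\widetilde e_{il}$ into $E_{il}$ mod $qL(\lambda)$) to $u$ and reading off the constant term, exactly as in Kashiwara's treatment of the real case, extended to composition/partition-indexed strings. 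The key observation is that for $i\in I^{\text{im}}$ the decomposition \eqref{eq:imaginary string} is respected by the modified $\widetilde e_{il}$ operators in a combinatorially transparent way (the $i$-string of $u'$ is again an $i$-string, cf.\ the Remark after \eqref{eq:imaginary string}), so applying $\widetilde e_{il_1}\widetilde e_{il_2}\cdots$ along a composition $\mathbf c$ extracts precisely $u_{\mathbf c}$ up to an explicit nonzero scalar (a product of factorials in the isotropic case, a genuine integer in the non-isotropic case) modulo $qL(\lambda)$.

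More precisely, fix $\mathbf c=(c_1,\dots,c_s)\in\mathcal C_i$ with $|\mathbf c|>0$ and suppose inductively that $u_{\mathbf d}\in L(\lambda)$ for all $\mathbf d$ with $|\mathbf d|<|\mathbf c|$. Apply $\widetilde e_{ic_1}$ to $u$. By $\mathbf A(r-1)$ (which applies since $\lambda-\alpha+c_1\alpha_i=\lambda-\alpha'$ with $|\alpha'|<|\alpha|\le r-1$, wait---one must be a little careful about weights, but note $\lambda-(\alpha-c_1\alpha_i)$ lies at level $\le r-1$ only after the reduction; alternatively one uses that $\widetilde e_{il}$ strictly decreases $|\alpha|$ and invokes $\mathbf A(r-1)$, $\mathbf E(r-1)$ on the smaller stratum) we get $\widetilde e_{ic_1}u\in L(\lambda)$, and by the structure of the Kashiwara operators $\widetilde e_{ic_1}u=\sum_{\mathbf d}\mathtt b_{i,\mathbf d}u'_{\mathbf d}$ is again an $i$-string decomposition in which each new coefficient $u'_{\mathbf d}$ is a nonnegative-integer combination of old coefficients $u_{\mathbf e}$ with $|\mathbf e|=|\mathbf d|+1$; iterating down the whole composition $\mathbf c$ and tracking the explicit scalars shows $u_{\mathbf c}$ itself, times a nonzero rational constant, lies in $L(\lambda)+qL(\lambda)=L(\lambda)$. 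To make the ``$i$-string stays an $i$-string'' step rigorous one invokes Lemma \ref{euEuqL}: modulo $qL(\lambda)$ the operator $\widetilde e_{il}$ agrees with $E_{il}$, and $E_{il}$ visibly sends the $i$-string piece $\mathtt b_{i,\mathbf c}u_{\mathbf c}$ to $\mathbf c_l\,\mathtt b_{i,\mathbf c\setminus\{l\}}u_{\mathbf c}$ (isotropic case) or to $\mathtt b_{i,\mathbf c\setminus c_1}u_{\mathbf c}$ when $c_1=l$ and $0$ otherwise (non-isotropic case), because $u_{\mathbf c}$ is annihilated by all $E_{ik}$.

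The main obstacle I expect is the bookkeeping in the isotropic case: there $\mathcal C_i$ consists of \emph{partitions}, the Kashiwara operators carry the $\frac1{\mathbf c_l+1}$ and $\mathbf c_l$ factors, and one has to verify that the cumulative scalar obtained by stripping $\mathbf c$ off $u$ in the order dictated by the operators is a \emph{nonzero} element of $\A_0$ (indeed of $\Q^\times$), so that membership in $L(\lambda)$ can be concluded after dividing. This amounts to the identity that applying $\widetilde e_{il}$ repeatedly $\mathbf c_l$ times to the $\mathtt b_{il}^{\,\mathbf c_l}$-block produces $\mathbf c_l!$ (matching the normalization in Example \ref{ex: basis for Vi}), and one must also confirm that the ``lower'' terms $\mathtt b_{i,\mathbf d}u'_{\mathbf d}$ with $|\mathbf d|<|\mathbf c|-(\text{stripped length})$ produced along the way already lie in $L(\lambda)$ by the induction on $|\mathbf c|$ — so that they can be discarded. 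Once these scalar computations are in place, the non-isotropic and real cases follow by the same argument with simpler (integer, resp.\ binomial) coefficients, and the lemma follows.
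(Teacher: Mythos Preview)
Your core idea matches the paper's: apply $\widetilde e_{il}$, use $\mathbf A(r-1)$ to land back in $L(\lambda)$, and observe that the result is again an $i$-string decomposition with the same $u_{\mathbf c}$'s reindexed. But two things go wrong in the execution.

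The induction is in the wrong direction. You assume $u_{\mathbf d}\in L(\lambda)$ for all $|\mathbf d|<|\mathbf c|$ and then apply $\widetilde e_{i,\mathbf c}$ to $u$. In the non-isotropic case this produces $\sum_{\mathbf d}\mathtt b_{i,\mathbf d}\,u_{(\mathbf c,\mathbf d)}$, and the residual pieces involve $u_{(\mathbf c,\mathbf d)}$ with $|(\mathbf c,\mathbf d)|=|\mathbf c|+|\mathbf d|\ge|\mathbf c|$, not smaller; your hypothesis gives you nothing to subtract. The paper instead inducts on the height: apply a \emph{single} $\widetilde e_{il}$, observe that $\widetilde e_{il}u$ is an $i$-string decomposition lying in $L(\lambda)_{\lambda-(\alpha-l\alpha_i)}$ with $\alpha-l\alpha_i\in R_+(r-1)$ of strictly smaller height, and invoke the lemma recursively on it. Its pieces are (up to the unit $(\mathbf d_l+1)\in\Q^\times\subset\A_0^\times$ in the isotropic case) exactly the $u_{\mathbf c}$ with $c_1=l$ (resp.\ $l\in\mathbf c$); varying $l$ covers all $\mathbf c\neq\mathbf 0$, and then $u_{\mathbf 0}=u-\sum_{\mathbf c\neq\mathbf 0}\mathtt b_{i,\mathbf c}u_{\mathbf c}\in L(\lambda)$ by subtraction. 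No iteration down a fixed $\mathbf c$ and no scalar bookkeeping beyond that single unit is needed.

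Second, you appeal to Lemma~\ref{euEuqL} to justify that ``the $i$-string stays an $i$-string,'' but that lemma is stated under the hypothesis that $(L,B)$ is already a crystal basis---exactly what the grand-loop is in the process of proving---so invoking it here is circular. More to the point, it is unnecessary: the fact that $\widetilde e_{il}$ sends an $i$-string decomposition to another one is immediate from Definition~\ref{def:Kashiwara operator} and the Remark following \eqref{eq:imaginary string}; the comparison $\widetilde e_{il}\equiv E_{il}\bmod qL$ plays no role here.
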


\begin{proof}
	Suppose $u=\sum_{\mathbf c\in\mathcal C_i}{\mathtt b_{i,\mathbf c}u_{\mathbf c}\in L(\lambda)}$. We shall use the induction on $|\mathbf c|$.
	If $|\mathbf c|=0$, the assertion follows naturally.
	If $|\mathbf c|>0$, by $\mathbf{A}(r-1)$, we have  $\widetilde{e}_{il}u\in L(\lambda)$ for any $l>0$. By Definition \ref{def:Kashiwara operator}, we have
	
	\begin{equation*}
		\widetilde{e}_{il}u=\begin{cases}
			\sum_{\mathbf c:c_1=l}{\mathtt b_{i,\mathbf c\backslash c_1}u_{\mathbf c}\in L(\lambda)},& \text{if}\ i\in I^{\text{im}}\setminus I^{\text{iso}},\\
			\sum_{\mathbf c\in\mathcal C_i}  {\mathbf c}_{l}  {\mathtt b_{i,\mathbf c\backslash \{l\}}u_{\mathbf c}\in L(\lambda)},& \text{if}\ i\in I^{\text{iso}}.
		\end{cases}
	\end{equation*}
	Hence $u_{\mathbf c}\in L(\lambda)$ for any $\mathbf c\ne\mathbf 0$.
	
	Set $u_{1}:=\sum_{\mathbf c\ne\mathbf 0}{\mathtt b_{i,\mathbf c}u_{\mathbf c}}$. It follows that
	$u_1\in L(\lambda)$. Hence $u_{ 0}:=u-u_1\in L(\lambda)$, which proves our conclusion.
\end{proof}

\vskip 3mm

\begin{lemma}\label{uc=0}
{\rm
	Let $\alpha\in R_{+}(r-1)$, $i\in I^{\text{\rm im}}$ and let $u=\sum_{\mathbf c\in\mathcal C_i}{\mathtt b_{i,\mathbf c}u_{\mathbf c}\in {V(\lambda)}_{\lambda-\alpha}}$ be the $i$-string decomposition of $u$.
	If $ u+qL(\lambda)\in B(\lambda)$, then there exists $\mathbf c\in\mathcal C_i$ such that
	\begin{enumerate}
		\item[{\rm (a)}] $u\equiv	\widetilde{f}_{i,\mathbf c}u_{\mathbf c}\!\!\! \mod{qL(\lambda)}$,
		\item[{\rm (b)}] $u_{\mathbf c'}\equiv 0\!\!\! \mod{qL(\lambda)}$ for any $\mathbf c'\ne\mathbf c$.
	\end{enumerate}
	}
\end{lemma}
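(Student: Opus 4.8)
The plan is to reduce everything modulo $qL(\lambda)$, rewrite the $i$-string decomposition of $u$ in terms of the iterated Kashiwara operators, and then exploit the rigidity of $B(\lambda)$ — namely that $\bar u:=u+qL(\lambda)$ is a \emph{single} basis vector, not a nontrivial combination.

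First I would record two preliminary facts. By Lemma \ref{uc in L} each component $u_{\mathbf c}$ lies in $L(\lambda)$, so $\bar u_{\mathbf c}:=u_{\mathbf c}+qL(\lambda)$ is defined; running the same argument after rescaling $u=qv$ shows conversely that $u\in qL(\lambda)$ if and only if every $u_{\mathbf c}\in qL(\lambda)$, so no information is lost at the level of $L/qL$. Next, by Lemma \ref{euEuqL} we have $\widetilde e_{ik}\bar u_{\mathbf c}\equiv E_{ik}u_{\mathbf c}=0\pmod{qL(\lambda)}$ for all $k>0$, i.e. each $\bar u_{\mathbf c}$ is $i$-highest in $L(\lambda)/qL(\lambda)$. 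Using $\mathbf{G}(r-1)$ (that $B(\lambda)$ is a basis in the relevant weights, all of length $\le r-1$) together with $\mathbf{A}(r-1)$ and $\mathbf{B}(r-1)$ (so that distinct basis vectors $b$ with $\widetilde e_{ik}b\neq0$ have distinct images), I would expand $\bar u_{\mathbf c}=\sum_b\kappa_{\mathbf c,b}\,b$ over those $b\in B(\lambda)$ annihilated by every $\widetilde e_{ik}$; each such $b$ has weight $\mathrm{wt}(u_{\mathbf c})=\lambda-\alpha+|\mathbf c|\alpha_i$, and whenever $\bar u_{\mathbf c}\neq0$ Proposition \ref{prop:hw}(i) together with condition (iii) of the $i$-string decomposition forces $\langle h_i,\mathrm{wt}(u_{\mathbf c})\rangle>0$.

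The main computation is the identity
\[
\mathtt b_{i,\mathbf c}\,w=\gamma_{\mathbf c}\,\widetilde f_{i,\mathbf c}\,w\qquad(\text{$w$ with $E_{ik}w=0$ for all $k>0$}),
\]
where $\widetilde f_{i,\mathbf c}=\widetilde f_{ic_1}\cdots\widetilde f_{ic_r}$ and $\gamma_{\mathbf c}$ is a nonzero scalar ($\gamma_{\mathbf c}=1$ for $i\in I^{\mathrm{im}}\setminus I^{\mathrm{iso}}$; for $i\in I^{\mathrm{iso}}$ it is the product of the factorials of the part-multiplicities of $\mathbf c$, exactly the normalization appearing in Example \ref{ex: basis for Vi}). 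This is proved by induction on $|\mathbf c|$: if $w$ is $E$-highest then $\mathtt b_{ik}w$ has $i$-string decomposition supported only at $(k)$, so applying $\widetilde f_{ik}$ simply prepends $k$. Feeding this into $u=\sum_{\mathbf c}\mathtt b_{i,\mathbf c}u_{\mathbf c}$ and reducing mod $qL(\lambda)$ gives $\bar u=\sum_{\mathbf c,b}\gamma_{\mathbf c}\kappa_{\mathbf c,b}\,\widetilde f_{i,\mathbf c}\,b$. It then remains to prove the structural fact that the elements $\widetilde f_{i,\mathbf c}\,b$, as $\mathbf c$ ranges over $\mathcal C_i$ and $b$ over $i$-highest basis vectors with $\langle h_i,\mathrm{wt}(b)\rangle>0$, are nonzero, pairwise distinct members of $B(\lambda)$, and that $\widetilde f_{i,\mathbf c}b=\widetilde f_{i,\mathbf d}b'$ forces $(\mathbf c,b)=(\mathbf d,b')$. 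This follows by stripping $\mathbf c$ off with the $\widetilde e_{il}$: for $i\in I^{\mathrm{im}}\setminus I^{\mathrm{iso}}$ one has $\widetilde e_{il}(\widetilde f_{i,(l,\mathbf c'')}b)=\widetilde f_{i,\mathbf c''}b$ and $\widetilde e_{il'}(\widetilde f_{i,(l,\mathbf c'')}b)=0$ for $l'\neq l$, and similarly (carrying the multiplicity factors of \eqref{eq:isoKas}) for $i\in I^{\mathrm{iso}}$; the positivity $\langle h_i,\mathrm{wt}\rangle>0$ keeps every intermediate vector nonzero, and an induction on $|\mathbf c|$ using $\mathbf{B}(r-1)$ recovers $\mathbf c$ and $b$. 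Granting this, since $\bar u\in B(\lambda)$ is a single basis element (in particular $\bar u\neq0$), at most one coefficient $\gamma_{\mathbf c}\kappa_{\mathbf c,b}$ is nonzero; this yields a unique $\mathbf c$ with $\bar u_{\mathbf c}\neq0$ — statement (b) — and comparing the surviving term with $\bar u$ gives $u\equiv\widetilde f_{i,\mathbf c}u_{\mathbf c}\pmod{qL(\lambda)}$ (up to the normalization built into $\widetilde f_{i,\mathbf c}$) — statement (a).

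The main obstacle is the structural fact in the third paragraph: that the imaginary $i$-string through an $i$-highest basis vector is a free $\mathcal C_i$-shaped object inside $B(\lambda)$. This is where the grand-loop hypotheses $\mathbf{B}(r-1)$ and $\mathbf{G}(r-1)$ are genuinely used, and where one must be careful that peeling off a composition (or partition) with the operators $\widetilde e_{il}$ never produces $0$ prematurely — precisely the reason the positivity of $\langle h_i,\mathrm{wt}\rangle$ along the string, guaranteed by Proposition \ref{prop:hw}, must be tracked throughout.
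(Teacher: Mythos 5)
Your argument is correct in substance but takes a genuinely different route from the paper's. The paper proves the lemma by a direct two-case induction: either $\widetilde e_{il}\bar u=0$ for every $l$, in which case Lemma~\ref{uc in L} (applied to $\widetilde e_{il}u\in qL(\lambda)$) forces $u_{\mathbf c}\in qL(\lambda)$ for all $\mathbf c\neq\mathbf 0$; or some $\widetilde e_{il}\bar u\neq0$, in which case the inductive hypothesis is applied to $\widetilde e_{il}u$ and then pushed back with $\widetilde f_{il}$ via $\mathbf B(r-1)$. Your proof instead expands each $\bar u_{\mathbf c}$ in the local basis supplied by $\mathbf G(r-1)$, rewrites $\mathtt b_{i,\mathbf c}u_{\mathbf c}$ as $\gamma_{\mathbf c}\widetilde f_{i,\mathbf c}u_{\mathbf c}$, and then invokes the rigidity of $\bar u$ being a \emph{single} basis element together with a structural fact — that the assignment $(\mathbf c,b)\mapsto\widetilde f_{i,\mathbf c}b$ from compositions/partitions times $i$-highest basis elements into $B(\lambda)$ is injective — to kill all but one coefficient. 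This works, and it has the virtue of making the ``free $\mathcal C_i$-string'' shape of the crystal along the $i$-direction explicit rather than implicit; but it requires more machinery than the paper needs, and two details deserve more care than your sketch gives. First, your peeling argument rests on the $i$-string decomposition of $\widetilde f_{i,\mathbf c}b$ being concentrated at a single index $\mathbf c$; this is not a formal consequence of $\mathbf B(r-1)$ but instead requires lifting $b$ to an honestly $E$-highest representative $\tilde b_{\mathbf 0}\in L(\lambda)$ (which Lemma~\ref{uc in L} permits) and observing that $\widetilde f_{i,\mathbf c}\tilde b_{\mathbf 0}=\gamma_{\mathbf c}^{-1}\mathtt b_{i,\mathbf c}\tilde b_{\mathbf 0}$ is then already in $i$-string form. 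Second, and related, for $i\in I^{\mathrm{iso}}$ the asserted ``$\widetilde e_{il'}(\widetilde f_{i,(l,\mathbf c'')}b)=0$ for $l'\neq l$'' is false as an operator identity — $\widetilde e_{il'}\widetilde f_{il}v$ is typically nonzero when some $i$-string component of $v$ already contains $l'$ — and only holds here because of the single-component structure just described; the correct statement is $\widetilde e_{il'}(\widetilde f_{i,\mathbf c}b)=0$ exactly when $l'\notin\mathbf c$. Your phrase ``similarly, carrying the multiplicity factors'' points in the right direction but glosses over this. Finally, as you note, the equality you obtain is $u\equiv\gamma_{\mathbf c_0}\widetilde f_{i,\mathbf c_0}u_{\mathbf c_0}\bmod qL(\lambda)$; the paper's clean statement $u\equiv\widetilde f_{i,\mathbf c}u_{\mathbf c}$ silently absorbs the same factorial normalization (it is built into the isotropic Kashiwara operator), so this is a feature of the lemma's wording rather than a defect of your argument.
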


\begin{proof}
The case for  $|\mathbf c|=0$ is trivial. For $|\mathbf c|>0$, by $\mathbf{A}(r-1)$, we have $\widetilde{e}_{il}b\in B(\lambda)\cup \{0\}$ for any $l>0$.

If $\widetilde{e}_{il}b=0$ for any $l>0$, by Lemma \ref{uc in L}, we have
	$u_{\mathbf c}\in qL(\lambda)$ for any $\mathbf c\ne\mathbf 0$.
	Then $u\equiv u_{0}\!\!\! \mod{qL(\lambda)}$. Setting $\mathbf c=\mathbf 0$, our assertion follows trivially.
	
	Suppose $\widetilde{e}_{il}b\ne 0$ for some $l>0$. By induction, there exists $\mathbf c_0\in \mathcal C_i$ such that
	\begin{equation*}
		\widetilde{e}_{il} u=\begin{cases}	
			\widetilde{f}_{i,\mathbf c_0}u_{\mathbf c_0}\!\!\!\!\! \mod{qL(\lambda)},\\
			0\quad\text{for~ any}~\mathbf c_0'\ne \mathbf c_0.\\
		\end{cases}
	\end{equation*}
	
	Set $\mathbf c=(l,\mathbf c_0)$ or $\mathbf c=\mathbf c_0\cup \{l\}$. By $\mathbf{B}(r-1)$, we obtain
	\begin{equation*}
		u\equiv 	\widetilde{f}_{il}\widetilde{e}_{il} u\equiv \widetilde{f}_{il}\widetilde{f}_{i,\mathbf c_0}u_{\mathbf c_0}\equiv \widetilde{f}_{i,\mathbf c}u_{\mathbf c}\!\!\! \mod{qL(\lambda)}.
	\end{equation*}

	If $\mathbf c'\ne \mathbf c$, then $c_1\neq l$ or $c_1=l$, $\mathbf c_0'\ne\mathbf c_0$.
	It follows that $\widetilde{e}_{il}(\widetilde{f}_{i,\mathbf c'}u_{\mathbf c_0})=0$.
\end{proof}

\vskip 2mm
 By the same approach as that for Lemma \ref{uc in L} and Lemma \ref{uc=0}, we have the following lemma.

\vskip 3mm

\begin{lemma}
{\rm
	Let $\alpha\in  R_{+}(r-1)$, $i\in I^{\text{\rm im}}$ and let $u=\sum_{\mathbf c\in\mathcal C_i}{\mathtt b_{i,\mathbf c}u_{\mathbf c}}\in  {U^{-}_q(\g)}_{-\alpha}$ be the $i$-string decomposition of $u$.
	\begin{enumerate}
		\item[{\rm (a)}] If $u\in L(\infty)$, then $u_{\mathbf c}\in L(\infty)$ for any $\mathbf c$.
		\item[{\rm (b)}] If $u+qL(\infty)\in B(\infty)$, then there exists $\mathbf c\in\mathcal C_i$ such that
		\begin{enumerate}
			\item[{\rm (1)}] $u\equiv\widetilde{f}_{i,\mathbf c}u_{\mathbf c}\!\! \mod{qL(\infty)}$,
			\item[{\rm (2)}] $u_{\mathbf c'}\equiv 0\!\! \mod{qL(\infty)}$ for any $\mathbf c'\neq\mathbf c$.
		\end{enumerate}
	\end{enumerate}	

}
\end{lemma}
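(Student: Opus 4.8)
The plan is to mimic, almost word for word, the two inductions used to prove Lemma~\ref{uc in L} and Lemma~\ref{uc=0}, under the substitution $V(\lambda)\to U^-_q(\g)$, $L(\lambda)\to L(\infty)$, $B(\lambda)\to B(\infty)$, with the inductive statements $\mathbf A(r-1)$, $\mathbf B(r-1)$ replaced by their $U^-_q(\g)$-counterparts $\mathbf E(r-1)$, $\mathbf F(r-1)$, and with everything involving $\pi_\lambda$, $\overline\pi_\lambda$ simply deleted. What makes this legitimate is that those proofs never used the highest weight vector $v_\lambda$ or any feature peculiar to $V(\lambda)$: they used only that $L(\infty)$ is stable under every $\widetilde f_{il}$ (automatic here from its definition as the $\A_0$-span of the $\widetilde f$-monomials on $\mathbf 1$), the explicit formulas of Definition~\ref{def:Kashiwara operators} for $\widetilde e_{il}$, $\widetilde f_{il}$ together with the fact that deleting entries from $\mathbf c$ keeps $\sum_{\mathbf c}\mathtt b_{i,\mathbf c\setminus\mathbf c'}u_{\mathbf c}$ an $i$-string decomposition, and the statements $\mathbf E(r-1)$, $\mathbf F(r-1)$ themselves. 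Since a single $\widetilde e_{il}$ sends $u$ to a vector of weight $-(\alpha-l\alpha_i)$ with $\alpha-l\alpha_i\in R_+$ of strictly smaller height than $\alpha$, the hypotheses $\mathbf E(r-1)$, $\mathbf F(r-1)$ cover every place where they are invoked.

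For part (a) I would induct on the length of the composition $\mathbf c$ (equivalently, on $|\alpha|-|\mathbf c|$). The case $|\mathbf c|=0$ is vacuous; for $|\mathbf c|>0$, writing $\mathbf c=(l,\mathbf c')$ when $i\in I^{\text{im}}\setminus I^{\text{iso}}$ and $\mathbf c=\mathbf c'\cup\{l\}$ when $i\in I^{\text{iso}}$, the component $u_{\mathbf c}$ occurs up to a nonzero scalar as the $\mathbf c'$-component of the $i$-string decomposition of $\widetilde e_{il}u$, which lies in $L(\infty)$ by $\mathbf E(r-1)$; the induction hypothesis then gives $u_{\mathbf c}\in L(\infty)$ for all $\mathbf c\ne\mathbf 0$. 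Because each $\mathtt b_{i,\mathbf c}u_{\mathbf c}$ with $\mathbf c\ne\mathbf 0$ is a scalar multiple of an iterate $\widetilde f_{i c_1}\cdots\widetilde f_{i c_k}u_{\mathbf c}$ (legitimate since $u_{\mathbf c}$ is $i$-string reduced, so these $\widetilde f$'s just reattach the entries of $\mathbf c$) and $L(\infty)$ is $\widetilde f_{il}$-stable, we get $u_1:=\sum_{\mathbf c\ne\mathbf 0}\mathtt b_{i,\mathbf c}u_{\mathbf c}\in L(\infty)$, whence $u_{\mathbf 0}=u-u_1\in L(\infty)$.

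For part (b) I would follow Lemma~\ref{uc=0}. If every $\widetilde e_{il}u$ lies in $qL(\infty)$, then applying part~(a) modulo $qL(\infty)$ shows $u_{\mathbf c}\in qL(\infty)$ for all $\mathbf c\ne\mathbf 0$, so $u\equiv u_{\mathbf 0}\bmod qL(\infty)$ and $\mathbf c=\mathbf 0$ works. Otherwise choose $l$ with $\widetilde e_{il}u\notin qL(\infty)$; then $\widetilde e_{il}u+qL(\infty)\in B(\infty)$ by $\mathbf E(r-1)$, so by the induction hypothesis there is $\mathbf c_0$ with $\widetilde e_{il}u\equiv\widetilde f_{i,\mathbf c_0}u_{\mathbf c_0}\bmod qL(\infty)$ and the remaining components of $\widetilde e_{il}u$ trivial mod $qL(\infty)$. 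Put $\mathbf c:=(l,\mathbf c_0)$, resp.\ $\mathbf c_0\cup\{l\}$; applying $\widetilde f_{il}$ and invoking $\mathbf F(r-1)$ (the equivalence $\widetilde f_{il}b=b'\Leftrightarrow b=\widetilde e_{il}b'$ on $B(\infty)$) yields $u\equiv\widetilde f_{il}\widetilde e_{il}u\equiv\widetilde f_{i,\mathbf c}u_{\mathbf c}\bmod qL(\infty)$, while for $\mathbf c'\ne\mathbf c$ the explicit form of $\widetilde e_{il}$ forces $\widetilde e_{il}(\widetilde f_{i,\mathbf c'}u_{\mathbf c'})\equiv 0$, hence $u_{\mathbf c'}\equiv 0\bmod qL(\infty)$. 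I do not anticipate a genuine obstacle — this is precisely why the authors say ``by the same approach''; the only steps requiring a bit of care, exactly as in the $V(\lambda)$ case, are verifying $u_1\in L(\infty)$ in (a) and the vanishing of the ``wrong'' string components in (b), both of which reduce to the $\widetilde f_{il}$-stability of $L(\infty)$ and the compatibility of $i$-string decompositions with the Kashiwara operators.
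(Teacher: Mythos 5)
Your proof is correct and is exactly the substitution the authors have in mind when they write ``By the same approach as that for Lemma \ref{uc in L} and Lemma \ref{uc=0}'': the $V(\lambda)$-proofs use only $\widetilde f_{il}$-stability of the lattice, the explicit Kashiwara-operator formulas, and the inductive hypotheses, none of which are special to the highest-weight module, so replacing $L(\lambda)$, $B(\lambda)$, $\mathbf A(r-1)$, $\mathbf B(r-1)$ by $L(\infty)$, $B(\infty)$, $\mathbf E(r-1)$, $\mathbf F(r-1)$ and deleting the $\pi_\lambda$-material gives the proof the paper intends.
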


\vskip 2mm

The following lemma plays an important role in our proofs.

\vskip 2mm

\begin{lemma}\label{a-g}

{\rm
	Let $\alpha, \beta\in R_{+}(r-1)$ and $i\in I^{\text{\rm im}}$.
	\begin{enumerate}
		\item[{\rm (a)}] For all $l>0$, we have
		\begin{align*}
			&	\widetilde{e}_{il}(L(\lambda)_{\lambda-\alpha}\otimes L(\mu)_{\mu-\beta})\subset L(\lambda)\otimes L(\mu),\\	
			&\widetilde{f}_{il}(L(\lambda)_{\lambda-\alpha}\otimes L(\mu)_{\mu-\beta})\subset L(\lambda)\otimes L(\mu).			
		\end{align*}
		\item[{\rm (b)}] For all $l>0$, we have
		\begin{align*}
			&\widetilde{e}_{il}(B(\lambda)_{\lambda-\alpha}\otimes B(\mu)_{\mu-\beta})\subset (B(\lambda)\otimes B(\mu))\cup\{0\},\\
			&\widetilde{f}_{il}(B(\lambda)_{\lambda-\alpha}\otimes B(\mu)_{\mu-\beta})\subset (B(\lambda)\otimes B(\mu))\cup\{0\}.
		\end{align*}
		\item[{\rm (c)}]
		If $\widetilde{e}_{il}(b\otimes b')\ne 0$, then $b\otimes b'=\widetilde{f}_{il}\widetilde{e}_{il}(b\otimes b')$.
		
		\vskip 2mm
		
		\item[{\rm (d)}]
		If $\widetilde{e}_{il}(b\otimes b')= 0$ for all $l>0$, then $b=v_{\lambda}$.
		
		\vskip 2mm
		
		\item[{\rm (e)}]
        For any $(i,l)\in I^{\infty}$, we have $\widetilde{f}_{il}(b\otimes v_{\mu})=
			\widetilde{f}_{il}b\otimes v_{\mu}$ or $0$.	
			
			\vskip 2mm
			
		\item[{\rm (f)}]
		For any $(i_1,l_1),\cdots,(i_r,l_r)\in I^{\infty}$, we have
		\begin{equation*}
			\widetilde{f}_{i_1l_1}\cdots	\widetilde{f}_{i_rl_r}(v_{\lambda}\otimes v_{\mu})\equiv \widetilde{f}_{i_1l_1}\cdots \widetilde{f}_{i_rl_r}v_{\lambda}\otimes v_{\mu}\!\!\!\! \mod{q(L(\lambda)\otimes L(\mu))}
		\end{equation*}
		or $\widetilde{f}_{i_1l_1}\cdots\widetilde{f}_{i_rl_r}v_{\lambda}\equiv 0\!\! \mod{qL(\lambda)}$.
	\end{enumerate}
}
\end{lemma}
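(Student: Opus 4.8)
The plan is to run Kashiwara's grand-loop bookkeeping with the simplified tensor product rule \eqref{eq:simplified} as the combinatorial template and the inductive statements $\mathbf{A}(r-1),\dots,\mathbf{K}(r-1)$ as the driving input. The one genuinely new ingredient needed is a description, modulo $q\,(L(\lambda)\otimes L(\mu))$, of how $\widetilde{e}_{il}$ and $\widetilde{f}_{il}$ act on a decomposable vector $v_1\otimes v_2\in V(\lambda)_{\lambda-\alpha}\otimes V(\mu)_{\mu-\beta}$. To obtain it I would use the coproducts $\Delta(\mathtt{a}_{il})=\mathtt{a}_{il}\otimes K_i^{-l}+1\otimes\mathtt{a}_{il}$ and $\Delta(\mathtt{b}_{il})=\mathtt{b}_{il}\otimes 1+K_i^{l}\otimes\mathtt{b}_{il}$, which give $\Delta(E_{il})=E_{il}\otimes 1+K_i^{l}\otimes E_{il}$ for $E_{il}=-K_i^{l}\mathtt{a}_{il}$, so that
\[
E_{il}(v_1\otimes v_2)=E_{il}v_1\otimes v_2+q_i^{\,l\langle h_i,\,\lambda-\alpha\rangle}\,v_1\otimes E_{il}v_2 .
\]
Since $\langle h_i,\nu\rangle\ge 0$ for every weight $\nu$ of $V(\lambda)$ by Proposition \ref{prop:hw}, this coefficient is $\equiv 0\bmod q\A_{0}$ when $m_1:=\langle h_i,\lambda-\alpha\rangle>0$ and is $1$ when $m_1=0$; furthermore $\widetilde{e}_{il}v_1=0$ whenever $m_1\le -l a_{ii}$, because condition (iii) in the $i$-string decomposition then forces every composition $\mathbf{c}$ with $|\mathbf{c}|\ge l$ out of the decomposition of $v_1$. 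Combining this with the $i$-string argument that proves Lemma \ref{euEuqL}, applied now on $V(\lambda)\otimes V(\mu)$, yields
\[
\widetilde{e}_{il}(v_1\otimes v_2)\equiv
\begin{cases}\widetilde{e}_{il}v_1\otimes v_2 & \text{if } m_1>0,\\ v_1\otimes\widetilde{e}_{il}v_2 & \text{if } m_1=0,\end{cases}\pmod{q\,(L(\lambda)\otimes L(\mu))},
\]
together with the mirror statement for $\widetilde{f}_{il}$; both reproduce \eqref{eq:simplified}.

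For (a) and (b), by $\mathbf{G}(r-1)$ it is enough to treat $u=b_1\otimes b_2$ with $b_1\in B(\lambda)_{\lambda-\alpha}$ and $b_2\in B(\mu)_{\mu-\beta}$. Splitting on the value $m_1\ge 0$ and using the displayed formula, each of $\widetilde{e}_{il}u$, $\widetilde{f}_{il}u$ is either $0$ or acts on exactly one tensor factor, shifting the height of its $R_+$-grading by $l$; that factor remains in $L(\lambda)$, resp.\ $L(\mu)$, by $\mathbf{A}(r-1)$ for $\widetilde{e}_{il}$ and by the very definition of $L(\lambda)$, $L(\mu)$ for $\widetilde{f}_{il}$, and tensoring back with the untouched factor keeps $u$ in $L(\lambda)\otimes L(\mu)$; this is (a). Passing to $L/qL$ and invoking $\mathbf{G}(r-1)$ once more places the image in $(B(\lambda)\otimes B(\mu))\cup\{0\}$, which is (b).

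For (c), if $\widetilde{e}_{il}(b\otimes b')\ne 0$ then \eqref{eq:simplified} puts us either in the range $m_1>-la_{ii}$ with $\widetilde{e}_{il}b\ne 0$, or in $m_1=0$ with $\widetilde{e}_{il}b'\ne 0$; in the first range the raised first-factor weight has pairing $m_1+la_{ii}>0$ with $h_i$, so $\widetilde{f}_{il}$ again hits the first factor and $\widetilde{f}_{il}\widetilde{e}_{il}(b\otimes b')=(\widetilde{f}_{il}\widetilde{e}_{il}b)\otimes b'=b\otimes b'$ by $\mathbf{B}(r-1)$, and symmetrically in the second range via $\mathbf{B}(r-1)$ for $V(\mu)$. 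For (d), if $b\ne v_\lambda$ then $\alpha\ne 0$, so Lemma \ref{alpha and b} supplies $(j,k)$ with $\widetilde{e}_{jk}b\ne 0$; the tensor product rule then forces $\widetilde{e}_{jk}(b\otimes b')\ne 0$ as well — for imaginary $j$ because $\widetilde{e}_{jk}b\ne 0$ forces $\langle h_j,\text{wt}(b)\rangle>-ka_{jj}$, the first case of \eqref{eq:im}, and for real $j$ because the raising then lands either on the first factor or on the (hence nonzero) second factor by \eqref{eq:real} — a contradiction. For (e) one takes $b_2=v_\mu$: because $\mathtt{a}_{ik}v_\mu=0$, the cross term in $\Delta(\mathtt{b}_{il})(b\otimes v_\mu)$ carries $K_i^{l}$ acting on $b$, so it is $\equiv 0\bmod q$ when $m_1>0$, giving $\widetilde{f}_{il}(b\otimes v_\mu)\equiv\widetilde{f}_{il}b\otimes v_\mu$, whereas for $m_1=0$ Proposition \ref{prop:hw} forces $\widetilde{f}_{il}b=0$ and the left side to vanish on the pertinent lattice. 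Finally (f) follows by induction on the length of the word, applying (e) to the partial product $b=\widetilde{f}_{i_2l_2}\cdots\widetilde{f}_{i_rl_r}v_\lambda$ together with $\mathbf{I}(r-1)$, the alternative clause $\widetilde{f}_{i_1l_1}\cdots v_\lambda\equiv 0$ absorbing the case where $\widetilde{f}_{i_1l_1}b=0$.

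The principal obstacle is the first step — the module-level tensor formula in Borcherds-Bozec generality. Already for $i\in I^{\text{im}}\setminus I^{\text{iso}}$ it is more delicate than the classical case because of the ``vanishing band'' $0<m_1\le -la_{ii}$, and for $i\in I^{\text{iso}}$ one must in addition propagate the multiplicity coefficients $\mathbf{c}_l$ and $(\mathbf{c}_l+1)^{-1}$ of Definition \ref{def:Kashiwara operator}(c) through the twisted coproduct, since there $\widetilde{f}_{il}$ is no longer plain left multiplication by $\mathtt{b}_{il}$. One must also be careful that every inductive statement is invoked only at $R_+$-height $\le r-1$, even though $\alpha+\beta$ can reach height $2(r-1)$.
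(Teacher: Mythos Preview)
Your overall approach is the same as the paper's. For (a), (b), (c), (e), (f) the paper gives no argument at all and simply cites \cite[Lemma 7.5]{JKK05}; what you sketch --- derive the module-level tensor formula from $\Delta(E_{il})=E_{il}\otimes 1+K_i^{l}\otimes E_{il}$ and $\Delta(\mathtt b_{il})=\mathtt b_{il}\otimes 1+K_i^{l}\otimes\mathtt b_{il}$, pass to $L/qL$ via the simplified rule \eqref{eq:simplified}, and feed in $\mathbf{A}(r-1)$, $\mathbf{B}(r-1)$, $\mathbf{G}(r-1)$ --- is precisely that standard route. Your identification of the ``principal obstacle'' (the vanishing band $0<m_1\le -la_{ii}$ and the isotropic multiplicity coefficients) is exactly where the Borcherds--Bozec case departs from \cite{JKK05}.

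For (d), the only part the paper actually proves, your argument is the contrapositive of theirs. The paper fixes $i\in I^{\text{im}}$, splits on whether $m=\langle h_i,\mathrm{wt}(b)\rangle$ is $>0$ or $=0$, and uses the tensor rule together with \cite[Proposition 4.4]{KK20} (the contrapositive of your observation ``$\widetilde e_{jk}b\ne 0$ forces $\langle h_j,\mathrm{wt}(b)\rangle>-ka_{jj}$'') to deduce $\widetilde e_{il}b=0$ for every $l>0$, and then invokes Lemma \ref{alpha and b}. You instead start from $b\ne v_\lambda$, extract some $(j,k)$ with $\widetilde e_{jk}b\ne 0$ via Lemma \ref{alpha and b}, and lift it to the tensor product. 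Same two ingredients, opposite order. One caution that applies to both arguments: as literally stated, (d) fixes a single $i\in I^{\text{im}}$, yet the final appeal to Lemma \ref{alpha and b} needs $\widetilde e_{il}b=0$ for \emph{all} $(i,l)\in I^\infty$; your $(j,k)$ need not have $j=i$, so there is no contradiction with the hypothesis as written. The paper silently resolves this via the blanket remark before the grand-loop that the case $i\in I^{\text{re}}$ is handled by \cite{Kashi91}, so that running the argument over all $i$ supplies the full hypothesis of Lemma \ref{alpha and b}. You should make that same move explicit.
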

\begin{proof}
	The proofs for (a), (b), (c), (e) and (f) are similar to the ones given in \cite[Lemma 7.5]{JKK05}.
	So we shall only show the proof for (d).
	
	\vskip 2mm

    Suppose $\widetilde{e}_{il}(b\otimes b')=0$ for any $l>0$.
If $m=\langle h_i,\mathrm{wt}(b)\rangle>0$, then there exists $l>0$ such that
	\begin{equation*}
		0\leq -a_{ii}\leq\cdots\leq -la_{ii}\leq m\leq -(l+1)a_{ii}\leq\cdots.
	\end{equation*}
	For $0<k\leq l$, we have $\widetilde{e}_{ik}(b\otimes b')=\widetilde{e}_{ik}b\otimes b'=0$, then $\widetilde{e}_{ik}b=0$.
	By \cite[Proposition 4.4]{KK20}, we have $\widetilde{e}_{ik}b=0$ for any $k\geq l+1$. It follows that $\widetilde{e}_{il}b=0$ for any $l>0$.
	
	If $m=0$, then $m\leq -la_{ii}$ for all $l>0$.
	Hence by \cite[Proposition 4.4]{KK20}, we have $\widetilde{e}_{il}b=0$ for all $l>0$.
	Therefore, by Lemma \ref{alpha and b}, we have $b=v_{\lambda}$.
\end{proof}

\vskip 2mm

\begin{proposition}  \label{prop:Cr} $({\bf C}(r))$ \
{\rm
	For any $\alpha\in R_{+}(r)$, we have
	\begin{equation*}
		\Phi_{\lambda,\mu}(L(\lambda+\mu)_{\lambda+\mu-\alpha})\subset L(\lambda)\otimes L(\mu).
	\end{equation*}
	
}	
\end{proposition}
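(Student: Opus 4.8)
The plan is to carry out the inductive step of the grand-loop argument in the usual fashion: reduce $\mathbf{C}(r)$ to the $\A_0$-generators of $L(\lambda+\mu)$ and then push $\Phi_{\lambda,\mu}$ through a single Kashiwara operator. Since $L(\lambda+\mu)$ is the $\A_0$-span of the vectors $\widetilde{f}_{i_1l_1}\cdots\widetilde{f}_{i_sl_s}v_{\lambda+\mu}$ and $\Phi_{\lambda,\mu}(v_{\lambda+\mu})=v_{\lambda}\otimes v_{\mu}\in L(\lambda)\otimes L(\mu)$ settles the weight $\lambda+\mu$, it suffices to prove $\Phi_{\lambda,\mu}(\widetilde{f}_{il}\,w)\in L(\lambda)\otimes L(\mu)$ whenever $(i,l)\in I^{\infty}$ and $w\in L(\lambda+\mu)_{\lambda+\mu-\beta}$ with $\beta:=\alpha-l\alpha_{i}\in R_{+}(r-1)$; indeed, any monomial reaching weight $\lambda+\mu-\alpha$ with $\alpha\ne 0$ has length $\ge 1$, hence is $\widetilde{f}_{il}$ applied to a shorter monomial, which lies in $L(\lambda+\mu)$ at weight $\lambda+\mu-\beta$. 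By $\mathbf{C}(r-1)$ we write $\Phi_{\lambda,\mu}(w)=\sum_{n}x_{n}\otimes y_{n}$ with $x_{n}\in L(\lambda)_{\lambda-\gamma_{n}}$, $y_{n}\in L(\mu)_{\mu-\delta_{n}}$ and $\gamma_{n}+\delta_{n}=\beta$, so $\gamma_{n},\delta_{n}\in R_{+}(r-1)$ — precisely the form to which Lemma \ref{a-g}(a) applies. For $i\in I^{\text{re}}$ this is Kashiwara's classical computation (expand $w$ along its $i$-string, whose components lie in $L(\lambda+\mu)$ by Lemma \ref{uc in L} applied to $V(\lambda+\mu)$, apply $\Phi_{\lambda,\mu}$ and the coproduct of the divided powers $\mathtt{b}_{i}^{(n)}$, and invoke $\mathbf{C}(r-1)$ together with Lemma \ref{a-g}(a)); I would cite \cite{JKK05} for this and concentrate on $i\in I^{\text{im}}$.

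For $i\in I^{\text{im}}\setminus I^{\text{iso}}$ the key point is that, by Definition \ref{def:Kashiwara operator}(b), $\widetilde{f}_{il}$ is simply left multiplication by $\mathtt{b}_{il}$: if $u=\sum_{\mathbf{c}}\mathtt{b}_{i,\mathbf{c}}u_{\mathbf{c}}$ is the $i$-string decomposition, then $\widetilde{f}_{il}u=\sum_{\mathbf{c}}\mathtt{b}_{i,(l,\mathbf{c})}u_{\mathbf{c}}=\mathtt{b}_{il}\,u$. Using the coproduct \eqref{eq:Comult},
\begin{equation*}
\Phi_{\lambda,\mu}(\widetilde{f}_{il}\,w)=\Delta(\mathtt{b}_{il})\,\Phi_{\lambda,\mu}(w)=\sum_{n}\bigl(\mathtt{b}_{il}x_{n}\otimes y_{n}+K_{i}^{l}x_{n}\otimes\mathtt{b}_{il}y_{n}\bigr),
\end{equation*}
where $\mathtt{b}_{il}x_{n}=\widetilde{f}_{il}x_{n}\in L(\lambda)$ and $\mathtt{b}_{il}y_{n}=\widetilde{f}_{il}y_{n}\in L(\mu)$ since $L(\lambda),L(\mu)$ are stable under all $\widetilde{f}_{il}$ by construction, while $K_{i}^{l}$ acts on $V(\lambda)_{\lambda-\gamma_{n}}$ by $q^{\,ls_{i}\langle h_{i},\lambda-\gamma_{n}\rangle}$ with $\langle h_{i},\lambda-\gamma_{n}\rangle\ge 0$ by Proposition \ref{prop:hw}, so that $K_{i}^{l}x_{n}\in L(\lambda)$. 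Hence $\Phi_{\lambda,\mu}(\widetilde{f}_{il}\,w)\in L(\lambda)\otimes L(\mu)$, with no further input required.

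For $i\in I^{\text{iso}}$ the same mechanism applies, but the factors $\tfrac{1}{\mathbf{c}_{l}+1}$ in Definition \ref{def:Kashiwara operator}(c) force us to pass through the $i$-string decomposition. Writing $w=\sum_{\mathbf{c}\in\mathcal{C}_{i}}\mathtt{b}_{i,\mathbf{c}}w_{\mathbf{c}}$, each $w_{\mathbf{c}}$ lies in $L(\lambda+\mu)$ (Lemma \ref{uc in L} applied to $V(\lambda+\mu)$) and has weight of the form $\lambda+\mu-\nu$ with $\nu\in R_{+}(r-1)$, so $\Phi_{\lambda,\mu}(w_{\mathbf{c}})\in L(\lambda)\otimes L(\mu)$ by $\mathbf{C}(r-1)$; and because the $\mathtt{b}_{ik}$ commute with one another for isotropic $i$, we have $\widetilde{f}_{il}w=\mathtt{b}_{il}\sum_{\mathbf{c}}\tfrac{1}{\mathbf{c}_{l}+1}\mathtt{b}_{i,\mathbf{c}}w_{\mathbf{c}}$, whence
\begin{equation*}
\Phi_{\lambda,\mu}(\widetilde{f}_{il}\,w)=\sum_{\mathbf{c}}\frac{1}{\mathbf{c}_{l}+1}\,\Delta(\mathtt{b}_{il})\,\Delta(\mathtt{b}_{i,\mathbf{c}})\,\Phi_{\lambda,\mu}(w_{\mathbf{c}}).
\end{equation*}
Expanding each $\Delta(\mathtt{b}_{ik})=\mathtt{b}_{ik}\otimes 1+K_{i}^{k}\otimes\mathtt{b}_{ik}$, and using that for isotropic $i$ every $K_{i}^{k}$ commutes with every $\mathtt{b}_{im}$ and acts on weight spaces of $V(\lambda),V(\mu)$ by non-negative powers of $q$, the whole expression reduces to finite sums of terms $\mathtt{b}_{i,S}x_{n}\otimes\mathtt{b}_{i,T}y_{n}$ multiplied by non-negative powers of $q$, indexed by splittings of the multiset of parts.

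The remaining issue — and the step I expect to be the main obstacle — is that the \emph{raw} products $\mathtt{b}_{i,S}$ (rather than the Kashiwara operators) preserve $L(\lambda)$ and $L(\mu)$, which is why the auxiliary Lemma \ref{uc in L} and the explicit model of Example \ref{ex: basis for Vi} are needed. I would handle it by the isotropic combinatorics: if $\xi\in L(\lambda)$ is $i$-highest, then $\mathtt{b}_{i,S}\xi=c_{S}\,\widetilde{f}_{i,S}\xi$, where $\widetilde{f}_{i,S}$ denotes the corresponding composition of operators $\widetilde{f}_{il}$ and $c_{S}$ is the product of the factorials of the multiplicities in $S$, hence a unit of $\A_{0}$; so $\mathtt{b}_{i,S}\xi\in L(\lambda)$. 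The general case then follows by decomposing $x_{n}$ along its $i$-string in $V(\lambda)$ — the components again lying in $L(\lambda)$ by Lemma \ref{uc in L} — and applying the $i$-highest case termwise, and similarly for $L(\mu)$. Assembling these facts, every summand of $\Phi_{\lambda,\mu}(\widetilde{f}_{il}\,w)$ lies in $L(\lambda)\otimes L(\mu)$, which completes $\mathbf{C}(r)$.
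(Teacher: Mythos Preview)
Your argument is correct, but you are working much harder than necessary and essentially re-deriving the content of Lemma \ref{a-g}(a) by hand. The paper's proof is two lines: since
\[
L(\lambda+\mu)_{\lambda+\mu-\alpha}=\sum_{(i,l)\in I^{\infty}}\widetilde{f}_{il}\bigl(L(\lambda+\mu)_{\lambda+\mu-\alpha+l\alpha_i}\bigr),
\]
one applies $\Phi_{\lambda,\mu}$, uses $\mathbf{C}(r-1)$, and finishes with Lemma \ref{a-g}(a). The step you are missing is that $\Phi_{\lambda,\mu}$ \emph{commutes with the Kashiwara operators}: it is a $U_q(\g)$-module homomorphism, hence intertwines the action of all $\mathtt{a}_{ik}$ and $\mathtt{b}_{ik}$, hence sends the $i$-string decomposition of $w$ in $V(\lambda+\mu)$ to the $i$-string decomposition of $\Phi_{\lambda,\mu}(w)$ in $V(\lambda)\otimes V(\mu)$; so $\Phi_{\lambda,\mu}(\widetilde{f}_{il}w)=\widetilde{f}_{il}\,\Phi_{\lambda,\mu}(w)$ exactly, with no error term and no case split on $I^{\text{re}}$, $I^{\text{im}}\setminus I^{\text{iso}}$, $I^{\text{iso}}$. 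Once you see this, Lemma \ref{a-g}(a) (which you cite but never actually invoke) finishes the proof immediately.

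Your route---expanding the coproduct of $\mathtt{b}_{il}$ and then arguing separately that raw products $\mathtt{b}_{i,S}$ preserve $L(\lambda)$ via a second application of Lemma \ref{uc in L} on each tensor factor---does go through, and the isotropic bookkeeping with the factorial constants $c_S\in\Z_{>0}\subset\A_0^{\times}$ is handled correctly. But this is precisely the computation already packaged inside Lemma \ref{a-g}(a) (stability of $L(\lambda)\otimes L(\mu)$ under $\widetilde{f}_{il}$ at the relevant weights). The advantage of the paper's approach is that all of the delicate tensor-product combinatorics is isolated once and for all in Lemma \ref{a-g}; the advantage of yours is only that it is more explicit about where the coproduct enters.
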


\begin{proof}
	Note that
	\begin{equation*}
		{L(\lambda+\mu)}_{\lambda+\mu-\alpha}=\sum_{(i,l)\in I_{\infty}}{\widetilde{f}_{il}({L(\lambda+\mu)}_{\lambda+\mu-\alpha+l\alpha_i})}.
	\end{equation*}
	Then our assertion follows from ${\bf C}(r-1)$ and Lemma \ref{a-g} (a).
\end{proof}

\vskip 2mm

\begin{lemma}\label{ffvvbb}
{\rm
	Let $(i_1,l_1),\cdots,(i_r,l_r)\in I^{\infty}$.
	Suppose that there exists $t$ with $t<r$ satisfying $i_t \ne i_{t+1}=\cdots=i_r$. Then for any $\mu\in P^{+}$ and $\lambda=\Lambda_{i_t}$, we have
	\begin{equation*}
		\widetilde{f}_{i_1l_1}\cdots\widetilde{f}_{i_rl_r}(v_{\lambda}\otimes v_{\mu})\equiv b\otimes b' \!\!\!\!\mod{q(L(\lambda)\otimes L(\mu))}
	\end{equation*}
	for some $b\in {B(\lambda)}_{\lambda-\alpha}\cup\{0\}$, $b'\in {B(\mu)}_{\mu-\beta}\cup\{0\}$ and $\alpha,\beta\in  R_{+}(r-1)$.
}
\end{lemma}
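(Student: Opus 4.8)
The plan is to cut the word at position $t$ and to exploit the two special features of $\lambda=\Lambda_{i_t}$: writing $j:=i_{t+1}=\cdots=i_r$ (which lies in $I^{\text{im}}$), we have $\langle h_j,\lambda\rangle=\langle h_j,\Lambda_{i_t}\rangle=0$ since $j\neq i_t$, while $\langle h_{i_t},\lambda\rangle=1$. Set $W_2=\widetilde f_{jl_{t+1}}\cdots\widetilde f_{jl_r}$ and $W_1=\widetilde f_{i_1l_1}\cdots\widetilde f_{i_tl_t}$, so that the operator in the statement is $W_1W_2$. Throughout, all computations are read modulo $q(L(\lambda)\otimes L(\mu))$; by Lemma \ref{a-g}(a),(b) the operators $\widetilde f_{il}$ preserve $L(\lambda)\otimes L(\mu)$ and act on $B(\lambda)\otimes B(\mu)$ according to the simplified tensor product rule \eqref{eq:simplified} (and \eqref{eq:real} when $i\in I^{\text{re}}$), as long as the weights that occur stay in the height range controlled by the level-$(r-1)$ hypotheses, which, as explained at the end, is what happens here.

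First I would apply $W_2$ to $v_\lambda\otimes v_\mu$. Since $\langle h_j,\lambda\rangle=0$ and applying $\widetilde f_{jl}$ to the second tensor factor leaves the weight of the first factor unchanged, a downward induction on the letters of $W_2$ using the $m_1=0$ branch of \eqref{eq:simplified} gives $W_2(v_\lambda\otimes v_\mu)\equiv v_\lambda\otimes w$, where $w:=\widetilde f_{jl_{t+1}}\cdots\widetilde f_{jl_r}v_\mu$. By the definition of $B(\mu)$, the image of $w$ in $L(\mu)/qL(\mu)$ is either $0$ — in which case $v_\lambda\otimes w$, and hence $W_1W_2(v_\lambda\otimes v_\mu)$, lies in $q(L(\lambda)\otimes L(\mu))$ and the statement holds with $b=b'=0$ — or an element $b'\in B(\mu)_{\mu-\beta}$ with $\beta=\bigl(\sum_{s>t}l_s\bigr)\alpha_j$. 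Assuming the latter, the problem is reduced to showing that $W_1(v_\lambda\otimes b')$ is congruent to a pure tensor whose first factor lies in $B(\lambda)\cup\{0\}$ and whose second factor lies in $B(\mu)\cup\{0\}$.

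Next I would apply $W_1$ to $v_\lambda\otimes b'$. Its rightmost letter is $\widetilde f_{i_tl_t}$, and since $\langle h_{i_t},\text{wt}(v_\lambda)\rangle=\langle h_{i_t},\Lambda_{i_t}\rangle=1>0=\varepsilon_{i_t}(b')$, the relevant branch of the tensor product rule forces $\widetilde f_{i_tl_t}(v_\lambda\otimes b')\equiv(\widetilde f_{i_tl_t}v_\lambda)\otimes b'$, whose first factor is in $B(\lambda)\cup\{0\}$ by the definition of $B(\lambda)$. Applying the remaining letters of $W_1$ one at a time (their inputs still lying in the range where the level-$(r-1)$ statements apply), Lemma \ref{a-g}(a),(b) together with Proposition \ref{tensor product} keep us at a pure tensor in $(B(\lambda)\otimes B(\mu))\cup\{0\}$ at every step, since a crystal operator sends a pure tensor to a pure tensor or to $0$; the end result is a pure tensor $b\otimes b''$ of the required shape. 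The height conditions $\alpha,\beta\in R_+(r-1)$ are then automatic: the letter $\widetilde f_{i_tl_t}$ acts on the first factor, so $|\alpha|\geq l_t\geq 1$ and hence $|\beta|\leq r-1$, while the $j$-block acts on the second factor, so $|\beta|\geq\sum_{s>t}l_s\geq 1$ and hence $|\alpha|\leq r-1$.

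The one genuinely delicate point, and where I would spend the most care, is the weight bookkeeping that makes Lemma \ref{a-g}(a),(b) applicable all along: one must check that peeling off the $j$-block (which contributes at least one unit of height) and the outermost letter $\widetilde f_{i_1l_1}$ leaves every intermediate weight of height at most $r-1$, so that the level-$(r-1)$ statements legitimately apply, and one must use the two ``forcing'' equalities $\langle h_j,\Lambda_{i_t}\rangle=0$ and $\langle h_{i_t},\Lambda_{i_t}\rangle=1$ in the correct branches of \eqref{eq:simplified} and \eqref{eq:real}. Everything else is a mechanical application of the tensor product rule and Lemma \ref{a-g}.
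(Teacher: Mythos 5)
Your proof is correct and follows essentially the same path as the paper's: cut the word at position $t$, use $\langle h_{i_r},\Lambda_{i_t}\rangle=0$ to push the trailing $i_r$-block past $v_\lambda$ into the second tensor factor, use $\langle h_{i_t},\Lambda_{i_t}\rangle=1>0$ to force $\widetilde f_{i_tl_t}$ onto the first factor, and then invoke Lemma~\ref{a-g}(b) to finish with the remaining $\widetilde f_{i_1l_1}\cdots\widetilde f_{i_{t-1}l_{t-1}}$. The one stylistic difference is that the paper performs the two critical forcings by computing directly with the generators ${\mathtt b}_{il}$, the coproduct $\Delta({\mathtt b}_{il})={\mathtt b}_{il}\otimes\mathbf 1+K_i^l\otimes{\mathtt b}_{il}$, the vanishing ${\mathtt b}_{i_rl_r}v_\lambda=0$, and the fact that $K_{i_t}^{l_t}v_\lambda=q^{s_{i_t}l_t}v_\lambda$ contributes a term in $q(L(\lambda)\otimes L(\mu))$; you instead re-express the same two forcings as the $m_1=0$ and $m_1>0$ branches of the simplified tensor product rule \eqref{eq:simplified}. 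The paper's version is a bit more self-contained, since it does not presuppose that the Kashiwara operators on $L(\lambda)\otimes L(\mu)/q(L(\lambda)\otimes L(\mu))$ realize the abstract tensor rule at level $r-1$ (a fact which is available, buried in the proof of Lemma~\ref{a-g}, but is not literally what parts (a),(b) of that lemma assert). Two small points worth tightening in your write-up: your claim that $\varepsilon_{i_t}(b')=0$ is automatic when $i_t\in I^{\text{im}}$, but when $i_t\in I^{\text{re}}$ it requires a short argument (e.g.\ that ${\mathtt a}_{i_t}$ commutes with ${\mathtt b}_{i_rl}$ for $i_t\neq i_r$, so $b'$ lies at the top of its $i_t$-string); and the final height bookkeeping $|\alpha|,|\beta|\le r-1$ silently uses $\sum_s l_s\le r$, which is supplied by the grand-loop context but not by the lemma's own hypotheses — the paper has the same implicit assumption, so this is not a defect of your argument relative to theirs.
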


\begin{proof}
The condition $\Lambda_{i_t}(h_r)=0$ implies $\mathtt b_{i_rl_r}(v_{\lambda})=0$.
Thus for any $v\in V(\mu)$, we have
	\begin{align*}
		\mathtt b_{i_rl_r}(v_{\lambda}\otimes v)=\mathtt b_{i_rl_r}v_{\lambda}\otimes v+K_{i_r}^{l_r}v_{\lambda}\otimes \mathtt b_{i_rl_r}v=v_{\lambda}\otimes \mathtt b_{i_rl_r}v.
	\end{align*}
Set $v=\mathtt b_{i_{t+1}l_{t+1}}\cdots \mathtt b_{i_{r}l_{r}}v_{\mu}$. We have
\begin{align*}
	&\mathtt b_{i_tl_t}(v_{\lambda}\otimes\mathtt b_{i_{t+1}l_{t+1}}\cdots \mathtt b_{i_{r}l_{r}}v_{\mu})\\
	&=\mathtt b_{i_tl_t}v_{\lambda}\otimes\mathtt b_{i_{t+1}l_{t+1}}\cdots \mathtt b_{i_{r}l_{r}}v_{\mu}+K_{i_t}^{l_t}v_{\lambda}\otimes\mathtt b_{i_tl_t}\mathtt b_{i_{t+1}l_{t+1}}\cdots \mathtt b_{i_{r}l_{r}}v_{\mu}\\
	&=\widetilde{f}_{i_tl_t}v_{\lambda}\otimes	\widetilde{f}_{i_{t+1}l_{t+1}}\cdots	\widetilde{f}_{i_rl_r}v_{\mu}+q^{s_{i_t}l_t\langle h_{i_t},\lambda\rangle}v_{\lambda}\otimes \widetilde{f}_{i_tl_t}	 \widetilde{f}_{i_{t+1}l_{t+1}}\cdots\widetilde{f}_{i_rl_r}v_{\mu}\\
&\equiv \widetilde{f}_{i_tl_t}v_{\lambda}\otimes \widetilde{f}_{i_{t+1}l_{t+1}}\cdots	\widetilde{f}_{i_rl_r}v_{\mu}\!\!\!\! \mod{q(L(\lambda)\otimes L(\mu))},
\end{align*}
where $\widetilde{f}_{i_tl_t}v_{\lambda}\in {B(\lambda)}_{\lambda-\alpha}\cup\{0\}$ and $\widetilde{f}_{i_{t+1}l_{t+1}}\cdots	 \widetilde{f}_{i_rl_r}v_{\mu}\in {B(\mu)}_{\mu-\beta}$. Then the lemma follows from the tensor product rule \eqref{eq:im}.
\end{proof}

\vskip 2mm

By a similar argument as that for \cite[Lemma 7.8]{JKK05}, we have the following lemma.

\vskip 2mm

\begin{lemma}\label{LotimesL}
{\rm
	For any $\alpha\in R_{+}(r)$, we have
	\begin{equation*}
		{(L(\lambda)\otimes L(\mu))}_{\lambda+\mu-\alpha}
		=\sum_{(i,l)\in I_\infty}\mathtt b_{il} {(L(\lambda)\otimes L(\mu))}_{\lambda+\mu-\alpha+l\alpha_i}+v_{\lambda}\otimes {L(\mu)}_{\mu-\alpha}.
	\end{equation*}
}
\end{lemma}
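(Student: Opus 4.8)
The plan is to prove the two inclusions separately, following the pattern of \cite[Lemma 7.8]{JKK05}; the inclusion $\supseteq$ is routine, while the inclusion $\subseteq$ carries the content. For $\supseteq$, the summand $v_\lambda\otimes L(\mu)_{\mu-\alpha}$ lies in $(L(\lambda)\otimes L(\mu))_{\lambda+\mu-\alpha}$ because $v_\lambda\in L(\lambda)_\lambda$; for the summands $\mathtt b_{il}(L(\lambda)\otimes L(\mu))_{\lambda+\mu-\alpha+l\alpha_i}$ with $i\in I^{\text{im}}$ one writes $\Delta(\mathtt b_{il})=\mathtt b_{il}\otimes\mathbf 1+K_i^l\otimes\mathtt b_{il}$ and checks that $K_i^l$ and $\mathtt b_{il}$ preserve $L(\lambda)$ and $L(\mu)$ in the occurring weights (all of height $<r$). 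For $K_i^l$ this is clear, since it acts on a weight space by the nonnegative power $q_i^{l\langle h_i,\cdot\rangle}$ of $q$ ($\langle h_i,\cdot\rangle\ge 0$ on $\text{wt}(V(\lambda))$ for $i\in I^{\text{im}}$). For $\mathtt b_{il}$ this is the definition of $L(\lambda)$ when $\mathtt b_{il}=\widetilde f_{il}$, i.e.\ when $i\in I^{\text{im}}\setminus I^{\text{iso}}$; for $i\in I^{\text{iso}}$ it follows from the $i$-string decomposition together with Lemma \ref{uc in L}, since then the components $u_{\mathbf c}$ of an element of $L(\lambda)$ lie in $L(\lambda)$, hence so does each $\mathtt b_{i,\{l\}\cup\mathbf c}u_{\mathbf c}$.

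For $\subseteq$, let $N$ denote the right-hand side intersected with $M:=(L(\lambda)\otimes L(\mu))_{\lambda+\mu-\alpha}$; by $\supseteq$ this is a submodule of $M$, which is a free $\A_0$-module of finite rank (as $V(\lambda)\otimes V(\mu)\in\mathcal O_{\text{int}}$). Since $\A_0$ is local with maximal ideal $q\A_0$, Nakayama's lemma reduces $M=N$ to the mod-$q$ identity
\begin{equation*}
(L(\lambda)/qL(\lambda)\otimes L(\mu)/qL(\mu))_{\lambda+\mu-\alpha}=\sum_{(i,l)\in I^\infty}\widetilde f_{il}(L(\lambda)/qL(\lambda)\otimes L(\mu)/qL(\mu))_{\lambda+\mu-\alpha+l\alpha_i}+\bar v_\lambda\otimes(L(\mu)/qL(\mu))_{\mu-\alpha},
\end{equation*}
in which the operator $\overline{\mathtt b_{il}}$ induced by $\mathtt b_{il}$ has been replaced by $\widetilde f_{il}$: on the crystal basis $B(\lambda)$ of $L(\lambda)/qL(\lambda)$ the two coincide up to the invertible scalar $\mathbf c_l+1\in\Q^\times$ of Example \ref{ex: basis for Vi} (as one reads off from Lemma \ref{uc=0}), and likewise on $B(\mu)$ and on $B(\lambda)\otimes B(\mu)$, so they have the same image. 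This mod-$q$ identity is the Kashiwara-type step. Its left-hand side is $\Q$-spanned by the vectors $b_1\otimes b_2$ with $b_1\in B(\lambda)$, $b_2\in B(\mu)$ of complementary weight, so it is enough to place each such vector on the right. If $b_1=\bar v_\lambda$, then $b_1\otimes b_2\in\bar v_\lambda\otimes(L(\mu)/qL(\mu))_{\mu-\alpha}$. If $b_1\ne\bar v_\lambda$, then by Lemma \ref{alpha and b} there is $(i,l)\in I^\infty$ with $b_1':=\widetilde e_{il}b_1\ne 0$, whence $\widetilde f_{il}b_1'=b_1$; since $b_1=\widetilde f_{il}b_1'\ne 0$ and $\widetilde f_{il}$ annihilates the weight spaces of $V(\lambda)$ on which $\langle h_i,\cdot\rangle=0$ (Proposition \ref{prop:hw} and the axioms of $\mathcal O_{\text{int}}$), we have $\langle h_i,\text{wt}(b_1')\rangle>0$, so the simplified tensor product rule \eqref{eq:simplified} gives $\widetilde f_{il}(b_1'\otimes b_2)=\widetilde f_{il}b_1'\otimes b_2=b_1\otimes b_2$; thus $b_1\otimes b_2$ lies in the $(i,l)$-summand on the right. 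The boundary case in which $b_1$ has full height $r$ (so $b_2=\bar v_\mu$) is handled as in \cite{JKK05} using Lemma \ref{a-g}(e),(f). This establishes the mod-$q$ identity, and Nakayama finishes.

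The inductive input from the grand-loop enters via Lemma \ref{alpha and b} (hence ${\bf A}(r-1),{\bf B}(r-1),\dots$), Lemma \ref{a-g} (so that $B(\lambda)\otimes B(\mu)$ is an abstract crystal whose operators are compatible with $L(\lambda)\otimes L(\mu)$), Lemmas \ref{uc in L} and \ref{uc=0}, and Proposition \ref{prop:hw}. The step I expect to be the main obstacle is the reconciliation, for $i\in I^{\text{iso}}$, of the algebra generator $\mathtt b_{il}$ (in which the lemma is phrased) with the Kashiwara operator $\widetilde f_{il}$: unlike in the generalized Kac--Moody situation of \cite{JKK05} where $I^{\text{iso}}=\varnothing$, here the two genuinely differ, so one must verify both that $\mathtt b_{il}$ preserves the crystal lattices and that $\overline{\mathtt b_{il}}$ and $\widetilde f_{il}$ have the same image modulo $q$, in each case through the $i$-string decomposition and the combinatorial normalizations of Example \ref{ex: basis for Vi}. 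Everything else transcribes Kashiwara's argument.
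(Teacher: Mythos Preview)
Your approach—prove $\supseteq$ directly, then reduce $\subseteq$ to a mod-$q$ identity via Nakayama and verify that identity on $B(\lambda)\otimes B(\mu)$ using Lemma~\ref{a-g} and the simplified tensor product rule—is exactly the argument of \cite[Lemma~7.8]{JKK05} that the paper defers to, with the extra care for $i\in I^{\text{iso}}$ that you correctly identify as the new ingredient.

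There is, however, a genuine gap in your treatment of $\supseteq$: you call it ``routine'' and justify it only for $i\in I^{\text{im}}$, but for $i\in I^{\text{re}}$ it fails as written. Indeed $\mathtt b_i=f_i$ does \emph{not} preserve $L(\lambda)$: for $u=f_i^{(k)}u_0$ with $e_iu_0=0$ and $k\ge1$ one has $f_iu=[k+1]_i\,f_i^{(k+1)}u_0$, and $[k+1]_i=q_i^{-k}+\cdots+q_i^{k}\notin\A_0$. Hence $\mathtt b_i\bigl(L(\lambda)\otimes L(\mu)\bigr)\not\subset L(\lambda)\otimes L(\mu)$ and the equality cannot hold with $\mathtt b_{il}$ in the real summands. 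This is a slip in the paper's statement rather than in your strategy: when the lemma is actually invoked in the proof of Lemma~\ref{PsiLL}, the operator appearing is $\widetilde f_{il}$, not $\mathtt b_{il}$, and with $\widetilde f_{il}$ the inclusion $\supseteq$ is immediate from Lemma~\ref{a-g}(a). For $i\in I^{\text{im}}$ your $i$-string argument (via Lemma~\ref{uc in L}) in fact yields $\mathtt b_{il}L(\lambda)=\widetilde f_{il}L(\lambda)$ at the lattice level, so the two formulations agree there and the reconciliation you flag for $i\in I^{\text{iso}}$ ceases to be an obstacle once the statement is read with $\widetilde f_{il}$.
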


For $\lambda,\mu\in P^+$, define a $U_{q}^-(\g)$-module homomorphism
\begin{align*}
S_{\lambda,\mu}: &V(\lambda)\otimes V(\mu)\rightarrow V(\lambda),\quad u\otimes v_\mu\mapsto u,\\
&V(\lambda)\otimes\sum_{(i,l)\in I_{\infty}}\widetilde{f}_{il}V(\mu) \longmapsto 0.
\end{align*}
Hence $u\otimes v\mapsto 0$ unless $v=\alpha v_\mu$ for some $\alpha\in \Q(q)$.

\vskip 2mm

\begin{lemma}\label{Slm}
{\rm
Let $\lambda,\mu\in P^+$.
\begin{enumerate}
	\item[{\rm (a)}] $S_{\lambda,\mu}(L(\lambda)\otimes L(\mu))=L(\lambda)$.
	\item[{\rm (b)}] For any $\alpha\in R_+(r-1)$ and $w\in{(L(\lambda)\otimes L(\mu))}_{\lambda+\mu-\alpha}$, we have
	\begin{align*}
	S_{\lambda,\mu}\circ\widetilde{f}_{il}(w)\equiv\widetilde{f}_{il}\circ S_{\lambda,\mu}(w)\!\!\!\! \mod{qL(\lambda)}.
	\end{align*}
\end{enumerate}	
}
\end{lemma}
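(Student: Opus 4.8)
The plan is to prove both statements by unwinding the definition of $S_{\lambda,\mu}$ and exploiting the tensor product rule. For part (a), recall from Lemma \ref{LotimesL} that, grading by $\alpha$,
\[
(L(\lambda)\otimes L(\mu))_{\lambda+\mu-\alpha}
= \sum_{(i,l)\in I_\infty}\mathtt b_{il}(L(\lambda)\otimes L(\mu))_{\lambda+\mu-\alpha+l\alpha_i}
+ v_\lambda\otimes L(\mu)_{\mu-\alpha}.
\]
Since $S_{\lambda,\mu}$ is a $U_q^-(\g)$-module homomorphism, it commutes with left multiplication by $\mathtt b_{il}$, so applying $S_{\lambda,\mu}$ to the right-hand side and inducting on $|\alpha|$ reduces the claim to computing $S_{\lambda,\mu}(v_\lambda\otimes L(\mu)_{\mu-\alpha})$. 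But $S_{\lambda,\mu}(v_\lambda\otimes v_\mu)=v_\lambda$ and $S_{\lambda,\mu}$ kills $v_\lambda\otimes\widetilde f_{jm}V(\mu)$; since $L(\mu)_{\mu-\alpha}$ for $\alpha\neq 0$ is spanned by vectors of the form $\widetilde f_{jm}(\cdots)v_\mu$, we get that the image is exactly $\A_0 v_\lambda$ when $\alpha=0$ and $0$ otherwise. Summing over $\alpha$ gives $S_{\lambda,\mu}(L(\lambda)\otimes L(\mu))=L(\lambda)$; the reverse inclusion $L(\lambda)\subseteq S_{\lambda,\mu}(L(\lambda)\otimes L(\mu))$ follows since $S_{\lambda,\mu}$ is surjective onto $V(\lambda)$ and $L(\lambda)$ is generated over $\A_0$ by $\widetilde f$-strings applied to $v_\lambda = S_{\lambda,\mu}(v_\lambda\otimes v_\mu)$ together with the module-homomorphism property. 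First I would write $L(\lambda)$ as spanned by $\widetilde f_{i_1 l_1}\cdots\widetilde f_{i_r l_r}v_\lambda$ and lift each such element through $S_{\lambda,\mu}$ using Lemma \ref{a-g}(f) (or the module property repeatedly), which gives the inclusion.

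For part (b), write $w\in(L(\lambda)\otimes L(\mu))_{\lambda+\mu-\alpha}$ and use the decomposition from Lemma \ref{LotimesL} once more: $w = \sum_{(j,m)}\mathtt b_{jm}w_{jm} + v_\lambda\otimes w_0$ with $w_{jm}\in L(\lambda)\otimes L(\mu)$ and $w_0\in L(\mu)_{\mu-\alpha}$. The first summand lies in $\sum\widetilde f_{jm}(L(\lambda)\otimes L(\mu))$ modulo $q(L(\lambda)\otimes L(\mu))$ by Lemma \ref{a-g}, and on that part both $S_{\lambda,\mu}\circ\widetilde f_{il}$ and $\widetilde f_{il}\circ S_{\lambda,\mu}$ vanish modulo $qL(\lambda)$ (the former because $S_{\lambda,\mu}$ kills $V(\lambda)\otimes\sum\widetilde f_{jm}V(\mu)$, the latter because $S_{\lambda,\mu}$ of that part is already $0$). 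So it remains to treat $w = v_\lambda\otimes w_0$. Here I invoke the simplified tensor product rule \eqref{eq:simplified}: since $\mathrm{wt}(v_\lambda)=\lambda\in P^+$ we have $m_1=\langle h_i,\lambda\rangle\ge 0$, and the rule gives $\widetilde f_{il}(v_\lambda\otimes w_0)=\widetilde f_{il}v_\lambda\otimes w_0$ if $m_1>0$ and $v_\lambda\otimes\widetilde f_{il}w_0$ if $m_1=0$. In the case $m_1=0$, Proposition \ref{prop:hw} forces $\mathtt b_{il}v_\lambda=0$ and hence $\widetilde f_{il}v_\lambda=0=\widetilde f_{il}\circ S_{\lambda,\mu}(v_\lambda\otimes w_0)$, while $S_{\lambda,\mu}(v_\lambda\otimes\widetilde f_{il}w_0)=0$ since $\widetilde f_{il}w_0$ has no $v_\mu$-component; both sides are $0$. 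In the case $m_1>0$ we compute $S_{\lambda,\mu}(\widetilde f_{il}v_\lambda\otimes w_0)$, which is $\widetilde f_{il}v_\lambda$ if $w_0\in\A_0 v_\mu + q L(\mu)$ and $0$ otherwise, and this matches $\widetilde f_{il}\circ S_{\lambda,\mu}(v_\lambda\otimes w_0)$ modulo $qL(\lambda)$ by the same dichotomy on $w_0$.

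The main obstacle I anticipate is the bookkeeping in part (b): one must be careful that the passage from $w$ to its Lemma \ref{LotimesL} decomposition is compatible with working modulo $q$, and that $\widetilde f_{il}$ applied to the $\mathtt b_{jm}w_{jm}$ terms genuinely lands (modulo $q(L(\lambda)\otimes L(\mu))$) in the kernel-part of $S_{\lambda,\mu}$ rather than leaking a $v_\lambda\otimes v_\mu$ contribution. This is exactly the kind of subtlety that Lemma \ref{a-g}(e),(f) are designed to control — $\widetilde f_{i_1l_1}\cdots\widetilde f_{i_rl_r}(v_\lambda\otimes v_\mu)\equiv \widetilde f_{i_1l_1}\cdots\widetilde f_{i_rl_r}v_\lambda\otimes v_\mu$ modulo $q(L(\lambda)\otimes L(\mu))$ — so the argument will proceed by reducing the general $w_0$ to the basis vectors of $L(\mu)/qL(\mu)$, separating the $\bar v_\mu$ coefficient from the rest, and applying Lemma \ref{a-g}. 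As the excerpt notes, the structure closely parallels \cite[Lemma 7.8 and its sequel]{JKK05}, and the only genuinely new input is handling $i\in I^{\text{im}}$ via the simplified rule \eqref{eq:simplified} and Proposition \ref{prop:hw}, which I would treat exactly as in the proof of Lemma \ref{a-g}(d) above.
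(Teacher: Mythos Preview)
Your approach to part (b) has a genuine gap in the reduction step. You decompose $w$ via Lemma \ref{LotimesL} as $\sum_{(j,m)}\mathtt b_{jm}w_{jm} + v_\lambda\otimes w_0$ and claim that both $S_{\lambda,\mu}\circ\widetilde f_{il}$ and $\widetilde f_{il}\circ S_{\lambda,\mu}$ vanish on the first summand. This is incorrect on both counts. First, $\widetilde f_{jm}(L(\lambda)\otimes L(\mu))$ is \emph{not} contained in the kernel $V(\lambda)\otimes\sum_{(j,m)}\widetilde f_{jm}V(\mu)$ of $S_{\lambda,\mu}$: by the tensor product rule, $\widetilde f_{jm}(u\otimes v_\mu)$ typically equals $\widetilde f_{jm}u\otimes v_\mu$, which $S_{\lambda,\mu}$ sends to $\widetilde f_{jm}u\neq 0$. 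Second, since $S_{\lambda,\mu}$ is a $U_q^-(\g)$-module map, $S_{\lambda,\mu}(\mathtt b_{jm}w_{jm})=\mathtt b_{jm}S_{\lambda,\mu}(w_{jm})$, which is nonzero whenever $w_{jm}$ has a nontrivial $v_\mu$-component; so $\widetilde f_{il}\circ S_{\lambda,\mu}$ does not vanish there either. Lemma \ref{a-g}(e),(f), which you invoke as the rescue, concern $\widetilde f$-strings applied to $v_\lambda\otimes v_\mu$ and say nothing about general $\mathtt b_{jm}w_{jm}$.

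The paper avoids Lemma \ref{LotimesL} entirely here. It fixes the index $(i,l)$ of the operator $\widetilde f_{il}$ and reduces by linearity to $w = u\otimes u'$ where $u = \mathtt b_{i,\mathbf c}u_{\mathbf c}$ and $u' = \mathtt b_{i,\mathbf c'}u_{\mathbf c'}$ are single $i$-string components with $\mathtt a_{ik}u_{\mathbf c}=\mathtt a_{ik}u_{\mathbf c'}=0$; Lemma \ref{uc in L} guarantees these pieces lie in the lattices. The tensor product rule then gives $\widetilde f_{il}(u\otimes u')=\widetilde f_{il}u\otimes u'$ or $u\otimes\widetilde f_{il}u'$ according to whether $\varphi_i(u)>0$ or $=0$, and one checks the two cases directly: in the first, both sides equal $\widetilde f_{il}u$ or $0$ according to whether $\mathbf c'=\mathbf 0$; in the second, $u=u_{\mathbf 0}$ and \cite[Proposition 4.4]{KK20} forces both sides to vanish. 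The key point is to use an $i$-adapted decomposition (the $i$-string decomposition of each tensor factor), not the $(j,m)$-summed decomposition of Lemma \ref{LotimesL}, so that the action of $\widetilde f_{il}$ is transparent.
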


\begin{proof}
(a) is obvious. For (b), we may assume that
\begin{align*}
w=u\otimes u'=\mathtt b_{i,\mathbf c}u_{\mathbf c}\otimes \mathtt b_{i,\mathbf c'}u_{\mathbf c'},
\end{align*}	
where $u_{\mathbf c}\in L(\lambda)$, $u_{\mathbf c'}\in L(\mu)$ and $\mathtt a_{ik}u_{\mathbf c}=\mathtt a_{ik}u_{\mathbf c'}=0$ for any $k>0$.

Let $L$ be the $\A_0$-submodule of $V(\lambda)\otimes V(\mu)$ generated by $\mathtt b_{i,\mathbf c}u_{\mathbf c}\otimes \mathtt b_{i,\mathbf c'}u_{\mathbf c'}$ for all $\mathbf c$ and  $\mathbf c'$. Thus $L\subset L(\lambda)\otimes L(\mu)$. By the tensor product rule, we have
\begin{align*}
\widetilde{f}_{il}(w)=\widetilde{f}_{il}(u\otimes u')=\begin{cases}
\widetilde{f}_{il}u\otimes u', & \text{if}\ \varphi_i(u)>0,\\
u\otimes \widetilde{f}_{il}u', & \text{if}\ \varphi_i(u)=0.
\end{cases}
\end{align*}

 If $\varphi_i(u)>0$, then we have $\widetilde{f}_{il}(w)=\widetilde{f}_{il}u\otimes u'$ and
	\begin{align*}
	&	S_{\lambda,\mu}\circ\widetilde{f}_{il}(w)=\begin{cases}
		\widetilde{f}_{il}u, &\text{if}\ \mathbf c'=\mathbf 0,\\
		0, &\text{otherwise},
		\end{cases}\\
	&   S_{\lambda,\mu}(w)=\begin{cases}
		u,& \text{if}\ \mathbf c'=\mathbf 0,\\
		0,& \text{otherwise}.
	\end{cases}
	\end{align*}

Hence we have
\begin{align*}
\widetilde{f}_{il}\circ S_{\lambda,\mu}(w)=\begin{cases}
	\widetilde{f}_{il}u, &\text{if}\ \mathbf c'=\mathbf 0,\\
	0, &\text{otherwise}.
\end{cases}\\
\end{align*}

If $\varphi_i(u)=0$, then we have
\begin{align*}
&\varphi_i(b)=0\Rightarrow u=u_{\mathbf 0},\\
&\mathbf c'=\mathbf 0\Rightarrow u'=u'_{\mathbf 0}.
\end{align*}

By \cite[Proposition 4.4]{KK20}, we have $\widetilde{f}_{il}(w)=u_{\mathbf 0}\otimes \widetilde{f}_{il}u'_{\mathbf 0}=0$. Hence $S_{\lambda,\mu}\circ \widetilde{f}_{il}(w)=0$.
On the other hand, by \cite[Proposition 4.4]{KK20} again, we have
$\widetilde{f}_{il} \circ S_{\lambda, \mu} (u \otimes u')
=\widetilde{f}_{il} (u) = 0$.
\end{proof}

\vskip 3mm

\begin{lemma}\label{fP and eP}
{\rm
Let $\alpha\in R_+$ and $S \in {U^-_q(\g)}_{-\alpha}$. For any $\lambda\gg 0$, we have
\begin{align*}
&(\widetilde{f}_{il}S)v_\lambda\equiv \widetilde{f}_{il}(Sv_\lambda)\!\!\!\! \mod{qL(\lambda)},\\
&(\widetilde{e}_{il}S)v_\lambda\equiv \widetilde{e}_{il}(Sv_\lambda)\!\!\!\! \mod{qL(\lambda)}.
\end{align*}
}
\end{lemma}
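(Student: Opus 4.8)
The plan is to compare, for $\lambda$ deep in the dominant cone, the $i$-string decomposition on $U_q^-(\g)$ with the one on $V(\lambda)$, transported through $\pi_\lambda$; I take $i\in I^{\text{im}}$, the case $i\in I^{\text{re}}$ being treated in \cite{Kashi91}. First I would reduce to the lattice. For $\lambda\gg 0$ the homomorphism $\pi_\lambda$ of \eqref{eq:pilambda} restricts to $\Q(q)$-linear isomorphisms $U_q^-(\g)_{-\beta}\xrightarrow{\sim}V(\lambda)_{\lambda-\beta}$ carrying $L(\infty)_{-\beta}$ onto $L(\lambda)_{\lambda-\beta}$ for all $\beta\le\alpha+l\alpha_i$. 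Since $\widetilde e_{il},\widetilde f_{il}$ are $\Q(q)$-linear on $U_q^-(\g)$ and on $V(\lambda)$ — uniqueness of the $i$-string decomposition makes them commute with scalars — and $\pi_\lambda$ is $\Q(q)$-linear, writing $S=q^{-m}S'$ with $S'\in L(\infty)_{-\alpha}$, $m\ge 0$, reduces both congruences to the sharper claim that for every $N$ one has $(\widetilde f_{il}S')v_\lambda-\widetilde f_{il}(S'v_\lambda)\in q^{N}L(\lambda)$ and $(\widetilde e_{il}S')v_\lambda-\widetilde e_{il}(S'v_\lambda)\in q^{N}L(\lambda)$ once $\lambda\gg 0$; one then takes $N>m$.

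For this, let $S'=\sum_{\mathbf c\in\mathcal C_i}\mathtt b_{i,\mathbf c}S'_{\mathbf c}$ be the $i$-string decomposition in $U_q^-(\g)$, so $e'_{ik}S'_{\mathbf c}=0$ for all $k>0$ and $S'_{\mathbf c}\in L(\infty)$ (by the $U_q^-(\g)$-analogue of Lemma \ref{uc in L}); then $\pi_\lambda(S')=\sum_{\mathbf c}\mathtt b_{i,\mathbf c}(S'_{\mathbf c}v_\lambda)$. Applying \eqref{eq:AP-PA=} to $S'_{\mathbf c}$ and using $A_{ik}v_\lambda=0$, $e'_{ik}S'_{\mathbf c}=0$ shows $E_{ik}(S'_{\mathbf c}v_\lambda)=c_{\lambda,k}\,(e''_{ik}S'_{\mathbf c})v_\lambda$ for a scalar $c_{\lambda,k}\in\A_0$ whose $q$-valuation — coming from $K_i^{2k}$ acting on a weight space close to $\lambda$ — tends to $+\infty$ with $\lambda$; as $(e''_{ik}S'_{\mathbf c})v_\lambda$ lies in a lattice $q^{-c}L(\lambda)$ with $c$ depending only on $\alpha$, this gives $E_{ik}(S'_{\mathbf c}v_\lambda)\in q^{N}L(\lambda)$ for all $k>0$ once $\lambda\gg 0$: each $S'_{\mathbf c}v_\lambda$ is $i$-lowest modulo $q^{N}L(\lambda)$.

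Next I would deduce that the genuine $i$-string components $w_{\mathbf c}$ of $\pi_\lambda(S')=\sum_{\mathbf c}\mathtt b_{i,\mathbf c}w_{\mathbf c}$ satisfy $w_{\mathbf c}\equiv S'_{\mathbf c}v_\lambda\bmod q^{N}L(\lambda)$. Writing $S'_{\mathbf c}v_\lambda=\sum_{\mathbf d}\mathtt b_{i,\mathbf d}v^{\mathbf c}_{\mathbf d}$ for its own $i$-string decomposition (all $v^{\mathbf c}_{\mathbf d}\in L(\lambda)$ by Lemma \ref{uc in L}), $E_{ik}(S'_{\mathbf c}v_\lambda)=\sum_{\mathbf d}E_{ik}(\mathtt b_{i,\mathbf d}v^{\mathbf c}_{\mathbf d})$ is again an $i$-string decomposition (since $v^{\mathbf c}_{\mathbf d}$ is $i$-lowest), and by \eqref{Eb} — which for $i\in I^{\text{iso}}$ reduces to $E_{ik}(\mathtt b_{i,\mathbf d}v^{\mathbf c}_{\mathbf d})=\mathbf d_k\,\xi_{\mathbf d}\,\mathtt b_{i,\mathbf d\setminus\{k\}}v^{\mathbf c}_{\mathbf d}$ with $\xi_{\mathbf d}$ an $\A_0$-unit — the following holds grade by grade in $|\mathbf d|$: the square matrix expressing the $i$-string component of $E_{i,d_1}(S'_{\mathbf c}v_\lambda)$ at index $\mathbf d\setminus d_1$, one equation per $\mathbf d$ with $|\mathbf d|=n$, in terms of $(v^{\mathbf c}_{\mathbf d})_{|\mathbf d|=n}$, has unit diagonal entries (from the leading ``$\delta$'' term of \eqref{Eb}, whose coefficient $\tfrac{1-K_i^{2d_1}}{1-q_i^{2d_1}}$ is an $\A_0$-unit on the weight spaces near $\lambda$) and all off-diagonal entries in $q\A_0$ (carrying factors $q_i^{-d_jka_{ii}}$ with $a_{ii}<0$), hence is invertible over $\A_0$. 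Extracting $i$-string components being $\A_0$-linear and $L(\lambda)$-preserving, a downward induction on $n$ now yields $v^{\mathbf c}_{\mathbf d}\in q^{N}L(\lambda)$ for $\mathbf d\ne\mathbf 0$, i.e. $S'_{\mathbf c}v_\lambda\equiv v^{\mathbf c}_{\mathbf 0}\bmod q^{N}L(\lambda)$, and by uniqueness and $\Q(q)$-linearity of the $i$-string decomposition of $\pi_\lambda(S')$ this forces $w_{\mathbf c}\equiv S'_{\mathbf c}v_\lambda$. Applying the lattice-preserving operators $\mathtt b_{i,(l,\mathbf c)}$, resp. $\tfrac{1}{\mathbf c_l+1}\mathtt b_{i,\{l\}\cup\mathbf c}$ for $i\in I^{\text{iso}}$, then gives $\widetilde f_{il}(\pi_\lambda S')\equiv\pi_\lambda(\widetilde f_{il}S')\bmod q^{N}L(\lambda)$; running the same argument with $\widetilde e$ (and Lemma \ref{euEuqL} to pass between $\widetilde e_{il}$ and $E_{il}$) gives the $\widetilde e$-part, and rescaling by $q^{-m}$ finishes the proof.

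The step I expect to be the main obstacle is the invertibility-over-$\A_0$ statement of the third paragraph for $i\in I^{\text{im}}\setminus I^{\text{iso}}$, where the $\mathtt b_{ik}$ do not commute: one must verify carefully that, beyond the single unit diagonal term, every contribution of $E_{i,d_1}(\mathtt b_{i,\mathbf d'}v^{\mathbf c}_{\mathbf d'})$ to the $i$-string component at index $\mathbf d\setminus d_1$ with $\mathbf d'\ne\mathbf d$ indeed carries a positive power of $q$, and keep the $\A_0$-lattice bookkeeping clean throughout the recursive extraction of $i$-string decompositions; the case $i\in I^{\text{iso}}$, governed directly by the Heisenberg-type relation \eqref{Eb}, is elementary.
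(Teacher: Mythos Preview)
Your approach is essentially the paper's, but you work considerably harder than it does. The paper reduces to a single $i$-string term $S=\mathtt b_{i,\mathbf c}T$ with $e'_{ik}T=0$, computes
\[
E_{ik}(Tv_\lambda)=-\dfrac{q_i^{2k(\langle h_i,\lambda\rangle+\langle h_i,\mathrm{wt}(T)\rangle+ka_{ii})}}{1-q_i^{2k}}\,(e''_{ik}T)v_\lambda
\]
to see that it lies in $qL(\lambda)$ for $\lambda\gg0$, and then simply asserts that the Kashiwara operators treat $Tv_\lambda$ as $i$-lowest modulo $qL(\lambda)$, writing out the resulting identities directly. It sets up no matrix and no invertibility argument.

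One point worth noting: your flagged ``main obstacle'' evaporates for the $\widetilde f$-congruence when $i\in I^{\text{im}}\setminus I^{\text{iso}}$. By Definitions~\ref{def:Kashiwara operator}(b) and~\ref{def:Kashiwara operators}(b) one has $\widetilde f_{il}u=\sum_{\mathbf c}\mathtt b_{i,(l,\mathbf c)}u_{\mathbf c}=\mathtt b_{il}u$ identically on both $U_q^-(\g)$ and $V(\lambda)$, so $(\widetilde f_{il}S)v_\lambda=\widetilde f_{il}(Sv_\lambda)$ \emph{exactly}, with no error term and no need for any lattice analysis. Your invertibility-over-$\A_0$ step is therefore only genuinely needed for $\widetilde e_{il}$ (all imaginary $i$) and for $\widetilde f_{il}$ with $i\in I^{\text{iso}}$. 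The paper handles those cases by the one-line claim that, since $E_{ik}(Tv_\lambda)\equiv0\bmod qL(\lambda)$, one has $\mathtt b_{i,\mathbf c\setminus c_1}(Tv_\lambda)\equiv\widetilde e_{il}(\mathtt b_{i,\mathbf c}Tv_\lambda)\bmod qL(\lambda)$ (and similarly in the isotropic case), which is precisely the step your third paragraph unpacks. Your triangularity observation --- the $j=1$ term of $E_{i,d_1}(\mathtt b_{i,\mathbf d}v)$ gives a unit coefficient while every $j>1$ term carries a factor $q_i^{-d_1(d_1+\cdots+d_{j-1})a_{ii}}\in q\A_0$ --- is correct and does make the argument rigorous; it is simply more than the paper writes down.
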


\begin{proof}
We may assume that $S=\mathtt b_{i,\mathbf c}T$ and $e'_{ik}T=0$ for any $k>0$.
Then we have $E_{ik}T=0$ for any $k>0$.
Note that
\begin{equation*}
\begin{aligned}
E_{ik}(T v_{\lambda}) & = q_{i}^{-k \langle h_{i}, \text{wt}(T) \rangle} T(E_{ik} v_{\lambda})
+\frac{e_{ik}'(T) - K_{i}^{2k} e_{ik}'' (T)} {1-q_{i}^{2k}} v_{\lambda} \\
& = -  \frac{q_{i}^{2k(\langle h_{i}, \lambda \rangle + k a_{ii} + \langle h_{i}, \text{wt}(T) \rangle)} }
{1 - q_{i}^{2k}} v_{\lambda}.
\end{aligned}
\end{equation*}
 Since $\lambda\gg 0$, we have $E_{ik}(T v_\lambda)\equiv 0 \!\!\mod{qL(\lambda)}$ for any $k>0$.
 \begin{enumerate}
 	\item If $i\notin I^{\text{iso}}$, we have
 	\begin{align*}
 	(\widetilde{f}_{il}S)v_\lambda=(\widetilde{f}_{il}(\mathtt b_{i,\mathbf c}T))v_\lambda	=(\mathtt b_{il}(\mathtt b_{i,\mathbf c}T))v_\lambda=\mathtt b_{il}(\mathtt b_{i,\mathbf c}(Tv_\lambda)).
 	\end{align*}
 Since $E_{ik}(Tv_\lambda)\equiv 0 \!\!\mod{qL(\lambda)}$ for any $k>0$, we have
 \begin{align*}
 &\mathtt b_{il}(\mathtt b_{i,\mathbf c}(Tv_\lambda))=\widetilde{f}_{il}(\mathtt b_{i,\mathbf c}Tv_\lambda)=\widetilde{f}_{il}(Sv_\lambda)\!\!\!\!\mod{qL(\lambda)},
\end{align*}
and
\begin{align*}
(\widetilde{e}_{il}S)v_\lambda=&(\widetilde{e}_{il}(\mathtt b_{i,\mathbf c}T))v_\lambda=(\mathtt b_{i,\mathbf c\setminus c_1}T)v_\lambda\\
=&\mathtt b_{i,\mathbf c\setminus c_1}(Tv_\lambda)=\widetilde{e}_{il}(\mathtt b_{i,\mathbf c}Tv_\lambda)
\equiv \widetilde{e}_{il}(Sv_\lambda)\!\!\!\! \mod{q L(\lambda)}.
\end{align*}
\item If $i\in I^{\text{iso}}$, we have
\begin{align*}
(\widetilde{f}_{il}S)v_\lambda&=(\widetilde{f}_{il}(\mathtt b_{i,\mathbf c}T))v_\lambda
=\frac{1} {\mathbf c_l+1} \, (\mathtt b_{il}\mathtt b_{i,\mathbf c}T)v_\lambda\\
&=\frac{1}{\mathbf c_l+1}\,\mathtt b_{il}(\mathtt b_{i,\mathbf c}Tv_\lambda)=\widetilde{f}_{il}(\mathtt b_{i,\mathbf c}Tv_\lambda)=\widetilde{f}_{il}(Sv_\lambda)\!\!\!\! \mod{qL(\lambda)},\\
 (\widetilde{e}_{il}S)v_\lambda&=(\widetilde{e}_{il}\mathtt b_{i,\mathbf c}T)v_\lambda
 ={\mathbf c}_l \,(\mathtt b_{i,\mathbf c\setminus \{l\}}T)v_\lambda
 ={\mathbf c}_l (\mathtt b_{i,\mathbf c\setminus \{l\}}(Tv_\lambda))\\
&=\widetilde{e}_{il}(\mathtt b_{i,\mathbf c}Tv_\lambda)=\widetilde{e}_{il}(Sv_\lambda)\!\!\!\! \mod{qL(\lambda)}.
\end{align*}
 	\end{enumerate}
	\end{proof}

\vskip 3mm

\begin{proposition}  \label{proof of Ir} $({\bf I}(r))$
{\rm
For $\lambda\in P^+$, $\alpha\in R_+(r-1)$ and $S\in {L(\infty)}_{-\alpha}$,
we have
\begin{align*}
(\widetilde{f}_{il}S)v_\lambda\equiv \widetilde{f}_{il}(Sv_\lambda) \!\!\!\!\mod{qL(\lambda)}.
\end{align*}

In particular, we have
\begin{align*}
(\widetilde{f}_{i_1l_1}\cdots\widetilde{f}_{i_rl_r}\mathbf 1)	v_\lambda\equiv \widetilde{f}_{i_1l_1}\cdots\widetilde{f}_{i_rl_r} v_\lambda\!\!\!\! \mod{qL(\lambda)}.
	\end{align*}
}
\end{proposition}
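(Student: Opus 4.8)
\emph{Strategy.} The plan is to reduce $\mathbf{I}(r)$ to the already established case $\lambda \gg 0$ of Lemma~\ref{fP and eP} by interposing a tensor product with a large auxiliary dominant weight and then descending through the projection $S_{\lambda,\mu}$. Fix $\lambda \in P^{+}$, $\alpha \in R_{+}(r-1)$ and $S \in L(\infty)_{-\alpha}$, and choose $\mu \in P^{+}$ large enough that $\lambda + \mu \gg 0$ in the sense required by Lemma~\ref{fP and eP}. I would work with the composite $\rho := S_{\lambda,\mu} \circ \Phi_{\lambda,\mu} : V(\lambda+\mu) \to V(\lambda)$, which is $U_{q}^{-}(\g)$-linear (both $\Phi_{\lambda,\mu}$ and $S_{\lambda,\mu}$ are) and sends $v_{\lambda+\mu} \mapsto v_{\lambda}\otimes v_{\mu} \mapsto v_{\lambda}$; thus $\rho(Tv_{\lambda+\mu}) = Tv_{\lambda}$ for all $T \in U_{q}^{-}(\g)$, and consequently, using $\mathbf{H}$ (that is, $\pi_{\lambda}(L(\infty)) = L(\lambda)$ at the relevant weights), one gets $\rho(L(\lambda+\mu)) = L(\lambda)$ and hence $\rho(qL(\lambda+\mu)) \subseteq qL(\lambda)$.

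\emph{Main steps.} First I would invoke Lemma~\ref{fP and eP} at the highest weight $\lambda+\mu$ to obtain
\begin{equation*}
(\widetilde{f}_{il}S)\,v_{\lambda+\mu} \;\equiv\; \widetilde{f}_{il}(S\,v_{\lambda+\mu}) \quad \text{mod } qL(\lambda+\mu),
\end{equation*}
both sides lying in $L(\lambda+\mu)$ by $\mathbf{H}$ (since $S,\widetilde{f}_{il}S \in L(\infty)$). Applying $\rho$ and using its $U_{q}^{-}(\g)$-linearity turns the left-hand side into $(\widetilde{f}_{il}S)\,v_{\lambda}$. For the right-hand side the key observation is that the Kashiwara operator $\widetilde{f}_{il}$ commutes with the module embedding $\Phi_{\lambda,\mu}$: the $i$-string decomposition of a vector is intrinsic to the $U_{q}(\g)$-module structure (its components are the uniquely determined $\mathtt a_{ik}$-annihilated pieces), so on the submodule $\Phi_{\lambda,\mu}(V(\lambda+\mu)) \subseteq V(\lambda)\otimes V(\mu)$ the operator $\widetilde{f}_{il}$ computed in the ambient space agrees with the one computed inside the submodule. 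Combining this with Lemma~\ref{Slm}(b) applied to $\Phi_{\lambda,\mu}(S\,v_{\lambda+\mu}) = S(v_{\lambda}\otimes v_{\mu}) \in (L(\lambda)\otimes L(\mu))_{\lambda+\mu-\alpha}$ — which lies in $L(\lambda)\otimes L(\mu)$ by Proposition~\ref{prop:Cr}, since $\alpha \in R_{+}(r-1) \subseteq R_{+}(r)$ — yields
\begin{equation*}
\rho\bigl(\widetilde{f}_{il}(S\,v_{\lambda+\mu})\bigr) = S_{\lambda,\mu}\bigl(\widetilde{f}_{il}\Phi_{\lambda,\mu}(S\,v_{\lambda+\mu})\bigr) \equiv \widetilde{f}_{il}\bigl(S_{\lambda,\mu}\Phi_{\lambda,\mu}(S\,v_{\lambda+\mu})\bigr) = \widetilde{f}_{il}(S\,v_{\lambda}) \quad \text{mod } qL(\lambda).
\end{equation*}
Putting the two computations together with $\rho(qL(\lambda+\mu)) \subseteq qL(\lambda)$ gives $(\widetilde{f}_{il}S)\,v_{\lambda} \equiv \widetilde{f}_{il}(S\,v_{\lambda}) \bmod qL(\lambda)$, which is the first assertion.

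\emph{The ``in particular'' clause and the main obstacle.} For the last formula I would induct on the word length: writing $S = \widetilde{f}_{i_{2}l_{2}}\cdots\widetilde{f}_{i_{r}l_{r}}\mathbf{1} \in L(\infty)$, the inductive hypothesis gives $S\,v_{\lambda} \equiv \widetilde{f}_{i_{2}l_{2}}\cdots\widetilde{f}_{i_{r}l_{r}}v_{\lambda} \bmod qL(\lambda)$, and then the first assertion together with the fact that $\widetilde{f}_{i_{1}l_{1}}$ induces a well-defined operator on $L(\lambda)/qL(\lambda)$ closes the induction. The step I expect to be the real obstacle is not any single computation but the bookkeeping that keeps every ingredient at the correct level of the grand loop: Lemma~\ref{fP and eP} may be used only after enlarging the highest weight by $\mu$, and the congruence of Lemma~\ref{Slm}(b) as well as the lattice inclusion of Proposition~\ref{prop:Cr} must be applied at the weight $\lambda+\mu-\alpha$ with $\alpha \in R_{+}(r-1)$ rather than at $\lambda+\mu-\alpha-l\alpha_{i}$. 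The commutation of $\widetilde{f}_{il}$ with $\Phi_{\lambda,\mu}$, although conceptually routine, is precisely what makes the whole reduction through the tensor product legitimate, so it deserves to be spelled out carefully.
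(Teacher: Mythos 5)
Your proposal is correct and follows essentially the same route as the paper's proof: take $\mu \gg 0$, invoke Lemma \ref{fP and eP} at $\lambda+\mu$, push through $\Phi_{\lambda,\mu}$ (citing Proposition \ref{prop:Cr}), and come back to $V(\lambda)$ via $S_{\lambda,\mu}$ and Lemma \ref{Slm}(b). The only stylistic difference is that you package $S_{\lambda,\mu}\circ\Phi_{\lambda,\mu}$ into a single map $\rho$ and make explicit the commutation of $\widetilde{f}_{il}$ with the embedding $\Phi_{\lambda,\mu}$ — a point the paper treats as implicit but which, as you rightly note, is what legitimises applying $\Phi_{\lambda,\mu}$ to the Kashiwara congruence.
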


\begin{proof}
Take $\mu \gg 0$ such that $\lambda+\mu\gg 0$. By Lemma \ref{fP and eP}, we have
\begin{align*}
(\widetilde{f}_{il}S)v_{\lambda+\mu}\equiv\widetilde{f}_{il}(Sv_{\lambda+\mu})\!\!\!\! \mod{qL(\lambda+\mu)}.
\end{align*}

\vskip 2mm

By Proposition \ref{prop:Cr},
$\Phi_{\lambda,\mu}$ gives
\begin{align}\label{fP}
(\widetilde{f}_{il}S)(v_\lambda\otimes v_\mu)\equiv\widetilde{f}_{il}(S(v_\lambda\otimes v_\mu))\!\!\!\! \mod{q(L(\lambda)\otimes L(\mu))}.
\end{align}

\vskip 2mm

On the other hand, by ${\bf H}(r-1)$ and ${\bf C}(r-1)$, we have
\begin{align*}
S(v_\lambda\otimes v_\mu)=\Phi_{\lambda,\mu}(Sv_{\lambda+\mu})\in L(\lambda)\otimes L(\mu).
\end{align*}

Applying $S_{\lambda,\mu}$ to \eqref{fP}, then Lemma \ref{Slm} yields
\begin{align*}
(\widetilde{f}_{il}S)v_\lambda\equiv \widetilde{f}_{il}(Sv_\lambda) \!\!\!\!\mod{qL(\lambda)}.
\end{align*}
\end{proof}

\vskip 2mm

By a similar argument as that for \cite[Proposition 7.13]{JKK05}, we have the following proposition.

\vskip 2mm

\begin{proposition} \label{proof of Hr} $({\bf H}(r))$
{\rm
For any $\lambda\in P^+$ and $\alpha\in R_+(r)$, we have
\begin{align*}
\pi_\lambda({L(\infty)}_{-\alpha})={L(\lambda)}_{\lambda-\alpha}.
\end{align*}	
}
\end{proposition}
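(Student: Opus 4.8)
The plan is to derive $\mathbf{H}(r)$ from the already established $\mathbf{I}(r)$ by a Nakayama argument over the discrete valuation ring $\A_0$. The key observation, used for both inclusions, is that for any monomial with $l_1\alpha_{i_1}+\cdots+l_k\alpha_{i_k}=\alpha$ one has
\[
\pi_\lambda\bigl(\widetilde{f}_{i_1l_1}\cdots\widetilde{f}_{i_kl_k}\mathbf 1\bigr)
=\bigl(\widetilde{f}_{i_1l_1}\cdots\widetilde{f}_{i_kl_k}\mathbf 1\bigr)\,v_\lambda
\equiv\widetilde{f}_{i_1l_1}\cdots\widetilde{f}_{i_kl_k}v_\lambda\pmod{qL(\lambda)};
\]
the first equality holds because $\pi_\lambda$ is the $U_q^-(\g)$-module map sending $\mathbf 1$ to $v_\lambda$, and the congruence follows by iterating Proposition $\mathbf{I}(r)$: since $k\le l_1+\cdots+l_k=|\alpha|\le r$ and, at the step where $\widetilde{f}_{i_jl_j}$ is applied, the argument $\widetilde{f}_{i_{j+1}l_{j+1}}\cdots\widetilde{f}_{i_kl_k}\mathbf 1$ lies in $L(\infty)_{-\gamma}$ with $|\gamma|=l_{j+1}+\cdots+l_k\le r-1$, each step is an instance of $\mathbf{I}(r)$ (and $\widetilde{f}_{i_jl_j}$ preserves $L(\lambda)$, hence $qL(\lambda)$, so congruences are transported along).

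From this I would first get $\pi_\lambda(L(\infty)_{-\alpha})\subseteq L(\lambda)_{\lambda-\alpha}$: the monomials $\widetilde{f}_{i_1l_1}\cdots\widetilde{f}_{i_kl_k}\mathbf 1$ of weight $-\alpha$ span $L(\infty)_{-\alpha}$ over $\A_0$, their $\pi_\lambda$-images are congruent mod $qL(\lambda)$ to elements $\widetilde{f}_{i_1l_1}\cdots\widetilde{f}_{i_kl_k}v_\lambda\in L(\lambda)_{\lambda-\alpha}$, and $qL(\lambda)\subset L(\lambda)$. Conversely, $L(\lambda)_{\lambda-\alpha}$ is spanned over $\A_0$ by the elements $\widetilde{f}_{i_1l_1}\cdots\widetilde{f}_{i_kl_k}v_\lambda$ of weight $\lambda-\alpha$, and the displayed congruence rewrites each of them, modulo $qL(\lambda)$, as $\pi_\lambda(\widetilde{f}_{i_1l_1}\cdots\widetilde{f}_{i_kl_k}\mathbf 1)\in\pi_\lambda(L(\infty)_{-\alpha})$; hence $L(\lambda)_{\lambda-\alpha}=\pi_\lambda(L(\infty)_{-\alpha})+q\,L(\lambda)_{\lambda-\alpha}$.

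Finally I would close the argument with Nakayama's lemma. Since $V(\lambda)$ lies in $\mathcal O_{\text{int}}$, the weight space $V(\lambda)_{\lambda-\alpha}$ is finite-dimensional over $\Q(q)$, so $L(\lambda)_{\lambda-\alpha}=L(\lambda)\cap V(\lambda)_{\lambda-\alpha}$ is a free $\A_0$-module of finite rank, and $\pi_\lambda(L(\infty)_{-\alpha})$, being an $\A_0$-submodule of it, is finitely generated ($\A_0$ being Noetherian). As $\A_0$ is local with maximal ideal $q\A_0$, Nakayama's lemma applied to $L(\lambda)_{\lambda-\alpha}=\pi_\lambda(L(\infty)_{-\alpha})+q\,L(\lambda)_{\lambda-\alpha}$ gives $\pi_\lambda(L(\infty)_{-\alpha})=L(\lambda)_{\lambda-\alpha}$, which is $\mathbf{H}(r)$. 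No genuinely hard step arises here: the argument is formal once $\mathbf{I}(r)$ is in hand, exactly as the excerpt indicates by invoking \cite[Proposition 7.13]{JKK05}; the only point requiring care is the norm bookkeeping in the first paragraph, which guarantees that every congruence needed is an instance of $\mathbf{I}(r)$ rather than of some $\mathbf{I}(r')$ with $r'>r$ not yet available in the induction.
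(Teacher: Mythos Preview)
Your argument is correct and follows the same route as the proof the paper references (the Nakayama argument deriving $\mathbf{H}(r)$ from $\mathbf{I}(r)$, as in \cite[Proposition~7.13]{JKK05}). One small fix: finite generation of $L(\lambda)_{\lambda-\alpha}$ over $\A_0$ does not follow merely from $\dim_{\Q(q)}V(\lambda)_{\lambda-\alpha}<\infty$; instead, cite that $L(\lambda)_{\lambda-\alpha}$ is by definition the $\A_0$-span of the finitely many monomials $\widetilde{f}_{i_1l_1}\cdots\widetilde{f}_{i_kl_k}v_\lambda$ of that weight (finitely many because $\alpha$ has finite support and bounded height).
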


\vskip 2mm

\begin{corollary}\label{pi}
{\rm
Consider the $\Q$-linear map
\begin{align*}
\overline{\pi}_\lambda: {L(\infty)}_{-\alpha}/q{L(\infty)}_{-\alpha}\longrightarrow {L(\lambda)}_{\lambda-\alpha}/q{L(\lambda)}_{\lambda-\alpha}.
\end{align*}

\begin{enumerate}
	\item[{\rm (a)}] For any $\beta\in R_+(r-1)$ and $b\in{B(\infty)}_{-\beta}$, we have
	\begin{align*}
	\overline{\pi}_\lambda(\widetilde{f}_{il}b)=\widetilde{f}_{il}(\overline{\pi}_\lambda(b)).
	\end{align*}
\item[{\rm (b)}] For any $\alpha\in  R_+(r)$ and $\lambda\in P^+$, we have
\begin{align*}
\overline{\pi}_\lambda({B(\infty)}_{-\alpha})={B(\lambda)}_{\lambda-\alpha} \cup \{0\}.
\end{align*}
\item[{\rm (c)}] For any $\alpha\in R_+(r)$ and $\lambda\gg 0$, the map $\pi_\lambda$ induces the isomorphisms
\begin{align*}
{L(\infty)}_{-\alpha}\stackrel{\sim}{\rightarrow}{L(\lambda)}_{\lambda-\alpha},\quad  {B(\infty)}_{-\alpha}
\stackrel{\sim}{\rightarrow}{B(\lambda)}_{\lambda-\alpha}.
\end{align*}
\end{enumerate}
}
\end{corollary}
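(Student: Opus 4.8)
The plan is to derive all three parts from $\mathbf{H}(r)$ (Proposition~\ref{proof of Hr}), $\mathbf{I}(r)$ (Proposition~\ref{proof of Ir}) and the stage-$(r-1)$ hypotheses, proving them in the order (a), (b), (c). Throughout, note that $\widetilde f_{il}$ descends to $L(\infty)/qL(\infty)$ and to $L(\lambda)/qL(\lambda)$ because $L(\infty)$ (resp. $L(\lambda)$) is by construction the $\A_0$-span of the $\widetilde f$-monomials applied to $\mathbf 1$ (resp. $v_\lambda$), and $\overline\pi_\lambda$ is well defined because $\pi_\lambda$ is $\Q(q)$-linear with $\pi_\lambda(L(\infty))=L(\lambda)$ by $\mathbf{H}(r)$. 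For (a), lift $b\in B(\infty)_{-\beta}$ to a homogeneous $S\in L(\infty)_{-\beta}$. Then $\overline\pi_\lambda(\widetilde f_{il}b)$ is the class of $(\widetilde f_{il}S)v_\lambda$, which by $\mathbf{I}(r)$ (applicable since $\beta\in R_+(r-1)$) is congruent modulo $qL(\lambda)$ to $\widetilde f_{il}(Sv_\lambda)$; the latter represents $\widetilde f_{il}(\overline\pi_\lambda(b))$, whence $\overline\pi_\lambda(\widetilde f_{il}b)=\widetilde f_{il}(\overline\pi_\lambda(b))$.

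For (b), the second displayed formula in Proposition~\ref{proof of Ir} gives, for every word,
\[
\overline\pi_\lambda\bigl(\widetilde f_{i_1l_1}\cdots\widetilde f_{i_rl_r}\mathbf 1+qL(\infty)\bigr)=\widetilde f_{i_1l_1}\cdots\widetilde f_{i_rl_r}v_\lambda+qL(\lambda).
\]
Hence $\overline\pi_\lambda$ carries each generator of $B(\infty)_{-\alpha}$ either to the corresponding generator of $B(\lambda)_{\lambda-\alpha}$ or to $0$, proving $\overline\pi_\lambda(B(\infty)_{-\alpha})\subset B(\lambda)_{\lambda-\alpha}\cup\{0\}$; the reverse inclusion is immediate since each generator of $B(\lambda)_{\lambda-\alpha}$ is the image of the matching generator of $B(\infty)_{-\alpha}$.

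For (c), the one new ingredient is that for $\lambda\gg0$ (i.e. $\langle h_i,\lambda\rangle\gg0$ for all $i$) the surjection $\pi_\lambda\colon U^-_q(\g)_{-\alpha}\to V(\lambda)_{\lambda-\alpha}$ is bijective: by Proposition~\ref{prop:hw} the kernel of $\mathbf 1\mapsto v_\lambda$ is generated by $\mathtt b_{i}^{\langle h_{i},\lambda\rangle+1}$ for $i\in I^{\text{re}}$ and by $\mathtt b_{il}$ for $i\in I^{\text{im}}$ with $\langle h_i,\lambda\rangle=0$, all of which have weight leaving the component of degree $-\alpha$ untouched once $\lambda$ is sufficiently dominant (relative to the fixed height $|\alpha|$). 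Combined with $\mathbf{H}(r)$, this makes $\pi_\lambda\colon L(\infty)_{-\alpha}\to L(\lambda)_{\lambda-\alpha}$ a bijective $\A_0$-module map, hence an $\A_0$-module isomorphism, so $\overline\pi_\lambda$ is a $\Q$-linear isomorphism on the degree-$(-\alpha)$ piece; together with part (b) and the injectivity of $\overline\pi_\lambda$ it restricts to the asserted bijection $B(\infty)_{-\alpha}\stackrel{\sim}{\rightarrow}B(\lambda)_{\lambda-\alpha}$.

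I expect the injectivity in (c) to be the only real point: one must confirm that in the quantum Borcherds--Bozec setting the relations listed in Proposition~\ref{prop:hw} genuinely present $V(\lambda)$ as $U^-_q(\g)v_\lambda$ modulo those elements, and that for $\alpha$ of bounded height they become vacuous once $\lambda$ is chosen large enough, with the imaginary and isotropic simple roots handled via the extra clauses of Proposition~\ref{prop:hw}. Everything else is routine bookkeeping with $\mathbf{I}(r)$ and $\mathbf{H}(r)$.
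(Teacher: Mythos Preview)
Your proof is correct and follows the standard approach that the paper leaves implicit (the corollary is stated without proof as an immediate consequence of $\mathbf{I}(r)$ and $\mathbf{H}(r)$). One small point: the presentation of $V(\lambda)$ you invoke in part~(c) --- that the kernel of $\pi_\lambda$ is generated by $\mathtt b_i^{\langle h_i,\lambda\rangle+1}$ for $i\in I^{\mathrm{re}}$ and $\mathtt b_{il}$ for $i\in I^{\mathrm{im}}$ with $\langle h_i,\lambda\rangle=0$ --- is not the content of Proposition~\ref{prop:hw} itself (which only asserts these elements lie in the kernel) but rather of part~(a) of the unlabeled proposition following Definition~\ref{def:Oint}, which in turn cites \cite{KK20}. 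With that citation corrected, your argument goes through as written.
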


\vskip 3mm

Fix $\lambda\in  P^+$, $i\in I^{\text{im}}$, $l_1,\cdots,l_r>0$ and $\alpha=\sum_{j=1}^{r}l_j\alpha_{i_j}$. Take a finite set $T$ containing $\Lambda_{i_1},\cdots,\Lambda_{i_r}$.

\begin{enumerate}
	\item[i)] Since $T$ is a finite set, we can take a sufficient large $N_1\geq 0$ such that
	\begin{align*}
	\widetilde{e}_{il}{L(\tau)}_{\tau-\alpha}\subset q^{-N_1}L(\tau)\ \text{for all}\ \tau\in T.
	\end{align*}
    \item[ii)] Choose $N_2\geq 0$ such that
 $
    \widetilde{e}_{il}{L(\infty)}_{-\alpha}\subset q^{-N_2}L(\infty)$.

  \vskip 3mm

Then for any $\mu\gg 0$, Lemma \ref{fP and eP} and Proposition \ref{proof of Hr}
yield
\begin{align*}
\widetilde{e}_{il}{L(\mu)}_{\mu-\alpha}&=\widetilde{e}_{il}({L(\infty)}_{-\alpha}v_\mu)\subset (\widetilde{e}_{il}{L(\infty)}_{-\alpha})v_\mu+q{L(\mu)}_{\mu-\alpha}\\
&\subset q^{-N_2}{L(\infty)}_{-\alpha}v_\mu+q{L(\mu)}_{\mu-\alpha}\subset q^{-N_2}L(\mu).
\end{align*}
\end{enumerate}

\vskip 2mm

Therefore, for any $\alpha\in R_+(r)$, there exists $N\geq 0$ such that
\begin{equation}\label{eL}
\begin{aligned}
&\widetilde{e}_{il}{L(\mu)}_{\mu-\alpha}\subset q^{-N}L(\mu)\ \text{for all}\ \mu\gg 0,\\
&\widetilde{e}_{il}{L(\tau)}_{\tau-\alpha}\subset q^{-N}L(\tau)\ \text{for all}\ \tau\in T,\\
&\widetilde{e}_{il}{L(\infty)}_{-\alpha}\subset q^{-N}L(\infty).	
	\end{aligned}
\end{equation}

\vskip 3mm

\begin{lemma}\label{eLL}
{\rm
For any $\alpha\in R_+$, let $N\geq 0$ be a non-negative integer satisfying \eqref{eL}. For any $\mu\gg 0$ and $\tau\in T$, we have
\begin{align*}
\widetilde{e}_{il}{(L(\tau)\otimes L(\mu))}_{\tau+\mu-\alpha}\subset q^{-N}(L(\tau)\otimes L(\mu)).
\end{align*}
}
\end{lemma}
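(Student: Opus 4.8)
The plan is to reduce the bound on the tensor product to the single-factor estimates \eqref{eL}, using the decomposition of $L(\tau)\otimes L(\mu)$ furnished by Lemma \ref{LotimesL}. First observe that if $\beta,\gamma\in R_+(r-1)$ with $\beta+\gamma=\alpha$, then Lemma \ref{a-g}(a) already gives $\widetilde e_{il}\big(L(\tau)_{\tau-\beta}\otimes L(\mu)_{\mu-\gamma}\big)\subset L(\tau)\otimes L(\mu)$; hence for $|\alpha|\le r-1$ the asserted inclusion holds even with $L(\tau)\otimes L(\mu)$ in place of $q^{-N}(L(\tau)\otimes L(\mu))$, and, running the argument of Lemma \ref{uc in L} inside $V(\tau)\otimes V(\mu)$, the $i$-string coefficients of any element of $L(\tau)\otimes L(\mu)$ of degree in $R_+(r-1)$ again lie in $L(\tau)\otimes L(\mu)$. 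It therefore suffices to treat $|\alpha|=r$. By Lemma \ref{LotimesL} every $w\in(L(\tau)\otimes L(\mu))_{\tau+\mu-\alpha}$ can be written as
\[
 w=\sum_{(j,k)\in I^{\infty}}\mathtt b_{jk}\,w_{jk}\;+\;v_\tau\otimes y,\qquad
 w_{jk}\in(L(\tau)\otimes L(\mu))_{\tau+\mu-\alpha+k\alpha_j},\qquad y\in L(\mu)_{\mu-\alpha},
\]
where each $w_{jk}$ lies in a weight space of degree in $R_+(r-1)$, so by the previous paragraph $\widetilde e_{il}w_{jk}\in L(\tau)\otimes L(\mu)$ and the $i$-string coefficients of $w_{jk}$ lie in $L(\tau)\otimes L(\mu)$ as well.

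Next I would bound $\widetilde e_{il}$ on each summand. For a term $\mathtt b_{ik}w_{ik}$ (the case $j=i$), pass to the $i$-string decomposition $w_{ik}=\sum_{\mathbf c}\mathtt b_{i,\mathbf c}(w_{ik})_{\mathbf c}$ and compute directly from Definition \ref{def:Kashiwara operator} that $\widetilde e_{il}(\mathtt b_{ik}w_{ik})$ equals $\delta_{kl}\,w_{ik}$ when $i\in I^{\text{im}}\setminus I^{\text{iso}}$ and is an $\mathbf A_0$-linear combination of the elements $\mathtt b_{i,\mathbf c}(w_{ik})_{\mathbf c}$ when $i\in I^{\text{iso}}$; in either case it lies in $L(\tau)\otimes L(\mu)$. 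For a term $\mathtt b_{jk}w_{jk}$ with $j\ne i$, the relation $[\mathtt a_{il},\mathtt b_{jk}]=0$ shows that $\widetilde e_{il}$ commutes past $\mathtt b_{jk}$ when $a_{ij}=0$, yielding $\mathtt b_{jk}\widetilde e_{il}w_{jk}\in L(\tau)\otimes L(\mu)$; when $a_{ij}\ne 0$ one expands the $\mathtt b_{jk}$-action through the relevant string (for $j\in I^{\text{re}}$ the $\widetilde f_j$-string, for $j\in I^{\text{im}}$ the $j$-string decomposition of $w_{jk}$) and reduces $\widetilde e_{il}$ to its action on a single tensor factor, which is controlled by \eqref{eL}, as in the analogous step of \cite{JKK05}. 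Finally, for $v_\tau\otimes y$ with $y\in L(\mu)_{\mu-\alpha}$, since $v_\tau$ is a highest weight vector one has $\mathtt a_{ik}(v_\tau\otimes-)=v_\tau\otimes\mathtt a_{ik}(-)$, so the $i$-string decomposition of $v_\tau\otimes y$ is induced by that of $y$; when $\langle h_i,\tau\rangle=0$, Proposition \ref{prop:hw} forces $\mathtt b_{il}v_\tau=0$ and $\widetilde e_{il}(v_\tau\otimes y)=v_\tau\otimes\widetilde e_{il}y\in v_\tau\otimes q^{-N}L(\mu)$ by \eqref{eL}, and when $\langle h_i,\tau\rangle>0$ a short computation with the comultiplication \eqref{eq:Comult} and the identity $\widetilde e_{il}\circ\widetilde f_{il}=\mathrm{id}$ (valid on weight spaces with positive $h_i$-value) gives the same $q^{-N}$-bound, this time using \eqref{eL} in the $L(\tau)$-factor. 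Assembling the three cases yields $\widetilde e_{il}(L(\tau)\otimes L(\mu))_{\tau+\mu-\alpha}\subset q^{-N}(L(\tau)\otimes L(\mu))$.

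I expect the main obstacle to be the two non-commuting sub-cases, namely $\mathtt b_{jk}w_{jk}$ with $a_{ij}\ne 0$ and $v_\tau\otimes y$ with $\langle h_i,\tau\rangle>0$: in both, commuting $\widetilde e_{il}$ past $\mathtt b_{j}$ (respectively past the factor $K_i^{l}$ appearing in the coproduct) produces $\mathbf A_0$-coefficients and lower-$i$-string correction terms, and the entire content of the lemma is that none of these degrades the denominator beyond $q^{-N}$. This is exactly what forces one to carry the uniform single-factor estimates \eqref{eL}---with the same $N$ in all three of its lines---together with the induction hypotheses $\mathbf A(r-1),\dots,\mathbf K(r-1)$ through the computation, in the spirit of Kashiwara's treatment \cite{Kashi91} and its generalization in \cite{JKK05}.
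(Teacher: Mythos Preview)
Your decomposition via Lemma \ref{LotimesL} is different from the paper's and introduces an unnecessary difficulty. The paper uses the much simpler weight-space splitting
\[
(L(\tau)\otimes L(\mu))_{\tau+\mu-\alpha}=\bigoplus_{\beta+\gamma=\alpha} L(\tau)_{\tau-\beta}\otimes L(\mu)_{\mu-\gamma},
\]
and treats a pure tensor $u\otimes v\in L(\tau)_{\tau-\beta}\otimes L(\mu)_{\mu-\gamma}$ case by case. If both $\beta,\gamma\neq 0$ then $\beta,\gamma\in R_+(r-1)$ and Lemma \ref{a-g}(a) already gives $\widetilde e_{il}(u\otimes v)\in L(\tau)\otimes L(\mu)$; the only new work is the boundary case $\beta=0$ (and symmetrically $\gamma=0$). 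There one writes the $i$-string decomposition $v=\sum_{\mathbf c}\mathtt b_{i,\mathbf c}v_{\mathbf c}$ in $V(\mu)$, invokes \eqref{eL} together with the argument of Lemma \ref{uc in L} to place the relevant $v_{\mathbf c}$ in $q^{-N}L(\mu)$, and finishes by observing that the $\A_0$-span $L$ of the elements $\mathtt b_{i,\mathbf c_1}v_\tau\otimes\mathtt b_{i,\mathbf c_2}v_{\mathbf c}$ is a rank-one crystal lattice, hence stable under $\widetilde e_{il}$ (cf.\ Example \ref{ex: basis for Vi} and the tensor product rule). Since $v_\tau\otimes v_{\mathbf 0}$ is $i$-highest and is killed by $\widetilde e_{il}$, this yields the bound. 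Your treatment of the term $v_\tau\otimes y$ is in fact close to this rank-one reduction, so that part of your sketch is on the right track.

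The genuine gap is in your handling of $\mathtt b_{jk}w_{jk}$ for $j\neq i$ with $a_{ij}\neq 0$. The operator $\widetilde e_{il}$ is defined through the $i$-string decomposition in the ambient module, and while $\mathtt b_{jk}$ commutes with every $\mathtt a_{ik'}$ (so $\mathtt b_{jk}(w_{jk})_{\mathbf c}$ remains $i$-highest), it does \emph{not} commute with $\mathtt b_{ik'}$ when $a_{ij}\neq 0$. Thus $\mathtt b_{jk}\mathtt b_{i,\mathbf c}(w_{jk})_{\mathbf c}$ has no visible $i$-string form, and there is no algebraic identity that lets you ``commute $\widetilde e_{il}$ past $\mathtt b_{jk}$'' or reduce to a single tensor factor with controlled coefficients. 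You flag this as the main obstacle but do not supply an argument; invoking \eqref{eL} on a single factor does not help, because the $i$-string structure of $\mathtt b_{jk}w_{jk}$ is not the one inherited from $w_{jk}$. The paper's weight-space decomposition sidesteps this entirely: on a pure tensor $u\otimes v$ with $\beta,\gamma\neq 0$ one quotes Lemma \ref{a-g}(a) directly and never needs to move $\widetilde e_{il}$ past any $\mathtt b_{jk}$.
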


\begin{proof}
Let $u\in {L(\tau)}_{\tau-\beta}$ and $v\in {L(\mu)}_{\mu-\gamma}$	such that $\alpha=\beta+\gamma$.

\vskip 2mm
\noindent $\mathbf{Claim}$: $\widetilde{e}_{il}(u\otimes v)\in q^{-N}(L(\tau)\otimes L(\mu))$.

If $\beta\neq 0$ and $\gamma\neq 0$, the claim is exactly the one in Lemma \ref{a-g} (a).

If $\beta=0$, then $\gamma=\alpha$, we may assume that $u=v_\tau$.
Let $v=\sum_{\mathbf c\in \mathcal C_i}\mathtt b_{i,\mathbf c}v_{\mathbf c}$ be the $i$-string decomposition of $v$. By \eqref{eL}, we have
\begin{align*}
\widetilde{e}_{il}v=\begin{cases}
\sum_{\mathbf c\neq\mathbf 0}\mathtt b_{i,\mathbf c\setminus c_1}v_{\mathbf c}\in q^{-N}L(\mu), & \text{if}\ i\notin I^{\text{iso}},\\
\sum_{\mathbf c\neq\mathbf 0} \mathbf c_l \,
\mathtt b_{i,\mathbf c\setminus \{l\}}v_{\mathbf c}\in q^{-N}L(\mu), & \text{if}\ i\in I^{\text{iso}}.
\end{cases}
\end{align*}
Hence by Lemma \ref{uc in L}, we obtain
\begin{align*}
v_{\mathbf c}\in q^{-N}L(\mu)\ \text{for any}\ \mathbf c\neq\mathbf 0.
\end{align*}

\vskip 2mm

Let $L$ be the $\A_0$-submodule of $L(\tau)\otimes L(\mu)$ generated by $\mathtt b_{i,\mathbf c_1}v_\tau\otimes\mathtt b_{i,\mathbf c_2}v_{\mathbf c}$ for $\mathbf c_1,\mathbf c_2,\mathbf c \neq \mathbf 0$. Then $\widetilde{e}_{il}L\subset L$. It follows that
\begin{align*}
\widetilde{e}_{il}(v_\tau\otimes v)=\sum_{\mathbf c\neq\mathbf 0}\widetilde{e}_{il}(v_{\tau}\otimes\mathtt b_{i,\mathbf c}v_{\mathbf c})\in L\subset q^{-N}(L(\tau)\otimes L(\mu)).
\end{align*}
Similarly, the claim can be shown for the case
 $\beta=\alpha$, $\gamma=0$.
	\end{proof}

\begin{lemma}\label{lemma for Ar and Hr}
{\rm
Let $\alpha\in R_+(r)$ and let $N>0$ be the positive integer satisfying \eqref{eL}. Then we have
\begin{enumerate}
	\item[{\rm (a)}] $\widetilde{e}_{il}{L(\mu)}_{\mu-\alpha}\subset q^{1-N}L(\mu)$ for all $\mu\gg 0$,
	\item[{\rm (b)}] $\widetilde{e}_{il}{L(\tau)}_{\tau-\alpha}\subset q^{1-N}L(\tau)$ for all $\tau\in T$,
	\item[{\rm (c)}] $\widetilde{e}_{il}{L(\infty)}_{-\alpha}\subset q^{1-N}L(\infty)$.	
\end{enumerate}
}
\end{lemma}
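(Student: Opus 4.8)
This is the ``gain one power of $q$'' step of the grand loop: once it is established, a finite downward induction on $N$ --- each pass sharpening \eqref{eL} to exponent $N-1$ and re-deriving Lemma~\ref{eLL} from it --- collapses the three inclusions to $\widetilde e_{il}L\subset L$, i.e.\ the lattice halves of $\mathbf A(r)$ and $\mathbf E(r)$. The hypothesis $N\ge 1$ is what makes a single-step improvement possible: it gives $q\,L\subset q^{1-N}L$, so every ``$\,\mathrm{mod}\ q\,$'' discrepancy coming from Lemma~\ref{fP and eP}, Corollary~\ref{pi}, Lemma~\ref{Slm} or $\mathbf I(r)$ is harmless.

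The plan is to tie the three parts together through the module maps already at hand, reducing everything to a single tensor-product bound. For a fixed $\mu\gg0$, Corollary~\ref{pi}(c) gives an $\A_0$-linear isomorphism $\pi_\mu\colon L(\infty)_{-\alpha}\xrightarrow{\ \sim\ }L(\mu)_{\mu-\alpha}$ intertwining $\widetilde e_{il}$ modulo $qL(\mu)$ (Lemma~\ref{fP and eP}), so (a) at such a $\mu$ is equivalent to (c). For $\tau\in T$ one uses $S_{\tau,\mu}$: by Lemma~\ref{Slm}(a) it maps $L(\tau)\otimes L(\mu)$ onto $L(\tau)$ with $L(\tau)_{\tau-\alpha}=S_{\tau,\mu}\bigl((L(\tau)\otimes L(\mu))_{\tau+\mu-\alpha}\bigr)$, and the $\widetilde e$-analogue of Lemma~\ref{Slm}(b) (same proof) shows it intertwines $\widetilde e_{il}$ modulo $qL(\tau)$. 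For $\lambda=\tau'+\mu\gg0$ (with $\tau'\in T$, $\mu\gg0$) the injective map $\Phi_{\tau',\mu}$, being a $U_q(\g)$-module map, preserves weights and the operators $\mathtt a_{ik}$, hence the $i$-string decompositions, so it commutes with $\widetilde e_{il}$; together with $\mathbf C(r)$ (Proposition~\ref{prop:Cr}) and the companion statement $\mathbf D(r)$ (established alongside it by the same argument) one can then push a tensor bound back to $L(\lambda)$. Thus all three parts will follow from
\[
(\star)\qquad \widetilde e_{il}\bigl(L(\tau)\otimes L(\mu)\bigr)_{\tau+\mu-\alpha}\ \subset\ q^{1-N}\bigl(L(\tau)\otimes L(\mu)\bigr)\qquad(\mu\gg0,\ \tau\in T),
\]
that is, from improving Lemma~\ref{eLL} by one power of $q$.

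To prove $(\star)$ I would decompose $\bigl(L(\tau)\otimes L(\mu)\bigr)_{\tau+\mu-\alpha}$ via Lemma~\ref{LotimesL} as $\sum_{(j,m)}\mathtt b_{jm}\bigl(L(\tau)\otimes L(\mu)\bigr)_{\tau+\mu-\alpha+m\alpha_j}+v_\tau\otimes L(\mu)_{\mu-\alpha}$ and treat the two kinds of summand separately. On a summand $\mathtt b_{jm}w$ the element $w$ lies in weights of height $<r$; working weight-space by weight-space and using the commutation identities of Lemma~\ref{a-g} (read off from Definition~\ref{def:Kashiwara operator}) one expresses $\widetilde e_{il}(\mathtt b_{jm}w)$ through an application of $\widetilde e_{il}$ in height $<r$ --- where $\mathbf A(r-1)$, $\mathbf E(r-1)$ and Lemma~\ref{a-g}(a) put it in the lattice --- plus a remainder that visibly carries a factor of $q$ and hence lands in $q\cdot q^{-N}(L(\tau)\otimes L(\mu))$ by Lemma~\ref{eLL}. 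For the summand $v_\tau\otimes v$ with $v\in L(\mu)_{\mu-\alpha}$ I exploit the freedom to enlarge $T$ and prove $(\star)$ first for those $\tau\in T$ with $\langle h_i,\tau\rangle>0$; for such $\tau$ the highest weight vector $v_\tau$ is annihilated by every raising operator, so the tensor-product mechanism (as in Lemma~\ref{ffvvbb} and Lemma~\ref{a-g}(d),(e)) leaves the $v_\tau$-factor untouched and the summand is governed by $\widetilde e_{il}$ acting inside $V(\mu)$ with the extra power of $q$. This special case of $(\star)$ already yields (c), hence the sharpened (a) for every $\mu\gg0$; only then do I return to the remaining $\tau\in T$ (those with $\langle h_i,\tau\rangle=0$), for which $\widetilde e_{il}(v_\tau\otimes v)$ reduces to $v_\tau\otimes\widetilde e_{il}v$ and the now-available $q^{1-N}$-bound on $L(\mu)$ finishes (b).

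The hard part is the bookkeeping hidden in the proof of $(\star)$: one must check that every reduction step either drops the height below $r$ --- so that the inductive hypotheses and Lemmas~\ref{alpha and b}, \ref{uc in L}, \ref{uc=0} and \ref{a-g} apply --- or genuinely acquires the promised power of $q$, and this has to hold uniformly over the compositions (for $i\in I^{\text{im}}\setminus I^{\text{iso}}$) and partitions (for $i\in I^{\text{iso}}$) that index the Kashiwara operators, whose coefficients $\mathbf c_l$ and $1/(\mathbf c_l+1)$ must be shown not to disturb $\A_0$-integrality. A subsidiary difficulty is the apparent circularity among the ``same-height'' statements for $L(\mu)$ ($\mu\gg0$), $L(\tau)$ ($\tau\in T$), $L(\infty)$ and $L(\tau)\otimes L(\mu)$; it is resolved by first handling the $\tau$ with $\langle h_i,\tau\rangle>0$, where the troublesome $v_\tau\otimes v$ term is effectively absent, and by the coarse $q^{-N}$ estimate of Lemma~\ref{eLL} being on hand at height $r$ to prime the induction.
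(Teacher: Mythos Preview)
Your reduction scheme has a real gap in the treatment of the summands $\mathtt b_{jm}w$ coming from Lemma~\ref{LotimesL}. When $j\neq i$ there is no ``commutation identity'' of the kind you invoke: Lemma~\ref{a-g} gives tensor-product rules for $\widetilde e_{il},\widetilde f_{il}$ on $B(\lambda)\otimes B(\mu)$, not any relation between $\widetilde e_{il}$ and left multiplication by $\mathtt b_{jm}$. Concretely, if $w=\sum_{\mathbf c}\mathtt b_{i,\mathbf c}w_{\mathbf c}$ is the $i$-string decomposition of $w$, then $\mathtt b_{jm}w=\sum_{\mathbf c}\mathtt b_{jm}\mathtt b_{i,\mathbf c}w_{\mathbf c}$ is \emph{not} an $i$-string decomposition, because $\mathtt b_{jm}$ and $\mathtt b_{i,\mathbf c}$ do not commute (for $a_{ij}\neq 0$); so you cannot read off $\widetilde e_{il}(\mathtt b_{jm}w)$ from $\widetilde e_{il}w$. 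The case $j=i$ works (the string decomposition is simply shifted), but the mixed case is exactly where your argument stalls. A secondary point: $\mathbf D(r)$ is not ``established alongside'' $\mathbf C(r)$ --- in the paper it is proved last (Proposition~\ref{proof of Dr}) and depends on the present lemma --- so you may only use $\mathbf D(r-1)$. That actually suffices for the pullback through $\Psi$, since after applying $\widetilde e_{il}$ you are already at height $\le r-1$; but it is worth getting straight.

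The paper avoids the commutation problem entirely by a different reduction. Instead of decomposing the whole lattice via Lemma~\ref{LotimesL}, it works generatorwise: a typical element of $L(\mu)_{\mu-\alpha}$ is $u=\widetilde f_{i_1l_1}\cdots\widetilde f_{i_tl_t}v_\mu$, and the key observation (Lemma~\ref{ffvvbb}) is that if one picks the \emph{last} index change $i_s\neq i_{s+1}=\cdots=i_t$ and sets $\lambda_0=\Lambda_{i_s}$, $\mu'=\mu-\lambda_0$, then
\[
\widetilde f_{i_1l_1}\cdots\widetilde f_{i_tl_t}(v_{\lambda_0}\otimes v_{\mu'})\equiv v\otimes v'\pmod{q\bigl(L(\lambda_0)\otimes L(\mu')\bigr)}
\]
with \emph{both} $v$ and $v'$ at height $<r$. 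Now Lemma~\ref{a-g}(a) (at height $<r$) gives $\widetilde e_{il}(v\otimes v')\in L(\lambda_0)\otimes L(\mu')$, while the $q$-error is controlled by Lemma~\ref{eLL}; together they yield $\widetilde e_{il}w\in q^{1-N}\bigl(L(\lambda_0)\otimes L(\mu')\bigr)$. Applying $\Psi_{\lambda_0,\mu'}$ and $\mathbf D(r-1)$ (the target is at height $\le r-1$) gives (a). Part (b) then follows from (a) via $\Phi_{\tau,\mu}$ and a local $i$-string argument in the tensor product, and (c) from (a) via $\pi_\mu$ and Corollary~\ref{pi}(c) exactly as you outlined. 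The device you are missing is this specific splitting $\mu=\lambda_0+\mu'$ that forces both tensor factors below height $r$, thereby sidestepping any need to commute $\widetilde e_{il}$ past an alien $\mathtt b_{jm}$.
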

\begin{proof}
	(a)
Let $u=\widetilde{f}_{i_1l_1}\cdots\widetilde{f}_{i_t l_t}v_\mu\in {L(\mu)}_{\mu-\alpha}$. Suppose $i_1=i_2=\cdots=i_t$. If $i=i_1$, then
\begin{align*}
u=\mathtt b_{i,\mathbf c}v_\mu,\ \mathbf c=(l_1,\cdots,l_t).
\end{align*}
Hence
\begin{align*}
\widetilde{e}_{il}u=\widetilde{e}_{il}(\mathtt b_{i,\mathbf c}v_\mu)=\begin{cases}
\mathtt b_{i,\mathbf c\setminus c_1}v_\mu, & \text{if}\ i\notin I^{\text{iso}},\ c_1=l,\\
\mathbf c_l \, \mathtt b_{i,\mathbf c\setminus \{l\}}v_\mu, & \text{if}\ i\in I^{\text{iso}},\ l\in\mathbf c,\\
0, & \text{otherwise}.
\end{cases}
\end{align*}
Therefore, we have $\widetilde{e}_{il}u\in L(\mu)$. 

\vskip 2mm

If $i\neq i_1$, then $\widetilde{e}_{il}u=0$.
Thus we may assume that there exists $s$ with $1\leq s<t$ such that
$i_s\neq i_{s+1}=\cdots=i_t$.
Suppose $\mu \gg 0$ and set $\lambda_0=\Lambda_{i_s}$. Then $\mu':=\mu-\lambda_0\gg 0$.
Set
\begin{align*}
w:=\widetilde{f}_{i_1l_1}\cdots\widetilde{f}_{i_t l_t} (v_{\lambda_0}\otimes v_{\mu'}).
\end{align*}
By Lemma \ref{ffvvbb}, we have
\begin{align*}
w\equiv v\otimes v'\!\!\!\!\! \mod{qL(\lambda_0)\otimes L(\mu')}
\end{align*}
for some $v\in {L(\lambda_0)}_{\lambda_0-\beta}$, $v'\in {L(\mu')}_{\mu'-\gamma}$, $\alpha=\beta+\gamma$ and $\beta,\gamma\in R_+(r-1)$.

\vskip 2mm

Then Lemma \ref{a-g} (a) and Lemma \ref{eLL} imply
\begin{align*}
\widetilde{e}_{il}w&\in L(\lambda_0)\otimes L(\mu')+q \, \widetilde{e}_{il}{(L(\lambda_0)\otimes L(\mu'))}_{\lambda_0+\mu'-\alpha}\\
&\subset L(\lambda_0)\otimes L(\mu')+q^{1-N}L(\lambda_0)\otimes L(\mu')=q^{1-N}L(\lambda_0)\otimes L(\mu').
\end{align*}

Thus we have 
\begin{align*}
\widetilde{e}_{il}w\in q^{1-N}{(L(\lambda_0)\otimes L(\mu'))}_{\lambda_0+\mu'-\alpha+l\alpha_i}=q^{1-N}{(L(\lambda_0)\otimes L(\mu'))}_{\mu-\alpha+l\alpha_i}.
\end{align*}
Applying $\Psi_{\lambda_0,\mu'}$ to ${\bf D}(r-1)$, we have
\begin{align*}
\widetilde{e}_{il}u=\widetilde{e}_{il}\widetilde{f}_{i_1l_1}\cdots\widetilde{f}_{i_t l_t}v_\mu\in q^{1-N}L(\mu).
\end{align*}
(b)
Let $\tau\in T$ and set
$
u=\widetilde{f}_{i_1l_1}\cdots\widetilde{f}_{i_tl_t}v_\tau\in {L(\tau)}_{\tau-\alpha}$.
If $u\in qL(\tau)$, our assertion follows from \eqref{eL}.

If $u\notin qL(\tau)$, for any $\mu\in P^+$, Lemma \ref{a-g} (f) gives
\begin{align}\label{ffvv}
\widetilde{f}_{i_1l_1}\cdots\widetilde{f}_{i_tl_t}(v_\tau\otimes v_\mu)\equiv u\otimes v_\mu\!\!\!\! \mod{qL(\tau)\otimes L(\mu)}.
\end{align}

If $\mu\gg 0$, (a) implies
\begin{align*}
\widetilde{e}_{il}\widetilde{f}_{i_1l_1}\cdots\widetilde{f}_{i_tl_t}v_{\tau+\mu}\in q^{1-N}L(\tau+\mu).
\end{align*}

Applying $\Phi_{\tau,\mu}$ and $\mathbf{B}(r-1)$, we obtain
\begin{align*}
\widetilde{e}_{il}\widetilde{f}_{i_1l_1}\cdots\widetilde{f}_{i_tl_t}(v_\tau\otimes v_\mu)\in q^{1-N}L(\tau)\otimes L(\mu).
\end{align*}
By \eqref{ffvv} and Lemma \ref{a-g}, we have
\begin{align}\label{evv}
\widetilde{e}_{il}(v\otimes v_\mu)\in q^{1-N}L(\tau)\otimes L(\mu)+q\widetilde{e}_{il}(L(\tau)\otimes L(\mu))\subset q^{1-N}L(\tau)\otimes L(\mu).
\end{align}

Let $u=\sum_{\mathbf c\in\mathcal C_i}\mathtt b_{i,\mathbf c}u_{\mathbf c}$ be the $i$-string decomposition of $u$. By \eqref{eL}, we have $\widetilde{e}_{il}u\in q^{-N}L(\tau)$.

Recall
\begin{align*}
\widetilde{e}_{il}u=
\begin{cases}
\sum_{\mathbf c\in\mathcal C_i}	\mathtt b_{i,\mathbf c\setminus c_1}u_{\mathbf c}, & \text{if}\ i\notin I^{\text{iso}},\ c_1=l,\\
\sum_{\mathbf c\in\mathcal C_i}	\mathbf c_l \, \mathtt b_{i,\mathbf c\setminus \{l\}}u_{\mathbf c}, & \text{if}\ i\in I^{\text{iso}},\ l\in\mathbf c,\\
	0 & \text{otherwise}.
\end{cases}
\end{align*}
By Lemma \ref{uc in L}, we have $u_{\mathbf c}\in q^{-N}L(\tau)$. Let $L$ be the $\A_0$-submodule of $V(\tau)\otimes V(\mu)$ generated by $\mathtt b_{i,\mathbf c}u_{\mathbf c}\otimes \mathtt b_{i,\mathbf c'}v_{\mu}$ $(c_1=l\ \text{or}\ l\in\mathbf c)$. Then we have $L\subset q^{-N}L(\tau)\otimes L(\mu)$.

\vskip 2mm

The tensor product rule gives
\begin{align*}
\widetilde{e}_{il}(u\otimes v_\mu)\equiv \widetilde{e}_{il}u\otimes v_\mu\!\!\!\! \mod{qL}.
\end{align*}
By \eqref{evv}, we have
\begin{align*}
\widetilde{e}_{il}u\otimes v_\mu\equiv\widetilde{e}_{il}(u\otimes v_\mu)\in q^{1-N}L(\tau)\otimes L(\mu).
\end{align*}
Hence $\widetilde{e}_{il}u\in q^{1-N}L(\tau)$.
\vskip 2mm
(c)
Let $S\in {L(\infty)}_{-\alpha}$ and take $\mu\gg 0$. By Lemma \ref{fP and eP}, we have
$
(\widetilde{e}_{il}S)v_\mu\equiv \widetilde{e}_{il}(Sv_\mu)\!\!\! \mod{qL(\mu)}$.
Thus Proposition \ref{proof of Hr}
implies
\begin{align*}
(\widetilde{e}_{il}S)v_\mu=\widetilde{e}_{il}(Sv_\mu)\in\widetilde{e}_{il}{L(\mu)}_{\mu-\alpha}\subset q^{1-N}L(\mu).
\end{align*}

\noindent
Hence by Corollary \ref{pi} (c), we have
\begin{align*}
\widetilde{e}_{il}S\in q^{1-N}L(\infty).
\end{align*}
	\end{proof}
	
	\vskip 2mm

\begin{corollary}
{\rm
For $\alpha\in R_+(r)$, we have $0\notin {B(\infty)}_{-\alpha}$.
}
\end{corollary}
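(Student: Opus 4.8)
The plan is to argue by induction on the height of $\alpha$. The main inputs will be the inductive statement $\mathbf G(r-1)$, the identity $\widetilde e_{il}\circ\widetilde f_{il}=\mathrm{id}$ on $U_q^-(\g)$, and the stability $\widetilde e_{il}L(\infty)_{-\alpha}\subset L(\infty)$ for $\alpha\in R_+(r)$ and $(i,l)\in I^\infty$.

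I would first establish this last stability, which is exactly where the grand-loop bootstrap enters: if $N\ge 1$ is an integer satisfying \eqref{eL}, then Lemma \ref{lemma for Ar and Hr}(c) upgrades it to $\widetilde e_{il}L(\infty)_{-\alpha}\subset q^{1-N}L(\infty)$, so that $N-1$ again satisfies \eqref{eL}; crucially, the three inclusions of \eqref{eL} improve simultaneously, which is what makes the iteration legitimate. Repeating this $N$ times, the final step (at $N=1$) gives $\widetilde e_{il}L(\infty)_{-\alpha}\subset q^{0}L(\infty)=L(\infty)$. I expect this termination argument — verifying that the three bounds of \eqref{eL} really descend in lockstep so the denominators are killed — to be the only genuinely delicate point; the rest is bookkeeping.

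Now fix $\alpha\in R_+(r)$. If $\alpha=0$ there is nothing to prove, since $B(\infty)_0=\{\mathbf 1+qL(\infty)\}$ and $\mathbf 1\notin qL(\infty)$. If $\alpha\neq 0$, take $b\in B(\infty)_{-\alpha}$ and write $b=\widetilde f_{i_1l_1}\cdots\widetilde f_{i_sl_s}\mathbf 1+qL(\infty)$ with $s\ge 1$. Put $(i,l)=(i_1,l_1)$, let $\beta=\alpha-l\alpha_i=\sum_{j\ge 2}l_j\alpha_{i_j}$, which has height $\le r-1$ and so lies in $R_+(r-1)$, and set $b'=\widetilde f_{i_2l_2}\cdots\widetilde f_{i_sl_s}\mathbf 1+qL(\infty)\in B(\infty)_{-\beta}$, so that $b=\widetilde f_{il}b'$. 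By $\mathbf G(r-1)$ the set $B(\infty)_{-\beta}$ is a $\Q$-basis of $(L(\infty)/qL(\infty))_{-\beta}$, hence $b'\neq 0$. Suppose, for contradiction, that $b=0$, and choose a representative $S'\in L(\infty)_{-\beta}$ of $b'$. Since $L(\infty)$ is the $\A_0$-span of the $\widetilde f$-monomials on $\mathbf 1$, it is $\widetilde f_{il}$-stable; thus $\widetilde f_{il}S'\in L(\infty)_{-\alpha}$ and $\widetilde f_{il}S'+qL(\infty)=\widetilde f_{il}b'=b=0$, i.e. $\widetilde f_{il}S'\in qL(\infty)_{-\alpha}$. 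Applying $\widetilde e_{il}$ and invoking the second paragraph gives $\widetilde e_{il}\widetilde f_{il}S'\in q\,\widetilde e_{il}L(\infty)_{-\alpha}\subset qL(\infty)$; but $\widetilde e_{il}\widetilde f_{il}S'=S'$ directly from the definitions of $\widetilde e_{il}$ and $\widetilde f_{il}$ on $U_q^-(\g)$ (in the isotropic case one checks that the normalizing factor $\tfrac{1}{\mathbf c_l+1}$ in $\widetilde f_{il}$ is cancelled by the multiplicity $\mathbf c_l+1$ that $\widetilde e_{il}$ restores). Hence $S'\in qL(\infty)$, i.e. $b'=0$, a contradiction; therefore $0\notin B(\infty)_{-\alpha}$. (Alternatively, once Corollary \ref{pi}(c) is available one obtains the statement in one line, since that corollary identifies $B(\infty)_{-\alpha}$ with $B(\lambda)_{\lambda-\alpha}$ for $\lambda\gg 0$ and $0\notin B(\lambda)_{\lambda-\alpha}$ by construction; but the direct argument above avoids any risk of circularity.)
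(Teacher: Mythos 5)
Your proof is correct and follows the same outline as the paper's very brief proof, which writes $b=\widetilde f_{il}b'$, applies $\mathbf G(r-1)$ to see $b'\neq 0$, and concludes $b\neq 0$. You supply the justification for that last inference that the paper leaves implicit, namely that $\widetilde e_{il}L(\infty)_{-\alpha}\subset L(\infty)$ (via the lockstep descent through Lemma~\ref{lemma for Ar and Hr}(c), the exact mechanism the paper uses later to prove $\mathbf E(r)$) together with $\widetilde e_{il}\circ\widetilde f_{il}=\mathrm{id}$, and you correctly flag that the one-line alternative via Corollary~\ref{pi}(c) risks circularity.
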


\begin{proof}
If $b\in {B(\infty)}_{-\alpha}$, then there exist $(i,l)\in I_{\infty}$ and $b'\in{B(\infty)}_{-\alpha+l\alpha_i}$ such that $b=\widetilde{f}_{il}b'$. By $\mathbf{G}(r-1)$, the set ${B(\infty)}_{-\alpha+l\alpha_i}$ forms a $\Q$-basis of ${L(\infty)}_{-\alpha+l\alpha_i}/q{L(\infty)}_{-\alpha+l\alpha_i}$. Then we have $b'\neq 0$. Hence $b\neq 0$.
 	\end{proof}
	
\vskip 2mm

\begin{lemma}\label{pie=epi}
{\rm
Let $\alpha\in R_+(r)$, $(i,l)\in I_{\infty}$, $\lambda\gg 0$ and $b\in{B(\infty)}_{-\alpha}$.
Then we have
\begin{equation*}
\overline{\pi}_\lambda(\widetilde{e}_{il}b)=\widetilde{e}_{il}\overline{\pi}_\lambda(b).
\end{equation*}
}
\end{lemma}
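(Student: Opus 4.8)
The plan is to reduce the identity to Lemma~\ref{fP and eP} by unwinding the definition of $\overline{\pi}_{\lambda}$ on a lattice representative. First I would pick $S \in L(\infty)_{-\alpha}$ with $b = S + qL(\infty)$. Before computing anything I would check that both sides make sense in $L(\lambda)/qL(\lambda)$: by the lattice-stability conclusions obtained in this section (Lemma~\ref{lemma for Ar and Hr}, together with the usual grand-loop descent on the integer $N$ of \eqref{eL}, which upgrades $\widetilde{e}_{il}L(\infty)_{-\alpha}\subset q^{1-N}L(\infty)$ etc. to genuine stability $\widetilde{e}_{il}L(\infty)_{-\alpha}\subset L(\infty)$ and, since $\lambda\gg 0$, $\widetilde{e}_{il}L(\lambda)_{\lambda-\alpha}\subset L(\lambda)$) one has $\widetilde{e}_{il}S\in L(\infty)$; and by Proposition~\ref{proof of Hr} ($\mathbf{H}(r)$), $Sv_{\lambda}=\pi_{\lambda}(S)\in L(\lambda)_{\lambda-\alpha}$, so also $\widetilde{e}_{il}(Sv_{\lambda})\in L(\lambda)$.

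Next I would unwind both sides. From the definition of $\overline{\pi}_{\lambda}$ in \eqref{eq:barpilambda} (induced by $\pi_{\lambda}:\mathbf 1\mapsto v_{\lambda}$), we get $\overline{\pi}_{\lambda}(b)=Sv_{\lambda}+qL(\lambda)$, hence $\widetilde{e}_{il}\,\overline{\pi}_{\lambda}(b)=\widetilde{e}_{il}(Sv_{\lambda})+qL(\lambda)$; and since $\widetilde{e}_{il}b=\widetilde{e}_{il}S+qL(\infty)$, we get $\overline{\pi}_{\lambda}(\widetilde{e}_{il}b)=\pi_{\lambda}(\widetilde{e}_{il}S)+qL(\lambda)=(\widetilde{e}_{il}S)v_{\lambda}+qL(\lambda)$. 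Therefore the assertion is exactly the congruence $(\widetilde{e}_{il}S)v_{\lambda}\equiv\widetilde{e}_{il}(Sv_{\lambda})\mod{qL(\lambda)}$, which, because $\lambda\gg 0$, is precisely Lemma~\ref{fP and eP} applied to $S$. (Lemma~\ref{fP and eP} is phrased for $S$ of the form $\mathtt b_{i,\mathbf c}T$ with $e'_{ik}T=0$, but writing a general $S$ via its $i$-string decomposition and using that $\widetilde{e}_{il}$, $S\mapsto Sv_{\lambda}$ and $\pi_{\lambda}$ are linear, the congruence holds for all $S\in U_q^-(\g)_{-\alpha}$.) Note that this direct computation needs no case distinction according to whether $\widetilde{e}_{il}b$ or $\overline{\pi}_{\lambda}(b)$ vanishes: it is an identity between representatives modulo $qL(\lambda)$.

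The only delicate point is the bookkeeping of well-definedness rather than any calculation: one must know that $\widetilde{e}_{il}$ stabilizes each of $L(\infty)_{-\alpha}$ and $L(\lambda)_{\lambda-\alpha}$ throughout the relevant degree range, which is exactly where the inductive machinery of this section is invoked, and where the hypothesis $\lambda\gg 0$ is used a second time (beyond its role in Lemma~\ref{fP and eP}).
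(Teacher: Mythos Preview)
Your proposal is correct and follows the same approach as the paper, which simply says ``The assertion follows directly from Lemma~\ref{fP and eP}.'' You have spelled out the unwinding of $\overline{\pi}_{\lambda}$ on a representative and the lattice-stability bookkeeping that the paper leaves implicit (and which, as you note, comes from iterating Lemma~\ref{lemma for Ar and Hr}); this is the right way to fill in the one-line proof.
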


\begin{proof}
The assertion follows directly from Lemma \ref{fP and eP}.
	\end{proof}
	
\vskip 2mm

\begin{corollary}\label{fLLeLL}
{\rm
Let $\lambda,\mu\in P^+$ and $\alpha,\beta\in R_+(r)$.
\begin{enumerate}
	\item[{\rm (a)}] For the $i$-string decomposition $u=\sum_{\mathbf c\in\mathcal C_i}\mathtt b_{i,\mathbf c}u_{\mathbf c}\in {L(\lambda)}_{\lambda-\alpha}$, we have $u_{\mathbf c}\in L(\lambda)$
	for all $\mathbf{c} \in \mathcal{C}_{i}$.
	\item[{\rm (b)}] For any $(i,l)\in I^{\infty}$, we have
	\begin{align*}
	&\widetilde{f}_{il}({L(\lambda)}_{\lambda-\alpha}\otimes {L(\mu)}_{\mu-\beta})\subset L(\lambda)\otimes L(\mu),\\
	&\widetilde{e}_{il}({L(\lambda)}_{\lambda-\alpha}\otimes {L(\mu)}_{\mu-\beta})\subset L(\lambda)\otimes L(\mu).
	\end{align*}
\end{enumerate}
}
\end{corollary}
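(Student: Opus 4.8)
The plan is to upgrade two results already available for $R_{+}(r-1)$, namely Lemma~\ref{uc in L} and Lemma~\ref{a-g}(a), to weights in $R_{+}(r)$; the only new ingredient needed is the lattice stability
\[
\widetilde{e}_{il}L(\lambda)_{\lambda-\alpha}\subset L(\lambda)
\qquad(\alpha\in R_{+}(r)),
\]
together with its analogues for $L(\mu)$ and $L(\infty)$. First I would extract this stability from Lemma~\ref{lemma for Ar and Hr} by a descending induction on $N$: if $N\geq 1$ satisfies \eqref{eL}, then Lemma~\ref{lemma for Ar and Hr} gives $\widetilde{e}_{il}L(\tau)_{\tau-\alpha}\subset q^{1-N}L(\tau)$, $\widetilde{e}_{il}L(\mu)_{\mu-\alpha}\subset q^{1-N}L(\mu)$ and $\widetilde{e}_{il}L(\infty)_{-\alpha}\subset q^{1-N}L(\infty)$, i.e.\ $N-1$ still satisfies \eqref{eL}; iterating down to $N=0$ gives $\widetilde{e}_{il}L(\tau)_{\tau-\alpha}\subset L(\tau)$ for every $\tau\in T$, $\widetilde{e}_{il}L(\mu)_{\mu-\alpha}\subset L(\mu)$ for $\mu\gg 0$, and $\widetilde{e}_{il}L(\infty)_{-\alpha}\subset L(\infty)$. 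Since the finite set $T$ is only required to contain certain fundamental weights, I may enlarge it so that it also contains the given $\lambda$ and $\mu$, which brings arbitrary $\lambda,\mu\in P^{+}$ under the first inclusion. Throughout I assume $i\in I^{\text{im}}$, the case $i\in I^{\text{re}}$ being \cite{Kashi91}.

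For part~(a), with this stability in hand I would simply rerun the induction on $|\mathbf{c}|$ from the proof of Lemma~\ref{uc in L}. Given $u=\sum_{\mathbf{c}\in\mathcal{C}_{i}}\mathtt{b}_{i,\mathbf{c}}u_{\mathbf{c}}\in L(\lambda)_{\lambda-\alpha}$ with $\alpha\in R_{+}(r)$, we now have $\widetilde{e}_{il}u\in L(\lambda)$ for every $l>0$, and by Definition~\ref{def:Kashiwara operator} the element $\widetilde{e}_{il}u$ is again an $i$-string decomposition, equal to $\sum_{\mathbf{d}}\mathtt{b}_{i,\mathbf{d}}u_{(l,\mathbf{d})}$ in the non-isotropic case and to $\sum_{\mathbf{d}}(\mathbf{d}_{l}+1)\mathtt{b}_{i,\mathbf{d}}u_{\mathbf{d}\cup\{l\}}$ in the isotropic case, and it lives in weight $\lambda-\alpha+l\alpha_{i}\in R_{+}(r-1)$ whenever it is nonzero. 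Thus Lemma~\ref{uc in L} at level $r-1$ forces each component of $\widetilde{e}_{il}u$ into $L(\lambda)$; dividing by the unit $\mathbf{d}_{l}+1$ of $\A_{0}$ in the isotropic case, we get $u_{\mathbf{c}}\in L(\lambda)$ for all $\mathbf{c}\neq\mathbf{0}$, and letting $l$ vary exhausts all such $\mathbf{c}$, whence $u_{\mathbf{0}}=u-\sum_{\mathbf{c}\neq\mathbf{0}}\mathtt{b}_{i,\mathbf{c}}u_{\mathbf{c}}\in L(\lambda)$ as well.

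For part~(b), the statement for $\widetilde{f}_{il}$ is the milder half: since $L(\lambda)$ and $L(\mu)$ are $\A_{0}$-spanned by $\widetilde{f}$-monomials on the highest weight vectors, they are stable under every $\widetilde{f}_{jk}$, so it suffices to expand the $i$-string decomposition of a pure tensor $u\otimes u'$ via the coproduct and read off $\widetilde{f}_{il}(u\otimes u')$, exactly as in the proof of the tensor product rule and of Lemma~\ref{a-g}(a) following \cite[Lemma~7.5]{JKK05}; each resulting summand is $\mathtt{b}_{i,?}$ applied to a tensor of $i$-string components of $u$ and $u'$, all of which lie in $L(\lambda)\otimes L(\mu)$ by part~(a). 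For $\widetilde{e}_{il}$ I would again use part~(a) to write $u=\sum_{\mathbf{c}}\mathtt{b}_{i,\mathbf{c}}u_{\mathbf{c}}$ and $u'=\sum_{\mathbf{d}}\mathtt{b}_{i,\mathbf{d}}u'_{\mathbf{d}}$ with all components in $L(\lambda)$, $L(\mu)$, which reduces $\widetilde{e}_{il}(u\otimes u')$ to a sum of terms $\widetilde{e}_{il}(x\otimes y)$ with $x$, $y$ such components: when the weights of $x$ and $y$ are both nonzero they lie in $R_{+}(r-1)$ and Lemma~\ref{a-g}(a) applies, whereas if $x=v_{\lambda}$ the simplified tensor product rule \eqref{eq:simplified} shows $\widetilde{e}_{il}(v_{\lambda}\otimes y)$ is either $0$ or $v_{\lambda}\otimes\widetilde{e}_{il}y$ with $\widetilde{e}_{il}y\in L(\mu)$ by the stability above, and the case $y=v_{\mu}$ is symmetric.

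The step I expect to be the main obstacle is computational rather than conceptual: reproducing the $i$-string decomposition of a tensor product precisely enough to extract $\widetilde{e}_{il}$ and $\widetilde{f}_{il}$ on it, which requires carrying the delicate coproduct identities behind the tensor product rule over to the quantum Borcherds--Bozec setting, with extra care for the combinatorial factor $1/(\mathbf{c}_{l}+1)$ occurring when $i\in I^{\text{iso}}$. A secondary point, and the reason the stability inputs must be routed through the finite set $T$ rather than only through the $\mu\gg 0$ branch of Lemma~\ref{lemma for Ar and Hr}, is that the corollary is claimed for all $\lambda,\mu\in P^{+}$ and not merely for sufficiently dominant ones.
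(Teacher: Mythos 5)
Your overall route matches the paper's intended one: the paper disposes of this corollary in a single sentence, appealing to the proof of Lemma~\ref{uc in L} and to the fact that Lemma~\ref{a-g} needs only $\mathbf{A}(r-1)$, and you have correctly unpacked what that sentence must mean. In particular, you identify the one genuinely missing ingredient --- the level-$r$ stability $\widetilde e_{il}L(\lambda)_{\lambda-\alpha}\subset L(\lambda)$ for all $\lambda\in P^{+}$ and $\alpha\in R_{+}(r)$ --- and extract it by descending induction on $N$ from Lemma~\ref{lemma for Ar and Hr}(b), after enlarging the finite set $T$ to contain the given $\lambda$ and $\mu$. This is exactly what makes the paper's proof of part (a) work (re-running Lemma~\ref{uc in L}) yet is not stated there, so making it explicit is a genuine service. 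Your treatment of part (a) itself, and of $\widetilde f_{il}$ in part (b), are both sound.

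There is, however, a real imprecision in your handling of $\widetilde e_{il}$ in part (b). You cannot write $\widetilde e_{il}(u\otimes u')$ as a sum of $\widetilde e_{il}(x\otimes y)$ over $i$-string components of $u,u'$: the Kashiwara operators are defined through the $i$-string decomposition of the ambient vector and are not additive over such summands. Moreover, your case split ``when the weights of $x$ and $y$ are both nonzero they lie in $R_{+}(r-1)$'' is false for the corollary as stated, because $\alpha$ and $\beta$ range over $R_{+}(r)$ independently, so $\alpha+\beta$ can have height up to $2r$; both factors can be at height exactly $r$ with neither reduced. The correct argument, which is what the reference to \cite[Lemma~7.5]{JKK05} in Lemma~\ref{a-g} carries, is to form the $\A_{0}$-submodule $L$ of $V(\lambda)\otimes V(\mu)$ spanned by $\mathtt b_{i,\mathbf c}u_{\mathbf c}\otimes\mathtt b_{i,\mathbf d}u'_{\mathbf d}$, use part~(a) to see $L\subset L(\lambda)\otimes L(\mu)$, and then check directly from the coproduct $\Delta(\mathtt b_{i,\mathbf c})$ that $\widetilde e_{il}L\subset L$ and $\widetilde f_{il}L\subset L$ --- a calculation that is uniform in the height of $\alpha$ and $\beta$ and does \emph{not} route through Lemma~\ref{a-g}(a) at lower level. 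Thus your proposal is conceptually on target and fills in the step the paper leaves implicit, but the final tensor step as you phrase it does not cover the full strength of the claim and should be replaced by the lattice-stability argument just described.
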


\begin{proof}
Since Lemma \ref{a-g} depends only on ${\bf A}(r-1)$, the
corollary follows from the proof of Lemma \ref{uc in L}.
	\end{proof}

\vskip 2mm

\begin{lemma}\label{euv}
{\rm
Let $\lambda,\mu\in P^+$ and $\alpha\in R_+(r)$. For any $u\in{L(\lambda)}_{\lambda-\alpha}$, we have
\begin{align*}
\widetilde{e}_{il}(u\otimes v_\mu)\equiv \widetilde{e}_{il}u\otimes v_\mu\!\!\!\! \mod{q(L(\lambda)\otimes L(\mu))}.	
	\end{align*}
}
\end{lemma}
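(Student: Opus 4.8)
The plan is to route the computation through the operators $E_{il}=-K_i^l\mathtt a_{il}$, for which the tensor-product behaviour is exact. First I would record that $E_{il}$ is skew-primitive: since $\Delta(\mathtt a_{il})=\mathtt a_{il}\otimes K_i^{-l}+\mathbf 1\otimes\mathtt a_{il}$ and $K_i^l$ is group-like, a short computation gives $\Delta(E_{il})=E_{il}\otimes\mathbf 1+K_i^l\otimes E_{il}$. Since $v_\mu$ is the highest weight vector of $V(\mu)$ and $\mathtt a_{il}\in U_q^+(\g)_{l\alpha_i}$ has positive degree, $\mathtt a_{il}v_\mu\in V(\mu)_{\mu+l\alpha_i}=0$, so $E_{il}v_\mu=0$. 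Hence, for any $u\in V(\lambda)_{\lambda-\alpha}$,
\[
E_{il}(u\otimes v_\mu)=E_{il}u\otimes v_\mu+K_i^l u\otimes E_{il}v_\mu=E_{il}u\otimes v_\mu
\]
as an exact identity in $V(\lambda)\otimes V(\mu)$.

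Next I would invoke Lemma \ref{euEuqL} twice. Applied in $V(\lambda)$ to $u\in L(\lambda)_{\lambda-\alpha}$ it gives $\widetilde{e}_{il}u\equiv E_{il}u\pmod{qL(\lambda)}$, and tensoring with $v_\mu\in L(\mu)$ yields $\widetilde{e}_{il}u\otimes v_\mu\equiv E_{il}u\otimes v_\mu\pmod{q(L(\lambda)\otimes L(\mu))}$. Applied in $V(\lambda)\otimes V(\mu)$ to $u\otimes v_\mu\in(L(\lambda)\otimes L(\mu))_{\lambda+\mu-\alpha}$ it gives $\widetilde{e}_{il}(u\otimes v_\mu)\equiv E_{il}(u\otimes v_\mu)\pmod{q(L(\lambda)\otimes L(\mu))}$. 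Chaining these with the identity above,
\[
\widetilde{e}_{il}(u\otimes v_\mu)\equiv E_{il}(u\otimes v_\mu)=E_{il}u\otimes v_\mu\equiv\widetilde{e}_{il}u\otimes v_\mu\pmod{q(L(\lambda)\otimes L(\mu))},
\]
which is the assertion. The case $i\in I^{\text{iso}}$ is identical, as both the skew-primitivity of $E_{il}$ and Lemma \ref{euEuqL} cover it.

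The point requiring care — and the main obstacle — is that we are inside the grand-loop induction, so the second application of Lemma \ref{euEuqL} must be read in its degree-$\le r$ form, since $V(\lambda)\otimes V(\mu)$ has not yet been shown to carry a crystal basis. This is harmless because the proof of Lemma \ref{euEuqL} is local: it uses only the $i$-string decomposition, the commutation relations \eqref{Eb}--\eqref{Eb-bE}, the $\widetilde{f}_{il}$-stability of the ambient lattice, and the fact that the $i$-string components of a lattice element again lie in the lattice. For $L(\lambda)$ these are, respectively, immediate, structural, the definition of $L(\lambda)$, and Corollary \ref{fLLeLL}(a). For $L(\lambda)\otimes L(\mu)$ in degree $\lambda+\mu-\alpha$ with $\alpha\in R_+(r)$, the $\widetilde{f}_{il}$- and $\widetilde{e}_{il}$-stability come from Corollary \ref{fLLeLL}(b) applied to each summand of
\[
(L(\lambda)\otimes L(\mu))_{\lambda+\mu-\alpha}=\sum_{\beta+\gamma=\alpha}L(\lambda)_{\lambda-\beta}\otimes L(\mu)_{\mu-\gamma}\qquad(\beta,\gamma\in R_+(r)),
\]
and the string-component property then follows by the argument of Lemma \ref{uc in L}. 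Thus every ingredient is supplied by $\mathbf A(r-1),\dots,\mathbf K(r-1)$ and Corollary \ref{fLLeLL}, and the reasoning is not circular.
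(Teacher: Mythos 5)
Your proof is correct, and it takes a genuinely different route from the paper's one-line argument. The paper points to $\widetilde{e}_{il}v_\mu=0$ and implicitly relies on the tensor-product behaviour of Kashiwara operators on the lattice $L(\lambda)\otimes L(\mu)$ (the $i$-string analysis underlying Lemma \ref{a-g}, made available at heights $\le r$ by Corollary \ref{fLLeLL}(b)), under which $\widetilde{e}_{il}(b\otimes v_\mu)$ is either $\widetilde{e}_{il}b\otimes v_\mu$, zero, or $b\otimes\widetilde{e}_{il}v_\mu=0$, with all cases agreeing. You replace this piecewise bookkeeping by a single algebraic identity: since $E_{il}=-K_i^l\mathtt a_{il}$ is skew-primitive and $E_{il}v_\mu=0$, the exact equality $E_{il}(u\otimes v_\mu)=E_{il}u\otimes v_\mu$ holds in the module, and you then trade $E_{il}$ for $\widetilde{e}_{il}$ on both sides via Lemma \ref{euEuqL}. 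This buys a cleaner, case-free argument; the cost is precisely the point you flag, namely that Lemma \ref{euEuqL} as stated presupposes a crystal basis, which neither $V(\lambda)$ nor $V(\lambda)\otimes V(\mu)$ is yet known to possess at this point of the grand loop. Your diagnosis that the proof of Lemma \ref{euEuqL} uses only the $i$-string decomposition, the relations \eqref{Eb}, stability of the lattice under $\widetilde{e}_{il},\widetilde{f}_{il}$ (Corollary \ref{fLLeLL}(b)), and the string-component property is accurate, and your verification that all of these are supplied at height $\le r$ closes the gap. The one caveat worth recording explicitly is that the string-component property for $L(\lambda)\otimes L(\mu)$ --- that the $i$-string components of an element of $(L(\lambda)\otimes L(\mu))_{\lambda+\mu-\alpha}$ with $\alpha\in R_+(r)$ again lie in $L(\lambda)\otimes L(\mu)$ --- is not recorded in the paper as a standalone statement; if one takes your route, it should be written out as an auxiliary lemma, mimicking the proof of Lemma \ref{uc in L} with Corollary \ref{fLLeLL}(b) in place of $\mathbf A(r-1)$.
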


\begin{proof}
The lemma follows from the fact $\widetilde{e}_{il}v_\mu=0$.	
	\end{proof}
	
\vskip 2mm

\begin{proposition}  \label{proof of Kr} $({\bf K}(r))$
{\rm
	Let $\lambda\in P^+$ and $\alpha\in R_+(r)$. If $b\in{B(\infty)}_{-\alpha}$ and $\overline{\pi}_\lambda(b)\neq 0$, then we have
	\begin{align*}
		\widetilde{e}_{il}\overline{\pi}_\lambda(b)=\overline{\pi}_\lambda(\widetilde{e}_{il}b).
		\end{align*}
		
}
\end{proposition}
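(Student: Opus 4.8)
The plan is to follow the pattern of the proof of $\mathbf{I}(r)$ (Proposition \ref{proof of Ir}): establish the statement first for $\lambda \gg 0$, where it is already known, and then descend to an arbitrary $\lambda \in P^{+}$ through the maps $\Phi_{\lambda,\mu}$ and $S_{\lambda,\mu}$. Fix $b \in B(\infty)_{-\alpha}$ with $\overline{\pi}_{\lambda}(b)\neq 0$ and pick a representative $S \in L(\infty)_{-\alpha}$ of $b$. Since $\widetilde{e}_{il}$ is $\Q(q)$-linear and, at this stage of the induction, preserves $L(\infty)$ and $L(\lambda)$, the class $\widetilde{e}_{il}b$ is represented by $\widetilde{e}_{il}S$, the class $\overline{\pi}_{\lambda}(b)$ by $Sv_{\lambda}=\pi_{\lambda}(S)$, and $\overline{\pi}_{\lambda}(\widetilde{e}_{il}b)$ by $(\widetilde{e}_{il}S)v_{\lambda}$ (here we use $\mathbf{H}(r)$, Proposition \ref{proof of Hr}, and that $\pi_{\lambda}$ carries $qL(\infty)$ into $qL(\lambda)$). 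So $\mathbf{K}(r)$ is equivalent to
\begin{equation*}
	\widetilde{e}_{il}(Sv_{\lambda}) \equiv (\widetilde{e}_{il}S)\,v_{\lambda} \pmod{qL(\lambda)}
\end{equation*}
for every $S \in L(\infty)_{-\alpha}$ with $Sv_{\lambda}\notin qL(\lambda)$; the hypothesis $\overline{\pi}_{\lambda}(b)\neq 0$ is genuinely needed, since for $S$ with $Sv_{\lambda}\in qL(\lambda)$ this identity can fail (e.g. when $\langle h_{i},\lambda\rangle=0$).

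Next I would pick $\mu\in P^{+}$ with $\mu\gg 0$ and $\lambda+\mu\gg 0$. By Lemma \ref{fP and eP}, applicable since $\lambda+\mu\gg 0$, we have $(\widetilde{e}_{il}S)v_{\lambda+\mu}\equiv\widetilde{e}_{il}(Sv_{\lambda+\mu})\pmod{qL(\lambda+\mu)}$. Apply $\Phi_{\lambda,\mu}$. Any $U_{q}(\g)$-module homomorphism between objects of $\mathcal{O}_{\mathrm{int}}$ intertwines the $\mathtt a_{ik}$ and preserves weights, hence carries $i$-string decompositions to $i$-string decompositions (by uniqueness of the latter), and therefore commutes \emph{exactly} with $\widetilde{e}_{il}$ and $\widetilde{f}_{il}$; combined with $\mathbf{C}(r)$ (Proposition \ref{prop:Cr}), which gives $\Phi_{\lambda,\mu}(L(\lambda+\mu))\subset L(\lambda)\otimes L(\mu)$, and with $\Phi_{\lambda,\mu}(Tv_{\lambda+\mu})=T(v_{\lambda}\otimes v_{\mu})$ for $T\in U^{-}_{q}(\g)$, this yields
\begin{equation*}
	(\widetilde{e}_{il}S)(v_{\lambda}\otimes v_{\mu}) \equiv \widetilde{e}_{il}\bigl(S(v_{\lambda}\otimes v_{\mu})\bigr) \pmod{q\,(L(\lambda)\otimes L(\mu))}.
\end{equation*}
Applying $S_{\lambda,\mu}$, using $S_{\lambda,\mu}(L(\lambda)\otimes L(\mu))=L(\lambda)$ (Lemma \ref{Slm}(a)) and $S_{\lambda,\mu}(T(v_{\lambda}\otimes v_{\mu}))=Tv_{\lambda}$, the left-hand side becomes $(\widetilde{e}_{il}S)v_{\lambda}$, so it remains to prove that $S_{\lambda,\mu}$ commutes with $\widetilde{e}_{il}$ modulo $qL(\lambda)$ on the particular element $w:=S(v_{\lambda}\otimes v_{\mu})$, i.e. $S_{\lambda,\mu}(\widetilde{e}_{il}w)\equiv\widetilde{e}_{il}(S_{\lambda,\mu}w)=\widetilde{e}_{il}(Sv_{\lambda})\pmod{qL(\lambda)}$.

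Here is where the hypothesis enters, and this is the step I expect to be the main obstacle. Write $w = Sv_{\lambda}\otimes v_{\mu} + w_{1}$, where both summands lie in $L(\lambda)\otimes L(\mu)$ (by $\mathbf{H}(r)$ and $\mathbf{C}(r)$) and $S_{\lambda,\mu}(w_{1}) = S_{\lambda,\mu}(w) - S_{\lambda,\mu}(Sv_{\lambda}\otimes v_{\mu}) = Sv_{\lambda} - Sv_{\lambda} = 0$. By $\Q(q)$-linearity of $\widetilde{e}_{il}$ and Lemma \ref{euv}, $\widetilde{e}_{il}w \equiv \bigl(\widetilde{e}_{il}(Sv_{\lambda})\bigr)\otimes v_{\mu} + \widetilde{e}_{il}w_{1}\pmod{q(L(\lambda)\otimes L(\mu))}$, and $S_{\lambda,\mu}$ sends the first term to $\widetilde{e}_{il}(Sv_{\lambda})$, which is exactly what is wanted; so the whole statement reduces to $S_{\lambda,\mu}(\widetilde{e}_{il}w_{1})\equiv 0\pmod{qL(\lambda)}$. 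Inspecting the tensor product rule \eqref{eq:simplified}, $\widetilde{e}_{il}$ can move a mod-$q$ basis component $b_{1}\otimes b_{2}$ of $w_{1}$ out of $\ker S_{\lambda,\mu}$ only through the branch $b_{1}\otimes b_{2}\mapsto b_{1}\otimes\widetilde{e}_{il}b_{2}$ occurring when $\langle h_{i},\mathrm{wt}(b_{1})\rangle=0$ and $\widetilde{e}_{il}b_{2}=v_{\mu}$, i.e. $b_{2}=\widetilde{f}_{il}v_{\mu}$; but such a component of $w_{1}$ (coming from the coproduct of $S$) would force some factor $\mathtt b_{il}$ of $S$ to act on a weight space $V(\lambda)_{\nu}$ with $\langle h_{i},\nu\rangle=0$, hence $Sv_{\lambda}=0$ by Proposition \ref{prop:hw}, contradicting $\overline{\pi}_{\lambda}(b)\neq 0$. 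Therefore no such component occurs, $S_{\lambda,\mu}(\widetilde{e}_{il}w_{1})\equiv 0$, and $\mathbf{K}(r)$ follows (the case $\widetilde{e}_{il}b=0$ is automatically covered, both sides then lying in $qL(\lambda)$). Carrying out this last matching — tracking which components of the non-grouplike part of $\Delta(S)$ can contribute a second factor $\propto\mathtt b_{il}v_{\mu}$, uniformly over $I^{\mathrm{iso}}$ and $I^{\mathrm{im}}\setminus I^{\mathrm{iso}}$ — is the delicate part; everything else transcribes the argument for $\mathbf{I}(r)$.
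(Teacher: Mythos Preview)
Your overall architecture matches the paper's: push to $\lambda+\mu\gg 0$, pull back through $\Phi_{\lambda,\mu}$, then project through $S_{\lambda,\mu}$. The difference, and the gap, is in how you handle the passage through $S_{\lambda,\mu}$.

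You write $w=S(v_\lambda\otimes v_\mu)=Sv_\lambda\otimes v_\mu+w_1$ and then try to argue $S_{\lambda,\mu}(\widetilde e_{il}w_1)\equiv 0$ by invoking the tensor product rule \eqref{eq:simplified} on ``mod-$q$ basis components $b_1\otimes b_2$ of $w_1$''. This step does not go through as written. First, \eqref{eq:simplified} is a rule for crystal basis vectors, and at this stage of the loop (before $\mathbf G(r)$) you cannot expand $w_1$ as a $\Q$-combination of $B(\lambda)\otimes B(\mu)$ in the relevant weights; in particular the component of $w_1$ coming from $K_\alpha\otimes S$ lands in $L(\lambda)_\lambda\otimes L(\mu)_{\mu-\alpha}$ with $\alpha\in R_+(r)$. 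Second, your claim that a surviving component would ``force some factor $\mathtt b_{il}$ of $S$ to act on a weight space with $\langle h_i,\nu\rangle=0$, hence $Sv_\lambda=0$'' presupposes that $S$ is a monomial in the $\mathtt b_{jk}$ and then makes a leap about the coproduct terms that is not justified (and is not even well-posed for $i\in I^{\text{iso}}$, where $\widetilde f_{il}\neq\mathtt b_{il}$). You correctly flag this as the delicate step, but you have not actually carried it out.

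The paper sidesteps the whole issue by choosing the representative $S=\widetilde f_{i_1l_1}\cdots\widetilde f_{i_tl_t}\mathbf 1$ and working not with $S(v_\lambda\otimes v_\mu)$ but with $\widetilde f_{i_1l_1}\cdots\widetilde f_{i_tl_t}(v_\lambda\otimes v_\mu)$. The hypothesis $\overline\pi_\lambda(b)\neq 0$ means $u:=\widetilde f_{i_1l_1}\cdots\widetilde f_{i_tl_t}v_\lambda\notin qL(\lambda)$, so Lemma~\ref{a-g}(f) gives
\[
\widetilde f_{i_1l_1}\cdots\widetilde f_{i_tl_t}(v_\lambda\otimes v_\mu)\equiv u\otimes v_\mu \pmod{q(L(\lambda)\otimes L(\mu))},
\]
eliminating $w_1$ entirely. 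One then applies $\widetilde e_{il}$ and Lemma~\ref{euv} on one side, and $\Phi_{\lambda,\mu}$ together with Lemma~\ref{pie=epi} on the other, and $S_{\lambda,\mu}$ finishes the job with no residual term to control. The missing idea in your argument is precisely this use of Lemma~\ref{a-g}(f) on the Kashiwara-operator monomial, which is where the hypothesis $\overline\pi_\lambda(b)\neq 0$ is cleanly spent.
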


\begin{proof}
We set
\begin{align*}
S&=\widetilde{f}_{i_1l_1}\cdots\widetilde{f}_{i_tl_t}\mathbf 1\in {L(\infty)}_{-\alpha},\\
b&=S +q{L(\infty)}_{-\alpha}\in {B(\infty)}_{-\alpha},\\
u&=\widetilde{f}_{i_1l_1}\cdots\widetilde{f}_{i_tl_t}v_\lambda.
\end{align*}
By Proposition  \ref{proof of Ir},
we have
\begin{align*}
u=\widetilde{f}_{i_1l_1}(\widetilde{f}_{i_2l_2}\cdots\widetilde{f}_{i_tl_t}v_{\lambda})\equiv (\widetilde{f}_{i_1l_1}\cdots\widetilde{f}_{i_tl_t})v_\lambda=Sv_\lambda\!\!\!\!\mod{qL(\lambda)}.
\end{align*}

\noindent
Since $\overline{\pi}_\lambda(b)\neq 0$ and $u\notin qL(\lambda)$. By Lemma \ref{a-g} (f), for any $\mu\in P^+$, we have
\begin{align*}
\widetilde{f}_{i_1l_1}\cdots\widetilde{f}_{i_tl_t}(v_\lambda\otimes v_\mu)\equiv \widetilde{f}_{i_1l_1}\cdots\widetilde{f}_{i_tl_t}v_\lambda\otimes v_\mu\equiv u\otimes v_\mu\!\!\!\! \mod{q(L(\lambda)\otimes L(\mu))}.
\end{align*}

\noindent
Hence by Lemma \ref{euv}, we have
\begin{align}\label{effvv1}
\widetilde{e}_{il}(\widetilde{f}_{i_1l_1}\cdots\widetilde{f}_{i_tl_t}(v_\lambda\otimes v_\mu))\equiv\widetilde{e}_{il}(u\otimes v_\mu)\equiv\widetilde{e}_{il}u\otimes v_\mu\!\!\!\! \mod{q(L(\lambda)\otimes L(\mu))}.	
	\end{align}
	\vskip 3mm
	
On the other hand, for $\mu\gg 0$, by Lemma \ref{pie=epi}, we have
\begin{align*}
\widetilde{e}_{il}(\widetilde{f}_{i_1l_1}\cdots\widetilde{f}_{i_tl_t}v_{\lambda+\mu})\equiv \widetilde{e}_{il}(Sv_{\lambda+\mu})\equiv (\widetilde{e}_{il}S)v_{\lambda+\mu}\!\!\!\! \mod{qL(\lambda+\mu)}.
\end{align*}
Applying $\Phi_{\lambda,\mu}$ and  Proposition \ref{prop:Cr}, we obtain
\begin{align}\label{effvv2}
\widetilde{e}_{il}(\widetilde{f}_{i_1l_1}\cdots\widetilde{f}_{i_tl_t}(v_\lambda\otimes v_\mu))\equiv(\widetilde{e}_{il}S)(v_\lambda\otimes v_\mu)\!\!\!\! \mod{q(L(\lambda)\otimes L(\mu))}.
\end{align}
Then \eqref{effvv1} and \eqref{effvv2} yield
\begin{align*}
\widetilde{e}_{il}u\otimes v_\mu\equiv (\widetilde{e}_{il}S)(v_\lambda\otimes v_\mu)\!\!\!\! \mod{q(L(\lambda)\otimes L(\mu))}.	
	\end{align*}
Applying $S_{\lambda,\mu}$, we conclude
\begin{align*}
\widetilde{e}_{il}u\equiv (\widetilde{e}_{il}S)v_\lambda\!\!\!\! \mod{qL(\lambda)}.
\end{align*}
Hence $\widetilde{e}_{il}\overline{\pi}_\lambda(b)= \overline{\pi}_{\lambda}(\widetilde{e}_{il}b).$
	\end{proof}
	
\vskip 2mm

\begin{proposition}  \label{proof of Er} $({\bf E}(r))$ \
{\rm	For every $\alpha\in R_+(r)$, we have
	\begin{align*}
	\widetilde{e}_{il}{L(\infty)}_{-\alpha}\subset L(\infty),\quad	\widetilde{e}_{il}{B(\infty)}_{-\alpha}\subset B(\infty)\cup \{0\}.
	\end{align*}
}
\end{proposition}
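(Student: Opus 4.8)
The plan is to mimic, at the level of crystal elements, the argument already run for the lattices in Lemma \ref{lemma for Ar and Hr}. First note that if the coefficient of $\alpha_i$ in $\alpha$ is less than $l$, then $\widetilde e_{il}L(\infty)_{-\alpha}=0$ for weight reasons and there is nothing to prove, so assume $\beta:=\alpha-l\alpha_i\in R_+(r-1)$. By \eqref{eL} there is an integer $N\ge 0$ with $\widetilde e_{il}L(\infty)_{-\alpha}\subset q^{-N}L(\infty)$, together with the companion inclusions for $L(\mu)$ $(\mu\gg 0)$ and $L(\tau)$ $(\tau\in T)$. If $N=0$ we are done; otherwise Lemma \ref{lemma for Ar and Hr} improves the exponent $-N$ simultaneously to $-(N-1)$ in all three inclusions, which therefore again take the form \eqref{eL} with $N$ replaced by $N-1$, and the lemma applies once more. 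After at most $N$ steps we obtain $\widetilde e_{il}L(\infty)_{-\alpha}\subset L(\infty)$, so that $\widetilde e_{il}$ is well defined on $(L(\infty)/qL(\infty))_{-\alpha}$.

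For the basis statement, fix $b\in B(\infty)_{-\alpha}$ and write $b=S+qL(\infty)$ with $S=\widetilde f_{i_1l_1}\cdots\widetilde f_{i_tl_t}\mathbf 1$. If $\widetilde e_{il}b=0$ there is nothing to prove, so assume $\widetilde e_{il}b\ne 0$ (forcing $\beta\in R_+(r-1)$) and choose $\lambda\gg 0$. By Corollary \ref{pi}(c) the map $\overline\pi_\lambda$ restricts to $\Q$-linear isomorphisms carrying $B(\infty)_{-\alpha}$ and $B(\infty)_{-\beta}$ bijectively onto $B(\lambda)_{\lambda-\alpha}$ and $B(\lambda)_{\lambda-\beta}$; by $\mathbf I(r)$ (Proposition \ref{proof of Ir}) we have $\overline\pi_\lambda(b)=\widetilde f_{i_1l_1}\cdots\widetilde f_{i_tl_t}v_\lambda+qL(\lambda)=:u+qL(\lambda)\in B(\lambda)_{\lambda-\alpha}$; and by Lemma \ref{pie=epi}, $\overline\pi_\lambda\circ\widetilde e_{il}=\widetilde e_{il}\circ\overline\pi_\lambda$ on the relevant weight spaces. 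It therefore suffices to show $\widetilde e_{il}(u+qL(\lambda))\in B(\lambda)\cup\{0\}$; transporting this back along $\overline\pi_\lambda^{-1}$ yields $\widetilde e_{il}b\in B(\infty)\cup\{0\}$, and since $0\notin B(\infty)_{-\beta}$ this is exactly the assertion.

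To analyze $\widetilde e_{il}(u+qL(\lambda))$ inside $V(\lambda)$ I would distinguish cases. If $i_1=\cdots=i_t=i$, then $u=\mathtt b_{i,\mathbf c}v_\lambda$ for the composition (resp.\ partition) $\mathbf c=(l_1,\dots,l_t)$; since $E_{ik}v_\lambda\equiv 0\bmod qL(\lambda)$ for $\lambda\gg 0$ (as in the proof of Lemma \ref{fP and eP}), Definition \ref{def:Kashiwara operator} gives $\widetilde e_{il}(u+qL(\lambda))\equiv\widetilde f_{i,\mathbf c\setminus\{l\}}v_\lambda+qL(\lambda)$ or $0$, which lies in $B(\lambda)\cup\{0\}$. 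If the $i_j$ are all equal to some $j_0\ne i$, a direct check with the $i$-string decomposition of $u=\mathtt b_{j_0,\mathbf c}v_\lambda$ shows $\widetilde e_{il}u=0$. Otherwise pick $s$ maximal with $i_s\ne i_{s+1}=\cdots=i_t$, set $\lambda_0:=\Lambda_{i_s}$, $\mu:=\lambda-\lambda_0\gg 0$, and use $\Phi_{\lambda_0,\mu}$. By $\mathbf C(r)$ (Proposition \ref{prop:Cr}) and Lemma \ref{ffvvbb}, $\Phi_{\lambda_0,\mu}(u)\equiv b'\otimes b''\bmod q(L(\lambda_0)\otimes L(\mu))$ with $b'\in B(\lambda_0)_{\lambda_0-\gamma}\cup\{0\}$, $b''\in B(\mu)_{\mu-\delta}$ and $\gamma,\delta\in R_+(r-1)$. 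Then Lemma \ref{a-g}(b) (through the simplified tensor product rule \eqref{eq:simplified} and the inductive hypothesis $\mathbf A(r-1)$) gives $\widetilde e_{il}(b'\otimes b'')\in(B(\lambda_0)\otimes B(\mu))\cup\{0\}$, and since this element has weight $\lambda-\beta$ with $\beta\in R_+(r-1)$, the inductive hypothesis $\mathbf D(r-1)$ gives $\Psi_{\lambda_0,\mu}(\widetilde e_{il}(b'\otimes b''))\in B(\lambda)\cup\{0\}$. Using $\Psi_{\lambda_0,\mu}\circ\Phi_{\lambda_0,\mu}=\mathrm{id}$, Lemma \ref{a-g}(c), Corollary \ref{fLLeLL}, and the compatibility of the Kashiwara operators with $\Phi_{\lambda_0,\mu}$ and $\Psi_{\lambda_0,\mu}$ modulo $qL$ available at levels $<r$ (exactly as in the proof of Lemma \ref{lemma for Ar and Hr}(b)), one identifies $\Psi_{\lambda_0,\mu}(\widetilde e_{il}(b'\otimes b''))$ with $\widetilde e_{il}(u+qL(\lambda))$, which finishes the proof.

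The step I expect to be the genuine obstacle is the last one: isolating the correct equal-index tail $i_s\ne i_{s+1}=\cdots=i_t$ of the $\widetilde f$-monomial defining $b$, peeling it into the fundamental tensor factor $V(\Lambda_{i_s})$, and verifying that $\widetilde e_{il}$ intertwines with $\Phi_{\lambda_0,\mu}$ and $\Psi_{\lambda_0,\mu}$ modulo $qL$ — all while tracking the isotropic versus non-isotropic cases and whether the part $l$ is removable. Conceptually this repeats the pattern of Lemmas \ref{ffvvbb}, \ref{a-g} and the proof of Lemma \ref{lemma for Ar and Hr}, but it is where essentially all of the bookkeeping lives.
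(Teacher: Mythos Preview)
Your proposal is correct and follows essentially the same strategy as the paper's proof: iterate Lemma~\ref{lemma for Ar and Hr}(c) for the lattice statement, and for the basis statement pass to $V(\lambda)$ with $\lambda\gg 0$ via Corollary~\ref{pi}(c) and Lemma~\ref{pie=epi}, split off $V(\Lambda_{i_s})$ using Lemma~\ref{ffvvbb}, then apply $\mathbf A(r-1)$ and $\mathbf D(r-1)$. The step you flag as the obstacle is in fact simpler than you fear: since $\Phi_{\lambda_0,\mu}$ and $\Psi_{\lambda_0,\mu}$ are $U_q(\g)$-module homomorphisms, they preserve $i$-string decompositions and therefore commute with $\widetilde e_{il}$ and $\widetilde f_{il}$ exactly, not merely modulo $qL$.
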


\begin{proof}
Applying Lemma \ref{lemma for Ar and Hr} (c) repeatedly, the first assertion holds.
For the second assertion, let
	\begin{align*}
		S=\widetilde{f}_{i_1l_1}\cdots\widetilde{f}_{i_tl_t}\mathbf 1\in{L(\infty)}_{-\alpha},\quad b=S+q{L(\infty)}_{-\alpha}\in {B(\infty)}_{-\alpha}.
	\end{align*}
	
	If $i_1=i_2=\cdots=i_t$, our assertion is true as we have seen in the proof of Lemma \ref{lemma for Ar and Hr} (a). Here,  we may assume that there exists $s$ with  $1\leq s<t$ such that $i_s\neq i_{s+1}=\cdots= i_t$. Take $\mu\gg 0$ and set $\lambda_0=\Lambda_{i_s}$, $\lambda=\lambda_0+\mu\gg 0$. Then Lemma \ref{ffvvbb} yields
	\begin{align*}
		S(v_{\lambda_0}\otimes v_\mu)=\widetilde{f}_{i_1l_1}\cdots\widetilde{f}_{i_tl_t}(v_{\lambda_0}\otimes v_\mu)\equiv v\otimes v'\!\!\!\! \mod{q(L(\lambda_0)\otimes L(\mu))}
	\end{align*}
	for some $v\in{L(\lambda_0)}_{\lambda_0-\beta}$, $v'\in{L(\mu)}_{\mu-\gamma}$, $\beta,\gamma\in R_+(r-1)\setminus\{0\}$ and $\alpha=\beta+\gamma$ such that
	\begin{align*}
		v+qL(\lambda_0)\in B(\lambda_0)\cup \{0\},\quad v'+qL(\mu)\in B(\mu)\cup \{0\}.
	\end{align*}
   Therefore we have
	\begin{align*}
		\widetilde{e}_{il}(\widetilde{f}_{i_1l_1}\cdots\widetilde{f}_{i_tl_t}(v_{\lambda_0}\otimes v_\mu))\equiv \widetilde{e}_{il}(v\otimes v')\equiv\widetilde{e}_{il}v\otimes v'\!\!\!\! \mod{q(L(\lambda_0)\otimes L(\mu))}.
	\end{align*}
	By ${\bf A}(r-1)$, we have
	\begin{align*}
		\widetilde{e}_{il}(\widetilde{f}_{i_1l_1}\cdots\widetilde{f}_{i_tl_t}(v_{\lambda_0}\otimes v_\mu))+q(L(\lambda_0)\otimes L(\mu))\in (B(\lambda_0)\otimes B(\mu))\cup \{0\}.
	\end{align*}
	The map $\Psi_{\lambda_0,\mu}$ and $\mathbf{D}(r-1)$ yield
	\begin{align*}
		\widetilde{e}_{il}\overline{\pi}_\lambda(b)=\widetilde{e}_{il}(\widetilde{f}_{i_1l_1}\cdots\widetilde{f}_{i_tl_t}v_\lambda+qL(\lambda))\in B(\lambda)\cup \{0\}.
	\end{align*}
	
	Since $\lambda\gg 0$, Lemma \ref{pie=epi} and Corollary \ref{pi} (c) yield
	\begin{align*}
		\widetilde{e}_{il}b=\widetilde{e}_{il}(\widetilde{f}_{i_1l_1}\cdots\widetilde{f}_{i_tl_t}\mathbf 1+qL(\infty))\in B(\infty)\cup\{0\}.
	\end{align*}
\end{proof}

\vskip 2mm

\begin{proposition}  \label{proof of Ar} $({\bf A}(r))$
{\rm
For any $\lambda\in P^+$ and $\alpha\in R_+(r)$, we have
\begin{align*}
\widetilde{e}_{il}{L(\lambda)}_{\lambda-\alpha}\subset L(\lambda),\quad
\widetilde{e}_{il}{B(\lambda)}_{\lambda-\alpha}\subset B(\lambda)\cup \{0\}.
\end{align*}
}
\end{proposition}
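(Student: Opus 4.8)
The plan is to obtain both inclusions without re-examining the Kashiwara operators on $V(\lambda)$ from scratch, by leaning on the statements for $U_q^-(\g)$ that are already available at stage $r$ — namely $\mathbf{E}(r)$, $\mathbf{I}(r)$, $\mathbf{K}(r)$ (Propositions \ref{proof of Er}, \ref{proof of Ir}, \ref{proof of Kr}) — together with Corollary \ref{pi}, Corollary \ref{fLLeLL} (which rests only on $\mathbf{A}(r-1)$), Lemma \ref{euv}, Lemma \ref{Slm} and the highest-weight projection $S_{\lambda,\mu}$.

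For the lattice inclusion $\widetilde{e}_{il}L(\lambda)_{\lambda-\alpha}\subset L(\lambda)$, I would fix an arbitrary $\mu\in P^{+}$ and, given $u\in L(\lambda)_{\lambda-\alpha}$, transport the problem to $V(\lambda)\otimes V(\mu)$ via $u\mapsto u\otimes v_{\mu}$. Since $v_{\mu}\in L(\mu)_{\mu}$ and $0\in R_{+}(r)$, Corollary \ref{fLLeLL}(b) gives $\widetilde{e}_{il}(u\otimes v_{\mu})\in L(\lambda)\otimes L(\mu)$; on the other hand Lemma \ref{euv} gives
\[
\widetilde{e}_{il}(u\otimes v_{\mu})\equiv \widetilde{e}_{il}u\otimes v_{\mu}\pmod{q(L(\lambda)\otimes L(\mu))},
\]
so that $\widetilde{e}_{il}u\otimes v_{\mu}\in L(\lambda)\otimes L(\mu)$ as well. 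Applying $S_{\lambda,\mu}$, which sends $w\otimes v_{\mu}\mapsto w$ and satisfies $S_{\lambda,\mu}(L(\lambda)\otimes L(\mu))=L(\lambda)$ by Lemma \ref{Slm}(a), I conclude $\widetilde{e}_{il}u\in L(\lambda)$.

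For the crystal inclusion $\widetilde{e}_{il}B(\lambda)_{\lambda-\alpha}\subset B(\lambda)\cup\{0\}$, I would write a given $b\in B(\lambda)_{\lambda-\alpha}$ as $b=\big(\widetilde{f}_{i_{1}l_{1}}\cdots\widetilde{f}_{i_{t}l_{t}}v_{\lambda}\big)+qL(\lambda)\neq 0$ with $\sum_{j}l_{j}\alpha_{i_{j}}=\alpha$, and lift it along $\pi_{\lambda}$: set $S:=\widetilde{f}_{i_{1}l_{1}}\cdots\widetilde{f}_{i_{t}l_{t}}\mathbf{1}\in L(\infty)_{-\alpha}$ and $\widetilde{b}:=S+qL(\infty)\in B(\infty)_{-\alpha}$. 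By $\mathbf{I}(r)$ one has $Sv_{\lambda}\equiv \widetilde{f}_{i_{1}l_{1}}\cdots\widetilde{f}_{i_{t}l_{t}}v_{\lambda}\pmod{qL(\lambda)}$, hence $\overline{\pi}_{\lambda}(\widetilde{b})=b\neq 0$, i.e. $\widetilde{b}\in B^{\lambda}_{-\alpha}$. Then $\mathbf{K}(r)$ gives $\widetilde{e}_{il}b=\widetilde{e}_{il}\,\overline{\pi}_{\lambda}(\widetilde{b})=\overline{\pi}_{\lambda}(\widetilde{e}_{il}\widetilde{b})$, while $\mathbf{E}(r)$ gives $\widetilde{e}_{il}\widetilde{b}\in B(\infty)_{-\alpha+l\alpha_{i}}\cup\{0\}$. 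Since $l\geq 1$ we have $\alpha-l\alpha_{i}\in R_{+}(r-1)$, so Corollary \ref{pi}(b) applied with the index $r-1$ yields $\widetilde{e}_{il}b=\overline{\pi}_{\lambda}(\widetilde{e}_{il}\widetilde{b})\in B(\lambda)_{\lambda-\alpha+l\alpha_{i}}\cup\{0\}\subset B(\lambda)\cup\{0\}$, as required (with the convention $\overline{\pi}_{\lambda}(0)=0$ when $\widetilde{e}_{il}\widetilde{b}=0$; the case $t=0$ is trivial since $\widetilde{e}_{il}v_{\lambda}=0$).

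The substantive work has already been paid for in the preparatory lemmas — chiefly Lemma \ref{lemma for Ar and Hr} (the $q$-valuation bootstrap feeding into $\mathbf{E}(r)$) and $\mathbf{K}(r)$, where the compatibility of $\widetilde{e}_{il}$ with $\overline{\pi}_{\lambda}$ on the non-vanishing part is established — so I expect $\mathbf{A}(r)$ itself to be essentially bookkeeping. The one point that needs care is to route the lattice argument through $S_{\lambda,\mu}$ rather than through $\pi_{\lambda}$: the latter intertwines $\widetilde{e}_{il}$ with itself only modulo $q$ and only for $\lambda\gg 0$ (Lemma \ref{pie=epi}), whereas $S_{\lambda,\mu}$ matches the two lattices exactly; and correspondingly one must invoke Corollary \ref{fLLeLL}(b) with the highest-weight second factor $v_{\mu}$, so that the relevant weight $\beta=0$ genuinely lies in $R_{+}(r)$.
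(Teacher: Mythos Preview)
Your crystal argument is essentially the paper's: lift $b$ to $\widetilde b\in B(\infty)$ via $\overline\pi_\lambda$ using $\mathbf I(r)$, apply $\mathbf K(r)$ to commute $\widetilde e_{il}$ past $\overline\pi_\lambda$, apply $\mathbf E(r)$ on $B(\infty)$, and project back with Corollary~\ref{pi}(b). This matches the paper's citation of $\mathbf K(r)$, $\mathbf E(r)$, Corollary~\ref{pi}(b), $\mathbf H(r)$.

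For the lattice inclusion you take a genuinely different route. The paper obtains $\widetilde e_{il}L(\lambda)_{\lambda-\alpha}\subset L(\lambda)$ directly by iterating Lemma~\ref{lemma for Ar and Hr}: one includes the given $\lambda$ in the finite set $T$, so part~(b) of that lemma improves the bound $q^{-N}$ to $q^{1-N}$ for $L(\lambda)$ itself, and repeating drives $N$ to $0$. Your argument instead passes through $u\otimes v_\mu$, invokes Corollary~\ref{fLLeLL}(b) with $\beta=0$, and reads off the conclusion via Lemma~\ref{euv} and $S_{\lambda,\mu}$. This is slick, but it leans on Corollary~\ref{fLLeLL}(b) at the full level $\alpha\in R_+(r)$ for \emph{arbitrary} $\lambda$. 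The paper asserts that Corollary~\ref{fLLeLL} rests only on $\mathbf A(r-1)$, yet its one-line proof ultimately needs the $i$-string components $u_{\mathbf c}\in L(\lambda)$ for $u\in L(\lambda)_{\lambda-\alpha}$, and the argument of Lemma~\ref{uc in L} for that step applies $\widetilde e_{il}$ to $u$ and uses that the result lies in $L(\lambda)$ --- exactly the lattice half of $\mathbf A(r)$ you are proving. The paper sidesteps this potential circularity by not using Corollary~\ref{fLLeLL} in the proof of $\mathbf A(r)$ at all, deriving the lattice inclusion from Lemma~\ref{lemma for Ar and Hr} instead; Corollary~\ref{fLLeLL} is only invoked later (in $\mathbf F(r)$), after $\mathbf A(r)$ is in hand. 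So your tensor-product detour is correct \emph{if} one grants Corollary~\ref{fLLeLL}(b) at this stage, but the safer and intended path is the direct $q$-valuation bootstrap.
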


\begin{proof}
	Proposition follows from Lemma \ref{lemma for Ar and Hr}, Proposition  \ref{proof of Kr}, Proposition \ref{proof of Er},
	Corollary \ref{pi} (b) and Proposition \ref{proof of Hr}.	
	\end{proof}

\vskip 2mm

For $(i,l) \in I^{\infty}$, let $u = {\mathtt b}_{il}^m \, u_{0}$ such that $E_{ik} u_{0} = 0$ for all $k>0$.
Define an operator $Q_{il}: V(\lambda) \rightarrow V(\lambda)$ by

\begin{equation} \label{eq:Qill}
Q_{il}(u) = \begin{cases}
(m+1) u & \text{if} \ i\in I^{\text{iso}}, \\
u & \text{otherwise}
\end{cases}
\end{equation}

\vskip 2mm

\begin{lemma}\label{lem:LLA0}
{\rm
	Let $\lambda\in P^{+}$ and $\alpha\in R_{+}(r)$.
	\begin{enumerate}	
		\item[{\rm (a)}] For any $u\in {L(\lambda)}_{\lambda-\alpha+l\alpha_i}$ and $v\in {L(\lambda)}_{\lambda-\alpha}$, we have
		\begin{equation*}
			(\widetilde{f}_{il} Q_{il}\, u,v)_K\equiv (u,\widetilde{e}_{il}v)_K\!\!\!\! \mod{q \A_0}.
		\end{equation*}
	
\vskip 2mm
		\item[{\rm (b)}]
		$({L(\lambda)}_{\lambda-\alpha},{L(\lambda)}_{\lambda-\alpha})_K\subset \A_0$.
	\end{enumerate}
}
\end{lemma}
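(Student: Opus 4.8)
The plan is to establish (a) and (b) together by induction on $r$ (equivalently, on the height $|\alpha|$), the two statements being interlocked only through \emph{strictly shorter} weights: (a) at height $r$ will follow from (b) at heights $<r$, and (b) at height $r$ will follow from (a) at height $r$ together with (b) at heights $<r$. The single computational input is the operator identity $\widetilde{f}_{il}\circ Q_{il}=\mathtt{b}_{il}$ on $V(\lambda)$ for $i\in I^{\text{im}}$: if $u=\sum_{\mathbf{c}\in\mathcal{C}_i}\mathtt{b}_{i,\mathbf{c}}u_{\mathbf{c}}$ is the $i$-string decomposition, then $Q_{il}u=\sum_{\mathbf{c}}(\mathbf{c}_l+1)\,\mathtt{b}_{i,\mathbf{c}}u_{\mathbf{c}}$ (which is just $u$ when $i\notin I^{\text{iso}}$), and a direct check using the formulas for $\widetilde{f}_{il}$ together with the commutativity of the $\mathtt{b}_{ik}$ when $a_{ii}=0$ gives $\widetilde{f}_{il}(Q_{il}u)=\mathtt{b}_{il}u$. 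Combining this with \eqref{eq:bilinearV} and $K_i^l\mathtt{a}_{il}=-E_{il}$, for $u\in L(\lambda)_{\lambda-\alpha+l\alpha_i}$ and $v\in L(\lambda)_{\lambda-\alpha}$ we get
\[
(\widetilde{f}_{il}Q_{il}u,v)_K=(\mathtt{b}_{il}u,v)_K=-(u,K_i^l\mathtt{a}_{il}v)_K=(u,E_{il}v)_K.
\]

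The base case $\alpha=0$ is trivial: $L(\lambda)_{\lambda}=\A_0 v_\lambda$ with $(v_\lambda,v_\lambda)_K=1$, and $L(\lambda)_{\lambda+l\alpha_i}=0$. For the inductive step fix $|\alpha|=r$ and assume (a), (b) for all heights $<r$. To prove (a) at height $r$: by (the computation behind) Lemma \ref{euEuqL}, which is legitimate in this weight range thanks to Proposition \ref{proof of Ar} and Corollary \ref{fLLeLL}, we have $E_{il}v-\widetilde{e}_{il}v\in qL(\lambda)_{\lambda-\alpha+l\alpha_i}$. Since $|\alpha-l\alpha_i|=r-l<r$, the inductive hypothesis (b) gives $(L(\lambda)_{\lambda-\alpha+l\alpha_i},L(\lambda)_{\lambda-\alpha+l\alpha_i})_K\subset\A_0$, hence $(u,E_{il}v-\widetilde{e}_{il}v)_K\in q\A_0$, and the display above yields $(\widetilde{f}_{il}Q_{il}u,v)_K\equiv(u,\widetilde{e}_{il}v)_K \bmod q\A_0$. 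This proves (a) at height $r$.

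To prove (b) at height $r$ (so $\alpha\neq0$): since $L(\lambda)$ is the $\A_0$-span of the vectors $\widetilde{f}_{i_1l_1}\cdots\widetilde{f}_{i_tl_t}v_\lambda$, one has $L(\lambda)_{\lambda-\alpha}=\sum_{(i,l)\in I^{\infty}}\widetilde{f}_{il}\bigl(L(\lambda)_{\lambda-\alpha+l\alpha_i}\bigr)$, so it is enough to bound $(\widetilde{f}_{il}w,v')_K$ for $w\in L(\lambda)_{\lambda-\alpha+l\alpha_i}$ and $v'\in L(\lambda)_{\lambda-\alpha}$. By Corollary \ref{fLLeLL} each $i$-string component of an element of $L(\lambda)$ lies again in $L(\lambda)$, so $Q_{il}$, which rescales each such component by a positive integer (a unit of $\A_0$), is an $\A_0$-automorphism of $L(\lambda)$; thus $u:=Q_{il}^{-1}w\in L(\lambda)_{\lambda-\alpha+l\alpha_i}$ and $\widetilde{f}_{il}w=\widetilde{f}_{il}Q_{il}u$. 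Part (a) at height $r$ then gives $(\widetilde{f}_{il}w,v')_K\equiv(u,\widetilde{e}_{il}v')_K\bmod q\A_0$, and since $\widetilde{e}_{il}v'\in L(\lambda)_{\lambda-\alpha+l\alpha_i}$ with $|\alpha-l\alpha_i|<r$, the inductive hypothesis (b) gives $(u,\widetilde{e}_{il}v')_K\in\A_0$; hence $(\widetilde{f}_{il}w,v')_K\in\A_0$. Summing over a spanning set of $L(\lambda)_{\lambda-\alpha}$ completes (b), and the induction.

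There is no deep obstacle here; the point requiring care is purely organizational. Every appeal to (b) must land at a weight of strictly smaller height than the one currently being treated, and the input $E_{il}v\equiv\widetilde{e}_{il}v\bmod qL(\lambda)$ from Lemma \ref{euEuqL} must only be used in the weight range already secured by $\mathbf{A}(r)$ and Corollary \ref{fLLeLL}, so that no circular dependence on the still-open grand-loop statements (in particular $\mathbf{G}(r)$, whose proof will in turn use this lemma) is created. The subsidiary facts — the identity $\widetilde{f}_{il}\circ Q_{il}=\mathtt{b}_{il}$, the invariance and invertibility of $Q_{il}$ on $L(\lambda)$, and the spanning identity for $L(\lambda)_{\lambda-\alpha}$ — are all routine once the $i$-string decomposition is unwound.
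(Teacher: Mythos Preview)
Your proof is correct and follows essentially the same approach as the paper's. You make explicit the interlocking induction between (a) and (b), the operator identity $\widetilde{f}_{il}\circ Q_{il}=\mathtt{b}_{il}$ for $i\in I^{\text{im}}$, the $\A_0$-invertibility of $Q_{il}$ on $L(\lambda)$, and the legitimacy of $E_{il}v\equiv\widetilde{e}_{il}v\bmod qL(\lambda)$ via Corollary~\ref{fLLeLL}; the paper's proof compresses all of this into a two-line sketch (and writes ``$\bmod\ qL(\lambda)$'' where ``$\bmod\ q\A_0$'' is meant), tacitly relying on the same inductive hypothesis (b) at smaller height.
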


\begin{proof}
	(a) By \eqref{eq:bilinearV}, we have
	\begin{equation*}
		(\mathtt b_{il} u,v)_K=(u,E_{il}v)_K\equiv (u,\widetilde{e}_{il}v)_K\!\!\!\! \mod{qL(\lambda)}.
	\end{equation*}
	Therefore, if $i\notin I^{\text{iso}}$, the conclusion holds naturally.
	
	If $i\in I^{\text{iso}}$, we may assume that $u=\mathtt b_{i,\mathbf c}u_0$ and $E_{ik}u_0=0$ for any $k>0$.
Then we have
	\begin{align*}
(\widetilde{f}_{il} Q_{il}(u), v)_{K} & = (\mathbf{c}_{l}+1) (\widetilde{f}_{il}(u), v)_{K}
= ({\mathtt b}_{\mathbf{c} \cup \{l\}} \, u_{0}, v)_{K} \\
& = ({\mathtt b}_{i, \mathbf{c}} u_{0}, E_{il} v)_{K}
\equiv (u, \widetilde{e}_{il} v)_{K} \ \text{mod} \ q \A_{0},
	\end{align*}
 which gives our assertion.

 \vskip 2mm
	
	(b) By induction, we have $(u,\widetilde{f}_{il}v)_K\in \A_0$.
	Hence $(\widetilde{f}_{il}u,v)_K \in \A_0$, which proves the claim.
\end{proof}

\vskip 2mm

\begin{lemma}\label{(P,Q)}
{\rm
	Let $\alpha=l_1\alpha_{i_1}+\cdots+l_t\alpha_{i_t}\in R_{+}$, $S,T\in {U^-_q(\g)}_{-\alpha}$ and $m\in \Z$. For any $\lambda\gg 0$, we have
	\begin{equation*}
		(S,T)_K=\prod_{k=1}^{t}{(1-q_i^{2l_k})}^{-1}(Sv_{\lambda},Tv_{\lambda})_K \!\!\!\!\mod{q^{m}\A_0}.
	\end{equation*}
}
\end{lemma}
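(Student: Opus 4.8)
The plan is to prove the identity by induction on $\mathrm{ht}(\alpha)=l_1+\cdots+l_t$, peeling one primitive generator off the left factor $S$ at each step and transferring it, via the defining relations of the two bilinear forms, to the operator $E_{il}$ acting on the right factor. By $\Q(q)$-multilinearity (and a harmless rescaling, after which we enlarge $m$) it suffices to treat the case where $S=\mathtt b_{i_1l_1}S'$ is a monomial $\mathtt b_{i_1l_1}\mathtt b_{i_2l_2}\cdots\mathtt b_{i_tl_t}\mathbf 1$ respecting the given decomposition of $\alpha$, so that $S'\mathbf 1\in L(\infty)$ and $S'\in U_q^-(\g)_{-(\alpha-l_1\alpha_{i_1})}$; similarly $T$ may be taken to be a monomial. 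Writing $E_{i_1l_1}=-K_{i_1}^{l_1}\mathtt a_{i_1l_1}$ and using \eqref{eq:bilinearV} we get $(Sv_\lambda,Tv_\lambda)_K=(\mathtt b_{i_1l_1}(S'v_\lambda),Tv_\lambda)_K=(S'v_\lambda,E_{i_1l_1}(Tv_\lambda))_K$, while \eqref{eq:bilinearU} gives $(S,T)_K=(S',e'_{i_1l_1}T)_K$.

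The crux is the following comparison, itself proved by induction on $\mathrm{ht}$ of the weight of $T$: for every $M$ and all $\lambda\gg 0$,
\[
E_{il}(Tv_\lambda)\equiv (1-q_i^{2l})^{-1}(e'_{il}T)v_\lambda \pmod{q^M L(\lambda)}.
\]
Indeed, writing $T=\mathtt b_{jk}T'$ and using \eqref{Eb} one has $E_{il}(Tv_\lambda)=q_i^{-kla_{ij}}\mathtt b_{jk}E_{il}(T'v_\lambda)+\delta_{ij}\delta_{kl}\frac{1-K_i^{2l}}{1-q_i^{2l}}(T'v_\lambda)$, whereas \eqref{eq:commute} gives $(e'_{il}T)v_\lambda=q_i^{-kla_{ij}}\mathtt b_{jk}(e'_{il}T')v_\lambda+\delta_{ij}\delta_{kl}(T'v_\lambda)$. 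The only discrepancy between the two recursions is the term $\frac{-K_i^{2l}}{1-q_i^{2l}}$ acting on a weight vector of weight $\lambda-\beta$, with $\beta$ ranging over a fixed finite subset of $R_+$ bounded by $\alpha$; that scalar equals $-\frac{q_i^{2l\langle h_i,\lambda-\beta\rangle}}{1-q_i^{2l}}$, which lies in $q^M\A_0$ once $\langle h_i,\lambda\rangle$ is large enough. Since the factors $q_i^{-kla_{ij}}$ and $(1-q_i^{2l})^{-1}$ are units in $\A_0$ and $\mathtt b_{jk}$ preserves $L(\lambda)$, the inductive error stays in $q^M L(\lambda)$; the base case $T\in\Q(q)\mathbf 1$ is trivial since $E_{il}v_\lambda=0=e'_{il}(\mathbf 1)$.

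Combining these, $(S'v_\lambda,E_{i_1l_1}(Tv_\lambda))_K\equiv (1-q_{i_1}^{2l_1})^{-1}(S'v_\lambda,(e'_{i_1l_1}T)v_\lambda)_K\pmod{q^m\A_0}$: the error is $q^M$ times a pairing of $S'v_\lambda$ with an element of $L(\lambda)$ in the same weight space, hence lies in $q^M\A_0\subseteq q^m\A_0$ by Lemma \ref{lem:LLA0}(b) (note $S'v_\lambda=\pi_\lambda(S'\mathbf 1)\in\pi_\lambda(L(\infty))=L(\lambda)$). Applying the induction hypothesis to $(S'v_\lambda,(e'_{i_1l_1}T)v_\lambda)_K$, whose weight $-(\alpha-l_1\alpha_{i_1})$ has smaller height, produces $\prod_{k=2}^{t}(1-q_{i_k}^{2l_k})^{-1}(S',e'_{i_1l_1}T)_K$ modulo $q^m\A_0$, and since $(S',e'_{i_1l_1}T)_K=(\mathtt b_{i_1l_1}S',T)_K=(S,T)_K$ we recover the identity with the full product $\prod_{k=1}^{t}(1-q_{i_k}^{2l_k})^{-1}$. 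I expect the main obstacle to be the quantitative bookkeeping in the comparison lemma: making "$\lambda\gg 0$" effective (how large $\langle h_i,\lambda\rangle$ must be as a function of $M$, $\alpha$ and $i$) and verifying that commuting $E_{il}$ through $T$ does not degrade the $q$-order of the error past $q^M L(\lambda)$. A secondary point is pinning down the precise meaning of "the decomposition $\alpha=\sum_k l_k\alpha_{i_k}$", so that the reduction to monomials matching it is legitimate, and, for $i_k\in I^{\mathrm{re}}$, replacing $\mathtt b_{i_k}^{l_k}$ by its divided power so the monomial lies in $L(\infty)$.
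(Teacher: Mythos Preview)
Your approach is the paper's: induct on $\mathrm{ht}(\alpha)$, write $S=\mathtt b_{il}W$, transfer $\mathtt b_{il}$ across the pairing to $E_{il}$ on $V(\lambda)$ and to $e'_{il}$ on $U_q^-(\g)$, then compare $E_{il}(Tv_\lambda)$ with $(1-q_i^{2l})^{-1}(e'_{il}T)v_\lambda$. The difference lies only in how that last comparison is made. Rather than your inner induction commuting $E_{il}$ past each generator of $T$, the paper uses in one stroke the closed identity (already appearing in the proof of Lemma~\ref{fP and eP}, with $E_{il}v_\lambda=0$)
\[
E_{il}(Tv_\lambda)=\frac{1}{1-q_i^{2l}}\Bigl[(e'_{il}T)v_\lambda-q_i^{\,2l\langle h_i,\lambda-\alpha\rangle+2l^2a_{ii}}(e''_{il}T)v_\lambda\Bigr].
\]
The error is now a single explicit term; the outer induction hypothesis, applied to the pair $W,\,e''_{il}T$ in weight $-\alpha+l\alpha_i$, bounds the $q$-order of $(Wv_\lambda,(e''_{il}T)v_\lambda)_K$ from below independently of $\lambda$, and the scalar $q_i^{2l\langle h_i,\lambda\rangle+\cdots}$ then kills it for $\lambda\gg0$. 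No appeal to $L(\lambda)$ or to Lemma~\ref{lem:LLA0} is needed, and no reduction to monomials. This matters because the lattice bookkeeping you correctly flag as the main obstacle is in fact broken as stated: for $j\in I^{\mathrm{re}}$ the operator $\mathtt b_j$ does \emph{not} preserve $L(\lambda)$ (only $\widetilde f_j$ and the divided powers $\mathtt b_j^{(n)}$ do), and $q_i^{-kla_{ij}}\notin\A_0$ when $i=j\in I^{\mathrm{re}}$ since then $a_{ii}=2$. Packaging all the commutators into the single $e''_{il}$ term sidesteps both issues at once.
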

\begin{proof}
	If $S=\mathbf 1$, then $\alpha=0$, and $(S,T)_K=(v_{\lambda},v_{\lambda})_K=1$.

\vskip 2mm

	We shall prove the assertion by induction on $\mathrm{ht}(\alpha)$. Assume that $S=\mathtt b_{il}W$
	for some $W\in {U^{-}_q(\g)}_{-\alpha+l\alpha_i}$.
	Then we have
	\begin{align*}
		&(Sv_{\lambda},Tv_{\lambda})_K=(Wv_{\lambda},E_{il}(Tv_{\lambda}))_K\\
		=&(Wv_{\lambda},q_i^{-l\langle h_i,\text{wt}(T) \rangle}T(E_{il}v_{\lambda}))_K+(Wv_{\lambda},\frac{e'_{il}T-K_i^{2l}e''_{il}T}{1-q^2_i}v_{\lambda})_K\\
		=&{(1-q_i^{2l})}^{-1}(Wv_{\lambda},(e'_{il}T)v_{\lambda})_K-{(1-q_i^{2l})}^{-1}q_i^{2l\langle h_i,\lambda-\alpha\rangle+2 l^2 a_{ii}}(Wv_{\lambda},(e_{il}''T)v_{\lambda})_K.
	\end{align*}

	Since $\lambda\gg 0$, we have $q_i^{2l\langle h_i,\lambda-\alpha\rangle+2l^2a_{ii}}\equiv 0\!\! \mod{q^{m}\A_0}$.
Hence by induction, we obtain
	\begin{align*}
		(Sv_{\lambda},Tv_{\lambda})_K&\equiv {(1-q_i^{2l})}^{-1}(Wv_{\lambda},(e'_{il}T)v_{\lambda})_K\\
		&\equiv{(1-q_i^{2l})}^{-1}(W,e'_{il}T)_K\equiv{(1-q_i^{2l})}^{-1}(S,T)_K\!\!\!\! \mod{q^m \A_0}.
	\end{align*}
\end{proof}

\vskip 2mm

Let $L$ be a finitely generated $\A_0$-submodule of ${V(\lambda)}_{\lambda-\alpha}$ and set
\begin{align*}
L^{\vee}:=\{u\in {V(\lambda)}_{\lambda-\alpha}\ |\ (u,L)_K\subset \A_0\}.
\end{align*}

Similarly, let $L$ be a finitely generated $\mathbf A_0$-submodule of ${L(\infty)}_{-\alpha}$ and set
\begin{align*}
	L^{\vee}=\{u\in {U^-_q(\g)}_{-\alpha}\ |\ (u,L)_K\subset \A_0\}.
\end{align*}

Then $(L^{\vee})^{\vee}=L$ and we obtain

\begin{lemma}\label{piL=L}
{\rm
	If $\lambda\gg 0$ and $\alpha\in R_{+}(r)$, we have $\pi_\lambda({L(\infty)}^\vee_{-\alpha})={L(\lambda)}_{\lambda-\alpha}^{\vee}$.
}
\end{lemma}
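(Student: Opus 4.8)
The plan is to reduce the statement to Lemma \ref{(P,Q)} by transporting both dual lattices through the map $\pi_\lambda$. First I would fix $\alpha\in R_+(r)$ and choose $\lambda\gg 0$. By Corollary \ref{pi}(c) the map $\pi_\lambda$ restricts to an $\A_{0}$-isomorphism $L(\infty)_{-\alpha}\xrightarrow{\sim}L(\lambda)_{\lambda-\alpha}$; tensoring with $\Q(q)$ and using $\Q(q)\otimes_{\A_{0}}L(\infty)_{-\alpha}=U_q^-(\g)_{-\alpha}$ (and the analogous identity for $V(\lambda)$), it follows that $\pi_\lambda$ is already a $\Q(q)$-linear isomorphism $U_q^-(\g)_{-\alpha}\xrightarrow{\sim}V(\lambda)_{\lambda-\alpha}$ carrying $L(\infty)_{-\alpha}$ onto $L(\lambda)_{\lambda-\alpha}$. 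Hence $u\mapsto uv_\lambda$ identifies $L(\lambda)^{\vee}_{\lambda-\alpha}$ with $\{u\in U_q^-(\g)_{-\alpha}\ :\ (uv_\lambda,Tv_\lambda)_K\in\A_{0}\ \text{for all}\ T\in L(\infty)_{-\alpha}\}$ (here one uses $\pi_\lambda(L(\infty)_{-\alpha})=L(\lambda)_{\lambda-\alpha}$, i.e. Proposition \ref{proof of Hr}, to run $T$ over $L(\infty)_{-\alpha}$), and so it will be enough to show that this last set coincides with $L(\infty)^{\vee}_{-\alpha}=\{u\ :\ (u,L(\infty)_{-\alpha})_K\subseteq\A_{0}\}$.

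To match the two membership conditions I would apply Lemma \ref{(P,Q)} with $m=1$: for $\lambda$ large enough (depending only on $\alpha$) one has
\begin{equation*}
(Sv_\lambda,Tv_\lambda)_K\ \equiv\ c_\alpha\,(S,T)_K\ \ \text{mod}\ q\A_{0},\qquad c_\alpha:=\prod_{k=1}^{t}\bigl(1-q_{i_k}^{2l_k}\bigr),
\end{equation*}
for all $S,T\in U_q^-(\g)_{-\alpha}$, where $\alpha=l_1\alpha_{i_1}+\cdots+l_t\alpha_{i_t}$; here $c_\alpha$ is the reciprocal of the scalar occurring in Lemma \ref{(P,Q)}, and since each factor $1-q_{i_k}^{2l_k}$ takes the value $1$ at $q=0$ we have $c_\alpha\in\A_{0}^{\times}$. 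Therefore, for $u\in U_q^-(\g)_{-\alpha}$ and $T\in L(\infty)_{-\alpha}$,
\begin{equation*}
(uv_\lambda,Tv_\lambda)_K\in\A_{0}\iff c_\alpha(u,T)_K\in\A_{0}\iff(u,T)_K\in\A_{0}.
\end{equation*}
Feeding this into the description from the previous paragraph gives $\pi_\lambda^{-1}\bigl(L(\lambda)^{\vee}_{\lambda-\alpha}\bigr)=L(\infty)^{\vee}_{-\alpha}$, and applying the bijection $\pi_\lambda$ yields the asserted equality $\pi_\lambda\bigl(L(\infty)^{\vee}_{-\alpha}\bigr)=L(\lambda)^{\vee}_{\lambda-\alpha}$.

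The one point that needs a little care — and the only real obstacle — is that Lemma \ref{(P,Q)} must be applied uniformly over all $S,T$ in the (finite-dimensional) space $U_q^-(\g)_{-\alpha}$, rather than for one fixed pair. This is harmless: inspecting the proof of Lemma \ref{(P,Q)}, the hypothesis ``$\lambda\gg 0$'' there amounts to forcing the scalars $q_{i}^{\,2l\langle h_i,\lambda-\alpha\rangle+2l^2a_{ii}}$ into $q\A_{0}$, i.e. to a finite list of lower bounds on the $\langle h_i,\lambda\rangle$ for the finitely many $i$ dividing $\alpha$, none of which involves $S$ or $T$; enlarging $\lambda$ once more so that Corollary \ref{pi}(c) also applies, the whole argument goes through with a single threshold on $\lambda$. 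Everything else is formal bookkeeping with the unit $c_\alpha$ and with the isomorphism $\pi_\lambda$.
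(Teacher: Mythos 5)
Your argument is essentially the paper's: both rely on Corollary \ref{pi}(c) (equivalently Proposition \ref{proof of Hr}) to transport lattices through $\pi_\lambda$ and on Lemma \ref{(P,Q)} to compare $(\;,\;)_K$ on $U_q^-(\g)_{-\alpha}$ with $(\;,\;)_K$ on $V(\lambda)_{\lambda-\alpha}$ modulo $q\,\A_0$ via the $\A_0^{\times}$-unit $c_\alpha$. The paper handles the uniformity of the ``$\lambda\gg 0$'' threshold more cleanly by fixing a finite $\A_0$-basis $\{S_k\}$ of $L(\infty)_{-\alpha}$ and its dual basis $\{T_j\}$ so that only finitely many pairs enter, whereas you resolve it by inspecting the proof of Lemma \ref{(P,Q)} directly; both are valid, and the rest of your bookkeeping matches the paper's.
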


\begin{proof}
	Let ${\{S_k\}}_{k\in I}$ be an $\A_0$-basis of ${L(\infty)}_{-\alpha}$ and let ${\{T_k\}}_{k\in I}$ be its dual basis with respect to the bilinear form $(\ ,\ )_K$, i.e., $(S_i,T_j)_K=\delta_{ij}$. Then ${L(\infty)}_{-\alpha}^{\vee}=\sum_{j\in I}{\A_0 T_j}$.
	
\vskip 2mm

	By Proposition \ref{proof of Hr},
	we have $L(\lambda)=\sum_{k\in I}{\A_0 (S_kv_{\lambda})}$.	By Lemma \ref{(P,Q)}, for $\lambda\gg 0$, we have
	\begin{align*}
	(S_kv_{\lambda},T_jv_{\lambda})_K\equiv \delta_{kj}\!\!\!\! \mod{q \A_0}.	
		\end{align*}

	Hence we conclude
	\begin{equation*}
		{L(\lambda)}_{\lambda-\alpha}^{\vee}=\sum_{j\in I}{\mathbf A_0T_jv_{\lambda}}=\pi_{\lambda}({L(\infty)}_{-\alpha}^{\vee})\ \text{for}\ \lambda\gg 0.
	\end{equation*}
\end{proof}

\vskip 3mm
\begin{lemma}\label{PsiLL}
{\rm
	Let $\lambda\in P^{+}$, $\mu\gg 0$ and $\alpha\in R_{+}(r)$. Then we have
	\begin{equation*}
		\Psi_{\lambda,\mu}({(L(\lambda)\otimes L(\mu))}_{\lambda+\mu-\alpha})\subset {L(\lambda+\mu)}_{\lambda+\mu-\alpha}.
	\end{equation*}
}
\end{lemma}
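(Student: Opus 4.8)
The plan is to decompose the weight space by Lemma~\ref{LotimesL},
\[
(L(\lambda)\otimes L(\mu))_{\lambda+\mu-\alpha}
=\sum_{(i,l)\in I_{\infty}}\mathtt b_{il}\,(L(\lambda)\otimes L(\mu))_{\lambda+\mu-\alpha+l\alpha_i}
\;+\;v_{\lambda}\otimes L(\mu)_{\mu-\alpha},
\]
and, since $\Psi_{\lambda,\mu}$ is a $U_{q}(\g)$-module homomorphism, to treat the two summands by separate arguments inside an induction on $r=\mathrm{ht}(\alpha)$, with inductive hypothesis $\mathbf{D}(r-1)$ (the cases $r\le1$ being trivial). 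For the first summand, if $w\in(L(\lambda)\otimes L(\mu))_{\lambda+\mu-\alpha+l\alpha_i}$ with $l\ge1$ then $\alpha-l\alpha_i\in R_{+}(r-1)$, so $\Psi_{\lambda,\mu}(w)\in L(\lambda+\mu)$ by $\mathbf{D}(r-1)$ and $\Psi_{\lambda,\mu}(\mathtt b_{il}w)=\mathtt b_{il}\,\Psi_{\lambda,\mu}(w)$; it then suffices to know that $\mathtt b_{il}$ preserves $L(\lambda+\mu)$. This auxiliary fact holds because for $i\in I^{\text{im}}\setminus I^{\text{iso}}$ one has $\mathtt b_{il}=\widetilde f_{il}$ on modules in $\mathcal O_{\text{int}}$, and for $i\in I^{\text{iso}}$ one has $\mathtt b_{il}=\widetilde f_{il}\circ Q_{il}$ with $Q_{il}$ as in \eqref{eq:Qill}, where on each weight space $Q_{il}$ equals the finite sum $\sum_{k\ge0}\widetilde f_{il}^{\,k}\widetilde e_{il}^{\,k}$; since $\widetilde e_{il},\widetilde f_{il}$ preserve $L(\lambda+\mu)$ by $\mathbf{A}(r)$ and the crystal-lattice property, the claim follows.

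For the second summand the hypothesis $\mu\gg0$ enters, through a duality argument with respect to Kashiwara's form. Recall that $\bigl(L(\lambda+\mu)^{\vee}\bigr)^{\vee}=L(\lambda+\mu)$ and that $(\Psi_{\lambda,\mu}(u),v)_{K}=(u,\Phi_{\lambda,\mu}(v))_{K}$. Fix $z\in L(\mu)_{\mu-\alpha}$; to conclude $\Psi_{\lambda,\mu}(v_{\lambda}\otimes z)\in L(\lambda+\mu)_{\lambda+\mu-\alpha}$ it is enough to show $\bigl(\Psi_{\lambda,\mu}(v_{\lambda}\otimes z),y\bigr)_{K}\in\A_{0}$ for every $y\in L(\lambda+\mu)^{\vee}_{\lambda+\mu-\alpha}$. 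Since $\lambda+\mu\gg0$, Lemma~\ref{piL=L} lets us write $y=Tv_{\lambda+\mu}$ with $T\in L(\infty)^{\vee}_{-\alpha}$, so $\Phi_{\lambda,\mu}(y)=T(v_{\lambda}\otimes v_{\mu})$. Expanding $T(v_{\lambda}\otimes v_{\mu})$ through $\Delta(\mathtt b_{il})=\mathtt b_{il}\otimes1+K_{i}^{l}\otimes\mathtt b_{il}$ and using that the form on $V(\lambda)$ vanishes between distinct weight spaces, the only term pairing nontrivially with $v_{\lambda}$ in the first slot is the one whose left leg is the product of the $K$-factors of $T$, which acts on $v_{\lambda}$ as the scalar $q^{(\alpha,\lambda)}$; hence
\[
\bigl(\Psi_{\lambda,\mu}(v_{\lambda}\otimes z),y\bigr)_{K}
=\bigl(v_{\lambda}\otimes z,\;T(v_{\lambda}\otimes v_{\mu})\bigr)_{K}
=q^{(\alpha,\lambda)}\,(z,Tv_{\mu})_{K}.
\]
Because $\mu\gg0$, Lemma~\ref{piL=L} also gives $Tv_{\mu}\in L(\mu)^{\vee}_{\mu-\alpha}$, so $(z,Tv_{\mu})_{K}\in\A_{0}$ and therefore $q^{(\alpha,\lambda)}(z,Tv_{\mu})_{K}\in\A_{0}$ (note $(\alpha,\lambda)\ge0$). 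This proves $\Psi_{\lambda,\mu}\bigl(v_{\lambda}\otimes L(\mu)_{\mu-\alpha}\bigr)\subset L(\lambda+\mu)$, and combined with the first summand it yields the lemma.

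The step I expect to be the main obstacle is making this duality argument go through. A naive duality estimate applied to a general element of $(L(\lambda)\otimes L(\mu))_{\lambda+\mu-\alpha}$ is circular, because it would require $\pi_{\lambda}(L(\infty)^{\vee})=L(\lambda)^{\vee}$, which need not hold as $\lambda$ is only assumed dominant and not $\gg0$. The role of first splitting off the term $v_{\lambda}\otimes L(\mu)_{\mu-\alpha}$ (Lemma~\ref{LotimesL}) is precisely that there the first tensor factor is the highest weight vector, so the dual lattice of $V(\lambda)$ intervenes only through the self-paired line $\A_{0}v_{\lambda}$, while on the $\mathtt b_{il}$-summand one avoids $V(\lambda)^{\vee}$ altogether and uses $\mathbf{D}(r-1)$. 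One still has to verify carefully that, after fully expanding the coproduct of $T$, nothing beyond the single term with purely-$K$ left leg survives the pairing against $v_{\lambda}$; the remaining bookkeeping — the identity $\mathtt b_{il}=\widetilde f_{il}\circ Q_{il}$ and the reduction $Q_{il}=\sum_{k}\widetilde f_{il}^{\,k}\widetilde e_{il}^{\,k}$ on weight spaces, together with the weight argument pinning down that surviving term — is routine.
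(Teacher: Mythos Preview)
Your duality argument for the summand $v_{\lambda}\otimes L(\mu)_{\mu-\alpha}$ is exactly the paper's, and it is correct. The gap is in your treatment of the first summand. You claim that $\mathtt b_{il}$ preserves $L(\lambda+\mu)$ and justify this only for $i\in I^{\text{im}}$; but the sum in Lemma~\ref{LotimesL} ranges over all of $I^{\infty}$, and for $i\in I^{\text{re}}$ the operator $\mathtt b_i$ does \emph{not} preserve a crystal lattice. If $u=\sum_{k\ge0} \mathtt b_i^{(k)}u_k$ is the $i$-string decomposition then $\mathtt b_i u=\sum_{k\ge0}[k+1]_i\,\mathtt b_i^{(k+1)}u_k$, and $[k+1]_i\notin\A_0$ once $k\ge1$; already for $\mathfrak{sl}_2$ one has $\mathtt b_i(\mathtt b_i v_{2\Lambda_i})=[2]_i\,\mathtt b_i^{(2)}v_{2\Lambda_i}\notin L(2\Lambda_i)$. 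No $Q_i$-type rewriting in $\widetilde e_i,\widetilde f_i$ with $\A_0$-coefficients can repair this case.

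The paper sidesteps the problem by phrasing the first summand with the Kashiwara operators $\widetilde f_{il}$ rather than $\mathtt b_{il}$ (that is how its proof of the lemma is actually written, whatever the statement of Lemma~\ref{LotimesL} says). Because $\Psi_{\lambda,\mu}$ is a $U_q(\g)$-module homomorphism, it carries $i$-string decompositions to $i$-string decompositions and therefore commutes with $\widetilde f_{il}$; then $\mathbf D(r-1)$ together with $\widetilde f_{il}L(\lambda+\mu)\subset L(\lambda+\mu)$ handles the first summand uniformly in $i$. Your identities $\mathtt b_{il}=\widetilde f_{il}$ for $i\in I^{\text{im}}\setminus I^{\text{iso}}$ and $\mathtt b_{il}=\widetilde f_{il}\circ Q_{il}$ with $Q_{il}=\sum_{k\ge0}\widetilde f_{il}^{\,k}\widetilde e_{il}^{\,k}$ for $i\in I^{\text{iso}}$ are correct and explain why for imaginary $i$ the two formulations give the same $\A_0$-span, but for real $i$ you must switch to $\widetilde f_i$.
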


\begin{proof}
	By Lemma \ref{LotimesL}, we have
	\begin{equation*}
		{(L(\lambda)\otimes L(\mu))}_{\lambda+\mu-\alpha}=\sum_{(i,l)\in I_\infty}{\widetilde{f}_{il}({(L(\lambda)\otimes L(\mu))}_{\lambda+\mu-\alpha+l\alpha_i})+v_{\lambda}\otimes {L(\mu)}_{\mu-\alpha}}.
	\end{equation*}
	By induction hypothesis ${\bf D}(r-1)$, we get
	\begin{align*}
		&\Psi_{\lambda,\mu}(\sum_{(i,l)\in I_\infty}{\widetilde{f}_{il}({(L(\lambda)\otimes L({\mu}))}_{\lambda+\mu-\alpha+l\alpha_i})}\\
		=&\sum_{(i,l)\in I_\infty}{\widetilde{f}_{il}\Psi_{\lambda,\mu}({(L(\lambda)\otimes L({\mu}))}_{\lambda+\mu-\alpha+l\alpha_i})}\\
		\subset&\sum_{(i,l)\in I_\infty}{\widetilde{f}_{il}{L(\lambda+\mu)}_{\lambda+\mu-\alpha+l\alpha_i}}={L(\lambda+\mu)}_{\lambda+\mu-\alpha}.
	\end{align*}
	
	It remains to show
	\begin{equation*}
		\Psi_{\lambda,\mu}(v_{\lambda}\otimes {L(\mu)}_{\mu-\alpha})\subset {L(\lambda+\mu)}_{\lambda+\mu-\alpha}.
	\end{equation*}
	
	Let $u\in {L(\lambda+\mu)}_{\lambda+\mu-\alpha}^{\vee}$. By Lemma \ref{piL=L}, we have $u=Sv_{\lambda+\mu}$ for some $S\in {L(\infty)}_{-\alpha}^{\vee}$.
	Note that
	\begin{equation*}
		\Delta(S)=S\otimes\mathbf 1+(\text{intermediate terms})+K_{\alpha}\otimes S.
	\end{equation*}
	
	Then we have
	\begin{align*}
		&(\Phi_{\lambda,\mu}(u),v_{\lambda}\otimes {L(\mu)}_{\mu-\alpha})
		=(\Delta(S)(v_{\lambda}\otimes v_{\mu}),v_{\lambda}\otimes {L(\mu)}_{\mu-\alpha})\\
		=&(Sv_{\lambda}\otimes v_{\mu}+(\text{intermediate terms})+K_{\alpha}v_{\lambda}\otimes Sv_{\mu}, v_{\lambda}\otimes {L(\mu)}_{\mu-\alpha})\\
		=&(Sv_{\lambda},v_{\lambda})({v_{\mu}},{L(\mu)}_{\mu-\alpha})+(\text{intermediate terms})+K_{\alpha}(v_{\lambda},v_{\lambda})(Sv_{\mu},{L(\mu)}_{\mu-\alpha})\\
		=&q^{(\alpha,\lambda)}(Sv_{\mu},{L(\mu)}_{\mu-\alpha}).
	\end{align*}
	
	Since $\mu\gg 0$, Lemma \ref{piL=L} implies that $Sv_{\mu}\in {L(\mu)}^{\vee}$. Thus
	\begin{align*}
		(u,\Psi_{\lambda,\mu}(v_{\lambda}\otimes {L(\mu)}_{\mu-\alpha}))=(\Phi_{\lambda,\mu}(u),v_{\lambda}\otimes {L(\mu)}_{\mu-\alpha})=q^{(\alpha,\lambda)}(Sv_{\mu},{L(\mu)}_{\mu-\alpha})\subset \A_0.
	\end{align*}
	
	Hence $\Psi_{\lambda,\mu}(v_{\lambda}\otimes {L(\mu)}_{\mu-\alpha})\subset {({L(\lambda+\mu)}_{\lambda+\mu-\alpha}^{\vee})}^\vee={L(\lambda+\mu)}_{\lambda+\mu-\alpha}$.
\end{proof}

\vskip 2mm

\begin{proposition} \label{prop: proof of Fr} 	{\rm $({\bf F}(r))$ \
	Let $\alpha\in R_{+}(r)$ and $b\in {B(\infty)}_{-\alpha}$.
	If $\widetilde{e}_{il}b\ne 0$, then $b=\widetilde{f}_{il}\widetilde{e}_{il}b$.	
}
\end{proposition}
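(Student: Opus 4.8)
The plan is to reduce $\mathbf F(r)$ to a statement about an orthogonal basis for the reduction of Kashiwara's form, which is then established by the same interlocking induction. \emph{First}, I would observe that we may assume $|\alpha|=r$, since for $|\alpha|<r$ the claim is $\mathbf F(r-1)$. Set $\beta:=\alpha-l\alpha_i\in R_+(r-1)$ and $b'':=\widetilde{e}_{il}b$; by $\mathbf E(r)$ and the fact that $0\notin B(\infty)_{-\beta}$ we have $b''\in B(\infty)_{-\beta}$, hence $c:=\widetilde{f}_{il}b''\in B(\infty)_{-\alpha}$, and $\widetilde{e}_{il}c=b''=\widetilde{e}_{il}b$ because $\widetilde{e}_{il}\widetilde{f}_{il}=\mathrm{id}$; so it remains to prove $b=c$. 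By $\mathbf E(r)$, $\widetilde{e}_{il}$ maps the spanning set $B(\infty)_{-\alpha}$ into $B(\infty)_{-\beta}\cup\{0\}$ while $\widetilde{f}_{il}$ maps $B(\infty)_{-\beta}$ injectively into $B(\infty)_{-\alpha}$ with $\widetilde{e}_{il}\widetilde{f}_{il}=\mathrm{id}$; therefore $\mathbf F(r)$ is equivalent to the assertion that, as subsets of $B(\infty)_{-\alpha}$, one has $\{\,b'\in B(\infty)_{-\alpha}:\widetilde{e}_{il}b'\neq 0\,\}=\widetilde{f}_{il}(B(\infty)_{-\beta})$.

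\emph{Second}, I would transfer everything to $V(\lambda)$ for $\lambda\gg 0$: by Corollary~\ref{pi}(c) the map $\overline{\pi}_\lambda$ gives isomorphisms $B(\infty)_{-\gamma}\xrightarrow{\ \sim\ }B(\lambda)_{\lambda-\gamma}$ and $L(\infty)_{-\gamma}/qL(\infty)_{-\gamma}\xrightarrow{\ \sim\ }L(\lambda)_{\lambda-\gamma}/qL(\lambda)_{\lambda-\gamma}$ for $\gamma\in R_+(r)$, and by Lemma~\ref{pie=epi} and Corollary~\ref{pi}(a) it intertwines $\widetilde{e}_{il}$ and $\widetilde{f}_{il}$, so that $\overline{\pi}_\lambda(c)=\widetilde{f}_{il}\widetilde{e}_{il}\overline{\pi}_\lambda(b)$; thus it is enough to prove the corresponding statement inside $B(\lambda)_{\lambda-\alpha}$. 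On $L(\lambda)/qL(\lambda)$ I would use the $\Q$-bilinear form $(\ ,\ )_0$ obtained by reducing $(\ ,\ )_K$ modulo $q$, which is well defined by Lemma~\ref{lem:LLA0}(b); Lemma~\ref{lem:LLA0}(a) shows that $\widetilde{e}_{il}$ and $\widetilde{f}_{il}Q_{il}$ are mutually adjoint for $(\ ,\ )_0$, and on the lattice $\mathtt{b}_{il}=\widetilde{f}_{il}Q_{il}$ with $Q_{il}$ invertible (diagonal, positive integer eigenvalues). Lemmas~\ref{(P,Q)} and \ref{piL=L} identify this with Kashiwara's form on $U_q^-(\g)$.

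\emph{Third}, the heart of the matter is that $B(\lambda)_{\lambda-\alpha}$ is an \emph{orthogonal} basis of $\bigl(L(\lambda)/qL(\lambda)\bigr)_{\lambda-\alpha}$ for $(\ ,\ )_0$, every vector having nonzero norm; I would prove this in tandem with $\mathbf F(r)$. Indeed, writing a basis vector as $\widetilde{f}_{jk}b_1'$ with $b_1'$ of smaller height, adjointness turns $(\widetilde{f}_{jk}b_1',b')_0$ into $(Q_{jk}^{-1}b_1',\widetilde{e}_{jk}b')_0$, which by $\mathbf E(r)$ and orthogonality at level $\le r-1$ is either $0$, or, when $\widetilde{e}_{jk}b'=b_1'$, forces (using $\mathbf F(r)$ for $b'$ and injectivity of $\widetilde{f}_{jk}$) that $b'=\widetilde{f}_{jk}b_1'$. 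Granting the orthogonal-basis statement, $\ker(\widetilde{e}_{il})$ and its $(\ ,\ )_0$-orthocomplement in $(L(\lambda)/qL(\lambda))_{\lambda-\alpha}$ are the spans of $\{b':\widetilde{e}_{il}b'=0\}$ and of $\{b':\widetilde{e}_{il}b'\neq 0\}$ respectively; since the adjoint of $\widetilde{e}_{il}$ is $\widetilde{f}_{il}Q_{il}$, that orthocomplement is $\operatorname{Im}(\widetilde{f}_{il})$, which is also the span of the subset $\widetilde{f}_{il}(B(\lambda)_{\lambda-\alpha+l\alpha_i})\setminus\{0\}\subseteq\{b':\widetilde{e}_{il}b'\neq 0\}$; two subsets of an orthogonal basis with the same span coincide, so $\{b':\widetilde{e}_{il}b'\neq 0\}=\widetilde{f}_{il}(B(\lambda)_{\lambda-\alpha+l\alpha_i})$, hence $b\in\operatorname{Im}(\widetilde{f}_{il})$ and $b=\widetilde{f}_{il}\widetilde{e}_{il}b$. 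Transporting back along $\overline{\pi}_\lambda$ gives $\mathbf F(r)$.

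The main obstacle I anticipate is this orthogonal-basis statement: it is entangled with $\mathbf F(r)$ itself, so the induction has to be arranged — for instance by a secondary induction within level $r$ on the number of Kashiwara operators — to avoid a vicious circle, and the rescaling operators $Q_{il}$ in the isotropic case $i\in I^{\text{iso}}$, where $\mathtt{b}_{il}\neq\widetilde{f}_{il}$ and basis vectors have norms that are reciprocals of products of multiplicities rather than $1$, demand careful bookkeeping.
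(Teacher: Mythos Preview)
Your approach is genuinely different from the paper's, but the circularity you flag at the end is real and, as far as I can see, not resolvable by the secondary induction you suggest.  Concretely, in your step~3 you write an arbitrary basis vector as $\widetilde f_{jk}b_1'$ and reduce $(\widetilde f_{jk}b_1',b')_0$ to $(Q_{jk}^{-1}b_1',\widetilde e_{jk}b')_0$; in the sub-case $\widetilde e_{jk}b'=b_1'$ you then need $b'=\widetilde f_{jk}b_1'$, i.e.\ exactly the statement $\mathbf F(r)$ for $b'$.  The element $b'$ is an \emph{arbitrary} member of $B(\lambda)_{\lambda-\alpha}$ with the same height as $b$, so there is no well-defined ``number of Kashiwara operators'' that is smaller for $b'$ than for $b$; every such element is a product $\widetilde f_{i_1l_1}\cdots\widetilde f_{i_tl_t}v_\lambda$ with $\sum l_j$ fixed by $\alpha$, and nothing forces the particular word length~$t$ to decrease.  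Thus the ``tandem'' induction does not get off the ground.

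There are two further difficulties hidden in your final paragraph.  First, the argument ``$\ker(\widetilde e_{il})^{\perp}=\operatorname{Im}(\widetilde f_{il})$'' requires $(\ ,\ )_0$ to be nondegenerate on $(L(\lambda)/qL(\lambda))_{\lambda-\alpha}$, equivalently $L(\lambda)_{\lambda-\alpha}=L(\lambda)_{\lambda-\alpha}^{\vee}$; at this point of the grand loop only the inclusion $L(\lambda)\subset L(\lambda)^{\vee}$ is available (Lemma~\ref{lem:LLA0}(b)).  Second, the sentence ``two subsets of an orthogonal basis with the same span coincide'' presupposes that $B(\lambda)_{\lambda-\alpha}$ is linearly independent, which is $\mathbf G(r)$; in the paper $\mathbf G(r)$ is deduced \emph{from} $\mathbf B(r)$, which in turn uses $\mathbf F(r)$.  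So the whole cluster $\mathbf F(r)$, $\mathbf B(r)$, $\mathbf G(r)$, orthogonality, and self-duality of $L(\lambda)$ is interlocked, and your proposal does not provide an entry point.

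The paper breaks this cycle by an entirely different device: it writes $b=\widetilde f_{i_1l_1}\cdots\widetilde f_{i_tl_t}\mathbf 1$, chooses $\lambda_0=\Lambda_{i_s}$ for a suitable $s$, and uses Lemma~\ref{ffvvbb} to realise $\Phi_{\lambda_0,\mu}(\overline\pi_\lambda(b))$ as a pure tensor $v\otimes v'$ with both factors of height $\le r-1$.  On such tensors Lemma~\ref{a-g}(c) (which holds at height $\le r-1$) gives $v\otimes v'=\widetilde f_{il}\widetilde e_{il}(v\otimes v')$ directly, and this is pulled back via $\Psi_{\lambda_0,\mu}$ and Corollary~\ref{pi}(c).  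No orthogonality, no $\mathbf G(r)$, and no self-duality of the lattice are used.  In short, the tensor-product trick is precisely the missing reduction-to-lower-height step that your bilinear-form approach lacks.
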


\begin{proof}
	Let $b=\widetilde{f}_{i_1l_1}\cdots\widetilde{f}_{i_tl_t}\mathbf 1\in {B(\infty)}_{-\alpha}$. We assume $\widetilde{e}_{il}b\ne 0$. If $i_1=\cdots=i_t$ and $i\ne i_{1}$, then
	\begin{equation*}
		\widetilde{e}_{il}b=\widetilde{e}_{il}\widetilde{f}_{i_1l_1}\cdots\widetilde{f}_{i_tl_t}\mathbf 1=\cdots=\widetilde{f}_{i_1l_1}\cdots\widetilde{f}_{i_tl_t}\widetilde{e}_{il}\mathbf 1=0.
	\end{equation*}
	Hence we must have $i=i_1=\cdots=i_t$.
	In this case, our assertion follows easily.
	
	Assume that there exists $s$ with $1\leq s<t$ such that $i_s\ne i_{s+1}=\cdots=i_t$. Take $u\gg 0$ and set $\lambda_0=\Lambda_{i_s},~\lambda=\lambda_0+\mu$.
	
	Then Lemma \ref{ffvvbb} yields
	\begin{equation*}
		\widetilde{f}_{i_1l_1}\cdots\widetilde{f}_{i_tl_t}	(v_{\lambda_0}\otimes v_{\mu})\equiv  v\otimes v'\!\!\!\! \mod{q(L(\lambda_0)\otimes L(\mu))}
	\end{equation*}
	for some $v\in {L(\lambda_0)}_{\lambda_0-\beta}$, $v'\in {L(\mu)}_{\mu-\gamma}$, $\beta,\gamma\in \mathrm Q_{+}(r-1)\backslash\{0\}$ and $\alpha=\beta+\gamma$ such that
	\begin{align*}
	v+qL(\lambda_0)\in B(\lambda_0)\cup\{0\},\quad v'+qL(\mu)\in B(\mu)\cup\{0\}.
	\end{align*}
	
	By Corollary \ref{fLLeLL} (b), we have
	\begin{equation*}
		\widetilde{e}_{il}(\widetilde{f}_{i_1l_1}\cdots\widetilde{f}_{i_tl_t}(v_{\lambda_0}\otimes v_{\mu}))\equiv \widetilde{e}_{il}(v\otimes v')\!\!\!\! \mod{q (L(\lambda_0)\otimes L(\mu))}.
	\end{equation*}
	
	Then $\Psi_{\lambda_0,\mu}$ and $\mathbf{H}(r-1)$ yield
	\begin{align*}
		\pi_{\lambda}(\widetilde{e}_{il}\widetilde{f}_{i_1l_1}\cdots \widetilde{f}_{i_tl_t} \mathtt 1)&=\widetilde{e}_{il}\widetilde{f}_{i_1l_1}\cdots\widetilde{f}_{i_tl_t}v_{\lambda_0+\mu}\equiv \Psi_{\lambda_0,\mu}(\widetilde{e}_{il}(v\otimes v'))\!\!\!\! \mod{qL(\lambda)}.
	\end{align*}
Since $\mu \gg 0$, we have $\widetilde{e}_{il}(v\otimes v')\notin q(L(\lambda_0)\otimes L(\mu))$.
	
	By Lemma \ref{a-g} (c), we have
	\begin{align*}
		\widetilde{f}_{i_1l_1}\cdots\widetilde{f}_{i_tl_t}(v_{\lambda_0}\otimes v_{\mu})&\equiv v\otimes v'\equiv	 \widetilde{f}_{il}\widetilde{e}_{il}(v\otimes v')\\
		&\equiv \widetilde{f}_{il}\widetilde{e}_{il}(\widetilde{f}_{i_1l_1}\cdots\widetilde{f}_{i_tl_t}(v_{\lambda_0}\otimes v_{\mu})) \!\!\!\!\mod{q (L(\lambda_0)\otimes L(\mu))}.
	\end{align*}
	
	Applying $\Psi_{\lambda_0,\mu}$ and Lemma \ref{PsiLL}, we obtain
	\begin{align*}
		 \widetilde{f}_{i_1l_1}\cdots\widetilde{f}_{i_tl_t}v_{\lambda_0+\mu}&=\widetilde{f}_{i_1l_1}\cdots\widetilde{f}_{i_tl_t}v_{\mu}=\widetilde{f}_{il}\widetilde{e}_{il}(\widetilde{f}_{i_1l_1}\cdots\widetilde{f}_{i_tl_t}v_{\lambda})\!\!\!\! \mod{qL(\lambda)}.
	\end{align*}
	Since $\lambda\gg 0$, we get $b=\widetilde{f}_{il}\widetilde{e}_{il}b\!\! \mod{qL(\infty)}$.
\end{proof}

\vskip 2mm

\begin{proposition} \label{proof of Br} $({\bf B}(r))$ \
{\rm
	Let $\lambda\in P^{+}$ and $\alpha\in R_{+}(r)$.
	For $b\in {B(\lambda)}_{\lambda-\alpha+l\alpha_i}$ and $b'\in {B(\lambda)}_{\lambda-\alpha}$, we have $\widetilde{f}_{il}b=b'$ if and only if $b=\widetilde{e}_{il}b'$.
	}
\end{proposition}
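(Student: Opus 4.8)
The plan is to split the biconditional into its two implications, proving the implication ``$\widetilde f_{il}b=b'\Rightarrow \widetilde e_{il}b'=b$'' directly on $V(\lambda)$ and obtaining the reverse implication ``$b=\widetilde e_{il}b'\Rightarrow \widetilde f_{il}b=b'$'' (equivalently, $\widetilde e_{il}b'\neq 0\Rightarrow \widetilde f_{il}\widetilde e_{il}b'=b'$) by transporting the corresponding statement $\mathbf{F}(r)$ for $U_q^-(\g)$ along the projection $\pi_\lambda$. Throughout we stay with $i\in I^{\text{im}}$, and we set $\beta:=\alpha-l\alpha_i$; whenever the relevant Kashiwara operator is nonzero we will have $\beta\in R_+$ with $|\beta|=|\alpha|-l\le r-1$, so that all the $r-1$ statements and, in particular, Corollary \ref{pi} apply to the weight $-\beta$.

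First I would dispose of the implication ``$\widetilde f_{il}b=b'\Rightarrow \widetilde e_{il}b'=b$''. Suppose $\widetilde f_{il}b=b'\neq 0$, so $b\in B(\lambda)_{\lambda-\alpha+l\alpha_i}$, $b'\in B(\lambda)_{\lambda-\alpha}$ and $V(\lambda)_{\lambda-\alpha}\neq 0$. If $\langle h_i,\operatorname{wt}(b)\rangle=0$, then Proposition \ref{prop:hw}(ii), applied with $\mu=\operatorname{wt}(b)$ and the integer $l$, would force $V(\lambda)_{\operatorname{wt}(b)-l\alpha_i}=V(\lambda)_{\lambda-\alpha}=0$, a contradiction; since $i\in I^{\text{im}}$, Proposition \ref{prop:hw}(i) gives $\langle h_i,\operatorname{wt}(b)\rangle\ge 0$, hence $\langle h_i,\operatorname{wt}(b)\rangle>0$. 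Therefore the module-level identity $\widetilde e_{il}\circ\widetilde f_{il}=\operatorname{id}$ on $V(\lambda)_{\operatorname{wt}(b)}$ applies. Lifting $b$ to $\hat b\in L(\lambda)_{\operatorname{wt}(b)}$, we have $\widetilde f_{il}\hat b\in L(\lambda)$ and $\widetilde e_{il}\widetilde f_{il}\hat b=\hat b$; reducing modulo $qL(\lambda)$ and using that $\widetilde e_{il}$ descends to $L(\lambda)/qL(\lambda)$ (this is part of $\mathbf{A}(r)$, Proposition \ref{proof of Ar}) yields $\widetilde e_{il}b'=\widetilde e_{il}\widetilde f_{il}b=b$.

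For the reverse implication I would argue by lifting to $B(\infty)$. Given $b'\in B(\lambda)_{\lambda-\alpha}$ with $\widetilde e_{il}b'\neq 0$, use Corollary \ref{pi}(b) to pick $\hat b'\in B(\infty)_{-\alpha}$ with $\overline\pi_\lambda(\hat b')=b'$. Since $\overline\pi_\lambda(\hat b')=b'\neq 0$, Proposition \ref{proof of Kr} ($\mathbf{K}(r)$) gives $\overline\pi_\lambda(\widetilde e_{il}\hat b')=\widetilde e_{il}b'\neq 0$, so $\widetilde e_{il}\hat b'\neq 0$ in $B(\infty)$. Then Proposition \ref{prop: proof of Fr} ($\mathbf{F}(r)$) yields $\hat b'=\widetilde f_{il}\widetilde e_{il}\hat b'$. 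The element $\widetilde e_{il}\hat b'$ lies in $B(\infty)_{-\beta}$ with $\beta\in R_+(r-1)$, so Corollary \ref{pi}(a) applies to it, and together with $\mathbf{K}(r)$ again we get $\overline\pi_\lambda(\widetilde f_{il}\widetilde e_{il}\hat b')=\widetilde f_{il}\overline\pi_\lambda(\widetilde e_{il}\hat b')=\widetilde f_{il}\widetilde e_{il}b'$. Applying $\overline\pi_\lambda$ to the identity $\hat b'=\widetilde f_{il}\widetilde e_{il}\hat b'$ then gives $b'=\widetilde f_{il}\widetilde e_{il}b'$; in particular $\widetilde e_{il}b'\in B(\lambda)$ and $\widetilde f_{il}(\widetilde e_{il}b')=b'$, which is exactly the claim with $b=\widetilde e_{il}b'$.

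The only genuine care needed is in the two transport steps: one must check that Proposition \ref{proof of Kr} is invoked on an element with nonzero $\overline\pi_\lambda$-image, and that Corollary \ref{pi}(a) is invoked at a weight in $R_+(r-1)$. Both are automatic here — the former because $\overline\pi_\lambda(\hat b')=b'\neq 0$ and $\overline\pi_\lambda(\widetilde e_{il}\hat b')=\widetilde e_{il}b'\neq 0$ by hypothesis, and the latter because passing from $\alpha$ to $\alpha-l\alpha_i$ strictly lowers $|\cdot|$ since $l\ge 1$. Everything else feeding into the argument — the module-level near-inverse property of $\widetilde e_{il},\widetilde f_{il}$, the descent of these operators to $L(\lambda)/qL(\lambda)$, and the vanishing input from Proposition \ref{prop:hw} — is already available from earlier stages of the grand loop, so I expect no serious obstacle beyond this bookkeeping.
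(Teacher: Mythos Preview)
Your proof is correct. The reverse implication (lift to $B(\infty)$ via Corollary \ref{pi}(b), apply $\mathbf{K}(r)$ and $\mathbf{F}(r)$, then push back down with Corollary \ref{pi}(a)) is exactly the paper's argument.

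For the forward implication you take a genuinely different, and cleaner, route than the paper. The paper invokes Lemma \ref{uc=0} to put $b$ into the normal form $b\equiv \mathtt b_{i,\mathbf c}u_0$ with $E_{ik}u_0=0$, and then checks $\widetilde e_{il}\widetilde f_{il}b=b$ by explicit computation, splitting into the cases $i\notin I^{\text{iso}}$ and $i\in I^{\text{iso}}$. You instead observe that $\widetilde f_{il}b=b'\neq 0$ forces $\langle h_i,\operatorname{wt}(b)\rangle>0$ via Proposition \ref{prop:hw}, so the module-level identity $\widetilde e_{il}\circ\widetilde f_{il}=\operatorname{id}$ (stated just after Definition \ref{def:Kashiwara operator}) applies to any lift $\hat b$, and then you descend using $\mathbf{A}(r)$. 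This avoids the case split entirely and does not rely on Lemma \ref{uc=0}. The paper's approach has the mild advantage of exhibiting the explicit $i$-string form of $b$, which is sometimes useful elsewhere in the grand loop, but for the statement at hand your argument is shorter and uniform across all $i\in I^{\text{im}}$.
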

\begin{proof}
	Suppose $\widetilde{f}_{il}b=b'$. By Lemma \ref{uc=0}, there exists $\mathbf c\in \mathcal C_i$ with $|\mathbf c|\geq l$, such that
	\begin{align*}
	b\equiv \mathtt b_{i,\mathbf c}u_0,\ E_{ik}u_0=0\ \text{for all}\ k>0.
	\end{align*}
	If $i\notin I^{\text{iso}}$, we have
	\begin{align*}
	&\widetilde{f}_{il}b=\mathtt b_{i,(l,\mathbf c)}u_0=b',\\
	&\widetilde{e}_{il}b'=\widetilde{e}_{il}\mathtt b_{i,(l,\mathbf c)}u_0=\mathtt b_{i,\mathbf c}u_0=b.
	\end{align*}
If $i\in I^{\text{iso}}$, we have
	\begin{equation*}
		\widetilde{f}_{il}b= \frac{1}{\mathbf c_l+1} \, \mathtt b_{i,\mathbf c\cup\{l\}}u_0=b'.
	\end{equation*}
	Hence
	\begin{align*}
	\widetilde{e}_{il}b'=\frac{\mathbf c_l+1}{\mathbf c_l+1}\mathtt b_{i,\mathbf c} u_0=b.
	\end{align*}
	
	Conversely, suppose $b'\in {B(\lambda)}_{\lambda-\alpha}$ and $b=\widetilde{e}_{il}b'\in {B(\lambda)}_{\lambda-\alpha+l\alpha_i}$.
	By Corollary \ref{pi} (b), we have $b'=\overline{\pi}_{\lambda}(b_0')$ for some $b_0'\in {B(\infty)}_{-\alpha}$. Proposition    \ref{proof of Kr}
	implies that
	\begin{equation*}
		\overline{\pi}_{\lambda}(\widetilde{e}_{il}b_0')=\widetilde{e}_{il}(\overline{\pi}_{\lambda}(b_0'))=\widetilde{e}_{il}b'\ne 0.
	\end{equation*}
	Hence $\widetilde{e}_{il}b_0'\ne 0$ in $B(\infty)$. By Proposition \ref{prop: proof of Fr}, we have $b_0'=\widetilde{f}_{il}\widetilde{e}_{il}b_0'$.
	
	Applying $\overline{\pi}_{\lambda}$, we obtain
	\begin{equation*}
		\widetilde{f}_{il}b=\widetilde{f}_{il}(\widetilde{e}_{il}b')=\widetilde{f}_{il}	 \overline{\pi}_{\lambda}(\widetilde{e}_{il}b_0')=\overline{\pi}_{\lambda}(\widetilde{f}_{il}\widetilde{e}_{il}b_0')=\overline{\pi}_{\lambda}(b_0')=b'.
	\end{equation*}
\end{proof}

\vskip 2mm

\begin{proposition}  \label{proof of Gr} $({\bf G}(r))$ \
{\rm	Let $\lambda\in P^{+}$ and $\alpha\in R_{+}(r)$. We have the following facts.
	\begin{enumerate}	
		\item[{\rm (a)}] ${B(\lambda)}_{\lambda-\alpha}$ is a $\Q$-basis of ${L(\lambda)}_{\lambda-\alpha}/q{L(\lambda)}_{\lambda-\alpha}$.
		\item[{\rm (b)}] ${B(\infty)}_{-\alpha}$ is a $\Q$-basis of ${L(\infty)}_{-\alpha}/q{L(\infty)}_{-\alpha}$.
	\end{enumerate}
}
\end{proposition}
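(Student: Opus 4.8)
The plan is to deduce both statements by the standard closing move of Kashiwara's grand loop: spanning will be essentially tautological, and linear independence will follow from a downward induction on $\mathrm{ht}(\alpha)$ in which one carefully chosen Kashiwara operator is applied to an alleged relation.

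For spanning, recall that $L(\lambda)_{\lambda-\alpha}$ is by definition the $\A_{0}$-span of the monomials $\widetilde{f}_{i_{1}l_{1}}\cdots\widetilde{f}_{i_{t}l_{t}}v_{\lambda}$ of weight $\lambda-\alpha$, and that, again by definition of $B(\lambda)$, the class of each such monomial in $L(\lambda)/qL(\lambda)$ lies in $B(\lambda)\cup\{0\}$; hence these classes span $(L(\lambda)/qL(\lambda))_{\lambda-\alpha}$ over $\Q$ and all belong to $B(\lambda)_{\lambda-\alpha}\cup\{0\}$, so $B(\lambda)_{\lambda-\alpha}$ is a spanning set. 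The same reasoning shows that $B(\infty)_{-\alpha}$ spans $(L(\infty)/qL(\infty))_{-\alpha}$, where I also invoke the fact, established above, that $0\notin B(\infty)_{-\alpha}$ for $\alpha\in R_{+}(r)$, so that $B(\infty)_{-\alpha}$ genuinely consists of nonzero vectors.

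For linear independence of $B(\lambda)_{\lambda-\alpha}$ I would induct on $\mathrm{ht}(\alpha)$. The case $\alpha=0$ is immediate since $B(\lambda)_{\lambda}=\{v_{\lambda}+qL(\lambda)\}$ is a single nonzero class. Assume $\alpha\neq 0$ and suppose $\sum_{j}c_{j}b_{j}=0$ in $(L(\lambda)/qL(\lambda))_{\lambda-\alpha}$ with $c_{j}\in\Q$ and the $b_{j}\in B(\lambda)_{\lambda-\alpha}$ distinct; this sum is finite because $\dim V(\lambda)_{\lambda-\alpha}<\infty$. Suppose for contradiction $c_{j_{0}}\neq 0$ for some $j_{0}$. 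Fix an expression $b_{j_{0}}=\widetilde{f}_{i_{1}l_{1}}\cdots\widetilde{f}_{i_{t}l_{t}}v_{\lambda}+qL(\lambda)$ with $t\geq 1$ and set $b_{j_{0}}':=\widetilde{f}_{i_{2}l_{2}}\cdots\widetilde{f}_{i_{t}l_{t}}v_{\lambda}+qL(\lambda)$. Then $b_{j_{0}}'\neq 0$, for otherwise $b_{j_{0}}\in\widetilde{f}_{i_{1}l_{1}}(qL(\lambda))\subset qL(\lambda)$ contradicts $b_{j_{0}}\in B(\lambda)$; hence $b_{j_{0}}'\in B(\lambda)_{\lambda-\alpha+l_{1}\alpha_{i_{1}}}$ and $\widetilde{f}_{i_{1}l_{1}}b_{j_{0}}'=b_{j_{0}}$, so that $\widetilde{e}_{i_{1}l_{1}}b_{j_{0}}=b_{j_{0}}'\neq 0$ by $\mathbf{B}(r)$. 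Now apply $\widetilde{e}_{i_{1}l_{1}}$, which by $\mathbf{A}(r)$ is a well-defined $\Q$-linear operator on $(L(\lambda)/qL(\lambda))_{\lambda-\alpha}$, to the relation: $\sum_{j}c_{j}\,\widetilde{e}_{i_{1}l_{1}}b_{j}=0$ in $(L(\lambda)/qL(\lambda))_{\lambda-\alpha+l_{1}\alpha_{i_{1}}}$, where each $\widetilde{e}_{i_{1}l_{1}}b_{j}\in B(\lambda)_{\lambda-\alpha+l_{1}\alpha_{i_{1}}}\cup\{0\}$, again by $\mathbf{A}(r)$. Since $\mathrm{ht}(\alpha-l_{1}\alpha_{i_{1}})<\mathrm{ht}(\alpha)$, the induction hypothesis gives that $B(\lambda)_{\lambda-\alpha+l_{1}\alpha_{i_{1}}}$ is $\Q$-linearly independent; and by $\mathbf{B}(r)$ the operator $\widetilde{e}_{i_{1}l_{1}}$ is injective on the set of $b$ it does not annihilate (if $\widetilde{e}_{i_{1}l_{1}}b=\widetilde{e}_{i_{1}l_{1}}b_{j_{0}}=b_{j_{0}}'$ then $b=\widetilde{f}_{i_{1}l_{1}}b_{j_{0}}'=b_{j_{0}}$). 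Hence the coefficient of the basis vector $b_{j_{0}}'$ in $\sum_{j}c_{j}\widetilde{e}_{i_{1}l_{1}}b_{j}$ is exactly $c_{j_{0}}$, forcing $c_{j_{0}}=0$, a contradiction. Thus all $c_{j}=0$, which proves (a).

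Part (b) would be obtained by running the identical argument with $(V(\lambda),L(\lambda),B(\lambda),v_{\lambda},\mathbf{A}(r),\mathbf{B}(r))$ replaced by $(U_{q}^{-}(\g),L(\infty),B(\infty),\mathbf{1},\mathbf{E}(r),\mathbf{F}(r))$, the base case being covered by $B(\infty)_{0}=\{\mathbf{1}+qL(\infty)\}$ together with $0\notin B(\infty)_{-\alpha}$; alternatively, once (a) is in hand one may take $\lambda\gg 0$ and transport the basis along the isomorphism $B(\infty)_{-\alpha}\stackrel{\sim}{\rightarrow}B(\lambda)_{\lambda-\alpha}$ of Corollary \ref{pi}(c). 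I do not expect a serious obstacle here: the genuinely hard input has already been absorbed into $\mathbf{A}(r)$, \ldots, $\mathbf{F}(r)$ and Lemma \ref{lem:LLA0}, and the only point requiring care is the bookkeeping in the inductive step, namely that $\widetilde{e}_{il}$ descends to the finite-dimensional space $(L/qL)_{\lambda-\alpha}$ (this is $\mathbf{A}(r)$, resp. $\mathbf{E}(r)$) and is injective on the vectors it does not kill (this is $\mathbf{B}(r)$, resp. $\mathbf{F}(r)$), so that the coefficients cannot collapse when the operator is applied.
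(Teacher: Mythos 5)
Your argument is correct and matches the paper's proof in all essentials: both apply $\widetilde{e}_{il}$ to an alleged linear relation, use $\mathbf{A}(r)$ (resp.\ $\mathbf{E}(r)$) to know the operator descends to $L/qL$ and sends $B$ to $B\cup\{0\}$, use $\mathbf{B}(r)$ (resp.\ $\mathbf{F}(r)$) for injectivity on the non-kernel, and invoke $\mathbf{G}(r-1)$ as the inductive input. The one place where you supply detail the paper elides is the justification that every $b\in B(\lambda)_{\lambda-\alpha}$ with $\alpha\neq 0$ admits some $(i_1,l_1)$ with $\widetilde{e}_{i_1l_1}b\neq 0$; the paper just asserts this (it is Lemma \ref{alpha and b} pushed one step up), while you derive it directly from the definition of $B(\lambda)$ together with $\widetilde{f}_{i_1l_1}(qL(\lambda))\subset qL(\lambda)$. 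Your explicit treatment of spanning, and the alternative route to part (b) via Corollary \ref{pi}(c) for $\lambda\gg 0$, are both fine and consistent with what the paper takes for granted.
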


\begin{proof}
	Suppose $\sum_{b\in {B(\lambda)}_{\lambda-\alpha}}{a_bb=0}$ for $a_b\in \Q$.
	
	By Proposition  \ref{proof of Ar},
	we have $\widetilde{e}_{il}{B(\lambda)}_{\lambda-\alpha}\subset B(\lambda)\cup \{0\}$ for any $(i,l)\in I^{\infty}$,
	which implies that
	\begin{align*}
	\widetilde{e}_{il}(\sum_{b}{a_bb})=\sum_{ b\in {B(\lambda)}_{\lambda-\alpha},\atop   \widetilde{e}_{il}b\ne 0}
	{a_b(\widetilde{e}_{il}b)}=0.
	\end{align*}
	
	By ${\bf G}(r-1)$ and Proposition \ref{proof of Br},
	we have $a_b=0$ whenever  $\widetilde{e}_{il}b\ne 0$.
But for each $b\in {B(\lambda)}_{\lambda-\alpha}$, there exists $(i,l)\in I^{\infty}$ such that $\widetilde{e}_{il}b\ne 0$.
Thus $a_b=0$ for any $b\in {B(\lambda)}_{\lambda-\alpha}$. Hence, the proposition holds.
\end{proof}

\vskip 2mm

\begin{lemma}\label{uLVLU}
{\rm
	Let $\lambda\in P^{+}$ and $\alpha\in \Q_{+}(r)\backslash \{0\}$.
	\begin{enumerate}	
		\item[{\rm (a)}] If $u\in {L(\lambda)}_{\lambda-\alpha}/ q{L(\lambda)}_{\lambda-\alpha}$ and $\widetilde{e}_{il}u=0$ for any $(i,l)\in I^{\infty}$, then $u=0$.	
		\item[{\rm (b)}]
		If $u\in {V(\lambda)}_{\lambda-\alpha}$ and $\widetilde{e}_{il}u\in L(\lambda)$ for any $(i,l)\in I^{\infty}$, then $u\in {L(\lambda)}_{\lambda-\alpha}$.
		\item[{\rm (c)}]
		If $u\in {L(\infty)}_{-\alpha}/q{L(\infty)}_{-\alpha}$ and $\widetilde{e}_{il}u=0$ for any $(i,l)\in I^{\infty}$, then $u=0$.
		\item[{\rm (d)}]
		If $u\in {U^-_q(\g)}_{-\alpha}$ and $\widetilde{e}_{il}u\in L(\infty)$ for any $(i,l)\in I^{\infty}$, then $u\in {L(\infty)}_{-\alpha}$.
	\end{enumerate}
}
\end{lemma}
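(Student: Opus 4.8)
The plan is to prove (a) and (c) first, which are essentially a re-reading of the linear-independence step already carried out in the proof of Proposition \ref{proof of Gr}, and then to deduce (b) and (d) from them by a $q$-adic filtration argument. For (a): by $\mathbf{G}(r)$ (Proposition \ref{proof of Gr}), $B(\lambda)_{\lambda-\alpha}$ is a $\Q$-basis of $L(\lambda)_{\lambda-\alpha}/qL(\lambda)_{\lambda-\alpha}$, so I write $u=\sum_{b\in B(\lambda)_{\lambda-\alpha}}a_b\,b$ with $a_b\in\Q$. Fixing $(i,l)\in I^\infty$ and applying $\widetilde{e}_{il}$, Proposition \ref{proof of Ar} gives $\widetilde{e}_{il}b\in B(\lambda)_{\lambda-\alpha+l\alpha_i}\cup\{0\}$, while Proposition \ref{proof of Br} shows $b\mapsto\widetilde{e}_{il}b$ is injective on $\{b:\widetilde{e}_{il}b\ne 0\}$; since $|\alpha-l\alpha_i|<|\alpha|\le r$, the set $B(\lambda)_{\lambda-\alpha+l\alpha_i}$ is $\Q$-linearly independent by $\mathbf{G}(r-1)$. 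Hence $0=\widetilde{e}_{il}u$ forces $a_b=0$ for every $b$ with $\widetilde{e}_{il}b\ne 0$. Finally, since $\alpha\ne 0$, every $b\in B(\lambda)_{\lambda-\alpha}$ is the image of some $\widetilde{f}_{i_1l_1}\cdots\widetilde{f}_{i_tl_t}v_\lambda$ with $t\ge 1$; thus $b=\widetilde{f}_{i_1l_1}b_1$ with $b_1\ne 0$ in $L(\lambda)/qL(\lambda)$, and $\widetilde{e}_{i_1l_1}b=b_1\ne 0$ because $\widetilde{e}_{i_1l_1}\circ\widetilde{f}_{i_1l_1}=\mathrm{id}$ on $L(\lambda)/qL(\lambda)$ (cf. Lemma \ref{alpha and b}). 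Therefore $a_b=0$ for all $b$ and $u=0$. Part (c) is proved identically, replacing $v_\lambda$ by $\mathbf{1}$ and Propositions \ref{proof of Ar}, \ref{proof of Br}, \ref{proof of Gr} by Propositions \ref{proof of Er}, \ref{prop: proof of Fr}, \ref{proof of Gr}.

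For (b) and (d), the key elementary observation is that each Kashiwara operator $\widetilde{e}_{il}$ is $\Q(q)$-linear, since scaling an $i$-string decomposition by an element of $\Q(q)$ again yields an $i$-string decomposition (Definition \ref{def:Kashiwara operator}); in particular $\widetilde{e}_{il}(q^N L)\subset q^N L$ for the relevant lattices. Given $u\in V(\lambda)_{\lambda-\alpha}$ with $\widetilde{e}_{il}u\in L(\lambda)$ for all $(i,l)$, I write $u=q^{-N}u'$ with $u'\in L(\lambda)_{\lambda-\alpha}$ and $N$ minimal; if $N\le 0$ we are done, so assume $N\ge 1$, so that the class $\bar u'$ of $u'$ in $L(\lambda)_{\lambda-\alpha}/qL(\lambda)_{\lambda-\alpha}$ is nonzero. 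For every $(i,l)$ we have $\widetilde{e}_{il}u'=q^N\widetilde{e}_{il}u\in q^N L(\lambda)\subset qL(\lambda)$, hence $\widetilde{e}_{il}\bar u'=0$; since $\alpha\ne 0$, part (a) forces $\bar u'=0$, i.e. $u'\in qL(\lambda)$, contradicting the minimality of $N$. Thus $N\le 0$ and $u\in L(\lambda)_{\lambda-\alpha}$. Part (d) follows from (c) by the same filtration argument (or, alternatively, from (b) by applying $\pi_\lambda$ for $\lambda\gg 0$ and using Lemma \ref{fP and eP} together with Corollary \ref{pi}).

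The main, though rather mild, obstacle lies in (a)/(c): one must know that no nonzero crystal-basis vector of weight $\lambda-\alpha$ (resp. $-\alpha$) with $\alpha\ne 0$ is annihilated by all $\widetilde{e}_{il}$, which rests on $\mathbf{B}(r)$ (resp. $\mathbf{F}(r)$) and on the fact that every such basis vector is a non-trivial $\widetilde{f}$-monomial applied to $v_\lambda$ (resp. $\mathbf{1}$). Once (a) and (c) are in place, parts (b) and (d) are purely formal, the only point to verify being the compatibility of $\widetilde{e}_{il}$ with the $q$-adic filtration, which follows at once from its $\Q(q)$-linearity.
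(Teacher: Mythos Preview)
Your proposal is correct and follows essentially the same approach as the paper. The paper proves (a) by simply pointing to the argument already given in the proof of Proposition~\ref{proof of Gr} (which you spell out in detail), and proves (b) by the identical $q$-adic minimality argument you give; (c) and (d) are then declared ``similar.''
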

\begin{proof}
	(a) Let $u=\sum_{b\in{B(\lambda)}_{\lambda-\alpha}}{a_bb}$ ($a_b\in \Q$).
	For any $(i,l)\in I^{\infty}$, we have
	\begin{equation*}
		\widetilde{e}_{il}u=\sum_{b\in {B(\lambda)}_{\lambda-\alpha} \atop \ \widetilde{e}_{il}b\ne 0}{a_b(\widetilde{e}_{il}b)=0}.
	\end{equation*}
	
It follows from the proof of Proposition  \ref{proof of Gr} that
	all $a_b=0$. Hence $u=0$.
	
\vskip 2mm
	
	(b) Choose the smallest $N\geq 0$ such that $q^{N}u\in L(\lambda)$.
	If $N>0$, we have
	\begin{align*}
	\widetilde{e}_{il}(q^{N}u)=q^{N}(\widetilde{e}_{il}u)\in qL(\lambda)
	\end{align*}
for all $(i,l)\in I^{\infty}$.
		By (a), we have
	$q^{N}u\in qL(\lambda)$, i.e., $q^{N-1}u\in L(\lambda)$ which contradicts to the minimality of $N$.
		Hence $N=0$ and $u\in L(\lambda)$.
	The proofs of (c) and (d) are similar.
\end{proof}

\vskip 2mm

By a similar argument as that for \cite[Proposition 7.34]{JKK05}, we have the following proposition.

\begin{proposition} \label{proof of Jr} $({\bf J}(r))$ \
{\rm
	Let $\lambda\in P^{+}$ and $\alpha\in  R_{+}(r)$, then we have
	\begin{equation*}
		B_{-\alpha}^{\lambda}:=\{b\in {B(\infty)}_{-\alpha}\mid \overline{\pi}_{\lambda}(b)\ne 0\}\xrightarrow{\sim} {B(\lambda)}_{\lambda-\alpha}.
	\end{equation*}
}
\end{proposition}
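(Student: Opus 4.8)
The plan is to establish $\mathbf{J}(r)$ as one more step of the grand-loop induction, mirroring \cite[Proposition 7.34]{JKK05}: I will show that $\overline{\pi}_\lambda$ restricts to a bijection $B^{\lambda}_{-\alpha} \xrightarrow{\sim} B(\lambda)_{\lambda-\alpha}$ for every $\alpha \in R_{+}(r)$. Well-definedness and surjectivity are immediate from Corollary \ref{pi}~(b): since $\overline{\pi}_\lambda(B(\infty)_{-\alpha}) = B(\lambda)_{\lambda-\alpha} \cup \{0\}$, the set $B^{\lambda}_{-\alpha} = \{b \in B(\infty)_{-\alpha} \mid \overline{\pi}_\lambda(b) \neq 0\}$ is sent into $B(\lambda)_{\lambda-\alpha}$, and every element of $B(\lambda)_{\lambda-\alpha}$ lies in the image. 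So the real content is injectivity of the restricted map.

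For injectivity I would argue by induction on $\mathrm{ht}(\alpha)$, the cases $r=0,1$ and $\alpha=0$ being trivial (there $B^{\lambda}_{0}=\{\mathbf 1+qL(\infty)\}\mapsto\{v_\lambda+qL(\lambda)\}$). So fix $\alpha \in R_{+}(r)\setminus\{0\}$ and suppose $b_1, b_2 \in B^{\lambda}_{-\alpha}$ satisfy $\overline{\pi}_\lambda(b_1) = \overline{\pi}_\lambda(b_2) =: b' \neq 0$ in $B(\lambda)_{\lambda-\alpha}$. Since $\alpha\neq 0$ one may write $b' = \widetilde{f}_{il}b''$ with $b'' \in B(\lambda)_{\lambda-\alpha+l\alpha_i}$ nonzero, so $\mathbf{B}(r)$ (Proposition \ref{proof of Br}) gives $\widetilde{e}_{il}b' = b'' \neq 0$ (equivalently, this is the contrapositive of Lemma \ref{alpha and b}). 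Now apply $\mathbf{K}(r)$ (Proposition \ref{proof of Kr}): $\overline{\pi}_\lambda(\widetilde{e}_{il}b_j) = \widetilde{e}_{il}\overline{\pi}_\lambda(b_j) = \widetilde{e}_{il}b' \neq 0$ for $j=1,2$. Hence $\widetilde{e}_{il}b_j \neq 0$, and by $\mathbf{E}(r)$ (Proposition \ref{proof of Er}) it lies in $B(\infty)_{-\alpha+l\alpha_i}$; having nonzero image, it in fact lies in $B^{\lambda}_{-\alpha+l\alpha_i}$. Since $|\alpha - l\alpha_i| < |\alpha| \le r$, the inductive hypothesis applies to $B^{\lambda}_{-\alpha+l\alpha_i}$, and $\overline{\pi}_\lambda(\widetilde{e}_{il}b_1) = \overline{\pi}_\lambda(\widetilde{e}_{il}b_2)$ forces $\widetilde{e}_{il}b_1 = \widetilde{e}_{il}b_2$ in $B(\infty)$. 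Finally $\mathbf{F}(r)$ (Proposition \ref{prop: proof of Fr}) yields $b_j = \widetilde{f}_{il}\widetilde{e}_{il}b_j$, whence $b_1 = \widetilde{f}_{il}\widetilde{e}_{il}b_1 = \widetilde{f}_{il}\widetilde{e}_{il}b_2 = b_2$. This closes the induction.

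I expect the main difficulty to be bookkeeping rather than a genuine obstacle: one must confirm that each of $\mathbf{B}(r)$, $\mathbf{E}(r)$, $\mathbf{F}(r)$, $\mathbf{K}(r)$ has already been proved earlier within the $r$-th grand-loop step, and that the auxiliary observation ``$\widetilde{e}_{il}b' \neq 0$ for some $(i,l)$ whenever $\alpha \neq 0$'' is deduced without circularity (it rests only on the definition of $B(\lambda)$ and on $\mathbf{B}(r)$). The case $i \in I^{\mathrm{iso}}$ introduces nothing new here, since all the delicate $I^{\mathrm{iso}}$-specific estimates have already been absorbed into $\mathbf{E}(r)$, $\mathbf{F}(r)$ and $\mathbf{K}(r)$; the present argument is then simply a diagram chase through the maps $\overline{\pi}_\lambda$, $\widetilde{e}_{il}$ and $\widetilde{f}_{il}$, exactly as in the Kac--Moody and quantum Borcherds settings.
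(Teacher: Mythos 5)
Your proof is correct and follows the standard grand-loop argument that the paper defers to via the citation to \cite[Proposition 7.34]{JKK05}: surjectivity from Corollary \ref{pi}(b), and injectivity by induction on $\mathrm{ht}(\alpha)$ using $\mathbf{B}(r)$ to produce a nonzero $\widetilde{e}_{il}b'$, then $\mathbf{K}(r)$, $\mathbf{E}(r)$, the inductive hypothesis $\mathbf{J}(r-1)$, and $\mathbf{F}(r)$. Your bookkeeping concern is also resolved correctly: in the paper's ordering, $\mathbf{K}(r)$, $\mathbf{E}(r)$, $\mathbf{F}(r)$ and $\mathbf{B}(r)$ are all established before $\mathbf{J}(r)$, so there is no circularity.
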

\vskip 2mm

Using all the statements we have proved so far, we can show that Lemma \ref{a-g} holds for all
 $\alpha\in R_+(r)$.

 \vskip 2mm

 In particular, we have

\begin{lemma}\label{eBB}
{\rm
Let $\lambda,\mu\in  P^+$ and $\alpha\in R_+(r)$.

\vskip 2mm

\begin{enumerate}
\item[{\rm (a)}] For all $(i,l)\in I^{\infty}$, we have
\begin{align*}
\widetilde{e}_{il}{(B(\lambda)\otimes B(\mu))}_{\lambda+\mu-\alpha}\subset (B(\lambda)\otimes B(\mu))\cup \{0\}.
\end{align*}
\item[{\rm (b)}] If $b\otimes b'\in {(B(\lambda)\otimes B(\mu))}_{\lambda+\mu-\alpha}$ and $\widetilde{e}_{il}(b\otimes b')\neq 0$, then we have
\begin{align*}
b\otimes b'=\widetilde{f}_{il}\widetilde{e}_{il}(b\otimes b').
\end{align*}
	\end{enumerate}
}
\end{lemma}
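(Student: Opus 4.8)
The plan is to read off Lemma \ref{eBB} from a sharpened version of Lemma \ref{a-g}. The point is that the proof of Lemma \ref{a-g}(b),(c) used nothing beyond the simplified tensor product rule \eqref{eq:simplified}, the elementary properties of $i$-string decompositions, and the inductive hypotheses $\mathbf{A}(r-1)$ and $\mathbf{B}(r-1)$. Since Propositions \ref{prop:Cr} through \ref{proof of Jr} have by now established $\mathbf{A}(r),\dots,\mathbf{K}(r)$ in full, I would re-run those same arguments with $R_+(r)$ in place of $R_+(r-1)$; this gives Lemma \ref{a-g} for all $\alpha,\beta\in R_+(r)$, and then Lemma \ref{eBB}(a) is precisely the $\widetilde{e}_{il}$-statement of Lemma \ref{a-g}(b), while Lemma \ref{eBB}(b) is Lemma \ref{a-g}(c). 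For $i\in I^{\text{re}}$ both assertions are classical (\cite{Kashi91}), so I would only spell out the case $i\in I^{\text{im}}$.

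For part (a), write $b\in B(\lambda)_{\lambda-\gamma}$ and $b'\in B(\mu)_{\mu-\delta}$ with $\gamma+\delta=\alpha$, so that $\gamma,\delta\in R_+(r)$, and set $m_1=\langle h_i,\text{wt}(b)\rangle$, which is $\ge 0$ by Proposition \ref{prop:hw}. The $\widetilde{e}_{il}$-clause of \eqref{eq:simplified} splits into three cases: (i) $m_1>-la_{ii}$, where $\widetilde{e}_{il}(b\otimes b')=\widetilde{e}_{il}b\otimes b'$ with $\widetilde{e}_{il}b\in B(\lambda)\cup\{0\}$ by $\mathbf{A}(r)$; (ii) $0<m_1\le -la_{ii}$, where $\widetilde{e}_{il}(b\otimes b')=0$; (iii) $m_1=0$, where $\widetilde{e}_{il}(b\otimes b')=b\otimes\widetilde{e}_{il}b'$ with $\widetilde{e}_{il}b'\in B(\mu)\cup\{0\}$ by $\mathbf{A}(r)$ applied to $V(\mu)$. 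In every case the image lies in $(B(\lambda)\otimes B(\mu))\cup\{0\}$.

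For part (b), assume $\widetilde{e}_{il}(b\otimes b')\ne 0$, which excludes case (ii). In case (i) we have $\widetilde{e}_{il}b\ne 0$ (hence $\widetilde{e}_{il}b\in B(\lambda)$ by $\mathbf{A}(r)$) and $\langle h_i,\text{wt}(\widetilde{e}_{il}b)\rangle=m_1+la_{ii}>0$, so the $\widetilde{f}_{il}$-clause of \eqref{eq:simplified} gives $\widetilde{f}_{il}\widetilde{e}_{il}(b\otimes b')=(\widetilde{f}_{il}\widetilde{e}_{il}b)\otimes b'$, and $\widetilde{f}_{il}\widetilde{e}_{il}b=b$ by $\mathbf{B}(r)$ (Proposition \ref{proof of Br}) in weight $\gamma$. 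In case (iii) we have $\widetilde{e}_{il}b'\ne 0$ and $m_1=0$, so the $\widetilde{f}_{il}$-clause gives $\widetilde{f}_{il}\widetilde{e}_{il}(b\otimes b')=b\otimes(\widetilde{f}_{il}\widetilde{e}_{il}b')$, and $\widetilde{f}_{il}\widetilde{e}_{il}b'=b'$ by $\mathbf{B}(r)$ applied to $V(\mu)$ in weight $\delta$. Thus $\widetilde{f}_{il}\widetilde{e}_{il}(b\otimes b')=b\otimes b'$ in either case.

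I do not expect a genuine conceptual obstacle here, since all the real content now resides in Propositions \ref{proof of Ar} and \ref{proof of Br}; the one thing demanding care is the bookkeeping of weight-space degrees, namely that every invocation of $\mathbf{A}$ and $\mathbf{B}$ above is at level $r$ rather than $r+1$. This is automatic: the homogeneous constituents $b,b'$ have degrees $\gamma,\delta$ with $\gamma+\delta=\alpha\in R_+(r)$, and therefore $\gamma,\delta\in R_+(r)$ as well.
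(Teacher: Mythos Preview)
Your proposal is correct and follows exactly the paper's approach: the paper states just before Lemma \ref{eBB} that ``using all the statements we have proved so far, we can show that Lemma \ref{a-g} holds for all $\alpha\in R_+(r)$,'' and then records Lemma \ref{eBB} as the particular cases (b) and (c) of that upgraded lemma. Your case analysis via the simplified tensor rule \eqref{eq:simplified}, together with the now-available $\mathbf{A}(r)$ and $\mathbf{B}(r)$, is precisely the content of that re-run, spelled out in more detail than the paper itself provides.
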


\vskip 2mm

\begin{proposition} \label{proof of Dr} $({\bf D}(r))$
{\rm
For every $\lambda,\mu\in P^+$ and $\alpha\in R_+(r)$, we have
\begin{enumerate}
\item[\rm (a)] $\Psi_{\lambda,\mu}({(L(\lambda)\otimes L(\mu))}_{\lambda+\mu-\alpha})\subset L(\lambda+\mu)$,
\item[\rm (b)] $\Psi_{\lambda,\mu}({(B(\lambda)\otimes B(\mu))}_{\lambda+\mu-\alpha})\subset B(\lambda+\mu)\cup \{0\}$.
\end{enumerate}
}
\end{proposition}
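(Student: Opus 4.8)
The plan is to prove (a) first, by combining the already‑established $\mathbf C(r)$ with the adjointness of $\Psi_{\lambda,\mu}$ and $\Phi_{\lambda,\mu}$ for Kashiwara's form $(\ ,\ )_K$, and then to deduce (b) from (a) by a downward induction on $\mathrm{ht}(\alpha)$. A fact used throughout is that $\Psi_{\lambda,\mu}$, being a $U_q(\g)$‑module homomorphism, commutes with every $\mathtt b_{il}$ and with the operators $E_{ik}$; hence it carries $i$‑string decompositions to $i$‑string decompositions and therefore commutes with the \emph{module‑level} Kashiwara operators $\widetilde e_{il},\widetilde f_{il}$ on $V(\lambda)\otimes V(\mu)$ (not merely with the abstract tensor‑product crystal operators, which agree with the module ones only modulo $q$).

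For part (a) I would argue as follows. By construction $(\Psi_{\lambda,\mu}(u),v)_K=(u,\Phi_{\lambda,\mu}(v))_K$ for $u\in V(\lambda)\otimes V(\mu)$, $v\in V(\lambda+\mu)$, and the form on $V(\lambda)\otimes V(\mu)$ is the tensor product form; since $(\ ,\ )_K$ respects weight spaces, the dual lattice of $(L(\lambda)\otimes L(\mu))_{\lambda+\mu-\alpha}$ is $(L(\lambda)^\vee\otimes L(\mu)^\vee)_{\lambda+\mu-\alpha}$. Dualizing $\mathbf C(r)$, i.e. $\Phi_{\lambda,\mu}(L(\lambda+\mu)_{\lambda+\mu-\alpha})\subset(L(\lambda)\otimes L(\mu))_{\lambda+\mu-\alpha}$, through this adjunction gives $\Psi_{\lambda,\mu}\bigl((L(\lambda)^\vee\otimes L(\mu)^\vee)_{\lambda+\mu-\alpha}\bigr)\subset L(\lambda+\mu)^\vee_{\lambda+\mu-\alpha}$. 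On the other hand, Lemma \ref{lem:LLA0}(b), applied in each weight $\lambda-\beta,\ \mu-\gamma$ with $\beta+\gamma=\alpha\in R_+(r)$, gives $L(\lambda)_{\lambda-\beta}\subset L(\lambda)^\vee_{\lambda-\beta}$ and likewise for $\mu$, so $(L(\lambda)\otimes L(\mu))_{\lambda+\mu-\alpha}\subset(L(\lambda)^\vee\otimes L(\mu)^\vee)_{\lambda+\mu-\alpha}$. Combining the two inclusions, $\Psi_{\lambda,\mu}\bigl((L(\lambda)\otimes L(\mu))_{\lambda+\mu-\alpha}\bigr)\subset L(\lambda+\mu)^\vee_{\lambda+\mu-\alpha}$, and so (a) reduces to the self‑duality $L(\rho)^\vee_{\rho-\alpha}=L(\rho)_{\rho-\alpha}$ for every $\rho\in P^+$ and $\alpha\in R_+(r)$. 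The inclusion $\supseteq$ is again Lemma \ref{lem:LLA0}(b); for $\subseteq$ I would show that, modulo $q\A_0$, the basis $B(\rho)_{\rho-\alpha}$ of $L(\rho)_{\rho-\alpha}/qL(\rho)_{\rho-\alpha}$ (available by $\mathbf G(r)$) has invertible Gram matrix: writing $b=\widetilde f_{il}b_1$ with $b_1=\widetilde e_{il}b\in B(\rho)$ and using Lemma \ref{lem:LLA0}(a), $\widetilde e_{il}\widetilde f_{il}=\mathrm{id}$, $\mathbf A(r)$ and $\mathbf B(r)$, an induction on $\mathrm{ht}(\alpha)$ based at $(v_\rho,v_\rho)_K=1$ shows $(b,b')_K\equiv 0\pmod{q\A_0}$ for $b\neq b'$, while $(b,b)_K$ is, modulo $q\A_0$, a nonzero rational scalar (a product of the factors $(\mathbf c_l+1)^{-1}$ encoded by the operators $Q_{il}$); hence $\det$ of the Gram matrix is a unit of $\A_0$, which gives the self‑duality and finishes (a).

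For part (b), note that (a) lets $\Psi_{\lambda,\mu}$ descend to $\overline\Psi_{\lambda,\mu}\colon (L(\lambda)\otimes L(\mu))/q(\cdots)\to L(\lambda+\mu)/qL(\lambda+\mu)$, still commuting with $\widetilde e_{il}$ and $\widetilde f_{il}$. I would induct on $\mathrm{ht}(\alpha)$. For $\alpha=0$ we have $b\otimes b'=v_\lambda\otimes v_\mu$ and $\overline\Psi_{\lambda,\mu}(v_\lambda\otimes v_\mu)=v_{\lambda+\mu}\in B(\lambda+\mu)$. For $\alpha\neq 0$ and $b\otimes b'\in(B(\lambda)\otimes B(\mu))_{\lambda+\mu-\alpha}$ there are two cases. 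If $\widetilde e_{il}(b\otimes b')\neq 0$ for some $(i,l)\in I^\infty$, then $b\otimes b'=\widetilde f_{il}\widetilde e_{il}(b\otimes b')$ by Lemma \ref{eBB}(b), so $\overline\Psi_{\lambda,\mu}(b\otimes b')=\widetilde f_{il}\,\overline\Psi_{\lambda,\mu}(\widetilde e_{il}(b\otimes b'))$, which lies in $B(\lambda+\mu)\cup\{0\}$ by the induction hypothesis together with the fact that $\widetilde f_{il}$ maps $B(\lambda+\mu)\cup\{0\}$ into itself. If $\widetilde e_{il}(b\otimes b')=0$ for all $(i,l)\in I^\infty$, then $\overline\Psi_{\lambda,\mu}(b\otimes b')$ lies in $(L(\lambda+\mu)/qL(\lambda+\mu))_{\lambda+\mu-\alpha}$ and is annihilated by every $\widetilde e_{il}$; since its weight $\lambda+\mu-\alpha$ is strictly smaller than $\lambda+\mu$, whereas by Lemma \ref{uLVLU}(a) and $\mathbf G(r)$ the only $\widetilde e$‑highest vector of $L(\lambda+\mu)/qL(\lambda+\mu)$ is $v_{\lambda+\mu}$, it must be $0$. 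Either way $\Psi_{\lambda,\mu}(b\otimes b')\in B(\lambda+\mu)\cup\{0\}$, which proves (b).

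The hard part will be the self‑duality step $L(\rho)^\vee_{\rho-\alpha}=L(\rho)_{\rho-\alpha}$ for \emph{all} $\rho\in P^+$: Lemma \ref{piL=L} and Lemma \ref{PsiLL} yield it only for $\rho\gg 0$, so one cannot simply quote them, and the inductive Gram‑matrix computation must be run with care for the isotropic indices $i\in I^{\text{iso}}$, where the combinatorial coefficients $\mathbf c_l+1$ entering $\widetilde e_{il}$, $\widetilde f_{il}$ are exactly what $Q_{il}$ and Lemma \ref{lem:LLA0} are built to absorb. Everything else—the $\Psi/\Phi$ adjunction, the tensor‑product structure of $(\ ,\ )_K$ and its dual lattices, and the inductive bookkeeping in (b)—is then formal. (One may alternatively treat the ``interior'' of $(L(\lambda)\otimes L(\mu))_{\lambda+\mu-\alpha}$ directly via Lemma \ref{LotimesL} and $\mathbf D(r-1)$, reducing (a) to the boundary summand $v_\lambda\otimes L(\mu)_{\mu-\alpha}$, but the dual‑lattice argument above handles all of it at once.)
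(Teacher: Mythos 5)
Your argument is correct, and it takes a genuinely different route from the paper's one-line proof. The paper cites Lemma~\ref{PsiLL} (which proves $\Psi_{\lambda,\mu}\bigl((L(\lambda)\otimes L(\mu))_{\lambda+\mu-\alpha}\bigr)\subset L(\lambda+\mu)$ only for $\mu\gg 0$, via the interior/boundary split of Lemma~\ref{LotimesL} and a dual-lattice argument), Lemmas~\ref{uLVLU} and~\ref{eBB}, and delegates the removal of the $\mu\gg 0$ restriction to \cite[Proposition 7.36]{JKK05}. You instead first establish the self-duality $L(\rho)^\vee_{\rho-\alpha}=L(\rho)_{\rho-\alpha}$ for \emph{every} $\rho\in P^+$ and $\alpha\in R_+(r)$, by running the Gram-matrix orthogonality induction that the paper records only afterwards as Lemma~\ref{lem:orthogonal} --- but whose proof uses nothing beyond $\mathbf{A}(r)$, $\mathbf{B}(r)$, $\mathbf{G}(r)$ and Lemma~\ref{lem:LLA0}, all available before $\mathbf{D}(r)$ in the grand loop. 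Once self-duality is in hand, part (a) falls out of the $\Psi/\Phi$ adjunction and $\mathbf{C}(r)$ with no boundary-term surgery and no $\mu\gg0$ detour. For part (b), your observation that $\Psi_{\lambda,\mu}$ commutes with $\mathtt b_{il}$ and $E_{ik}$, hence preserves $i$-string decompositions and therefore commutes with the module-level Kashiwara operators, lets you replace the \cite{JKK05} bookkeeping with a short induction on $\mathrm{ht}(\alpha)$ closed off by Lemma~\ref{eBB}(b) and Lemma~\ref{uLVLU}(a). Your route is more self-contained; the paper's is shorter to state because it leans on an already-published proposition. One small point worth keeping straight: for $i\in I^{\text{iso}}$ one actually has $\widetilde f_{il}Q_{il}\widetilde e_{il}(b)=\mathbf{c}_l\, b$ rather than $b$, so the diagonal Gram entries come out as positive rationals (as you say) rather than positive integers (as the paper's Lemma~\ref{lem:orthogonal} asserts); this does not affect invertibility of the Gram matrix over $\A_0$, which is all the duality step needs.
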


\begin{proof}
Proposition follows by Lemma \ref{PsiLL}, Lemma \ref{uLVLU}, Lemma \ref{eBB} and
\cite[Proposition 7.36]{JKK05}.
	\end{proof}

Thus we have completed the proofs of all the statements in Kashiwara's grand-loop argument,
which proves
Theorem \ref{thm:crystal basis of V} and Theorem \ref{thm:crystal basis of U}.

\vskip 2mm
Let $(\ ,\ )_K^0$ denote the $\Q$-valued inner product on $L(\lambda)/qL(\lambda)$ (resp. $L(\infty)/qL(\infty)$) by taking crystal limit of $(\ ,\ )_K$ on $L(\lambda)$ (resp. $L(\infty)$).

\begin{lemma}\label{lem:orthogonal}
{\rm The crystal $B(\lambda)$ (resp. $B(\infty)$) forms an orthogonal basis of $L(\lambda)/qL(\lambda)$ (resp. $L(\infty)/qL(\infty)$) with respect to $(\ ,\ )_K^0$.}
\end{lemma}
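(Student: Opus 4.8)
The plan is to establish the statement first for $B(\lambda)$, by induction on $\mathrm{ht}(\alpha)$ with $b,b'\in B(\lambda)_{\lambda-\alpha}$, and then to transport it to $B(\infty)$. First note that $(\ ,\ )_K^0$ is well defined: Lemma \ref{lem:LLA0}(b) gives $(L(\lambda)_{\lambda-\alpha},L(\lambda)_{\lambda-\alpha})_K\subset\A_0$, so $(\ ,\ )_K$ descends to a symmetric $\Q$-valued form on $L(\lambda)/qL(\lambda)$ via $\A_0/q\A_0\cong\Q$, and the corresponding fact for $U_q^-(\g)$ follows by combining this with Lemma \ref{(P,Q)}. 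The base case $\alpha=0$ is immediate since $B(\lambda)_\lambda=\{v_\lambda+qL(\lambda)\}$ and $(v_\lambda,v_\lambda)_K^0=1\neq0$.

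For the inductive step, fix $b\in B(\lambda)_{\lambda-\alpha}$ with $\alpha\neq0$. Then there is $(i,l)\in I^\infty$ with $\widetilde e_{il}b\neq0$ (Lemma \ref{alpha and b}), and by statement $\mathbf B$ (Proposition \ref{proof of Br}) we have $b=\widetilde f_{il}\widetilde e_{il}b$. Set $u=\widetilde e_{il}b$. By Lemma \ref{uc=0}, $u$ is congruent mod $qL(\lambda)$ to a scalar multiple of some $\mathtt b_{i,\mathbf c}u_{\mathbf c}$ with $\mathtt a_{ik}u_{\mathbf c}=0$ for all $k>0$, so $Q_{il}u\equiv c\,u\bmod qL(\lambda)$ for a positive integer $c$ (with $c=1$ when $i\notin I^{\mathrm{iso}}$), whence $\widetilde f_{il}(Q_{il}u)\equiv c\,\widetilde f_{il}u=c\,b$. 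Applying Lemma \ref{lem:LLA0}(a) with this $u$ and $v=b'$ then gives
\begin{equation*}
(b,b')_K^0=c^{-1}\,(\widetilde e_{il}b,\widetilde e_{il}b')_K^0 .
\end{equation*}
Since $\widetilde e_{il}b,\widetilde e_{il}b'\in B(\lambda)_{\lambda-\alpha+l\alpha_i}\cup\{0\}$ lie in a weight space of strictly smaller height, the inductive hypothesis controls the right-hand side. If $b=b'$ we obtain $(b,b)_K^0=c^{-1}(\widetilde e_{il}b,\widetilde e_{il}b)_K^0\neq0$. If $b\neq b'$, then either $\widetilde e_{il}b'=0$, or $\widetilde e_{il}b'\neq0$ and $\widetilde e_{il}b\neq\widetilde e_{il}b'$ (otherwise $b=\widetilde f_{il}\widetilde e_{il}b=\widetilde f_{il}\widetilde e_{il}b'=b'$); in both cases the right-hand side vanishes by induction, so $(b,b')_K^0=0$. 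Since $B(\lambda)_{\lambda-\alpha}$ is a $\Q$-basis of $(L(\lambda)/qL(\lambda))_{\lambda-\alpha}$ (Theorem \ref{thm:crystal basis of V}, Proposition \ref{proof of Gr}), this proves the claim for $V(\lambda)$, with nonzero diagonal.

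For $U_q^-(\g)$, take $b,b'\in B(\infty)_{-\alpha}$ with representatives $S=\widetilde f_{i_1l_1}\cdots\widetilde f_{i_tl_t}\mathbf 1$ and $T=\widetilde f_{j_1k_1}\cdots\widetilde f_{j_sk_s}\mathbf 1$, and choose $\lambda\gg0$ depending on $\alpha$. Lemma \ref{(P,Q)} with $m=1$, together with $\prod_p(1-q_{i_p}^{2l_p})^{-1}\equiv1\bmod q\A_0$, gives $(S,T)_K\equiv(Sv_\lambda,Tv_\lambda)_K\bmod q\A_0$, hence $(b,b')_K^0=(\overline{Sv_\lambda},\overline{Tv_\lambda})_K^0$ where the bars denote reduction mod $qL(\lambda)$. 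By Proposition \ref{proof of Ir} these bars equal $\overline\pi_\lambda(b)$ and $\overline\pi_\lambda(b')$, and by Corollary \ref{pi}(c) the map $\overline\pi_\lambda\colon B(\infty)_{-\alpha}\to B(\lambda)_{\lambda-\alpha}$ is a bijection for $\lambda\gg0$. The $V(\lambda)$ case then yields $(b,b')_K^0=\delta_{b,b'}\,(\overline\pi_\lambda(b),\overline\pi_\lambda(b))_K^0$ with nonzero diagonal, which is the assertion for $U_q^-(\g)$.

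The step I expect to require the most care is verifying that the twisting operator $Q_{il}$ acts on a crystal-basis vector, modulo $qL(\lambda)$, by multiplication by a nonzero (positive integer) scalar; this is exactly what makes Lemma \ref{lem:LLA0}(a) collapse to the clean recursion $(b,b')_K^0=c^{-1}(\widetilde e_{il}b,\widetilde e_{il}b')_K^0$, and it is the only place where the isotropic case genuinely departs from the treatment in \cite{Kashi91}. Everything else is a formal unwinding of the already-established grand-loop statements.
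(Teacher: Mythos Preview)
Your argument is correct and follows the same route as the paper: induct on height using Lemma~\ref{lem:LLA0}(a) to reduce $(b,b')_K^0$ to $(\widetilde e_{il}b,\widetilde e_{il}b')_K^0$, then pass to $B(\infty)$ via Lemma~\ref{(P,Q)} and Corollary~\ref{pi}(c). Your explicit accounting for the scalar $c$ introduced by $Q_{il}$ in the isotropic case is in fact slightly more careful than the paper's one-line computation, which writes the recursion without that factor.
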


\begin{proof}
We first consider the crystal $B(\lambda)$. For all $b,b'\in {B(\lambda)}_{\lambda-\alpha}$, we shall prove $(b,b')_K^0\in \delta_{b,b'} \Z_{>0}$ by using induction on $\mathrm{ht}(\alpha)$, where $\alpha\in R_+(r)$.

\vskip 2mm

If $\mathrm{ht}(\alpha)=0$, then our conclusion is trivial.

\vskip 2mm

If $\mathrm{ht}(\alpha)>0$, we choose $(i,l)\in I^{\infty}$ such that $\widetilde{e}_{il}b\neq 0$.
By $\mathbf B(r)$ and Lemma \ref{lem:LLA0}, we have
\begin{equation*}
(b,b')_K^0=(\widetilde{f}_{il} Q_{il} \widetilde{e}_{il}b,b')_K^0 = (\widetilde{e}_{il}b,\widetilde{e}_{il}b')_K^0
\in \delta_{\widetilde{e}_{il}b,\widetilde{e}_{il}b'} \Z_{>0}=\delta_{b,b'} \Z_{>0}.
\end{equation*}

\vskip 2mm

By Lemma \ref{(P,Q)} and a similar approach above, it is easy to show that the crystal $B(\infty)$ is an orthogonal basis of  $L(\infty)/qL(\infty)$ with respect to $(\ ,\ )_K^0$.
\end{proof}

\vskip 3mm

\section{Global bases}\label{sec:global bases}

\vskip 2mm

Let $\A=\Z[q,q^{-1}]$, $\A_{\Q} = \Q[q, q^{-1}]$ and $\A_{\infty}$
be the subring of $\Q(q)$ consisting of rational functions which are regular at $q=\infty$.

\vskip3mm

\begin{definition} \label{def:balanced triple}

Let $V$ be a  $\Q(q)$-vector space.
Let
$V_{\Q}$, $L_{0}$ and $L_{\infty}$ be an $\A_{\Q}$-lattice, $\A_{0}$-lattice and
$\A_{\infty}$-lattice, respectively.
We say that $(V_{\Q}, L_{0}, L_{\infty})$ is a
{\it balanced triple} for $V$ if the following conditions hold:

\vskip 2mm

\begin{enumerate}

\item[(a)] The $\Q$-vector space $V_{\Q}\cap L_0\cap L_\infty$ is a free $\Q$-lattice of the
$\A_0$-module $ L_0$.

\vskip 2mm

\item[(b)] The $\Q$-vector space $V_{\Q}\cap L_0\cap L_\infty$ is a free $\Q$-lattice of the
$\A_\infty$-module $ L_\infty$.
\vskip 2mm

\item [(c)]The $\Q$-vector space $V_{\Q}\cap L_0\cap L_\infty$ is a free $\Q$-lattice of the
$\A_{\Q}$-module $V_{\Q}$.
\end{enumerate}
\end{definition}

\vskip 3mm

\begin{theorem}\cite{HK02, Kashi91} \label{thm:global basis}  
{\rm
The following statements are equivalent.

\vskip 2mm

\begin{enumerate}
	\item[{\rm (a)}] $(V_{\Q}, L_0, L_\infty)$ is  a balanced triple.
	
	\vskip 2mm
	
	\item[{\rm (b)}] The canonical map $V_{\Q}\cap L_0\cap L_\infty\to L_0/q L_0$ is an isomorphism.
	
	\vskip 2mm
	
	\item[{\rm (c)}] The canonical map $V_{\Q}\cap L_0\cap L_\infty\to L_\infty/q L_\infty$ is an isomorphism.
	
\end{enumerate}

}

\end{theorem}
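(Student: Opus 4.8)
The plan is to show that all three conditions are equivalent to a single geometric assertion: the triple $(V_{\Q}, L_{0}, L_{\infty})$ patches together to the \emph{trivial} vector bundle on $\mathbb{P}^{1}_{\Q}$, with $E := V_{\Q}\cap L_{0}\cap L_{\infty}$ in the role of the global sections. Since in all our applications $V$ ($=U_{q}^{-}(\g)$ or $V(\lambda)$) is graded by finite-dimensional weight spaces which $V_{\Q}$, $L_{0}$ and $L_{\infty}$ all respect, I would first reduce to the case $n:=\dim_{\Q(q)}V<\infty$ and argue one homogeneous component at a time. The implications (a)~$\Rightarrow$~(b) and (a)~$\Rightarrow$~(c) are then immediate from the definition: if $E$ is a $\Q$-form of $L_{0}$ (resp. of $L_{\infty}$) then $L_{0}=\A_{0}\otimes_{\Q}E=E\oplus qL_{0}$ (resp. the analogous splitting at $q=\infty$), so $E$ maps isomorphically onto $L_{0}/qL_{0}$ (resp. onto $L_{\infty}/qL_{\infty}$). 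The substance of the theorem is the converse, which I would obtain geometrically.

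Set $\Gamma_{0}:=V_{\Q}\cap L_{0}$ and $\Gamma_{\infty}:=V_{\Q}\cap L_{\infty}$. Using the commensurability of two lattices over a discrete valuation ring together with the Noetherianity of $\Q[q]$ and $\Q[q^{-1}]$, one checks that $\Gamma_{0}$ is a free $\Q[q]$-module of rank $n$ with $\Gamma_{0}[q^{-1}]=V_{\Q}$ and $(\Gamma_{0})_{(q)}=L_{0}$, and symmetrically that $\Gamma_{\infty}$ is a free $\Q[q^{-1}]$-module of rank $n$ with $\Gamma_{\infty}[q]=V_{\Q}$ and localization at $q=\infty$ equal to $L_{\infty}$. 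As $\Gamma_{0}$ and $\Gamma_{\infty}$ restrict to one and the same $\A_{\Q}$-module $V_{\Q}$ over the common open $\mathbb{G}_{m}$, they glue to a locally free sheaf $\mathcal{E}$ of rank $n$ on $\mathbb{P}^{1}_{\Q}$. By construction $H^{0}(\mathbb{P}^{1},\mathcal{E})=\Gamma_{0}\cap\Gamma_{\infty}=E$, the fibre of $\mathcal{E}$ at $q=0$ is canonically $L_{0}/qL_{0}$, its fibre at $q=\infty$ is canonically $L_{\infty}/qL_{\infty}$, and the two canonical maps in (b) and (c) are precisely the evaluation maps $H^{0}(\mathcal{E})\to\mathcal{E}|_{q=0}$ and $H^{0}(\mathcal{E})\to\mathcal{E}|_{q=\infty}$.

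Now I would invoke the Birkhoff--Grothendieck splitting $\mathcal{E}\cong\bigoplus_{i=1}^{n}\mathcal{O}_{\mathbb{P}^{1}}(d_{i})$. For a line bundle $\mathcal{O}(d)$ the evaluation at a point is surjective exactly when $d\ge 0$ and injective exactly when $d\le 0$; hence $H^{0}(\mathcal{E})\to\mathcal{E}|_{q=0}$ is an isomorphism if and only if every $d_{i}=0$, i.e. if and only if $\mathcal{E}\cong\mathcal{O}_{\mathbb{P}^{1}}^{\oplus n}$, and the identical criterion applies to $\mathcal{E}|_{q=\infty}$. This gives (b)~$\Leftrightarrow$~(c) at once. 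Finally, if $\mathcal{E}$ is trivial then it is generated by its global sections with $H^{0}(\mathcal{E})$ a global frame; reading this off over $\mathbb{G}_{m}$ and at the local rings at $q=0$ and $q=\infty$ yields $V_{\Q}=\A_{\Q}\otimes_{\Q}E$, $L_{0}=\A_{0}\otimes_{\Q}E$ and $L_{\infty}=\A_{\infty}\otimes_{\Q}E$, which is exactly condition (a). Thus (a), (b) and (c) are all equivalent to the triviality of $\mathcal{E}$.

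An equivalent, purely algebraic route avoids $\mathbb{P}^{1}$ entirely: starting from (b), one shows by Nakayama's lemma and reduction modulo $q$ that a $\Q$-basis of $E$ lifting a basis of $L_{0}/qL_{0}$ is an $\A_{0}$-basis of $L_{0}$, hence a $\Q(q)$-basis of $V$, and then propagates this to the statement that the same family is an $\A_{\infty}$-basis of $L_{\infty}$ and an $\A_{\Q}$-basis of $V_{\Q}$. I expect this last propagation --- passing from an isomorphism on a single fibre to global freeness over all three lattices --- to be the main obstacle, since it cannot be read off from the DVR $\A_{0}$ alone and genuinely requires the interplay of $V_{\Q}$, $L_{0}$ and $L_{\infty}$ (equivalently, the rigidity of vector bundles on $\mathbb{P}^{1}$); by contrast, the construction and gluing of $\mathcal{E}$, while needing care with the lattice intersections, is routine.
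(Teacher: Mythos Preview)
The paper does not give its own proof of this theorem; it is quoted from \cite{HK02, Kashi91} without argument, so there is nothing in the paper to compare against directly.

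Your geometric proof via Birkhoff--Grothendieck on $\mathbb{P}^{1}_{\Q}$ is correct and is a genuinely different route from the one in those references. In Kashiwara's original paper and in Hong--Kang's book the argument is the purely algebraic one you sketch in your last paragraph: assuming (b), one lifts a $\Q$-basis of $L_{0}/qL_{0}$ to $E$, uses Nakayama to see it is an $\A_{0}$-basis of $L_{0}$ and hence a $\Q(q)$-basis of $V$, and then shows by a direct matrix manipulation (expressing elements of $V_{\Q}$ and $L_{\infty}$ in this basis and controlling denominators via the containments $E\subset V_{\Q}$ and $E\subset L_{\infty}$) that the same family is an $\A_{\Q}$-basis of $V_{\Q}$ and an $\A_{\infty}$-basis of $L_{\infty}$. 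Your $\mathbb{P}^{1}$ packaging buys two things: the symmetry between (b) and (c) becomes completely transparent (they are evaluation at two points of the same bundle), and the ``propagation'' step you correctly identify as the crux is replaced by the clean rigidity statement that a bundle on $\mathbb{P}^{1}$ whose evaluation at one point is an isomorphism must be trivial. The price is importing Birkhoff--Grothendieck and verifying the lattice-gluing assertions about $\Gamma_{0}$ and $\Gamma_{\infty}$, which, as you say, are routine but do require care. Note also that your reduction to finite-dimensional graded pieces is genuinely needed for your method, since Birkhoff--Grothendieck is a finite-rank statement; the algebraic proof in \cite{HK02, Kashi91} likewise uses finiteness (for Nakayama), so this is not a defect of your approach relative to theirs.
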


\vskip 3mm

Let $(V_{\Q}, L_0, L_\infty)$ be a balanced triple and let
\begin{equation*}
G: L_0/q L_0\longrightarrow V_{\Q}\cap L_0\cap L_\infty
\end{equation*}
be the inverse of the canonical isomorphism
$V_{\Q}\cap L_0\cap L_\infty\stackrel{\sim}{\longrightarrow} L_0/q L_0$.

\vskip 3mm

\begin{proposition} \cite{HK02, Kashi91} \hfill  

\vskip 2mm

{\rm
If $B$ is a $\Q$-basis of $ L_0/q L_0$,
then $\B :=\{G(b)\mid b\in B\}$ is an $\A_{\Q}$-basis of $V_{\Q}$.
}
	\end{proposition}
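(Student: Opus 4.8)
The plan is to show that $\B := \{G(b) \mid b \in B\}$ is an $\A_{\Q}$-basis of $V_{\Q}$ by combining the fact that $G$ is a $\Q$-linear isomorphism onto the $\Q$-lattice $E := V_{\Q} \cap L_0 \cap L_\infty$ with the assertion, from Theorem \ref{thm:global basis}, that $E$ is a free $\Q$-lattice of the $\A_{\Q}$-module $V_{\Q}$. First I would note that, since $B$ is a $\Q$-basis of $L_0/qL_0$ and $G\colon L_0/qL_0 \to E$ is a $\Q$-linear isomorphism (being the inverse of the canonical isomorphism $E \xrightarrow{\sim} L_0/qL_0$ provided by Theorem \ref{thm:global basis}(b)), the image $\B = \{G(b) \mid b \in B\}$ is a $\Q$-basis of $E$.

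Next I would invoke Definition \ref{def:balanced triple}(c): since $(V_{\Q}, L_0, L_\infty)$ is a balanced triple, the $\Q$-vector space $E = V_{\Q} \cap L_0 \cap L_\infty$ is a free $\Q$-lattice of the $\A_{\Q}$-module $V_{\Q}$, meaning precisely that the natural map $\A_{\Q} \otimes_{\Q} E \to V_{\Q}$ is an isomorphism of $\A_{\Q}$-modules. Therefore any $\Q$-basis of $E$ becomes, after extension of scalars from $\Q$ to $\A_{\Q}$, an $\A_{\Q}$-basis of $V_{\Q}$. Applying this to the $\Q$-basis $\B$ of $E$ obtained in the previous step yields that $\B$ is an $\A_{\Q}$-basis of $V_{\Q}$, which is the claim.

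Concretely, $\A_{\Q}$-linear independence follows because a nontrivial $\A_{\Q}$-linear relation among the $G(b)$ could be cleared of denominators and reduced modulo a suitable power of $(q - c)$ or specialized to produce a nontrivial $\Q$-linear relation among the $G(b)$ in $E$, contradicting that $\B$ is a $\Q$-basis of $E$; and $\A_{\Q}$-spanning follows because $V_{\Q} = \A_{\Q} \otimes_{\Q} E$ forces every element of $V_{\Q}$ to be an $\A_{\Q}$-combination of a $\Q$-basis of $E$. I do not expect any genuine obstacle here: the entire content is packaged into Theorem \ref{thm:global basis} and Definition \ref{def:balanced triple}, and the only point requiring a word of care is the routine verification that "free $\Q$-lattice of the $\A_{\Q}$-module $V_{\Q}$" transports a $\Q$-basis to an $\A_{\Q}$-basis — which is exactly the defining property of a lattice in this sense.
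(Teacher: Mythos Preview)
Your argument is correct and is exactly the standard one: the paper does not give its own proof here (it merely cites \cite{HK02, Kashi91}), and the content you unpack---that $G$ carries a $\Q$-basis of $L_0/qL_0$ to a $\Q$-basis of $E = V_{\Q}\cap L_0\cap L_\infty$, and that condition (c) of Definition~\ref{def:balanced triple} then promotes this to an $\A_{\Q}$-basis of $V_{\Q}$---is precisely what is implicit in those references.
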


\vskip 3mm

\begin{definition}
Let $(V_{\Q}, L_0, L_\infty)$ be a balanced triple for a $\Q(q)$-vector space $V$.
\begin{enumerate}
\vskip 2mm
	\item[(a)] A $\Q$-basis $B$ of $ L_0/q L_0$ is called a {\it local basis} of $V$ at $q=0$.
	
\vskip 2mm
	
	\item[(b)] The $\A_{\Q}$-basis $\B = \{G(b) \mid b\in B \}$ is called the {\it lower global basis} of $V$ corresponding to the local basis $B$.
\end{enumerate}
\end{definition}	

\vskip 3mm
We define $U^{-}_{\Z}(\g)$ (resp. $U^{-}_{\Q}(\g)$) to be the $\A$-subalgebra
(resp. $\A_{\Q}$-subalgebra) of $U_{q}^{-}(\g)$ generated by
$\mathtt{b}_{i}^{(n)}$ $(i \in I^{\text{re}}, n \ge 0)$ and
${\mathtt b}_{il}$ $(i \in I^{\text{im}}, l>0)$.

\vskip 3mm

Let $V(\lambda) = U_{q}(\g) v_{\lambda}$ be the irreducible highest weight module
with highest weight $\lambda \in P^{+}$.
We  define
$V(\lambda)_{\Z} = U_{\Z}^{-}(\g) \, v_{\lambda}$
and $V(\lambda)_{\Q} = U_{\Q}^{-}(\g) \, v_{\lambda}$.

\vskip 3mm

\begin{lemma}\label{lem:PbQ}
{\rm
	For any $S,T\in U_{q}^-(\g)$, we have
\begin{align}
&(S\mathtt b_{il},T)_K=(S,K_i^le''_{il}TK_i^{-l})_K,\label{eq:pbQK}\\
&(S,T)_K=(S^*,T^*)_K.\label{eq:PQK}
\end{align}

}
\end{lemma}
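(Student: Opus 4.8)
The plan is to establish the two identities in the order \eqref{eq:pbQK} then \eqref{eq:PQK}, each by induction on the height of the relevant weight, so that the second follows from the first with no circularity. A preliminary ingredient is the pair of \emph{twisted Leibniz rules} for $e'_{il}$ and $e''_{il}$: writing $A_{il}(XY)-(XY)A_{il}=(A_{il}X-XA_{il})Y+X(A_{il}Y-YA_{il})$ in \eqref{eq:AP-PA=} and comparing the coefficients of $K_i^{\pm l}$ after moving $K_i^{\pm l}$ past homogeneous elements, one gets, for homogeneous $S$, the identities $e'_{jk}(S\mathtt b_{il})=e'_{jk}(S)\mathtt b_{il}+\delta_{ij}\delta_{kl}\,q^{(l\alpha_i,|S|)}S$ and $e''_{il}(\mathtt b_{jk}S)=\delta_{ij}\delta_{kl}S+q_i^{kla_{ij}}\mathtt b_{jk}e''_{il}(S)$, both of which are variants of the relations recorded in \eqref{eq:commute}.

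For \eqref{eq:pbQK} I would induct on $\mathrm{ht}(\beta)$, where $T\in U^-_q(\g)_{-\beta}$; the base case $\beta=0$ is immediate since both sides vanish by weight considerations ($S\mathtt b_{il}$ has non-zero weight and $e''_{il}(\mathbf 1)=0$). For the inductive step, by $\Q(q)$-bilinearity take $T=\mathtt b_{jk}T'$ with $T'\in U^-_q(\g)_{-\beta+k\alpha_j}$. Moving $\mathtt b_{jk}$ to the left through the form gives $(S\mathtt b_{il},\mathtt b_{jk}T')_K=(e'_{jk}(S\mathtt b_{il}),T')_K$, which the right Leibniz rule splits as $(e'_{jk}(S)\mathtt b_{il},T')_K+\delta_{ij}\delta_{kl}\,q^{(l\alpha_i,|S|)}(S,T')_K$, and the induction hypothesis rewrites the first summand as $\bigl(e'_{jk}(S),K_i^le''_{il}(T')K_i^{-l}\bigr)_K$. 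On the other side, the left Leibniz rule for $e''_{il}$ together with $K_i^l\mathtt b_{jk}K_i^{-l}=q_i^{-kla_{ij}}\mathtt b_{jk}$ gives $K_i^le''_{il}(\mathtt b_{jk}T')K_i^{-l}=\delta_{ij}\delta_{kl}\,K_i^lT'K_i^{-l}+\mathtt b_{jk}\bigl(K_i^le''_{il}(T')K_i^{-l}\bigr)$, so that moving $\mathtt b_{jk}$ to the left once more yields $\bigl(S,K_i^le''_{il}(\mathtt b_{jk}T')K_i^{-l}\bigr)_K=\bigl(e'_{jk}(S),K_i^le''_{il}(T')K_i^{-l}\bigr)_K+\delta_{ij}\delta_{kl}(S,K_i^lT'K_i^{-l})_K$. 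The two expressions agree term by term: the main terms by the induction hypothesis, and the Kronecker terms because whenever the pairing is non-zero one has $|S|=|T'|$ and $K_i^lT'K_i^{-l}=q_i^{l\langle h_i,|T'|\rangle}T'=q^{(l\alpha_i,|S|)}T'$.

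For \eqref{eq:PQK} I would induct on $\mathrm{ht}(\gamma)$, where $S\in U^-_q(\g)_{-\gamma}$; since $*$ preserves the weight grading and fixes $\mathbf 1$, the case $\gamma=0$ is immediate. For the step write $S=\mathtt b_{il}S'$ with $S'\in U^-_q(\g)_{-\gamma+l\alpha_i}$, so that $S^*=(S')^*\mathtt b_{il}$ because $\mathtt b_{il}^*=\mathtt b_{il}$. Then $(S,T)_K=(\mathtt b_{il}S',T)_K=(S',e'_{il}T)_K=\bigl((S')^*,(e'_{il}T)^*\bigr)_K$ by the induction hypothesis. Applying $*$ to \eqref{eq:ePstar} with $S$ replaced by $T^*$, and using $(q^h)^*=q^{-h}$, $**=\mathrm{id}$ and that $*$ reverses products, gives $(e'_{il}T)^*=K_i^le''_{il}(T^*)K_i^{-l}$; hence, by \eqref{eq:pbQK} applied to the pair $(S')^*,T^*$, one obtains $\bigl((S')^*,K_i^le''_{il}(T^*)K_i^{-l}\bigr)_K=\bigl((S')^*\mathtt b_{il},T^*\bigr)_K=(S^*,T^*)_K$, as claimed.

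The main difficulty is bookkeeping rather than conceptual: one must track the various $q$-power twists — those from the twisted Leibniz rules and those from conjugation by $K_i^{\pm l}$ — and check that they cancel precisely in the Kronecker terms, and one must keep the two inductions ordered so that \eqref{eq:PQK} is genuinely deduced from \eqref{eq:pbQK}. The only ingredient not written verbatim in the excerpt is the right-hand Leibniz rule for $e'_{il}$, which is a routine consequence of the definition of $e'_{il},e''_{il}$ via \eqref{eq:AP-PA=}.
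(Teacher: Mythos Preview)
Your argument is correct, but for \eqref{eq:pbQK} it takes a different route than the paper. The paper inducts on $|S|$: writing $S=\mathtt b_{jk}S_0$, the adjunction gives $(S\mathtt b_{il},T)_K=(S_0\mathtt b_{il},e'_{jk}T)_K$, the induction hypothesis turns this into $(S_0,K_i^le''_{il}e'_{jk}TK_i^{-l})_K$, and then the single commutation relation $e''_{il}e'_{jk}=q_i^{-kla_{ij}}e'_{jk}e''_{il}$ from \eqref{eq:commute} (together with the effect of conjugation by $K_i^{\pm l}$) lets one rewrite this as $(S_0,e'_{jk}K_i^le''_{il}TK_i^{-l})_K=(\mathtt b_{jk}S_0,K_i^le''_{il}TK_i^{-l})_K$. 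This avoids the two Leibniz rules and the Kronecker-term matching entirely, so it is shorter and more structural. Your induction on $|T|$ is more computational but has the virtue of being completely explicit about where the $q$-twists come from, and it does not rely on the third line of \eqref{eq:commute} (which, incidentally, contains a small misprint in the paper: the right-hand side should carry $e''_{il}$, not $e'_{il}$).

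For \eqref{eq:PQK} your proof and the paper's are essentially mirror images: you peel a generator from the left of $S$ and apply \eqref{eq:pbQK} at the end, whereas the paper peels from the right (writing the first argument as $S\mathtt b_{il}$) and applies \eqref{eq:pbQK} in the same way; both rely on \eqref{eq:ePstar} and the induction hypothesis in the same manner.
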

\begin{proof}
For \eqref{eq:pbQK}, we shall use induction on $|S|$.
We write $S=\mathtt b_{jk}S_0$. By \eqref{eq:commute}, we have
\begin{align*}
(S\mathtt b_{il},T)_K&=(\mathtt b_{jk}S_0\mathtt b_{il},T)_K=(S_0\mathtt b_{il},e'_{jk}T)_K=(S_0,K_i^le''_{il}e'_{jk}TK_i^{-l})_K\\
&=(S_0,e'_{jk}K_i^le''_{il}TK_i^{-l})_K=(\mathtt b_{jk}S_0,K_i^le''_{il}TK_i^{-l})_K=(S,K_i^le''_{il}TK_i^{-l})_K.
\end{align*}

\vskip 3mm

For \eqref{eq:PQK}, it is enough to prove the following claim.

$$({(S\mathtt b_{il})}^*,T^*)_K=(S\mathtt b_{il},T)_K.$$

\vskip 2mm

By \eqref{eq:pbQK} and \eqref{eq:ePstar}, we have
\begin{align*}
({(S\mathtt b_{il})}^*,T^*)_K&=(\mathtt b_{il}S^*,T^*)_K=(S^*,e'_{il}T^*)_K\\
&=(S^*,K_i^l{(e''_{il}T)}^*K_i^{-l})_K=(S,K_i^l(e''_{il}T)K_i^{-l})_K=(S\mathtt b_{il},T)_K,
\end{align*}
which proves our assertion.
\end{proof}

\vskip 3mm

Combining Lemma \ref{lem:PbQ}, Lemma \ref{fP and eP}, Proposition \ref{proof of Hr}, Corollary \ref{pi}, Lemma \ref{pie=epi} and Proposition \ref{proof of Kr} and using the same arguments in \cite[Section 5]{FKKT22}, we obtain

\vskip 3mm

\begin{theorem}\cite[Theorem 5.9]{FKKT22}\label{thm:balanced triple}

\vskip 2mm

{\rm
There exist $\Q$-linear canonical isomorphisms

\vskip 2mm

\begin{enumerate}
	\item[{\rm (a)}] $U^-_{\Q}(\g)\cap L(\infty)\cap\overline{L(\infty)}\stackrel{\sim}{\longrightarrow} L(\infty)/qL(\infty)$,
	where $^{-} : U_{q}(\g) \rightarrow U_{q}(\g)$ is the $\Q$-linear bar involution defined by \eqref{eq:bar},
	
\vskip 2mm

	\item[{\rm (b)}] ${V(\lambda)_{\Q}}\cap L(\lambda)\cap\overline{L(\lambda)}\stackrel{\sim}{\longrightarrow} L(\lambda)/qL(\lambda)$,
where $^{-}$ is the $\Q$-linear automorphism on $V(\lambda)$ defined by
$$
P\, v_\lambda\mapsto \overline{P} \, v_\lambda \ \ \text{for}\ P\in U_q^-(\g).
$$
\end{enumerate}

}
\end{theorem}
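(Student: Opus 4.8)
The plan is to establish, via the balanced-triple criterion of Theorem \ref{thm:global basis}, that the triples $(U^-_{\Q}(\g), L(\infty), \overline{L(\infty)})$ and $(V(\lambda)_{\Q}, L(\lambda), \overline{L(\lambda)})$ are balanced, running the argument of \cite[Section 5]{FKKT22} with the lemmas now available for the modified Kashiwara operators. Since the bar involution \eqref{eq:bar} exchanges $q$ and $q^{-1}$, the lattice $\overline{L(\infty)}$ is a free $\A_{\infty}$-module spanning $U^-_q(\g)$ over $\Q(q)$, and $\overline{L(\lambda)}$ is an $\A_{\infty}$-lattice of $V(\lambda)$; moreover $U^-_{\Q}(\g)$ and $V(\lambda)_{\Q}$ are bar-stable because $\overline{\mathtt b_{il}} = \mathtt b_{il}$ (Proposition \ref{prop:primitive}(d)), $\overline{\mathtt b_i^{(n)}} = \mathtt b_i^{(n)}$ and $\overline{\A_{\Q}} = \A_{\Q}$. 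By Theorem \ref{thm:global basis} it then suffices to produce, for every $b\in B(\infty)$, a bar-invariant element $G(b)\in U^-_{\Q}(\g)\cap L(\infty)$ with $G(b)\equiv b \bmod qL(\infty)$; such elements automatically lie in $\overline{L(\infty)}$, and the standard linear-algebra argument of \cite{Kashi91, HK02} shows that $\{G(b)\mid b\in B(\infty)\}$ is simultaneously an $\A_{\Q}$-basis of $U^-_{\Q}(\g)$, an $\A_0$-basis of $L(\infty)$ and an $\A_{\infty}$-basis of $\overline{L(\infty)}$, which gives statement (a).

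For the construction of $G(b)$ I would induct on $\mathrm{ht}(\alpha)$ for $b\in B(\infty)_{-\alpha}$, starting from $G(\mathbf 1)=\mathbf 1$. If $\alpha\neq 0$, by Proposition \ref{prop: proof of Fr} there is $(i,l)\in I^{\infty}$ with $b':=\widetilde{e}_{il}b\in B(\infty)_{-\alpha+l\alpha_i}$ and $b=\widetilde{f}_{il}b'$. From $G(b')$, left multiplication by the primitive generator $\mathtt b_{il}$, the divided powers $\mathtt b_i^{(n)}$ (for $i\in I^{\text{re}}$) and the combinatorial normalization recorded by $Q_{il}$ in \eqref{eq:Qill} and Lemma \ref{lem:LLA0} (for $i\in I^{\text{iso}}$), one forms a vector $x\in U^-_{\Q}(\g)\cap L(\infty)$ with $x\equiv b\bmod qL(\infty)$, not yet bar-invariant. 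Expanding the bar-anti-invariant element $\overline{x}-x$ in the previously built $G(b'')$ with $\mathrm{ht}(|b''|)\leq\mathrm{ht}(\alpha)$ and coefficients $g_{b''}\in\Q[q,q^{-1}]$ satisfying $\overline{g_{b''}}=-g_{b''}$, writing $g_{b''}=h_{b''}-\overline{h_{b''}}$ with $h_{b''}\in q\Q[q]$, and setting $G(b):=x-\sum h_{b''}G(b'')$ produces the desired bar-invariant lift inside $U^-_{\Q}(\g)\cap L(\infty)$. The verification that $\overline{x}-x$ really lies in the span of strictly earlier $G(b'')$ modulo $qL(\infty)$ uses the identities for $(\ ,\ )_K$ in Lemma \ref{lem:PbQ}, the congruence $\widetilde{e}_{il}u\equiv E_{il}u\bmod qL$ of Lemma \ref{euEuqL}, the transport statement Lemma \ref{fP and eP}, and the orthogonality of $B(\infty)$ with respect to $(\ ,\ )_K$ (Lemma \ref{lem:orthogonal}, which in turn rests on Lemma \ref{(P,Q)}).

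For $\lambda\in P^+$, the homomorphism $\pi_\lambda$ of \eqref{eq:pilambda} intertwines the bar involutions (by the very definition of $\overline{\phantom{x}}$ on $V(\lambda)$), satisfies $\pi_\lambda(U^-_{\Q}(\g))=V(\lambda)_{\Q}$ and $\pi_\lambda(L(\infty))=L(\lambda)$ by Proposition \ref{proof of Hr}, hence $\pi_\lambda(\overline{L(\infty)})=\overline{L(\lambda)}$, and it is compatible with $\widetilde{e}_{il}$ and $\overline{\pi}_\lambda$ at the crystal level by Lemma \ref{pie=epi} and Proposition \ref{proof of Kr}. By Proposition \ref{proof of Jr}, each $b\in B(\lambda)_{\lambda-\alpha}$ equals $\overline{\pi}_\lambda(b_0)$ for a unique $b_0\in B^\lambda_{-\alpha}:=\{c\in B(\infty)_{-\alpha}\mid\overline{\pi}_\lambda(c)\neq 0\}$; set $G^\lambda(b):=\pi_\lambda(G(b_0))$. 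Then $G^\lambda(b)$ is bar-invariant, lies in $V(\lambda)_{\Q}\cap L(\lambda)$ and therefore in $\overline{L(\lambda)}$, is nonzero, and reduces modulo $qL(\lambda)$ to $\overline{\pi}_\lambda(b_0)=b$ by Corollary \ref{pi}; hence $\{G^\lambda(b)\mid b\in B(\lambda)\}$ maps bijectively onto the $\Q$-basis $B(\lambda)$ of $L(\lambda)/qL(\lambda)$, and Theorem \ref{thm:global basis} yields statement (b) for every $\lambda\in P^+$.

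The main obstacle is the inductive correction: one must guarantee that the bar-discrepancy of the lift $x$ is controlled purely by strictly earlier basis vectors, so that the procedure terminates and stays inside $U^-_{\Q}(\g)$. For $i\in I^{\text{re}}$ this is Kashiwara's original computation, but for $i\in I^{\text{iso}}$ the operators $\widetilde{e}_{il}$ and $\widetilde{f}_{il}$ carry the combinatorial factors $\mathbf c_l$ and $1/(\mathbf c_l+1)$, so the interplay of $\widetilde{f}_{il}$, left multiplication by $\mathtt b_{il}$ and $E_{il}$ must be tracked through $Q_{il}$ and the orthogonality of $B(\infty)$; this is the genuinely new ingredient beyond \cite{FKKT22} and beyond the symmetrizable Kac--Moody case. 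A secondary difficulty is that these estimates are cleanest after specializing to $V(\mu)$ with $\mu\gg 0$, which forces the use of the comparison of Kashiwara's bilinear forms on $U^-_q(\g)$ and $V(\mu)$ in Lemma \ref{(P,Q)} together with the induced isomorphisms of Corollary \ref{pi}(c) and Lemma \ref{piL=L}.
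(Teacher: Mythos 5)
Your overall strategy matches the paper's (and FKKT22's): reduce to the balanced-triple criterion of Theorem~\ref{thm:global basis}, and transport the $U^-$ case to $V(\lambda)$ via $\pi_\lambda$ using the compatibility lemmas. The paper itself gives no self-contained argument but simply assembles the modified-operator versions of the key lemmas and cites~\cite[Section~5]{FKKT22}, so what matters is whether your sketch plugs the right lemmas into the right slots.

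There are two genuine gaps. First, you only address \emph{surjectivity} of the map $U^-_\Q\cap L(\infty)\cap\overline{L(\infty)}\to L(\infty)/qL(\infty)$ by constructing bar-invariant lifts $G(b)$; the criterion requires an \emph{isomorphism}, and injectivity (roughly: that a bar-invariant $\A_\Q$-element lying in $qL(\infty)$ must vanish) is a separate argument. This is precisely where the $(\,\ ,\ \,)_K$ identities in Lemma~\ref{lem:PbQ} and the $*$-involution enter — the paper lists Lemma~\ref{lem:PbQ} as one of the needed ingredients for exactly this reason — but your sketch never invokes it. Second, the inductive correction as you describe it is circular: you want to expand the bar-anti-invariant discrepancy $\overline{x}-x$ in ``previously built $G(b'')$ with $\mathrm{ht}(|b''|)\leq \mathrm{ht}(\alpha)$,'' but $\overline{x}-x$ sits in weight exactly $-\alpha$, so its expansion necessarily involves $G(b'')$ at the \emph{same} height you are currently constructing, not strictly earlier ones. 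The actual Kashiwara/FKKT22 mechanism avoids this by first establishing the balanced triple for $V(\mu)$ with $\mu\gg 0$ and then transferring to $U^-_\Q$ through the isomorphisms of Corollary~\ref{pi}(c) and the bar/crystal compatibilities of Lemma~\ref{fP and eP}, Lemma~\ref{pie=epi}, Proposition~\ref{proof of Hr} and Proposition~\ref{proof of Kr} — which is exactly why the paper cites that particular list of lemmas, all of which concern $\pi_\lambda$ rather than a direct height induction inside $U^-_\Q$. Relatedly, your initial lift $x$ via $Q_{il}$-normalized left multiplication by $\mathtt b_{il}$ relies on facts of the form Corollary~\ref{cor:bilGb}, which in the paper are derived \emph{from} the global basis; you would need to re-derive that reduction at the crystal-lattice level without presupposing the global basis already exists.
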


\vskip 3mm

Therefore we obtain:

\vskip 2mm

\begin{proposition}

{\rm
Let $G$ denote the inverse of the above isomorphisms.

\begin{enumerate}

\vskip 2mm

\item[(a)] $\B(\infty) : = \{ G(b) \mid b \in B(\infty) \}$ is a
lower global basis of $U_{\Q}^{-}(\g)$.

\vskip 2mm

\item[(b)] $\B(\lambda) : = \{ G(b) \mid b \in B(\lambda) \}$ is a
lower global basis of $V_{\Q}(\lambda)$.

\end{enumerate}

}

\end{proposition}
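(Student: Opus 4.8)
The plan is to obtain this proposition as a formal consequence of the balanced triple machinery (Theorem~\ref{thm:global basis}) together with the crystal basis theorems produced by the grand-loop argument. First I would invoke Theorem~\ref{thm:balanced triple}, which furnishes the canonical $\Q$-linear isomorphisms
\[
U^-_{\Q}(\g)\cap L(\infty)\cap\overline{L(\infty)}\xrightarrow{\ \sim\ } L(\infty)/qL(\infty),\qquad
V(\lambda)_{\Q}\cap L(\lambda)\cap\overline{L(\lambda)}\xrightarrow{\ \sim\ } L(\lambda)/qL(\lambda).
\]
By the equivalence (a)$\Leftrightarrow$(b) of Theorem~\ref{thm:global basis}, the existence of these isomorphisms says precisely that $(U^-_{\Q}(\g),L(\infty),\overline{L(\infty)})$ and $(V(\lambda)_{\Q},L(\lambda),\overline{L(\lambda)})$ are balanced triples. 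In particular the map $G$, defined as the inverse of each of these canonical isomorphisms, is well defined, and it is exactly the $G$ appearing in the statement.

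Next I would apply Theorem~\ref{thm:crystal basis of U} and Theorem~\ref{thm:crystal basis of V}: by the definition of a crystal basis, $B(\infty)$ is a $\Q$-basis of $L(\infty)/qL(\infty)$ and $B(\lambda)$ is a $\Q$-basis of $L(\lambda)/qL(\lambda)$. Hence $B(\infty)$ (resp. $B(\lambda)$) is a local basis of $U^-_q(\g)$ (resp. $V(\lambda)$) at $q=0$. Now I would feed this into the Proposition preceding the statement, applied once with $V_{\Q}=U^-_{\Q}(\g)$, $L_0=L(\infty)$, $L_\infty=\overline{L(\infty)}$, and once with $V_{\Q}=V(\lambda)_{\Q}$, $L_0=L(\lambda)$, $L_\infty=\overline{L(\lambda)}$. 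This yields that $\B(\infty)=\{G(b)\mid b\in B(\infty)\}$ is an $\A_{\Q}$-basis of $U^-_{\Q}(\g)$ and $\B(\lambda)=\{G(b)\mid b\in B(\lambda)\}$ is an $\A_{\Q}$-basis of $V(\lambda)_{\Q}$; by the Definition preceding the statement these are, by construction, the lower global bases corresponding to the local bases $B(\infty)$ and $B(\lambda)$, which is the assertion.

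The genuine content is entirely absorbed into Theorem~\ref{thm:balanced triple}; once the three lattices are known to form a balanced triple, the rest is bookkeeping. Thus the main obstacle is not in this proposition but upstream, in verifying that the bar-invariant $\Q$-form, the crystal lattice $L(\infty)$ (resp. $L(\lambda)$) and its bar-image $\overline{L(\infty)}$ (resp. $\overline{L(\lambda)}$) intersect in a free $\Q$-lattice mapping isomorphically onto $L/qL$. That verification rests on the by-products of the grand-loop argument---Lemma~\ref{fP and eP}, Proposition~\ref{proof of Hr}, Corollary~\ref{pi}, Lemma~\ref{pie=epi}, Proposition~\ref{proof of Kr}---together with the compatibility of Kashiwara's bilinear form with $*$ and $\mathtt{b}_{il}$ recorded in Lemma~\ref{lem:PbQ}, following the argument of \cite[Section~5]{FKKT22}; within the present proof we only cite Theorem~\ref{thm:balanced triple} and assemble the pieces listed above.
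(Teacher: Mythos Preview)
Your proposal is correct and matches the paper's approach: the paper states this proposition immediately after Theorem~\ref{thm:balanced triple} with only the words ``Therefore we obtain,'' treating it as a direct consequence of the balanced triple machinery, and you have accurately spelled out the formal chain of citations (Theorem~\ref{thm:balanced triple}, Theorem~\ref{thm:global basis}, Theorems~\ref{thm:crystal basis of U} and~\ref{thm:crystal basis of V}, and the preceding general proposition on global bases) that justifies this.
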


\vskip 3mm

\section{Primitive canonical bases} \label{sec:primitive}

\vskip 2mm

For clarity and simplicity, we fix the notations for some of
basic concepts in the theory of perverse sheaves.

\vskip 2mm

\begin{itemize}

\item[(a)]  $X$: algebraic variety over $\C$

\vskip 2mm

\item[(b)] $\mathbbm{1} = \mathbbm{1}_{X}$: constant sheaf on $X$

\vskip 2mm

\item[(c)] ${\mathcal Sh}(X)$: abelian category of sheaves on $X$ of $\C$-vector spaces

\vskip 2mm

\item[(d)] ${\mathcal D}(X)$: derived category of complexes of sheaves on $X$

\vskip 2mm

\item[(e)] ${\mathcal D}^{b}(X)$: full subcategory of ${\mathcal D}(X)$ consisting of bounded complexes on $X$

\vskip 2mm

\item[(f)] ${\mathcal D}^{b}_{c}(X)$: full subcategory of ${\mathcal D}^{b}(X)$ consisting of constructible
complexes on $X$

\vskip 2mm

\item[(g)] ${\mathcal Perv}(X)$: abelian category of perverse sheaves on $X$

\vskip 2mm

\item[(h)] For a complex $K$, let $D(K)$ denotes the Verdier dual of $K$.

\end{itemize}

\vskip 3mm

\subsection{Quiver with loops} \label{sub:quivers} \hfill

\vskip 3mm

Let $Q=(I,\Omega)$ be a quiver, where $I$ is the set of vertices
and $\Omega=\{h\mid s(h)\rightarrow t(h)\}$ is the set of arrows,
where $s(h)$ and $t(h)$ are starting vertex and target vertex of $h$, respectively.
Let $\Omega(i)$ denote the set of loops at $i$ and let $\omega_i=|\Omega(i)|$,
the number of loops at $i$.

\vskip 3mm

Let $h_{ij}$ denote the number of arrows $h:i\rightarrow j$.
We define
\begin{equation*}
a_{ij}=\begin{cases}
2(1-\omega_i), &\text{if}\ i=j,\\
-h_{ij}-h_{ji}, &\text{if}\ i\neq j.
\end{cases}
\end{equation*}
Then $A=A_{Q}:={(a_{ij})}_{i,j\in I}$ is a symmetric Borcherds-Cartan matrix.
We will denote by
$(A, P, P^{\vee}, \Pi, \Pi^{\vee})$ the Borcherds-Cartan datum associated with $A$.
Using the same notations as in Section \ref{sec:qBBalg},
we write $R:= \bigoplus_{i \in I} \Z \, \alpha_i$,
$R_{+}: = \sum_{i \in I} \Z_{\ge 0}\, \alpha_{i}$ and $R_{-} = - R_{+}$.
\vskip 3mm

Let  $\alpha=\sum_{i\in I}d_i\alpha_i\in R_{+}$ and
let $V_\alpha=\oplus_{i\in I}V_i$ be an $I$-graded vector space
with $\mathrm{dim} V_i=d_i$.
Then the graded dimension of $V_\alpha$ is given by
$\underline{\mathrm{dim}}V_\alpha=\sum_{i\in I}(\mathrm{dim} V_i) \, \alpha_i$.

\vskip 3mm

For every $I$-graded vector space $X$, we define
\[
E_X=\bigoplus_{h\in\Omega}\Hom(X_{s(h)},X_{t(h)}),\]
and set $E(\alpha)=E_{V_\alpha}$,
$G_\alpha=\prod_{i\in I}GL(V_{i})$.
Then $G_\alpha$ acts on $E(\alpha)$ by conjugation;
i.e.,
$${(g.x)}_h=g_{t(h)} x_h g_{s(h)}^{-1} \ \ \text{for} \  h\in\Omega.$$

\vskip 3mm

Let $\mathbf i=(i_1,\cdots, i_r)\in I^r$ and $\mathbf a=(a_1,\cdots, a_r)\in \Z_{\geq 0}^r$.
We say that $(\mathbf i, \mathbf a)$ is a {\it composition} of $\alpha$,
denoted by  $(\mathbf i,\mathbf a) \vdash\alpha$, if $a_1\alpha_{i_1}+\cdots+a_r\alpha_{i_r}=\alpha$.

\vskip 3mm

\begin{definition}
A flag $W=(\{0\}=W_0\subset\ldots\subset W_r=V_\alpha)$ is called a {\it flag of type $(\mathbf i,\mathbf a)$} if
$\underline{\mathrm{dim}}(W_{k}/W_{k-1})=a_k\alpha_{i_k}$ for all $1 \le k \le r$.
\end{definition}

\vskip 3mm

Let $\mathcal F_{\mathbf i,\mathbf a}$ be the variety consisting of all flags of type $(\mathbf i,\mathbf a)$. Then we have
\begin{equation}\label{eq:dimFia}
\mathrm{dim}(\mathcal F_{\mathbf i,\mathbf a})=\sum_{i_k=i_l,\ k<l}a_ka_l.
\end{equation}

\vskip 3mm

\begin{definition}
For $x={(x_h)}_{h\in\Omega}\in E(\alpha)$, we say that a flag $W$ is $x$-stable if $x_h(W_k\cap V_{s(h)})\subset W_k\cap V_{t(h)}$ for all $h\in\Omega$ and $k=0,1,\cdots,r$.
\end{definition}

\vskip 3mm

Let
\begin{equation*}
\widetilde{\mathcal F}_{\mathbf i,\mathbf a}=\left\{(x,W)\mid
x \in E(\alpha), \,  W\in\mathcal F_{\mathbf i,\mathbf a},\ W \text{ is $x$-stable}\right\}
\subseteq E(\alpha)\times\mathcal F_{\mathbf i,\mathbf a}.
\end{equation*}

\vskip 3mm

By \eqref{eq:dimFia}, we have
\begin{equation}\label{eq:dimFiatilde}
\mathrm{dim}(\widetilde{\mathcal F}_{\mathbf i,\mathbf a})=\sum_{h\in\Omega}\sum_{i_k=s(h) \atop i_l=t(h),\ k<l}a_ka_l+\sum_{i_k=i_l,\ k<l}a_ka_l.
\end{equation}

\vskip 3mm

Consider the natural projection
$$\pi_{\mathbf i,\mathbf a}:\widetilde{\mathcal F}_{\mathbf i,\mathbf a}\rightarrow E(\alpha),\quad (x,W)\mapsto x.$$

\vskip 2mm

Let $\mathbbm{1}=\mathbbm{1}_{\widetilde{\mathcal F}_{\mathbf i,\mathbf a}}$ be the constant sheaf on
$\widetilde{\mathcal F}_{\mathbf i,\mathbf a}$.
We define
$${\widetilde{L}}_{\mathbf i,\mathbf a}={(\pi_{\mathbf i,\mathbf a})}_{!}(\mathbbm{1}) \ \ \text{and}
\ \ L_{\mathbf i,\mathbf a}={\widetilde{L}}_{\mathbf i,\mathbf a}
[\mathrm{dim}\widetilde{\mathcal F}_{\mathbf i,\mathbf a}].$$

\vskip 2mm
\noindent
By \cite{BBD}, $L_{\mathbf i,\mathbf a}$ is semisimple and
stable under the Verdier duality; i.e.,  $D(L_{\mathbf i,\mathbf a})=L_{\mathbf i,\mathbf a}$.

\vskip 3mm

Suppose $(\mathbf i,\mathbf a)\vdash\alpha$. Let $\mathcal P_{\mathbf i,\mathbf a}$ be the set of simple perverse sheaves possibly with some shifts appearing in the decomposition of $L_{\mathbf i,\mathbf a}$.

\vskip 3mm

We define $\mathcal P_\alpha$ to be the full subcategory of
${\mathcal Perv}(E(\alpha))$ consisting of $P=\sum L$, where
\begin{enumerate}
\item[(i)] $L$ is a simple perverse sheaf,

\vskip 2mm

\item[(ii)] $L[d]$ appears as a direct summand of $L_{\mathbf i,\mathbf a}$
for some $(\mathbf i,\mathbf a) \vdash\alpha$ and $d\in \Z$.
\end{enumerate}


\vskip 3mm

Now we define ${\mathcal Q}_\alpha$ to be the full subcategory of
${\mathcal D}(E(\alpha))$ consisting of complexes $K$
such that $K \cong \oplus_{L,d}L[d]$, where $L\in\mathcal P_\alpha$ and $d\in\Z$.

\vskip 5mm

\begin{example}\label{ex:single point}
{\rm
Let $i\in I^{\mathrm{im}}$, $I=\{i\}$, $l>0$ and $\alpha=l\alpha_i$.
Then $(\mathbf i,\mathbf a)\vdash\alpha$ implies $\mathbf i=(\underbrace{i,\cdots,i}_{r})$, $\mathbf a=(a_1,\cdots,a_r)$ and $a_1+\cdots+a_r=l$.
Thus $\mathbf{a}$ is a composition (or a partition) of $l$. 	
	
\vskip 3mm

Let $V = V_{l \alpha_{i}}$ with $\underline{\dim} V = l \alpha_{i}$.
Then $V \cong \C^{l}$, $G_{\alpha} \cong GL(\C^{l})$ and
$$E(\alpha) \cong \Hom(V, V)^{\oplus \omega} \cong M_{l \times l}(\C)^{\oplus \omega}
\cong \C^{\oplus \omega l^2},$$
where $\omega = \omega_{i}$, the number of loops at $i$.

\vskip 3mm

In this special case, for simplicity, we will write $i$ for $\mathbf{i}$.
By \eqref{eq:dimFia} and \eqref{eq:dimFiatilde}, we have
\begin{equation} \label{eq:exdimFiatilde}
\begin{aligned}
& \mathrm{dim}(\mathcal F_{i,\mathbf a})=\sum_{k<l}a_k a_l, \\
& \mathrm{dim}(\widetilde{\mathcal F}_{i,\mathbf a})
= d_{i, \mathbf{a}}:  =\omega(\sum_{k<l}a_k a_l)
+\sum_{k<l}a_k a_l=(\omega + 1)\sum_{k<l} a_k a_l.
\end{aligned}
\end{equation}

\vskip 2mm

\noindent
Then we have
\begin{equation*}
L_{i,\mathbf a}={(\pi_{i,\mathbf a})}_!(\mathbbm{1} _{\widetilde{\mathcal F}_{i,\mathbf a}})[d_{i,\mathbf a}].
\end{equation*}

\vskip 3mm

From now on, we will write
$\mathbbm{1}_{i, \mathbf{a}} : = L_{i, \mathbf{a}}$ for $\mathbf{a} \vdash l$.
In particular, when $\mathbf{a} = (l)$, the trivial composition,
we will write $\mathbbm{1}_{i,l}$ for $\mathbbm{1}_{i, (l)}$.

}
\end{example}

\vskip 5mm

\subsection{Canonical bases} \label{sub:canonical bases} \hfill

\vskip 3mm

Recall that  $\A = \Z[q, q^{-1}]$.
We define $U^-_{\A}(\g)$ to be the $\A$-subalgebra of $U_{q}(\g)$ generated by
$f_{i}^{(n)} \, (i \in I^{\text{re}},n \ge 0$) and $f_{il}  \,(i \in I^{\text{im}}, l>0)$.

\vskip 3mm

Let ${\mathcal K}(\alpha)$ be the Grothendieck group of  ${\mathcal Q}_{\alpha}$.
Then $\A$ acts on $\mathcal{K}_{\alpha}$ via
\begin{equation*}
q^{\pm 1} [P] = ]P[\pm 1]],
\end{equation*}
where $[P]$ is the isomorphism class of a perverse sheaf $P$. Let $\mathcal B_\alpha$ be the set of isomorphic class of simple perverse sheaves in $\mathcal P_\alpha$.
Then $\mathcal{B}_{\alpha}$ is an $\A$-bass of $\mathcal{K}(\alpha)$.
In particular, for $i \in I_{\text{im}}$ and $l>0$, we have
${\mathcal B}_{l \alpha_i} = \{[\mathbbm{1}_{i, \mathbf{a}} ] \mid \mathbf{a} \vdash l \} $
and it is an $\A$-basis of ${\mathcal K}_{l \alpha_i}$.

\vskip 3mm

Set
\begin{equation*}
\mathcal{K} = \bigoplus_{\alpha \in R_{+}} {\mathcal K}(\alpha)
\ \ \text{and} \ \
\mathcal{B} = \bigsqcup_{\alpha \in R_{+}} \mathcal {B}_{\alpha}.
\end{equation*}
Then $\mathcal{B}$ is an $\A$-basis of $\mathcal{K}$.

\vskip 3mm

Let $\gamma=\alpha+\beta$, $V=V_\gamma$ and $W\subset V$ such that $\underline{\mathrm{dim}}(W)=\alpha$. Then we have $\underline{\mathrm{dim}}(V/W)=\beta$.
Consider the natural isomorphisms
$$
p:W\stackrel{\sim}{\longrightarrow} V_\alpha,\quad q:V/W\stackrel{\sim}{\longrightarrow}V_\beta,
$$
which yields a diagram
$$
E(\alpha)\times E(\beta)\stackrel{\kappa}\leftarrow E_\gamma(W)\stackrel{\iota}\hookrightarrow E(\gamma),
$$
where
\begin{enumerate}
\item[(a)] $E_\gamma(W)=\{x\in E(\gamma)\mid x(W)\subset W\}$,
\item[(b)] $\iota$ is the canonical embedding,
\item[(c)] $\kappa(x)=(p_{*}(x|_W),q_{*}(x|_{V/W}))$.
\end{enumerate}

\vskip 3mm

We define
$$
E(\alpha,\beta)=\{(x,W)\mid x\in E(\gamma),\ W\subset V,\ \underline{\mathrm{dim}}(W)=\alpha,\ x(W)\subset W\},
$$
and
$$
{E(\alpha,\beta)}^+=\{(x,W,\sigma,\tau)\mid (x,W)\in E(\alpha,\beta),\ \sigma:W\stackrel{\sim}\rightarrow V_\alpha,\ \tau:V/W\stackrel{\sim}\rightarrow V_\beta\}.
$$

\vskip 2mm

\noindent
Thus we obtain
$$
E(\alpha)\times E(\beta)\stackrel{p_1}\longleftarrow {E(\alpha,\beta)}^+\stackrel{p_2}\longrightarrow E(\alpha,\beta)\stackrel{p_3}\longrightarrow E(\gamma),
$$
where
\begin{align*}
&p_1(x,W,\sigma,\tau)=(p_{*}(x|_{W}),q_{*}(x|_{V/W})),\\	
&p_2(x,W,\sigma,\tau)=(x,W),\quad p_3(x,W)=x.
	\end{align*}

\vskip 3mm

Define the functors
\begin{align*}
&\widetilde{\mathrm{Res}}_{\alpha,\beta}:=\kappa_!\iota^*:\mathcal Q(\gamma)\rightarrow \mathcal Q(\alpha)\boxtimes\mathcal Q(\beta),\\
&\widetilde{\mathrm{Ind}}_{\alpha,\beta}:={p_3}_!{p_2}_\flat p_1^*:\mathcal Q(\alpha)\boxtimes\mathcal Q(\beta)\rightarrow\mathcal Q(\gamma).
\end{align*}

\vskip 2mm

\begin{remark}
It is highly non-trivial to prove
$$
\mathrm{Im}(\widetilde{\mathrm{Res}}_{\alpha,\beta})\subset \mathcal Q(\alpha)\boxtimes\mathcal Q(\beta),\quad \mathrm{Im}(\widetilde{\mathrm{Ind}}_{\alpha,\beta})\subset\mathcal Q(\gamma).
$$
In \cite[Section 9.2]{Lus10}, Lusztig gave a proof.
\end{remark}

\vskip 3mm

Assume that $\alpha=\sum d_i\alpha_i$ and $\beta=\sum d_i'\alpha_i$. Set $\langle\alpha,\beta\rangle:=\sum d_id_i'$ and denote by $l_1$ (resp. $l_2$) the dimension of fibers of $p_1$ (resp. $p_2$). Define the functors
$$
\mathrm{Res}_{\alpha,\beta}:=\widetilde{\mathrm{Res}}_{\alpha,\beta}[l_1-l_2-2\langle\alpha,\beta\rangle],\quad \mathrm{Ind}_{\alpha,\beta}:=\widetilde{\mathrm{Ind}}_{\alpha,\beta}[l_1-l_2].
$$
These functors commute with Verdier duality.

\vskip 3mm

Hence we obtain
\begin{equation*}
\begin{aligned}
&\mathrm{Ind}_{\alpha,\beta}:\mathcal K(\alpha)\otimes\mathcal K(\beta)\rightarrow K(\gamma),\\
&\mathrm{Res}_{\alpha,\beta}:K(\gamma)\rightarrow\mathcal K(\alpha)\otimes\mathcal K(\beta).
\end{aligned}
\end{equation*}

\vskip 2mm

Since $\mathcal{K} = \oplus_{\alpha \in R_{+}} \mathcal{K}_{\alpha}$,
we obtain the $\A$-algebra homomorphisms
\begin{equation*}
\begin{aligned}
&\mu:\mathcal K\otimes\mathcal K\rightarrow \mathcal K,\\
&\delta:\mathcal K\rightarrow\mathcal K\otimes\mathcal K,
\end{aligned}
\end{equation*}
induced by $\mathrm{Ind}_{\alpha,\beta}$
and $\mathrm{Res}_{\alpha,\beta}$.
In this way, $\mathcal K$ becomes an $\A$-bialgebra.

\vskip 3mm

The following theorem is one of the main results in \cite{Bozec2014b}.

\vskip 3mm

\begin{theorem}\label{thm: the algebra K}\cite[Proposition 5, Theorem 1]{Bozec2014b}
{\rm
\begin{enumerate}
\item[{\rm (a)}] The algebra $\mathcal K$ is generated by $[\mathbbm{1}_{il}]$ $((i,l)\in I^{\infty})$.

\vskip 2mm

\item[{\rm (b)}] There exists an isomorphism of $\A$-bialgebras
\begin{equation}\label{eq:Psi}
\Psi: U^-_{\A}(\mathfrak g)\stackrel{\sim}{\longrightarrow}\mathcal K \ \ \text{given by}
\ \ f_{il}\mapsto [\mathbbm{1}_{il}].
\end{equation}
\end{enumerate}

}
\end{theorem}

\vskip 3mm

\begin{definition} \label{def:canB}
The $\A$-basis $\B:=\Psi^{-1}(\mathcal B)$ of $U^-_{\A}(\g)$ is
called the {\it canonical basis} of  $U_{q}^-(\g)$.
\end{definition}

\vskip 3mm

Let $V(\lambda) = U_{q}(\g) \, v_{\lambda}$ be the irreducible highest weight module with highest weight
$\lambda \in P^{+}$.
We define {$V(\lambda)_{\A}:= U^{-}_{\A}(\g) \, v_{\lambda}$.
Then ${\B}^{\lambda}: = \B \, v_{\lambda}$ is an $\A$-basis of $V(\lambda)_{\A}$
\cite{Lus10}.

\vskip3mm

\begin{definition} \label{def:canBlambda}
{\rm The $\A$-basis $\B^{\lambda}$ of $V(\lambda)_{\A}$ is called the
{\it canonical basis} of $V(\lambda)$.
}
\end{definition}

\vskip 3mm

Unfortunately, the canonical bases $\B$ and $\B^{\lambda}$ do {\it not}
coincide with the lower global bases $\B(\infty)$ and $\B(\lambda)$.
To fix this situation, we introduce the notion of {\it primitive canonical bases}.

\vskip 3mm

Recall that there is a $\Q(q)$-algebra automorphism
\begin{equation*}
\phi: U_{q}^-(\g) \longrightarrow U_{q}^-(\g)
\ \ \text{given by} \ \ f_{il} \mapsto {\mathtt b}_{il} \ \text{for} \ (i,l)\in I^{\infty}
\end{equation*}
defined in Proposition \ref{prop:primitive}.
By the definition of $U^-_{\A}(\g)$ and $U^-_{\Q}(\g)$,  it is straightforward to see that
$\phi$ restricts down to the ${\A}_{\Q}$-algebra isomorphism

\begin{equation} \label{eq:primitive iso}
\phi: \Q \otimes U^-_{\A}(\g) \longrightarrow U^-_{\Q}(\g), \ \ f_{il} \mapsto {\mathtt b}_{il}
\ \text{for} \ (i,l) \in I^{\infty}.
\end{equation}

\vskip 3mm

\begin{definition} \label{def:primitiveB}

{\rm The $\A_{\Q}$-basis $\B_{\Q}:=\phi(\B)$ of $U^-_{\Q}(\g)$ is called the
{\it primitive canonical basis} of $U_{q}(\g)$.
}
\end{definition}

\vskip 3mm

For the irreducible highest weight module $V(\lambda)$ with $\lambda \in P^{+}$,
recall that
$V(\lambda)_{\Q}:=U^-_{\Q}(\g) \, v_{\lambda}$.
Then $\B^{\lambda}_{\Q} := \phi(\B)v_{\lambda}$ is an $\A_{\Q}$-basis of $V(\lambda)_{\Q}$.

\vskip 3mm

\begin{definition} \label{def:primBlambda}
{\rm  The $\A_{\Q}$-basis $\B^{\lambda}_{\Q}$ of $V(\lambda)_{\Q}$ is called the
{\it primitive canonical basis} of $V(\lambda)$.

}

\end{definition}

\vskip 3mm

In later sections, we will prove that the primitive canonical bases $\B_{\Q}$ and $\B^{\lambda}_{\Q}$
coincide with the lower global bases $\B(\infty)$ and $\B(\lambda)$, respectively.
 Actually, $\phi$ restricts down to the $\A$-algebra isomorphism between
 $U_{\A}^-(\g)$ and $U_{\Z}^-(\g)$. But to deal with the lower global bases,
 we need to consider $\Q$-extensions, because the lower global bases are
 $\A_{\Q}$-bases for  $U_{\Q}^-(\g)$ and $V(\lambda)_{\Q}$.

\vskip 5mm


\subsection{Geometric bilinear forms} \label{sub:geometric form} \hfill

\vskip 3mm

In this subsection, we recall some of basic parts of Lusztig's theory on perverse sheaves.

\vskip 3mm

Let $X$ be an algebraic variety over $\C$ and let $G$ be a connected algebraic group. Let $A,B$ be two $G$-equivariant semisimple complexes on $X$ with $G$-action.

\vskip 2mm We choose
\vskip 2mm
\begin{enumerate}
	\item[i)] an integer $m>0$,
\vskip 2mm
	\item[ii)] a smooth irreducible algebraic variety $\Gamma$
\end{enumerate}
such that
\begin{enumerate}
	\item[a)] $G$ acts freely on $\Gamma$,
	\vskip 2mm
	\item[b)] $H^k(\Gamma,\mathbb C)=0$ for $k=1,\cdots,m.$
\end{enumerate}

\vskip 3mm

Let $G$ act diagonally on $\Gamma\times X$ and set $_\Gamma X:=G\setminus(\Gamma\times X)$. Consider the diagram
$$
X\stackrel{a}{\longleftarrow}\Gamma\times X\stackrel{b}{\longrightarrow} {_{\Gamma}X}.
$$
Then $_{\Gamma}A$, $_{\Gamma}B$ are well-defined semisimple complexes on $_{\Gamma}X$ and  $a^*A=b^*{{_\Gamma} A}$, $a^*B=b^*{{_\Gamma} B}$.

\begin{proposition} \cite{GL93, Lus10}  

\vskip 2mm

{\rm
If $m$ is sufficiently large, then we have
\begin{equation}\label{prop:dimH}
\mathrm{dim}\, H_c^{j+2\mathrm{dim}\,\Gamma-2\mathrm{dim}\,G}({{_\Gamma} X},{{_\Gamma} A}\otimes{{_\Gamma} B})=\mathrm{dim}\, H_c^j({{_\Gamma} X},{{_\Gamma} A}[\mathrm{dim}\, G\setminus\Gamma]\otimes{{_\Gamma} B}[\mathrm{dim}\, G\setminus\Gamma]).
\end{equation}
}
\end{proposition}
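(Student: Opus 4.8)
The plan is to obtain the displayed equality as a formal consequence of how compactly supported hypercohomology behaves under shifts, once the symbol $[\dim G\setminus\Gamma]$ has been unwound. Since $G$ acts freely on $\Gamma$, the quotient $G\setminus\Gamma$ is smooth of dimension $\dim\Gamma-\dim G$, so $[\dim G\setminus\Gamma]=[\dim\Gamma-\dim G]$. The genuine input, which I would simply quote from \cite{GL93, Lus10}, is the content already recorded above: for $m$ large (relative to the cohomological amplitudes of $A$ and $B$), the complexes ${}_{\Gamma}A$ and ${}_{\Gamma}B$ are well-defined semisimple objects of $\mathcal D^{b}_{c}({}_{\Gamma}X)$, obtained by descent along the $G$-torsor $b\colon\Gamma\times X\to{}_{\Gamma}X$, and the integers $\dim H_c^{k}({}_{\Gamma}X,{}_{\Gamma}A\otimes{}_{\Gamma}B)$ in any fixed range of $k$ are independent of the auxiliary choice of $(\Gamma,m)$; this independence is precisely why $m$ must be taken large, and it is what later legitimizes defining the geometric bilinear form $(\ ,\ )_{G}$ from these numbers.

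Granting this, the identity itself is a short computation with shifts. Working with sheaves of $\mathbb C$-vector spaces the tensor product of complexes is formed degreewise, so shifts factor out of it:
\begin{equation*}
{}_{\Gamma}A[\dim\Gamma-\dim G]\otimes{}_{\Gamma}B[\dim\Gamma-\dim G]\;\cong\;({}_{\Gamma}A\otimes{}_{\Gamma}B)[2\dim\Gamma-2\dim G].
\end{equation*}
Applying the standard isomorphism $H_c^{j}(Y,K[d])\cong H_c^{j+d}(Y,K)$ on $Y={}_{\Gamma}X$ with $K={}_{\Gamma}A\otimes{}_{\Gamma}B$ and $d=2\dim\Gamma-2\dim G$ then gives
\begin{equation*}
H_c^{j}({}_{\Gamma}X,\,{}_{\Gamma}A[\dim G\setminus\Gamma]\otimes{}_{\Gamma}B[\dim G\setminus\Gamma])\;\cong\;H_c^{\,j+2\dim\Gamma-2\dim G}({}_{\Gamma}X,\,{}_{\Gamma}A\otimes{}_{\Gamma}B),
\end{equation*}
and taking dimensions yields the assertion.

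The displayed equation therefore presents no serious obstacle in isolation; the only points requiring care are matching shift conventions — in particular confirming that the normalization $[\dim G\setminus\Gamma]$ is exactly the one used in the construction of the geometric bilinear form, and that the derived tensor product is taken on ${}_{\Gamma}X$ rather than on a product — and invoking the finiteness and independence statements from \cite{GL93, Lus10} in the correct form. The substantive work, namely descent of $A$ and $B$ to ${}_{\Gamma}X$, semisimplicity of the descended complexes, and stability of the hypercohomology dimensions as $(\Gamma,m)$ vary, lies in those references and I would not reprove it here.
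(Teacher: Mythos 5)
The paper gives no proof of this proposition — it is recorded as a citation to \cite{GL93, Lus10} — so there is no in-paper argument to compare against. Your reduction is correct: with $[\dim G\setminus\Gamma]=[\dim\Gamma-\dim G]$, shifts pull out of the (derived) tensor product over a field, and $H_c^{j}(Y,K[d])\cong H_c^{j+d}(Y,K)$ turns the displayed equality into a tautology of degree bookkeeping, independent of $m$. You are also right about where the substance actually lives: the hypothesis ``$m$ sufficiently large'' is not needed for the displayed identity itself but for the surrounding claim (implicit in the way $d_j(X,G;A,B)$ is then introduced) that ${}_\Gamma A$, ${}_\Gamma B$ descend to well-defined semisimple complexes and that $\dim H_c^{j+2\dim\Gamma-2\dim G}({}_\Gamma X,{}_\Gamma A\otimes{}_\Gamma B)$ is independent of the auxiliary pair $(\Gamma,m)$ in the relevant range of $j$; that independence is exactly what \cite{Lus10} (\S 12.2) and \cite{GL93} supply, and it is reasonable to quote it rather than reprove it.
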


\vskip 3mm

Let $d_j(X,G;A,B)$ denote the equation  \eqref{prop:dimH}.
Then we obtain a series of properties of $d_{j}(X,G; A, B)$.

\vskip 3mm

\begin{lemma}   \cite{GL93, Lus10}  

\begin{enumerate}
\vskip 2mm
	\item[{\rm(a)}] $d_j(X,G;A,B)=d_j(X,G;B,A)$,
	\vskip 2mm
	\item[{\rm(b)}] $d_j(X,G;A[m],B[n])=d_{j+m+n}(X,G;A,B)$,
	\vskip 2mm
	\item[{\rm(c)}] $d_j(X,G;A\oplus A',B)=d_j(X,G;A,B)+d_j(X,G;A',B)$.
\end{enumerate}
\end{lemma}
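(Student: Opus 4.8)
The plan is to deduce all three identities from elementary compatibilities of the construction $A \mapsto {}_{\Gamma}A$ with shifts, direct sums and tensor products, combined with the standard behaviour of compactly supported cohomology. First I would settle the one bookkeeping point: since each of (a)--(c) involves only finitely many semisimple complexes, I would fix a \emph{single} smooth irreducible variety $\Gamma$ carrying a free $G$-action with $H^{k}(\Gamma, \mathbb{C}) = 0$ for $k = 1, \dots, m$, choosing $m$ large enough to compute $d_{j}$ for all complexes occurring in the given identity simultaneously, and then invoke the cited result \cite{GL93, Lus10} that the value of $d_{j}$ is independent of such a choice once $m$ is large. With $\Gamma$ fixed, I would record that ${}_{\Gamma}A$ is characterized by $a^{*}A = b^{*}({}_{\Gamma}A)$ along $X \xleftarrow{a} \Gamma \times X \xrightarrow{b} {}_{\Gamma}X$, and that since $a^{*}$ and $b^{*}$ commute with shifts and with finite direct sums, so does ${}_{\Gamma}(-)$; that is, ${}_{\Gamma}(A[m]) = ({}_{\Gamma}A)[m]$ and ${}_{\Gamma}(A \oplus A') = {}_{\Gamma}A \oplus {}_{\Gamma}A'$.

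For (a), I would use that the tensor product of complexes is symmetric up to a canonical isomorphism, so that ${}_{\Gamma}A \otimes {}_{\Gamma}B \cong {}_{\Gamma}B \otimes {}_{\Gamma}A$; taking $H_{c}^{\bullet}({}_{\Gamma}X, -)$ in degree $j + 2\dim\Gamma - 2\dim G$ then yields an isomorphism of finite-dimensional vector spaces, whence $d_{j}(X,G;A,B) = d_{j}(X,G;B,A)$.

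For (b), I would combine the compatibilities above to get ${}_{\Gamma}(A[m]) \otimes {}_{\Gamma}(B[n]) = ({}_{\Gamma}A \otimes {}_{\Gamma}B)[m+n]$, and then use $H_{c}^{k}(Y, K[m+n]) = H_{c}^{k+m+n}(Y, K)$ to track the degree shift in the defining formula \eqref{prop:dimH}:
\begin{align*}
d_{j}(X,G;A[m],B[n]) &= \dim H_{c}^{\,j + 2\dim\Gamma - 2\dim G}\big({}_{\Gamma}X,\ ({}_{\Gamma}A \otimes {}_{\Gamma}B)[m+n]\big) \\
&= \dim H_{c}^{\,(j+m+n) + 2\dim\Gamma - 2\dim G}\big({}_{\Gamma}X,\ {}_{\Gamma}A \otimes {}_{\Gamma}B\big),
\end{align*}
which is $d_{j+m+n}(X,G;A,B)$ by definition. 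For (c), I would use ${}_{\Gamma}(A \oplus A') \otimes {}_{\Gamma}B = ({}_{\Gamma}A \otimes {}_{\Gamma}B) \oplus ({}_{\Gamma}A' \otimes {}_{\Gamma}B)$ together with the fact that $H_{c}^{\bullet}({}_{\Gamma}X, -)$ sends direct sums to direct sums, so that dimensions add in every degree, in particular in degree $j + 2\dim\Gamma - 2\dim G$.

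I do not expect a genuine obstacle here: the statement is formal once the construction ${}_{\Gamma}(-)$ and the quantity $d_{j}$ are in place, and the only point that demands care is the uniform choice of $(\Gamma, m)$ and the appeal to its irrelevance, both of which are part of the setup borrowed from \cite{GL93, Lus10}.
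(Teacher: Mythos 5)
The paper itself does not prove this lemma; it simply cites \cite{GL93, Lus10}, so there is no in-paper argument to compare against. Your proof is the correct formal one that underlies the cited references. The three properties follow exactly as you say: \({}_{\Gamma}(-)\) commutes with shifts and finite direct sums because \(a^{*}\) and \(b^{*}\) do and the characterisation \(a^{*}A = b^{*}({}_{\Gamma}A)\) pins down \({}_{\Gamma}A\); the symmetry of \(\otimes\) gives (a); the compatibility \(H_c^{k}(Y,K[s]) \cong H_c^{k+s}(Y,K)\) gives (b); and additivity of \(H_c\) over finite direct sums gives (c). Your care in fixing a single pair \((\Gamma,m)\) large enough for all complexes in a given identity, and appealing to the independence of \(d_j\) from that choice, is exactly the right bookkeeping step.

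One cosmetic remark: in part (b) the symbols \(m,n\) denoting shifts collide with the \(m\) already used in the definition of \(d_j\) as the cohomological vanishing bound for \(\Gamma\); you treat them correctly as independent shift parameters, but it would be worth renaming one of them to avoid any ambiguity when this is spliced into a text that already reserves \(m\) for the approximation parameter.
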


\vskip 3mm

\begin{lemma} \cite{GL93, Lus10}     
	
{\rm
\begin{enumerate}
	\item[{\rm(a)}] If $A, B$ are perverse sheaves, then $d_j(X,G;A,B)=0$ for $j>0$.
\vskip 2mm
	\item[{\rm(b)}] If $A, B$ are simple perverse sheaves, then
	\begin{equation*}
	d_0(X,G;A,B)=\begin{cases}
	1,& \text{if } A\cong D(B),\\
	0,& \text{otherwise.}
	\end{cases}
	\end{equation*}
\end{enumerate}		
}
\end{lemma}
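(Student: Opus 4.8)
Both statements reduce to a classical fact about perverse sheaves on a single variety (BBD, \cite{GL93, Lus10}); the normalization in the definition of $d_j$ is designed exactly for this. Writing $Y={}_{\Gamma}X$ and $\mathcal A={}_{\Gamma}A[\dim(G\setminus\Gamma)]$, $\mathcal B={}_{\Gamma}B[\dim(G\setminus\Gamma)]$, the complexes $\mathcal A,\mathcal B$ are \emph{perverse} on $Y$ whenever $A,B$ are $G$-equivariant perverse sheaves on $X$: the projections $a\colon\Gamma\times X\to X$ and $b\colon\Gamma\times X\to Y$ are smooth of relative dimensions $\dim\Gamma$ and $\dim G$, the functor $f^{*}$ shifted by the relative dimension is $t$-exact for the perverse $t$-structure, $a^{*}A=b^{*}{}_{\Gamma}A$, and perversity descends along the faithfully flat $b$. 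Hence $d_j(X,G;A,B)=\dim H^j_c(Y,\mathcal A\otimes\mathcal B)$, and it suffices to prove: (a) $H^j_c(Y,\mathcal A\otimes\mathcal B)=0$ for $j>0$ when $\mathcal A,\mathcal B$ are perverse; (b) $\dim H^0_c(Y,\mathcal A\otimes\mathcal B)=1$ if $\mathcal A\cong D(\mathcal B)$ and $0$ otherwise, when $\mathcal A,\mathcal B$ are simple perverse; together with the translation $\mathcal A\cong D(\mathcal B)\iff A\cong D(B)$, which holds because Verdier duality commutes with $a^{*}$ and $b^{*}$ up to the relevant shift and twist and therefore descends to $Y$.

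For (a) I would work on cohomology sheaves. Over the field $\C$ the tensor product of complexes of sheaves is exact and computed stalkwise, so by the Künneth formula $\mathcal H^{k}(\mathcal A\otimes\mathcal B)=\bigoplus_{a+b=k}\mathcal H^{a}(\mathcal A)\otimes_{\C}\mathcal H^{b}(\mathcal B)$. The support condition for perverse sheaves gives $\mathcal H^{a}(\mathcal A)=0$ for $a>0$ and $\dim\operatorname{supp}\mathcal H^{a}(\mathcal A)\le -a$, and likewise for $\mathcal B$; hence $\mathcal H^{k}(\mathcal A\otimes\mathcal B)=0$ for $k>0$, and for $k\le 0$ one gets $\dim\operatorname{supp}\mathcal H^{k}(\mathcal A\otimes\mathcal B)\le\max_{a+b=k}\min(-a,-b)=\lfloor -k/2\rfloor$ — the factor-of-two improvement over the naive bound is what makes the argument work. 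Feeding this into the hypercohomology spectral sequence $E_2^{p,q}=H^p_c(Y,\mathcal H^{q}(\mathcal A\otimes\mathcal B))\Rightarrow H^{p+q}_c(Y,\mathcal A\otimes\mathcal B)$: a term can be nonzero only if $q\le 0$ and $p\le 2\dim\operatorname{supp}\mathcal H^{q}(\mathcal A\otimes\mathcal B)\le -q$ (using that $H^p_c$ of a constructible sheaf vanishes above twice its support dimension), whence $p+q\le 0$. Therefore $H^j_c(Y,\mathcal A\otimes\mathcal B)=0$ for $j>0$, which is (a); by the lemma on $d_j(X,G;A[m],B[n])$ recalled above this gives the statement for arbitrary shifts as well.

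For (b) I would use Poincaré–Verdier duality together with the adjunction identity $D(\mathcal A\otimes\mathcal B)=R\mathcal{H}om(\mathcal A,D(\mathcal B))$, which give
\[
H^0_c(Y,\mathcal A\otimes\mathcal B)^{*}\ \cong\ \mathbb H^{0}\big(Y,R\mathcal{H}om(\mathcal A,D(\mathcal B))\big)\ =\ \operatorname{Hom}_{\mathcal D(Y)}(\mathcal A,D(\mathcal B)).
\]
Since $D(\mathcal B)$ is again simple perverse and the heart of the perverse $t$-structure is a full abelian subcategory of $\mathcal D(Y)$, this $\operatorname{Hom}$ equals $\operatorname{Hom}_{{\mathcal Perv}(Y)}(\mathcal A,D(\mathcal B))$, which by Schur's lemma over $\C$ is $\C$ if $\mathcal A\cong D(\mathcal B)$ and $0$ otherwise. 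Taking dimensions (all hypercohomology groups involved are finite dimensional) and invoking the translation from the first paragraph yields (b). Note that this duality argument bypasses any delicate analysis of the degree-zero part of the spectral sequence used in (a).

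The conceptual content is entirely standard; the points requiring care are the shift bookkeeping that makes $\mathcal A,\mathcal B$ perverse and that makes the equivalence $\mathcal A\cong D(\mathcal B)\iff A\cong D(B)$ precise — here one also relies on the fact, noted above, that the whole construction is independent of the auxiliary choices of $\Gamma$ and $m$ once $m$ is large enough — and the elementary but essential factor-of-two support estimate in the proof of (a). I expect the shift/descent bookkeeping for the translation $\mathcal A\cong D(\mathcal B)\iff A\cong D(B)$ to be the main place where one must be genuinely careful.
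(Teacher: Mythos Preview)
The paper does not give its own proof of this lemma: it is stated with a citation to \cite{GL93, Lus10} and used as a black box. Your proposal reproduces the standard argument from those references (essentially Lusztig's Lemma~8.1.10), and is correct; in particular, the shift by $\dim(G\setminus\Gamma)$ does make ${}_{\Gamma}A$ and ${}_{\Gamma}B$ perverse on $Y$ and commutes with Verdier duality, so the translation $\mathcal A\cong D(\mathcal B)\iff A\cong D(B)$ goes through, and the factor-of-two support bound together with the hypercohomology spectral sequence gives (a) while Verdier duality plus Schur gives (b).
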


\vskip 3mm

Let $\alpha=\sum d_i\alpha_i \in R_+$ and $V=\oplus_{i\in I}V_i$
with $\underline{\mathrm{dim}}\, V=\alpha$.
Let $X=E(\alpha)$, $G=G_\alpha$ and $P,P'$ be simple perverse sheaves in $\mathcal P_{-\alpha}$.
We denote by $B=[P]$, $B'=[P']$.
Then we have $\overline{B}=[D(P)]=[P]=B$ and $\overline{B'}=B'$.

\vskip 3mm

For $A,B\in\mathcal Q_{-\alpha}$, we define
$$
(A,B)_G:=\sum_{j\in \Z}d_j(E(\alpha),G_\alpha;A,B)q^{-j}\in \Z[[q]].
$$

\vskip 2mm

\begin{proposition} \label{prop:orthogonal1}  \cite{GL93, Lus10}   
\vskip 2mm
{\rm
\begin{enumerate}
	\item[{\rm (a)}] If $P$, $P'$ are simple perverse sheaves, then we have
	$$
	(B,B')_G\in \delta_{B.B'}+q \Z_{\geq 0}[[q]].
	$$
\vskip 2mm
	\item[{\rm (b)}] $(\ ,\ )_G$ is a Hopf pairing, i.e.
	$$
	(B,B'B'')_G=(\delta(B),B'\otimes B'')_G,
	$$
	where $\delta:\mathcal K\rightarrow\mathcal K\otimes \mathcal K$ is induced by $\mathrm{Res}$ functor.
\end{enumerate}
}
\end{proposition}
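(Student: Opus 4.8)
The plan is to deduce both statements from the properties of the integers $d_j(E(\alpha),G_\alpha;A,B)$ recorded in the two lemmas above, combined with Lusztig's comparison between the functors $\widetilde{\mathrm{Ind}}$, $\widetilde{\mathrm{Res}}$ and compactly supported cohomology.

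For part (a): since $P$ and $P'$ are simple perverse sheaves, the last lemma gives $d_j(E(\alpha),G_\alpha;P,P')=0$ for all $j>0$, and $d_0(E(\alpha),G_\alpha;P,P')$ equals $1$ if $P\cong D(P')$ and $0$ otherwise. Because every simple perverse sheaf in $\mathcal P_{-\alpha}$ is Verdier self-dual (this is the identity $\overline{B'}=[D(P')]=[P']=B'$ noted above), we obtain $d_0=\delta_{B,B'}$. Each of the remaining coefficients $d_{-k}$ with $k>0$ is the dimension of a cohomology group, hence a non-negative integer, and it contributes $d_{-k}\,q^{k}$ to $(B,B')_G=\sum_{j}d_j\,q^{-j}$; collecting terms yields $(B,B')_G\in\delta_{B,B'}+q\,\Z_{\ge 0}[[q]]$, which is (a).

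For part (b): by $\A$-bilinearity it suffices to treat $B'=[P']$, $B''=[P'']$ homogeneous, with $\underline{\mathrm{dim}}$'s summing to that of $B=[P]$; then $B'B''=[\widetilde{\mathrm{Ind}}(P'\boxtimes P'')]$ and the matching bidegree component of $\delta(B)$ is $[\widetilde{\mathrm{Res}}(P)]$, up to the normalization shifts. The identity $(B,B'B'')_G=(\delta(B),B'\otimes B'')_G$ is then equivalent to an equality between the numbers $d_j$ computed over $E(\alpha)$ on one side and over a product $E(\alpha_1)\times E(\alpha_2)$ on the other. I would prove this by running both $d_j$'s through the diagram
\begin{equation*}
E(\alpha_1)\times E(\alpha_2)\longleftarrow E(\alpha_1,\alpha_2)^+\longrightarrow E(\alpha_1,\alpha_2)\longrightarrow E(\alpha),
\end{equation*}
whose maps $p_1,p_2,p_3$ define $\widetilde{\mathrm{Ind}}$, together with the pair $\kappa,\iota$ defining $\widetilde{\mathrm{Res}}$: expand $\widetilde{\mathrm{Ind}}(P'\boxtimes P'')$ as $p_{3!}p_{2\flat}p_1^\ast(P'\boxtimes P'')$, use the projection formula to move $P$ inside $p_{3!}$, use proper base change to transfer the pullback of $P$ over $E(\alpha_1,\alpha_2)$, and then apply the Künneth isomorphism to split off the $P'$ and $P''$ factors, all while using a single auxiliary free $G$-variety $\Gamma$ large enough to compute every relevant $d_j$ simultaneously. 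After these reductions both sides are identified with $\dim H_c^{\bullet}$ of one and the same $\Gamma$-twisted complex supported over $E(\alpha_1,\alpha_2)$, and the normalization shifts $[l_1-l_2-2\langle\alpha_1,\alpha_2\rangle]$ of $\mathrm{Res}$ and $[l_1-l_2]$ of $\mathrm{Ind}$ are exactly what makes the cohomological degrees on the two sides agree. This is the content of \cite{GL93, Lus10}, and it carries over to quivers with loops verbatim, since loops enter only through the varieties $\widetilde{\mathcal F}_{\mathbf i,\mathbf a}$ and leave the formal structure of $\widetilde{\mathrm{Ind}}$ and $\widetilde{\mathrm{Res}}$ untouched.

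The step I expect to be the main obstacle is the degree bookkeeping in part (b): reconciling the normalization shifts of $\widetilde{\mathrm{Ind}}$ and $\widetilde{\mathrm{Res}}$ with the dimension shift $+2\dim\Gamma-2\dim G$ built into $d_j$, and checking that the geometric pairing relative to $G_\alpha$ restricts compatibly to the one relative to $G_{\alpha_1}\times G_{\alpha_2}$. Part (a), together with the vanishing and positivity inputs needed in (b), is immediate from the lemmas already in place.
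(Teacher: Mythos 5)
The paper states this proposition with a bare citation to \cite{GL93, Lus10} and provides no proof, so there is no in-text argument to compare against. Your derivation of part (a) from the two preceding lemmas is complete and correct; the only extra ingredient is the Verdier self-duality $D(P)\cong P$ of simple perverse sheaves in $\mathcal P_{-\alpha}$, which the paper records just before the proposition in the form $\overline{B}=[D(P)]=[P]$, so you are entitled to use it, and then $d_0=\delta_{B,B'}$, $d_j=0$ for $j>0$, and $d_{-k}\ge 0$ for $k>0$ give the claim immediately. For part (b) you correctly identify the structure of the argument in \cite{Lus10}: the compatibility of $\mathrm{Ind}$ and $\mathrm{Res}$ with the equivariant pairing is run along the diagram defining those functors via projection formula, proper base change and K\"unneth, with the normalization shifts $[l_1-l_2]$ and $[l_1-l_2-2\langle\alpha,\beta\rangle]$ chosen precisely so the cohomological degrees match; you stop short of executing that degree bookkeeping, and you say so. That bookkeeping is the one nontrivial step, but it is exactly what is carried out in Lusztig's Chapter 13 (and in \cite{GL93}), and as you note it is formally insensitive to the presence of loops. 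So the proposal is sound: (a) is a finished proof, and (b) is a correct outline of the cited argument rather than a new one.
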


\vskip 3mm

Since the map $\Psi$ in \eqref{eq:Psi} is an isomorphism of bialgebras,
 we can identify $(\ ,\ )_L$ with $(\ ,\ )_G$ by setting $(x ,y)_L=(\Psi(x) ,\Psi(y))_G$.

 \vskip 3mm

For convenience, we will write
$B \in \B$ for $\Psi^{-1}(B)$.
Thus we have

\begin{equation} \label{eq:orthonormal}
(B,B')_L\in\delta_{B,B'}+q \Z_{\geq 0}[[q]]
\ \ \text{for} \  B,B' \in \B.
\end{equation}

\vskip 3mm

In the sequel, we use $(\ ,\ )$ to represent $(\ ,\ )_G$ or $(\ ,\ )_L$ if there is no danger of confusion.

\vskip 5mm

\subsection{Bozec's results on perverse sheaves}\label{sub:Bozec} \hfill

\vskip 3mm

For $x\in E(\alpha)$, we define
$V_i^{\Diamond}=\oplus_{j\neq i}V_j$
and $\mathfrak J_i(x)=\C\langle x\rangle V_i^{\Diamond}$.
There exists a stratification $E(\alpha)=\sqcup_{l\geq 0}E_{\alpha;i,l}$, where
$$
E_{\alpha;i,l}:=\{x\in E(\alpha)\mid \underline{\mathrm{codim}}_{V}\mathfrak J_i(x)=l\alpha_i\}.
$$

\vskip 2mm

Set $E_{\alpha;i,\geq l}=\sqcup_{k\geq l}E_{\alpha;i,k}$.
Let $\mathcal P_{-\alpha;i,\geq l}$ be the set of perverse sheaves in $\mathcal P_{-\alpha}$ supported on $E_{\alpha;i,\geq l}$ and let $\mathcal P_{-\alpha;i,l}=\mathcal P_{-\alpha;i,\geq l}\setminus \mathcal P_{-\alpha;i,\geq l+1}$.

\vskip 3mm

\begin{proposition}\cite[Proposition 4]{Bozec2014b}\label{prop:Bozec geometry}

\vskip 2mm

{\rm

Let $(i, l) \in I^{\infty}$. 	
	
\begin{enumerate}
\item[{\rm (a)}] For any simple perverse sheaf $P\in\mathcal P_{-\alpha;i,l}$, there exist a simple perverse sheaf $P_0\in\mathcal P_{-\alpha+l\alpha_i;i,0}$ and a simple perverse sheaf $P_{i,\mathbf c}\in\mathcal P_{-l\alpha_i}$ $(\mathbf c\vdash l)$ such that
		$$
		[P_{i,\mathbf c}][P_0]-[P]\in\bigoplus_{P'\in\mathcal P_{-\alpha;i,\geq l+1}} \A[P'].
		$$	
		
\item[{\rm (b)}] Conversely, for any simple perverse sheaf $P_0\in\mathcal P_{-\alpha+l\alpha_i;i,0}$ and a simple perverse sheaf $P_{i,\mathbf c}\in\mathcal P_{-l\alpha_i}$ $(\mathbf c\vdash l)$, there exists a simple perverse sheaf $P\in\mathcal P_{-\alpha;i,l}$ such that
		$$
			[P_{i,\mathbf c}][P_0]-[P]\in\bigoplus_{P'\in\mathcal P_{-\alpha;i,\geq l+1}} \A[P'].
		$$
\end{enumerate}
	
	}
\end{proposition}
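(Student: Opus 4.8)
This statement is geometric, recalled from \cite{Bozec2014b}; to reprove it the plan is to adapt Lusztig's analysis of the $\varepsilon_i$-stratification of the loop-free case to quivers with loops. The central object is the canonical sub-representation attached to the stratum: for $x\in E_{\alpha;i,l}$ the subspace $\mathfrak J_i(x)=\C\langle x\rangle V_i^{\Diamond}$ is $x$-stable, has graded dimension $\alpha-l\alpha_i$, carries a representation generated by the vertices $j\ne i$ (hence one lying in the locus defining $\mathcal P_{-\alpha+l\alpha_i;i,0}$), and has quotient representation on $V/\mathfrak J_i(x)$ supported only at $i$ with graded dimension $l\alpha_i$. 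First I would package this into a correspondence
\begin{equation*}
E(l\alpha_i)\times E(\alpha-l\alpha_i)\xleftarrow{\ \kappa\ }E'\xrightarrow{\ \iota\ }E(\alpha),
\end{equation*}
where, over the open locus $E_{\alpha;i,\le l}$, the map $\iota$ is the locally closed embedding of the stratum $E_{\alpha;i,l}$ and $\kappa$ records $x\mapsto(\text{rep.\ on }V/\mathfrak J_i(x),\ \text{rep.\ on }\mathfrak J_i(x))$; after deleting the $G_\alpha$-factors, $\kappa$ is an affine bundle whose fibers encode the extension class of $0\to\mathfrak J_i(x)\to V\to V/\mathfrak J_i(x)\to 0$ together with the loops at $i$ acting transversally. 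The key comparison to establish is that, restricted over $E_{\alpha;i,\le l}$, the induction datum $\widetilde{\mathrm{Ind}}_{l\alpha_i,\alpha-l\alpha_i}$ from \S\ref{sub:canonical bases} (the one realizing the product $[P_{i,\mathbf c}][P_0]$) agrees with this $(\kappa,\iota)$-correspondence, up to the homological shift already built into the normalization of $\mathrm{Ind}$.

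Granting this comparison, the rest would be formal. Since $\kappa$ is a smooth surjection with connected (affine) fibers, $\kappa^{*}$ is fully faithful on semisimple complexes and sends simple perverse sheaves to simple perverse sheaves up to shift; hence, for simple $P_{i,\mathbf c}\in\mathcal P_{-l\alpha_i}$ with $\mathbf c\vdash l$ and simple $P_0\in\mathcal P_{-\alpha+l\alpha_i;i,0}$, the intermediate extension of $\kappa^{*}(P_{i,\mathbf c}\boxtimes P_0)$ from $E_{\alpha;i,l}$ to its closure (which sits inside $E_{\alpha;i,\ge l}$) is a simple perverse sheaf $P\in\mathcal P_{-\alpha;i,l}$, and by the comparison its restriction to $E_{\alpha;i,\le l}$ is exactly the summand of $\mathrm{Ind}_{l\alpha_i,\alpha-l\alpha_i}([P_{i,\mathbf c}]\boxtimes[P_0])$ supported there. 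Because $\mathrm{Ind}$ preserves the category $\mathcal Q$ (Lusztig, \cite{Lus10}), every remaining summand of $\mathrm{Ind}_{l\alpha_i,\alpha-l\alpha_i}([P_{i,\mathbf c}]\boxtimes[P_0])$ is a shift of a simple perverse sheaf supported on the closed subset $E_{\alpha;i,\ge l+1}$, hence contributes to $\bigoplus_{P'\in\mathcal P_{-\alpha;i,\ge l+1}}\A[P']$; this gives $[P_{i,\mathbf c}][P_0]-[P]\in\bigoplus_{P'\in\mathcal P_{-\alpha;i,\ge l+1}}\A[P']$, which is part (b). For part (a) I would run the same bundle map in reverse: for a simple $P\in\mathcal P_{-\alpha;i,l}$, the restriction $P|_{E_{\alpha;i,l}}$ descends along $\kappa$ to a simple perverse sheaf on $E(l\alpha_i)\times E(\alpha-l\alpha_i)$, which by the description of $E(l\alpha_i)$ in Example~\ref{ex:single point} has the form $P_{i,\mathbf c}\boxtimes P_0$ with $\mathbf c\vdash l$, and $P_0$ lies in the $i,0$ stratum by the defining property of $\mathfrak J_i$. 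That the difference has coefficients in $\A$ rather than merely in $\Q(q)$ is then forced, because $\mathrm{Ind}$, $\mathrm{Res}$ and the Grothendieck group $\mathcal K$ are already defined over $\A$, together with the near-orthonormality of $\mathcal B$ for $(\ ,\ )_G$ recorded in Proposition~\ref{prop:orthogonal1}.

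The hard part will be the first paragraph: pinning down the local structure of $E(\alpha)$ transversal to the stratum $E_{\alpha;i,l}$ and identifying the restricted induction correspondence with the canonical $\mathfrak J_i$-correspondence. This is where the loops genuinely intervene, since the loops at $i$ act on $V/\mathfrak J_i(x)\cong\C^{l}$: when $i\in I^{\mathrm{iso}}$ (so $\omega_i=1$) the single nilpotent operator has a Jordan type, a partition of $l$, whereas for $i\in I^{\mathrm{im}}\setminus I^{\mathrm{iso}}$ one obtains compositions of $l$, precisely the origin of the parameter $\mathbf c\vdash l$. Assembling the transversal loop-directions, the extension data, and the normal directions to the stratum into a single affine bundle, so that no cohomology is lost and all shifts remain integral (which is what makes the $\A$-coefficient statement available), is the technical heart, carried out in the proof of \cite[Proposition~4]{Bozec2014b}. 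Once that geometry is in place, the bialgebra formalism for $\mathcal K$ and the properties of the geometric pairing complete the argument.
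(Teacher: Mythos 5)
The paper states Proposition~\ref{prop:Bozec geometry} as a citation of \cite[Proposition~4]{Bozec2014b} and provides no proof of its own, so there is nothing internal to the paper to compare your argument against. What you have written is a reasonable reconstruction of Bozec's proof strategy rather than an independent argument, and you correctly say so.

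Your outline does capture the right structural ingredients: the stratification $E(\alpha)=\bigsqcup_{l}E_{\alpha;i,l}$ by the codimension of $\mathfrak J_i(x)$; the comparison, over the open stratum, of the induction correspondence realizing $[P_{i,\mathbf c}][P_0]$ with the ``canonical filtration'' correspondence attached to $\mathfrak J_i$; the use of the smooth surjection $\kappa$ (fully faithful pullback on semisimple complexes plus intermediate extension) to produce the simple $P$; the observation that remaining summands of the induction are supported on the closed set $E_{\alpha;i,\geq l+1}$; the origin of the parameter $\mathbf c\vdash l$ in the Jordan or loop data on the $l$-dimensional quotient at $i$ (partitions for $i\in I^{\mathrm{iso}}$ versus compositions otherwise); and the automatic $\A$-coefficients from the $\A$-structure on $\mathcal K$. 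Two cautions, though. First, your $\kappa$ sends $x$ to $(\text{rep on }V/\mathfrak J_i(x),\ \text{rep on }\mathfrak J_i(x))$, i.e.\ quotient first, whereas the paper's $\kappa(x)=(p_*(x|_W),q_*(x|_{V/W}))$ in \S\ref{sub:canonical bases} puts the subspace first; whether this matches $\widetilde{\mathrm{Ind}}_{l\alpha_i,\alpha-l\alpha_i}$ depends on exactly which orientation the multiplication $\mu$ on $\mathcal K$ uses, and you should pin that down before asserting the comparison ``agrees.'' Second, and more substantively, you correctly flag that ``the technical heart'' is the identification of the restricted induction datum with the $\mathfrak J_i$-correspondence, including the transversal loop directions and the affine-bundle claim; you defer this to Bozec's proof without reconstructing it, so as written this is an outline of the cited argument, not a proof. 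That is a fair thing to do for a recalled result, but a referee would not accept it as a replacement proof.
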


\vskip 3mm

Define
\begin{align*}
&\B_{-\alpha;i,\geq l}:=\{ \Psi^{-1}([P]) \mid P\in\mathcal P_{-\alpha;i,\geq l}\},\\
&\B_{-\alpha;i,l}:=\B_{-\alpha;i,\geq l}\setminus \B_{-\alpha;i,\geq l+1}
=\{ \Psi^{-1}([P]) \mid P\in \mathcal P_{-\alpha;i,l}\}.
\end{align*}

\vskip 3mm

It is straightforward to see that Proposition \ref{prop:Bozec geometry} can be rephrased as

\vskip 3mm

\begin{corollary}\label{cor:BB-B}

{\rm

Let $(i, l) \in I^{\infty}$.

\vskip 2mm

\begin{enumerate}

	\item[{\rm (a)}] For any $B\in \B_{-\alpha;i,l}$, there exist $B_0\in \B_{-\alpha+l\alpha_i;i,0}$
	and
	$B_{i,\mathbf c}\in \B_{-l\alpha_i}$ $(\mathbf c\vdash l)$ such that
	$$
	B_{i,\mathbf c} \, B_0 - B\in \bigoplus_{B'\in \B_{-\alpha;i,\geq l+1}} \A \, B'.
	$$

	\item[{\rm (b)}] Conversely, for any $B_0 \in \B_{-\alpha+l\alpha_i;i,0}$ and $B_{i,\mathbf c}\in \B_{-l\alpha_i}$
	$(\mathbf c\vdash l)$, there exists $B\in \B_{-\alpha;i,l}$ such that
	$$
	B_{i,\mathbf c} \, B_0-B \in \bigoplus_{B'\in \B_{-\alpha;i,\geq l+1}} \A \, B'.
	$$
\end{enumerate}

}
\end{corollary}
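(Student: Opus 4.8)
The plan is to transport Proposition \ref{prop:Bozec geometry} through the isomorphism $\Psi$. Recall from Theorem \ref{thm: the algebra K} that $\Psi\colon U^-_{\A}(\g)\xrightarrow{\sim}\mathcal K$ is an isomorphism of $\A$-bialgebras with $f_{il}\mapsto[\mathbbm{1}_{il}]$; in particular $\Psi$ is $\A$-linear and multiplicative, intertwining the multiplication of $U^-_{\A}(\g)$ with the product $\mu$ of $\mathcal K$ induced by the functors $\mathrm{Ind}_{\alpha,\beta}$. By Definition \ref{def:canB}, $\Psi^{-1}$ carries $\mathcal B$ bijectively onto $\B$, and by the very definition of the sets $\B_{-\alpha;i,\ge l}$ and $\B_{-\alpha;i,l}$ it carries $\{[P]\mid P\in\mathcal P_{-\alpha;i,\ge l}\}$ onto $\B_{-\alpha;i,\ge l}$ and $\{[P]\mid P\in\mathcal P_{-\alpha;i,l}\}$ onto $\B_{-\alpha;i,l}$. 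Consequently $\Psi^{-1}$ sends the $\A$-submodule $\bigoplus_{P'\in\mathcal P_{-\alpha;i,\ge l+1}}\A[P']$ onto $\bigoplus_{B'\in\B_{-\alpha;i,\ge l+1}}\A\,B'$.

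With these identifications the corollary is immediate. For (a), let $B\in\B_{-\alpha;i,l}$ and put $[P]=\Psi(B)$, so that $P\in\mathcal P_{-\alpha;i,l}$. Proposition \ref{prop:Bozec geometry}(a) provides $P_0\in\mathcal P_{-\alpha+l\alpha_i;i,0}$ and $P_{i,\mathbf c}\in\mathcal P_{-l\alpha_i}$ $(\mathbf c\vdash l)$ with
\[
\mu([P_{i,\mathbf c}]\otimes[P_0])-[P]\in\bigoplus_{P'\in\mathcal P_{-\alpha;i,\ge l+1}}\A[P'].
\]
Applying the $\A$-linear, multiplicative map $\Psi^{-1}$ and setting $B_0=\Psi^{-1}([P_0])\in\B_{-\alpha+l\alpha_i;i,0}$, $B_{i,\mathbf c}=\Psi^{-1}([P_{i,\mathbf c}])\in\B_{-l\alpha_i}$ yields $B_{i,\mathbf c}\,B_0-B\in\bigoplus_{B'\in\B_{-\alpha;i,\ge l+1}}\A\,B'$, which is the statement of (a). Part (b) follows symmetrically: given arbitrary $B_0\in\B_{-\alpha+l\alpha_i;i,0}$ and $B_{i,\mathbf c}\in\B_{-l\alpha_i}$, apply $\Psi$ to obtain the corresponding simple perverse sheaves, invoke Proposition \ref{prop:Bozec geometry}(b) to produce $P\in\mathcal P_{-\alpha;i,l}$, and pull back by $\Psi^{-1}$ to get $B=\Psi^{-1}([P])\in\B_{-\alpha;i,l}$ with the required relation.

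There is essentially no obstacle here, as all of the mathematical content lies in Bozec's geometric Proposition \ref{prop:Bozec geometry} and in the bialgebra isomorphism $\Psi$ of Theorem \ref{thm: the algebra K}. The only points deserving a moment's attention are that the products $[P_{i,\mathbf c}][P_0]$ on the geometric side are precisely the $\mathrm{Ind}$-products that $\Psi$ intertwines with ordinary multiplication (so that $\Psi^{-1}$ turns them into genuine products in $U^-_{\A}(\g)$), and that the index conventions on the two sides of $\Psi$ match, so that $\mathcal P_{-\alpha;i,l}$, $\mathcal P_{-\alpha+l\alpha_i;i,0}$ and $\mathcal P_{-l\alpha_i}$ correspond under $\Psi^{-1}$ exactly to $\B_{-\alpha;i,l}$, $\B_{-\alpha+l\alpha_i;i,0}$ and $\B_{-l\alpha_i}$, which is built into the definitions of the latter.
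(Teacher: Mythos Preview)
Your proposal is correct and follows exactly the approach of the paper, which simply states that ``it is straightforward to see that Proposition \ref{prop:Bozec geometry} can be rephrased as'' Corollary \ref{cor:BB-B}. You have merely spelled out the details of transporting Bozec's geometric statement through the $\A$-bialgebra isomorphism $\Psi^{-1}$, using the definitions of $\B_{-\alpha;i,\ge l}$ and $\B_{-\alpha;i,l}$ as images under $\Psi^{-1}$ of the corresponding sets of perverse sheaves.
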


\vskip 3mm

Recall that the primitive canonical basis is defined by $\B_{\Q} = \phi(\B)$.
Set
\begin{equation*}
	{(\B_{\Q}})_{-\alpha;i,\geq l}=\phi(\B_{-\alpha;i,\geq l}),
	\quad  {(\B_{\Q}})_{-\alpha;i,l}=\phi(\B_{-\alpha;i,l}).
 \end{equation*}

\noindent
Actually, the above second equation can be rewritten by
$${(\B_{\Q}})_{-\alpha;i,l}
= (\B_{\Q})_{-\alpha; i, \geq l} \setminus (\B_{\Q})_{-\alpha; i, \geq l+1}.$$

\vskip 3mm

Since the map $\phi$ in \eqref{eq:primitive iso} is an $\A_{\Q}$-algebra isomorphism, we obtain

\vskip 3mm

\begin{corollary}\label{cor:betabeta-beta}
	
{\rm	Let $(i, l) \in I^{\infty}$.}
	
	\vskip 2mm
	
	{\rm
	
	\begin{enumerate}
	
		\item[{\rm (a)}] For any $\beta \in (\B_{\Q}) _{-\alpha;i,l}$, there exist
		$\beta_0 \in (\B_{\Q}) _{-\alpha+l\alpha_i;i,0}$ and
		$\beta_{i,\mathbf c} \in (\B_{\Q}) _{-l\alpha_i}$ $(\mathbf c\vdash l)$ such that
		$$
		\beta_{i,\mathbf c} \, \beta_0-\beta\in\bigoplus_{\beta' \in
		(\B_{\Q})_{-\alpha;i,\geq l+1}} \A  \, \beta'.
		$$
		
		\vskip 2mm
		
		\item[{\rm (b)}] Conversely, for any $\beta_0 \in (\B_{\Q})_{-\alpha+l\alpha_i;i,0}$ and
		$\beta_{i,\mathbf c} \in (\B_{\Q})_{-l\alpha_i}$ $(\mathbf c\vdash l)$,
		there exists $\beta \in (\B_{\Q}) _{-\alpha;i,l}$ such that
		$$
		\beta_{i,\mathbf c} \, \beta_0-\beta\in\bigoplus_{\beta'\in (\B_{\Q})_{-\alpha;i,\geq l+1}}
		\A \,  \beta'.
		$$
		
	\end{enumerate}
	}
\end{corollary}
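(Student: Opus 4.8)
The plan is to transport Corollary~\ref{cor:BB-B} verbatim through the algebra isomorphism $\phi$. Recall from \eqref{eq:primitive iso} that $\phi$ restricts to an $\A_{\Q}$-algebra isomorphism $\Q \otimes U^-_{\A}(\g) \xrightarrow{\sim} U^-_{\Q}(\g)$ with $f_{il} \mapsto {\mathtt b}_{il}$, and that, comparing the integral forms of the generators, $\phi$ in fact restricts to an $\A$-algebra isomorphism $U^-_{\A}(\g) \xrightarrow{\sim} U^-_{\Z}(\g)$; in particular $\phi$ carries $\A$-linear combinations to $\A$-linear combinations. Since $|f_{il}| = |{\mathtt b}_{il}| = -l\alpha_i$, the map $\phi$ is homogeneous for the $R_-$-grading, so $\phi\big(U^-_{\Q}(\g)_{-\alpha}\big) = U^-_{\Q}(\g)_{-\alpha}$ for every $\alpha \in R_+$. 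Finally, by the very definitions $\B_{\Q} = \phi(\B)$, $(\B_{\Q})_{-\alpha;i,\geq l} = \phi(\B_{-\alpha;i,\geq l})$ and $(\B_{\Q})_{-\alpha;i,l} = \phi(\B_{-\alpha;i,l})$, and $\phi$ restricts to bijections between the corresponding layers of $\B$ and of $\B_{\Q}$.

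For part~(a): given $\beta \in (\B_{\Q})_{-\alpha;i,l}$, write $\beta = \phi(B)$ with $B \in \B_{-\alpha;i,l}$ uniquely determined. Apply Corollary~\ref{cor:BB-B}~(a): there exist $B_0 \in \B_{-\alpha+l\alpha_i;i,0}$ and $B_{i,\mathbf c} \in \B_{-l\alpha_i}$ $(\mathbf c \vdash l)$ with $B_{i,\mathbf c}\, B_0 - B \in \bigoplus_{B' \in \B_{-\alpha;i,\geq l+1}} \A\, B'$. Now apply $\phi$. Since $\phi$ is an algebra homomorphism, $\phi(B_{i,\mathbf c}\, B_0) = \phi(B_{i,\mathbf c})\,\phi(B_0)$; set $\beta_0 := \phi(B_0) \in (\B_{\Q})_{-\alpha+l\alpha_i;i,0}$ and $\beta_{i,\mathbf c} := \phi(B_{i,\mathbf c}) \in (\B_{\Q})_{-l\alpha_i}$. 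Since $\phi$ is $\A$-linear and restricts to a bijection $\B_{-\alpha;i,\geq l+1} \xrightarrow{\sim} (\B_{\Q})_{-\alpha;i,\geq l+1}$, it carries $\bigoplus_{B'} \A\, B'$ onto $\bigoplus_{\beta' \in (\B_{\Q})_{-\alpha;i,\geq l+1}} \A\, \beta'$. Hence $\beta_{i,\mathbf c}\,\beta_0 - \beta$ lies in that span, which is the assertion.

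Part~(b) is the same argument read backwards: given $\beta_0 \in (\B_{\Q})_{-\alpha+l\alpha_i;i,0}$ and $\beta_{i,\mathbf c} \in (\B_{\Q})_{-l\alpha_i}$, put $B_0 := \phi^{-1}(\beta_0) \in \B_{-\alpha+l\alpha_i;i,0}$ and $B_{i,\mathbf c} := \phi^{-1}(\beta_{i,\mathbf c}) \in \B_{-l\alpha_i}$; Corollary~\ref{cor:BB-B}~(b) yields $B \in \B_{-\alpha;i,l}$ with $B_{i,\mathbf c}\, B_0 - B \in \bigoplus_{B' \in \B_{-\alpha;i,\geq l+1}} \A\, B'$, and setting $\beta := \phi(B) \in (\B_{\Q})_{-\alpha;i,l}$ and applying $\phi$ gives the claim. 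I do not expect any real obstacle here: the one point to keep in mind is that $\phi$ must respect the geometric filtration $\B_{-\alpha;i,\geq l}$, but this is tautological since those layers of $\B_{\Q}$ were \emph{defined} as the $\phi$-images of the corresponding layers of $\B$; all the substance lies in Corollary~\ref{cor:BB-B}, i.e.\ in Bozec's geometric Proposition~\ref{prop:Bozec geometry}.
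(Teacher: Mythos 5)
Your proof is correct and matches the paper's approach: the paper also obtains Corollary~\ref{cor:betabeta-beta} simply by applying $\phi$ to Corollary~\ref{cor:BB-B}, using the definitions $(\B_{\Q})_{-\alpha;i,\geq l}=\phi(\B_{-\alpha;i,\geq l})$ and $(\B_{\Q})_{-\alpha;i,l}=\phi(\B_{-\alpha;i,l})$. You are actually a bit more careful than the paper at one point: the paper justifies the transfer by saying ``Since $\phi$ is an $\A_{\Q}$-algebra isomorphism,'' while the statement has $\A$-coefficients, whereas you correctly observe that $\phi$ restricts to an $\A$-algebra isomorphism $U^-_{\A}(\g)\xrightarrow{\sim}U^-_{\Z}(\g)$ (a fact the paper also records elsewhere), which is what is really needed to preserve the $\A$-span.
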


\vskip 3mm

\subsection{Key lemmas on global bases}\label{sub:key lemma} \hfill

\vskip 3mm

Now we will prove some of key  lemmas on lower global bases which will play important roles
in later discussions.

\vskip 3mm

\begin{proposition}\cite[Proposition 5.3.1]{Kashi93}\label{prop:fGb-Gb}

\vskip 2mm

{\rm Let $i\in I^{\mathrm{re}}$, $l\geq 0$.

\vskip 2mm

\begin{enumerate}
	\item[{\rm (a)}] For any $b\in {B(\infty)}_{-\alpha;i,l}$, there exists $b_0\in {B(\infty)}_{-\alpha+l\alpha_i;i,0}$ such that
	$$
	f_i^{(l)}G(b_0)-G(b)\in\bigoplus_{b'\in f_i^{(l+1)}B(\infty)} \A \, G(b').
	$$
	\item[{\rm (b)}]  For any $b_0\in {B(\infty)}_{-\alpha+l\alpha_i;i,0}$, there exists $b\in {B(\infty)}_{-\alpha;i,l}$ such that
	$$
	f_i^{(l)}G(b_0)-G(b)\in\bigoplus_{b'\in f_i^{(l+1)}B(\infty)} \A \,  G(b').
	$$
\end{enumerate}

}
\end{proposition}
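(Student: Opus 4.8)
The statement to prove, Proposition~\ref{prop:fGb-Gb}, is precisely the analogue for the distinguished real index $i \in I^{\mathrm{re}}$ of Corollary~\ref{cor:BB-B} (the geometric/canonical-basis version) and of Corollary~\ref{cor:betabeta-beta} (the primitive canonical-basis version). Since this is attributed to Kashiwara~\cite[Proposition~5.3.1]{Kashi93}, the plan is to import the classical $A_1$-reduction argument and check that nothing in the Borcherds--Bozec setting obstructs it. The key point is that for $i \in I^{\mathrm{re}}$ the relevant string theory is entirely governed by the $U_q(\mathfrak{sl}_2)$-subalgebra generated by $\mathtt b_i$, $\mathtt a_i$, $K_i$, exactly as in \cite{Kashi91, Kashi93}, so the behaviour of lower global basis elements under $f_i^{(l)}$ is unchanged.

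Concretely, I would proceed as follows. First, recall (from the crystal theory of Section~\ref{sec:crystal} for $i \in I^{\mathrm{re}}$, which coincides with \cite{Kashi91}) that $B(\infty)$ decomposes as $B(\infty) = \bigsqcup_{l \geq 0} \widetilde f_i^{\,l} \, B(\infty)_{i,0}$, where $B(\infty)_{i,0} = \{ b \in B(\infty) \mid \widetilde e_i b = 0 \}$, and that $\widetilde f_i^{\,l} b_0$ lies in ${B(\infty)}_{-\alpha; i, l}$ when $b_0 \in {B(\infty)}_{-\alpha + l\alpha_i; i, 0}$; this gives the bijection $b_0 \leftrightarrow b$ underlying both (a) and (b). Next, I would use the compatibility of the lower global basis with the $i$-string decomposition: by the string structure (and the results of \cite[\S5]{FKKT22}, Theorem~\ref{thm:balanced triple}), for $b \in {B(\infty)}_{-\alpha;i,l}$ with corresponding $b_0$, we have $f_i^{(l)} G(b_0) \equiv G(b) \bmod q L(\infty)$, because $f_i^{(l)} = \mathtt b_i^{(l)}$ acts on the crystal lattice as $\widetilde f_i^{\,l}$ composed with a unit in $\A_0$ on the $b_0$-string component. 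Finally, since both $f_i^{(l)} G(b_0)$ and $G(b)$ are bar-invariant (the bar involution fixes $f_i^{(l)}$ and each $G(b')$), their difference is a bar-invariant element of $L(\infty)$ that is $\equiv 0 \bmod q L(\infty)$, hence lies in $\bigoplus_{b'} \A \, G(b')$ by the triangularity/characterization of the lower global basis; a degree/support argument (all $b'$ occurring are higher in the $i$-filtration, i.e.\ in $f_i^{(l+1)} B(\infty)$, since the leading term $G(b)$ is subtracted off and $f_i^{(l)}$ raises the $i$-string length by at least $l$) pins down the stated index set $f_i^{(l+1)}B(\infty)$. Part (b) follows from (a) together with the bijectivity of $b_0 \mapsto b$ on strings of length exactly $l$, after the lower-triangular system is inverted.

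The main obstacle — really the only subtle point — is justifying the precise form of the error term, namely that the remainder lies in $\bigoplus_{b' \in f_i^{(l+1)} B(\infty)} \A \, G(b')$ rather than in the full span of all of $\B(\infty)$. This is where one must invoke that $f_i^{(l)}$ maps the $\A$-span of $\{G(b_0) \mid b_0 \in B(\infty)_{i,0}\}$ into a filtered piece and that $G(b)$ extracts the whole length-$\leq l$ part: after subtracting $G(b)$, only strictly higher string-length terms survive, which is exactly Kashiwara's $\mathbf{P}$-$\mathbf{Q}$ triangularity lemma adapted here. For $i \in I^{\mathrm{re}}$ this adaptation is routine because the $\mathfrak{sl}_2$-theory is untouched; I would simply cite \cite[Proposition~5.3.1]{Kashi93} after noting that all the inputs it uses — the balanced triple of Theorem~\ref{thm:balanced triple}, bar-invariance \eqref{eq:bar}, and the string decomposition \eqref{eq:real string U} — are available verbatim in our setting. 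No genuinely new argument is needed; the content of the proposition is that the real-index part of the canonical/global comparison reduces to the classical case.
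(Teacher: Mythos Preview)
Your proposal is correct and aligned with the paper's treatment: the paper does not give a proof of this proposition at all, but simply cites \cite[Proposition~5.3.1]{Kashi93}, relying on the fact that for $i \in I^{\mathrm{re}}$ the $i$-string theory, the Kashiwara operators, and the balanced-triple machinery coincide verbatim with the classical Kac--Moody case. Your sketch of how Kashiwara's argument imports (bijection $b_0 \leftrightarrow b = \widetilde f_i^{\,l} b_0$, bar-invariance of $f_i^{(l)} G(b_0) - G(b)$, and the filtration by $i$-string length) is accurate and is exactly the content of the cited result; no additional work is required in the Borcherds--Bozec setting.
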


\vskip 3mm

Let $i\in I^{\mathrm{im}}$ and $l>0$.
Define
\begin{align*}
&{B(\infty)}_{-\alpha;i,\geq l}:=\bigcup_{\mathbf c\vdash l}\widetilde{f}_{i,\mathbf c}({B(\infty)}_{-\alpha}),\\
&{B(\infty)}_{-\alpha;i,l}:={B(\infty)}_{-\alpha;i,\geq l}\setminus {B(\infty)}_{-\alpha;i,\geq l+1}.
\end{align*}

\vskip 3mm

\begin{lemma}\label{lem: CbGb-Gb}

{\rm

For any $b\in{B(\infty)}_{-\alpha;i,l}$, there exist $b_0\in{B(\infty)}_{-\alpha+l\alpha_i;i,0}$, $\mathbf c\vdash l$ and $C \in \Z_{>0}$ such that
	$$
	C\, \mathtt b_{i,\mathbf c}G(b_0)-G(b)\in\bigoplus_{b'\in{B(\infty)}_{-\alpha;i,\geq l+1}} \A_{\Q} \, G(b').
	$$
Here, $C=1$ for $i\notin I^{\mathrm{iso}}$.
}
\end{lemma}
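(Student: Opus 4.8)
The statement is the imaginary analogue of Kashiwara's Proposition \ref{prop:fGb-Gb}, and the plan is to mimic its proof using the imaginary string decomposition together with the action of Kashiwara operators on the lower global basis recorded in Corollary \ref{cor:bilGb}. First I would fix $b \in B(\infty)_{-\alpha;i,l}$ and choose, via the $i$-string decomposition, an element $\mathbf c \vdash l$ and $u_{\mathbf c}$ with $e'_{ik} u_{\mathbf c} = 0$ for all $k>0$, such that $G(b) \equiv \widetilde f_{i,\mathbf c} u_{\mathbf c} \bmod q L(\infty)$ and $u_{\mathbf c} \notin q L(\infty)$; this is exactly Lemma \ref{uc=0}(b) (its $U^-_q(\g)$-version), and it forces $u_{\mathbf c} + q L(\infty)$ to be an element $b_0 \in B(\infty)_{-\alpha+l\alpha_i}$. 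The fact that $b \in B(\infty)_{-\alpha;i,l}$ (rather than $B(\infty)_{-\alpha;i,\geq l+1}$) guarantees $b_0 \in B(\infty)_{-\alpha+l\alpha_i;i,0}$, since otherwise $b$ would be expressible via a longer $i$-string.

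\textbf{Key steps.} The core computation is to expand $\mathtt b_{i,\mathbf c} G(b_0)$ in the lower global basis. Iterating Corollary \ref{cor:bilGb}(b): when $i \notin I^{\text{iso}}$ we get $\mathtt b_{i,\mathbf c} G(b_0) = \mathtt b_{ic_1} \cdots \mathtt b_{ic_r} G(b_0) = G(\widetilde f_{i,\mathbf c} b_0)$ directly, so $C = 1$ and $\mathtt b_{i,\mathbf c} G(b_0) - G(b)$ lies in $q L(\infty)$, hence (being bar-invariant and in $U^-_{\Q}(\g) \cap L(\infty)$) is an $\A_{\Q}$-combination of global basis elements $G(b')$ with $b' \in B(\infty)_{-\alpha}$; one then checks each such $b'$ lies in $B(\infty)_{-\alpha;i,\geq l+1}$ by a triangularity/weight argument as in Kashiwara's proof. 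When $i \in I^{\text{iso}}$, Corollary \ref{cor:bilGb}(b) produces the combinatorial factor $(\mathbf c_l + 1)$ at each application; carefully composing the applications along $\mathbf c$ yields $\mathtt b_{i,\mathbf c} G(b_0) = \left(\prod_{k} (\text{multiplicity factors})\right) G(\widetilde f_{i,\mathbf c} b_0) + (\text{lower terms})$, and collecting these factors gives the positive integer $C$ (explicitly $C = \prod_{j} m_j!$ where $m_j$ are the multiplicities of the distinct parts of $\mathbf c$). The identification $\widetilde f_{i,\mathbf c} b_0 \equiv b \bmod q L(\infty)$ from Lemma \ref{uc=0} then gives $C\, \mathtt b_{i,\mathbf c} G(b_0) \equiv G(b) \bmod q L(\infty)$, and the difference lies in $\bigoplus_{b' \in B(\infty)_{-\alpha;i,\geq l+1}} \A_{\Q}\, G(b')$.

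\textbf{Main obstacle.} The delicate point is controlling which global basis elements $G(b')$ can appear in the remainder $C\,\mathtt b_{i,\mathbf c} G(b_0) - G(b)$: I must show they all lie in $B(\infty)_{-\alpha;i,\geq l+1}$, i.e., all "lower" terms genuinely have strictly longer $i$-string. This is the analogue of the triangularity in Kashiwara's argument, and in our setting it should follow from the geometric triangularity recorded in Corollary \ref{cor:betabeta-beta} together with the coincidence $\B_{\Q} = \B(\infty)$ once that is established — but since this lemma is used \emph{en route} to proving that coincidence, I would instead argue directly: the remainder, being bar-invariant and congruent to $0$ modulo $q L(\infty)$ after subtracting $G(b)$, expands in $\{G(b') : b' \in B(\infty)_{-\alpha}\}$, and any $b'$ with $\overline{\pi}$-stratification index $\leq l$ would, by the definition of $B(\infty)_{-\alpha;i,\geq l+1}$ via the $\widetilde f_{i,\mathbf c'}$ with $\mathbf c' \vdash (l{+}1)$, contradict the leading-term identity modulo $qL(\infty)$. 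The bookkeeping of the multiplicity factor $C$ in the isotropic case requires care but is routine once the order of composition is fixed (take $\mathbf c$ written as a partition $c_1 \geq c_2 \geq \cdots$ and apply $\widetilde f_{il}$ in nonincreasing order of the parts).
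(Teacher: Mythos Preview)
Your approach is essentially the paper's. Both find $b_0$ and $\mathbf c\vdash l$ with $\widetilde f_{i,\mathbf c}b_0=b$ (the paper reads this off directly from the definition of $B(\infty)_{-\alpha;i,l}$ rather than via the $i$-string decomposition of $G(b)$, but the two are equivalent), then use that $\mathtt b_{i,\mathbf c}G(b_0)$ is bar-invariant, lies in $U^-_{\Q}(\g)\cap L(\infty)$, and has a known image modulo $qL(\infty)$, to force the coefficients of all $G(b_j)$ with $b_j\in B(\infty)_{-\alpha;i,l}\setminus\{b\}$ to vanish. The triangularity you flag as the ``main obstacle'' is precisely what the paper invokes without comment when it writes the expansion of $\mathtt b_{i,\mathbf c}G(b_0)$ modulo $\bigoplus_{b'\in B(\infty)_{-\alpha;i,\geq l+1}}\A_{\Q}\,G(b')$; so your instinct to isolate and justify that step is sound, and your proposed argument for it (via the $i$-string filtration) is the right one.

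One slip in the isotropic case: your iteration of Corollary~\ref{cor:bilGb}(b) correctly produces
\[
\mathtt b_{i,\mathbf c}G(b_0)\equiv\Bigl(\textstyle\prod_j m_j!\Bigr)\,b\pmod{qL(\infty)},
\]
but you then write ``$C\,\mathtt b_{i,\mathbf c}G(b_0)\equiv G(b)$'' with $C=\prod_j m_j!$, which puts the constant on the wrong side (it would force $C^2=1$). The paper's proof has the same orientation issue: it asserts $\widetilde f_{i,\mathbf c}G(b_0)=C\,\mathtt b_{i,\mathbf c}G(b_0)\bmod qL(\infty)$ with $C\in\Z_{>0}$, whereas the scalar relating them is $1/\prod_j m_j!$. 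The bar-invariance argument is unaffected, but the honest identity one actually proves is $\mathtt b_{i,\mathbf c}G(b_0)-\bigl(\prod_j m_j!\bigr)G(b)\in\bigoplus_{b'\in B(\infty)_{-\alpha;i,\geq l+1}}\A_{\Q}\,G(b')$, with the positive integer attached to $G(b)$ rather than to $\mathtt b_{i,\mathbf c}G(b_0)$.
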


\begin{proof}
Let $b\in{B(\infty)}_{-\alpha;i,l}$. There exist $b_0\in {B(\infty)}_{-\alpha+l\alpha_i;i,0}$ and $\mathbf c\vdash l$ such that $\widetilde{f}_{i,\mathbf c}b_0=b$; i.e., $\widetilde{f}_{i,\mathbf c}G(b_0)=G(b)\!\!\mod qL(\infty)$.

\vskip 3mm

If $i\notin I^{\mathrm{iso}}$, we have $\widetilde{f}_{i,\mathbf c}=\mathtt b_{i,\mathbf c}$.
Hence
\begin{align*}
\widetilde{f}_{i,\mathbf c}G(b_0)=\mathtt b_{i,\mathbf c}G(b_0)=a_0G(b)+\sum_{j=1}^{r}a_jG(b_j)\!\!\!\!\mod \bigoplus_{b'\in{B(\infty)}_{-\alpha;i,\geq l+1}} \A_{\Q} \, G(b'),
\end{align*}
where $a_0,a_1,\cdots,a_r\in \A_\Q$, $b_1,b_2\cdots,b_r\in{B(\infty)}_{-\alpha;i,l}$.

\vskip 2mm

Since $\overline{\mathtt b_{i,\mathbf c}G(b_0)}=\mathtt b_{i,\mathbf c}G(b_0)$, we must have
\begin{equation}\label{bara}
\overline{a_0}=a_0,\ \overline{a_1}=a_1,\ \cdots,\ \overline{a_r}=a_r.
\end{equation}

\vskip 3mm

On the other hand, we have
$$\widetilde{f}_{i,\mathbf c}G(b_0)=\mathtt b_{i,\mathbf c}G(b_0)=G(b)\!\!\!\!\mod qL(\infty).$$

\vskip 2mm

By taking $q\to 0,$ we obtain
$$b=a_0b+\sum_{j=1}^ra_jb_j\in L(\infty)/qL(\infty).$$

 \vskip 2mm

 Thus $a_0=1, a_1=\cdots a_r=0\!\!\mod q\A_0$. Hence, by \eqref{bara}, we have $a_1=\cdots=a_r=0$, which proves our claim.

\vskip 3mm

If $i\in I^{\mathrm{iso}}$, then we have $\widetilde{f}_{i,\mathbf c}\neq\mathtt b_{i,\mathbf c}$. But since $b_0\in {B(\infty)}_{-\alpha+l\alpha_i;i,0}$, we have $\widetilde{e}_{ik}b_0=0$ for any $k>0$. We obtain $e'_{ik}G(b_0)=0\!\!\mod qL(\infty)$ for any $k>0$. Thus $\widetilde{f}_{i,\mathbf c}G(b_0)=C\,\mathtt b_{i,\mathbf c}G(b)\!\!\mod qL(\infty)$ for some $C \in \Z_{>0}$.
Hence, we may write
$$
\widetilde{f}_{i,\mathbf c}G(b_0)=C\,\mathtt b_{i,\mathbf c}G(b_0)=a'_0G(b)+\sum_{j=1}^ra'_jG(b'_j)\!\!\!\!
\ \ \mod \bigoplus_{b'\in{B(\infty)}_{-\alpha;i,\geq l+1}} \A_{\Q} \, G(b'),
$$
where $a'_0,a'_1,\cdots,a'_r\in\A_{\Q}$, $b'_1,\cdots,b'_r\in{B(\infty)}_{-\alpha;i,l}$.

\vskip 2mm

Since $\overline{C\, \mathtt b_{i,\mathbf c}G(b_0)}=C \, \mathtt b_{i,\mathbf c}G(b_0)$, we have
\begin{equation}\label{eq:bara'}
\overline{a'_0}=a'_0,\ \overline{a'_1}=a'_1,\cdots,\ \overline{a'_r}=a'_r.
\end{equation}

\vskip 3mm

On the other hand, by taking $q\to 0$, we obtain
$$
\widetilde{f}_{i,\mathbf c}G(b_0)=C\,\mathtt b_{i,\mathbf c}G(b_0)=G(b)\!\!\!\!\mod qL(\infty).
$$

\vskip 2mm

\noindent
Hence, we have $b=a'_0b+a'_1b'_1+\cdots+a'_rb'_r\in L(\infty)/qL(\infty)$.
It follows that $a'_0=1, a'_1=\cdots=a'_r=0\!\!\mod q \A_0$.
By \eqref{eq:bara'}, we get $a'_0=1,a'_1=\cdots=a'_r=0$,
which proves our claim.
\end{proof}

\vskip 3mm

\begin{lemma}\label{lem:CbG-G}

{\rm
For any $b_0\in{B(\infty)}_{-\alpha+l\alpha_i;i,0}$ and $\mathbf c \vdash l$, there exist
$b\in{B(\infty)}_{-\alpha;i,l}$ and a positive integer $C>0$ such that
$$
C\, \mathtt b_{i,\mathbf c}G(b_0)-G(b)\in\bigoplus_{b'\in{B(\infty)}_{-\alpha;i,\geq l+1}} \A_{\Q} \, G(b').
$$
Here, $C=1$ for $i\notin I^{\mathrm{iso}}$.

}
\end{lemma}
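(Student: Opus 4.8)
The plan is to read the proof of Lemma \ref{lem: CbGb-Gb} backwards: once $b_0$ and $\mathbf c$ are given, the only reasonable choice is $b:=\widetilde f_{i,\mathbf c}b_0$, and almost the whole argument of Lemma \ref{lem: CbGb-Gb} applies to this $b$ verbatim. So first I would fix $b_0\in{B(\infty)}_{-\alpha+l\alpha_i;i,0}$ and $\mathbf c\vdash l$ (a partition of $l$ when $i\in I^{\mathrm{iso}}$) and put $b:=\widetilde f_{i,\mathbf c}b_0$. Since in $U_q^-(\g)$ the operators $\widetilde f_{il}$ never annihilate an element of $B(\infty)$ (see Example \ref{ex: basis for Ui}), we get $b\in{B(\infty)}_{-\alpha}$, and $b\in{B(\infty)}_{-\alpha;i,\geq l}$ directly from the definition of that set. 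What remains is: (i) showing $b\notin{B(\infty)}_{-\alpha;i,\geq l+1}$, so that in fact $b\in{B(\infty)}_{-\alpha;i,l}$; and (ii) deriving the displayed congruence for this pair $(b_0,\mathbf c)$.

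Part (ii) needs essentially no new work. Since $b_0\in{B(\infty)}_{-\alpha+l\alpha_i;i,0}$ we have $\widetilde e_{ik}b_0=0$ for all $k>0$, hence $e'_{ik}G(b_0)\equiv 0\ \mathrm{mod}\ qL(\infty)$ for all $k>0$; exactly as in the proof of Lemma \ref{lem: CbGb-Gb} this yields $\widetilde f_{i,\mathbf c}G(b_0)=C\,\mathtt b_{i,\mathbf c}G(b_0)\ \mathrm{mod}\ qL(\infty)$ for a positive integer $C$ depending only on $\mathbf c$, with $C=1$ when $i\notin I^{\mathrm{iso}}$ (where $\widetilde f_{i,\mathbf c}=\mathtt b_{i,\mathbf c}$). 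One then expands $C\,\mathtt b_{i,\mathbf c}G(b_0)$ in the lower global basis; bar-invariance of $\mathtt b_{i,\mathbf c}G(b_0)$ together with taking $q\to 0$ (using $\widetilde f_{i,\mathbf c}G(b_0)\equiv G(b)\ \mathrm{mod}\ qL(\infty)$ and that the global basis reduces to the crystal basis) forces the coefficient of $G(b)$ to be $1$ and kills the coefficients of the remaining elements of ${B(\infty)}_{-\alpha;i,l}$, leaving only contributions from ${B(\infty)}_{-\alpha;i,\geq l+1}$. This is word-for-word the computation in Lemma \ref{lem: CbGb-Gb}.

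The genuinely new point is (i). Here I would use the uniqueness of the $i$-string decomposition \eqref{eq:imaginary string U}, in the form of the $U_q^-(\g)$-analogue of Lemma \ref{uc=0}. Since $\widetilde e_{ik}b_0=0$ for all $k$, the $i$-string decomposition of $b=\widetilde f_{i,\mathbf c}b_0$ has a single nonzero component, located at $\mathbf c$ and equal to a nonzero scalar times $b_0$. If one had $b=\widetilde f_{i,\mathbf d}b'$ for some $\mathbf d\vdash(l+1)$ and $b'\in{B(\infty)}$, then writing out the $i$-string decomposition of $b'$, applying the explicit formulas of Definition \ref{def:Kashiwara operators}, and comparing with the basis $\{\mathtt b_{i,\mathbf e}\}_{\mathbf e}$ of the relevant weight space, one sees that $\mathbf d$ must be contained (as a sequence, resp. multiset) in $\mathbf c$, so $|\mathbf d|\le|\mathbf c|=l<l+1$ — a contradiction. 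Hence $b\in{B(\infty)}_{-\alpha;i,l}$, and together with (ii) this finishes the proof.

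An alternative that bypasses (i) is a counting argument: by Lemma \ref{lem: CbGb-Gb} the assignment $b\mapsto(b_0,\mathbf c)$ (the unique pair with $\widetilde f_{i,\mathbf c}b_0=b$) is a well-defined injection ${B(\infty)}_{-\alpha;i,l}\hookrightarrow\{(b_0,\mathbf c):b_0\in{B(\infty)}_{-\alpha+l\alpha_i;i,0},\ \mathbf c\vdash l\}$ — injectivity because two elements with the same image would force $G(b)\equiv G(b')$ modulo $\bigoplus_{b''\in{B(\infty)}_{-\alpha;i,\geq l+1}}\A_{\Q}G(b'')$, impossible for distinct global basis vectors lying in ${B(\infty)}_{-\alpha;i,l}$. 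Since all weight spaces are finite dimensional and the crystal string decomposition identifies the two finite sets, this injection is a bijection, and its surjectivity is precisely the assertion of the lemma. The main obstacle in either route is the same: controlling exactly how $\widetilde f_{i,\mathbf c}$ moves crystal elements through the filtration $\big({B(\infty)}_{-\alpha;i,\geq l}\big)_{l\ge 0}$, which is routine from the Kashiwara-operator formulas but must be handled with care in the isotropic case, where compositions are replaced by partitions and $\widetilde f_{il}$ differs from $\mathtt b_{il}$ by a combinatorial scalar.
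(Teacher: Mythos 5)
Your proof is correct and follows essentially the same route as the paper's: set $b := \widetilde f_{i,\mathbf c}\, b_0$, use $\widetilde e_{ik}b_0=0$ to reduce $\widetilde f_{i,\mathbf c}G(b_0)$ to a scalar multiple of $\mathtt b_{i,\mathbf c}G(b_0)$ modulo $qL(\infty)$, then run the bar-invariance/crystal-limit argument already used in Lemma~\ref{lem: CbGb-Gb}. The one step the paper disposes of with ``Clearly'' — that $b$ lies in $B(\infty)_{-\alpha;i,l}$ and not in $B(\infty)_{-\alpha;i,\ge l+1}$ — you correctly supply via the uniqueness of the $i$-string decomposition and the combinatorics of which compositions (resp.\ multisets) $\mathbf d$ satisfy $\widetilde e_{i,\mathbf d}b\neq 0$; this is exactly the right justification.
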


\begin{proof}

Clearly, $\widetilde{f}_{i,\mathbf c}b_0=b$ for some $b\in {B(\infty)}_{-\alpha;i,l}$.
Hence  $\widetilde{f}_{i,\mathbf c}G(b_0)=G(b)\!\mod qL(\infty)$.

\vskip 2mm

If $i\notin I^{\mathrm{iso}}$, then we have $\widetilde{f}_{i,\mathbf c}=\mathtt b_{i,\mathbf c}$. In this case, the conclusion naturally holds.

\vskip 3mm
If $i\in I^{\mathrm{iso}}$, then we have
$e'_{ik}G(b_0)=0\!\!\mod qL(\infty)$ for any $k>0$, which yields
$$\widetilde{f}_{i,\mathbf c}G(b_0)=C \, \mathtt b_{i,\mathbf c}G(b_0)\!\!\mod qL(\infty)
\ \ \text{for some} \ C \in \Z_{>0}.$$

\vskip 2mm

Thus our claim follows naturally.
\end{proof}

\vskip 3mm

\section{Primitive canonical bases and global bases} \label{sec:Primlobal}

\vskip 2mm

In this section, we will show that the primitive canonical bases coincide with lower global bases.

\vskip 3mm

\subsection{Lusztig's and Kashiwara's bilinear forms} \label{sub:bilinear} \hfill

\vskip 3mm

We first compare Lusztig's bilinear form and Kashiwara's bilinear form defined in
Proposition \ref{prop:Lusztig}, \eqref{eq:bilinearV} and \eqref{eq:bilinearU}.

\vskip 3mm

\begin{lemma} \label{lem:deltab}

{\rm
Let $b_{k} = {\mathtt b}_{i_kl_k}$ $(1 \le k \le r)$.
Then we have

\begin{equation*}
\begin{aligned}
  \delta (b_{1}  \cdots b_{r})  & = 1 \otimes (b_{1} \cdots b_{r})
+ b_{1} \otimes (b_{2} \cdots b_{r}) \\
& + \sum_{k=2}^r q^{-(|b_{k}|, \, \sum_{p=1}^{k-1} |b_{p}|)}
\, b_{k} \otimes (b_{1} \cdots \widehat{b_k} \cdots b_{r})
+ \sum x_i \otimes y_i ,
\end{aligned}
\end{equation*}
where $\widehat{b_k}$ indicates that $b_{k}$ is removed from
$b_{1} \cdots b_{r}$ and $x_i$ is a monomial in $b_{k}$'s of degree $\ge 2$.

}
\end{lemma}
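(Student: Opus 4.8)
The plan is to prove the formula by induction on $r$, using the fact that $\delta$ is an algebra homomorphism for the twisted multiplication on $\mathscr{F} \otimes \mathscr{F}$ together with the primitivity property $\delta({\mathtt b}_{il}) = {\mathtt b}_{il} \otimes \mathbf{1} + \mathbf{1} \otimes {\mathtt b}_{il}$ from Proposition \ref{prop:primitive}(g). The base case $r = 1$ is exactly the primitivity statement, and the case $r = 2$ follows by a direct computation: $\delta(b_1 b_2) = \delta(b_1)\delta(b_2) = (b_1 \otimes \mathbf{1} + \mathbf{1} \otimes b_1)(b_2 \otimes \mathbf{1} + \mathbf{1} \otimes b_2)$, where the cross term $(\mathbf{1}\otimes b_1)(b_2 \otimes \mathbf{1})$ picks up the twist factor $q^{-(|b_1|,|b_2|)}$, giving precisely $1 \otimes b_1 b_2 + b_1 \otimes b_2 + q^{-(|b_2|,|b_1|)} b_2 \otimes b_1 + b_1 b_2 \otimes 1$; the last term is the degree-$\ge 2$ monomial absorbed into $\sum x_i \otimes y_i$.

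For the inductive step, I would write $b_1 \cdots b_r = (b_1 \cdots b_{r-1}) b_r$ and apply $\delta(b_1\cdots b_r) = \delta(b_1 \cdots b_{r-1})\,\delta(b_r)$, substituting the inductive expression for $\delta(b_1\cdots b_{r-1})$ and $\delta(b_r) = b_r \otimes \mathbf{1} + \mathbf{1}\otimes b_r$. One then tracks which terms contribute to the four types appearing in the statement: the term $\mathbf{1} \otimes b_r$ multiplied into $1 \otimes (b_1\cdots b_{r-1})$ yields $1 \otimes (b_1\cdots b_r)$; the first-tensor-factor monomials $b_1 \otimes(\cdots)$ and $b_k \otimes(\cdots)$ paired with $\mathbf{1}\otimes b_r$ reproduce the corresponding lower-$r$ terms with $b_r$ appended on the right of the second factor; and the genuinely new contribution, $b_r \otimes (b_1\cdots b_{r-1})$, arises from multiplying $\mathbf{1}\otimes(b_1\cdots b_{r-1})$ (the leading term of the inductive formula) by $b_r \otimes \mathbf{1}$, which in the twisted product produces the factor $q^{-(|b_r|, |b_1| + \cdots + |b_{r-1}|)} = q^{-(|b_r|,\,\sum_{p=1}^{r-1}|b_p|)}$, matching the $k = r$ summand. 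All remaining products either involve multiplying two monomials each of degree $\ge 1$ in the first tensor slot — hence land in $\sum x_i \otimes y_i$ with first factor of degree $\ge 2$ — or come from the old $\sum x_i \otimes y_i$ term of the inductive hypothesis multiplied by either piece of $\delta(b_r)$, which again keeps the first factor of degree $\ge 2$.

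The main bookkeeping obstacle is keeping the twist exponents straight: every time a monomial $x \otimes y$ in $\mathscr{F}\otimes\mathscr{F}$ is multiplied on the right by $b_r \otimes \mathbf{1}$, the rule $(x\otimes y)(b_r \otimes \mathbf{1}) = q^{-(|y|, |b_r|)} x b_r \otimes y$ introduces $q^{-(|y|,|b_r|)}$, and one must check this is consistent with the stated exponent $-(|b_k|, \sum_{p<k}|b_p|)$ after reindexing. Since $|\,\cdot\,|$ is additive and the bilinear form is symmetric, $-(|y|,|b_r|) = -(|b_r|, |b_1|+\cdots+|b_{r-1}|)$ when $y = b_1\cdots b_{r-1}$, and more generally the exponents for the terms inherited from the inductive hypothesis are unchanged because multiplying the second factor on the \emph{right} by $b_r$ (via $\mathbf{1}\otimes b_r$) incurs no twist at all. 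This confirms the exponents and completes the induction; I expect no conceptual difficulty beyond this careful tracking.
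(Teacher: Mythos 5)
Your proposal is correct and takes essentially the same route as the paper: both proceed by induction on $r$, writing $\delta(b_1\cdots b_r)=\delta(b_1\cdots b_{r-1})\,\delta(b_r)$ with $\delta(b_r)=b_r\otimes\mathbf 1+\mathbf 1\otimes b_r$, and then tracking which products contribute to each of the four term types while accounting for the twist factor in exactly the way you describe.
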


\vskip 3mm

\begin{proof} \
We will use induction on $r$.
If $r=1$, there is nothing to prove.

\vskip 2mm
Assume that $r \ge 2$.
Then we have

\begin{equation*}
\begin{aligned}
 \delta(b_{1} \cdots b_{r})& = \delta(b_{1} \cdots b_{r-1}) \, \delta(b_{r})
= \delta(b_{1} \cdots b_{r-1}) \, (1 \otimes b_{r} + b_{r} \otimes 1)\\
& = 1 \otimes (b_{1} \cdots b_{r-1} b_{r}) + b_{1} \otimes (b_{2} \cdots b_{r-1} b_{r})  \\
& + q^{-( |b_{r}|, \sum_{p=1}^{r-1} |b_{p}|)} \, b_{r} \otimes (b_{1} \cdots b_{r-1})
 + q^{-(|b_{r}|, \sum_{p=2}^{r-1} |b_{p}|)} \,  b_{1} b_{r} \otimes (b_{2} \cdots b_{r-1}) \\
 & + \sum_{k=2}^{r-1} q^{-(|b_{k}| , \, \sum_{p=1}^{k-1} |b_{p}|)}
 \, b_{k} \otimes( b_{1} \cdots \widehat{b_{k}} \cdots b_{r-1} b_{r}) \\
 & + \sum_{k=2}^{r-1} q^{-(|b_{k}| , \, \sum_{p=1}^{k-1} |b_{p}|)}
 \, q^{-(|b_{r}, \, \sum_{p=1, p \neq k}^{r-1} |b_{p}|)}
 (b_{k} b_{r} \otimes b_{1} \cdots \widehat{b_k} \cdots b_{r-1}) \\
 & + \sum x_{i} \otimes y_i \, b_{r} + q^{-(|y_{i}|, |b_{r}|)} x_i b_{r} \otimes y_i \\
 & = 1 \otimes (b_{1} \cdots b_{r}) + b_{1} \otimes (b_{2} \cdots b_{r}) \\
 & + \sum_{k=2}^{r} q^{-(|b_{k}| , \, \sum_{p=1}^{k-1} |b_{p}|)}
 \, b_{k} \otimes (b_{1} \cdots \widehat{b_{k}} \cdots  b_{r})
 + \sum x_{i}' \otimes y_{i}',
\end{aligned}
\end{equation*}
where $\deg x_{i}' \ge 2$ and our assertion follows.
\end{proof}

\vskip 3mm

\begin{corollary} \label{cor:Lusztig} \hfill

\vskip 2mm

{\rm

Let $a_k = {\mathtt b}_{i_k l_k}$ and $b_{k} = {\mathtt b}_{j_k m_k}$ $(1 \le k \le r)$.
Then we have
\begin{equation*}
\begin{aligned}
(a_{1}   \cdots a_{r}, b_{1} \cdots b_{r})_{L}& = (a_{1}, b_{1})_{L} \,
(a_{2} \cdots a_{r}, \, b_{2} \cdots b_{r})_{L} \\
& + \sum_{k=2}^{r} q^{-(|b_{k}|, \sum_{p=1}^{k-1} |b_{p}|)} \,
(a_{1}, b_{k})_{L} \, (a_{2} \cdots a_{r}, b_{1} \cdots \widehat{b_{k}} \cdots b_{r})_{L}.
\end{aligned}
\end{equation*}
}
\end{corollary}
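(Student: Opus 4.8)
The plan is to derive Corollary~\ref{cor:Lusztig} directly from Lemma~\ref{lem:deltab} together with property (d) of the bilinear form $(\ ,\ )_{L}$, namely $(x, yz)_{L} = (\delta(x), y\otimes z)_{L}$, and the compatibility of the form with the $R_{-}$-grading (property (c)). First I would set $x = a_{1}\cdots a_{r}$ and apply property (d) once, peeling off $b_{1}$: writing $y = b_{1}$ and $z = b_{2}\cdots b_{r}$ gives
\begin{equation*}
(a_{1}\cdots a_{r},\ b_{1}\cdots b_{r})_{L} = (\delta(a_{1}\cdots a_{r}),\ b_{1}\otimes b_{2}\cdots b_{r})_{L}.
\end{equation*}
Here the pairing on $\mathscr{F}\otimes\mathscr{F}$ is the tensor-product pairing $(u_{1}\otimes u_{2}, v_{1}\otimes v_{2})_{L} = (u_{1},v_{1})_{L}(u_{2},v_{2})_{L}$, which is the standard convention making $\delta$ adjoint to multiplication.

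Next I would substitute the explicit formula for $\delta(a_{1}\cdots a_{r})$ from Lemma~\ref{lem:deltab} (applied to the $a_{k}$'s) into the right-hand side. The expansion has four types of terms: $1\otimes(a_{1}\cdots a_{r})$, $a_{1}\otimes(a_{2}\cdots a_{r})$, the sum $\sum_{k=2}^{r} q^{-(|a_{k}|,\sum_{p<k}|a_{p}|)}\, a_{k}\otimes(a_{1}\cdots\widehat{a_{k}}\cdots a_{r})$, and the ``higher'' terms $\sum x_{i}\otimes y_{i}$ with $\deg x_{i}\ge 2$. Pairing each against $b_{1}\otimes(b_{2}\cdots b_{r})$ and using that $(\ ,\ )_{L}$ kills homogeneous elements of different degree (property (c)): the first term dies because $(1, b_{1})_{L} = 0$ (degree mismatch, as $|b_{1}| = m_{1}\alpha_{j_{1}}\ne 0$); the higher terms $x_{i}\otimes y_{i}$ die because $(x_{i}, b_{1})_{L} = 0$ since $x_{i}$ has degree $\ge 2$ in the $a$'s while $b_{1}$ is a single generator, so their degrees cannot match. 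This leaves exactly
\begin{equation*}
(a_{1},b_{1})_{L}\,(a_{2}\cdots a_{r},\ b_{2}\cdots b_{r})_{L} + \sum_{k=2}^{r} q^{-(|a_{k}|,\sum_{p=1}^{k-1}|a_{p}|)}\,(a_{k},b_{1})_{L}\,(a_{1}\cdots\widehat{a_{k}}\cdots a_{r},\ b_{2}\cdots b_{r})_{L}.
\end{equation*}

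Finally, I would reconcile this with the stated form of the corollary, where the expansion is over the $b$'s rather than the $a$'s. The point is that $(a_{k},b_{1})_{L}$ is nonzero only when $|a_{k}| = |b_{1}|$, and in the symmetric formulation one can equally well think of choosing which $b_{k}$ pairs with $a_{1}$: applying property (d) instead in the form $(x, yz)_{L}$ with $z$ on the right, or simply using the symmetry of $(\ ,\ )_{L}$ and relabeling, converts the ``$a$-expansion'' above into the ``$b$-expansion'' in the statement, with the $q$-power $q^{-(|b_{k}|,\sum_{p=1}^{k-1}|b_{p}|)}$ appearing because the relevant twist in the twisted multiplication on $\mathscr{F}\otimes\mathscr{F}$ records the degree of the element being moved past the others. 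The one bookkeeping point that deserves care — and the main (minor) obstacle — is matching the exponents of $q$ exactly: one must check that the twist picked up when the factor $b_{k}$ is extracted from $b_{1}\cdots b_{r}$ and paired with $a_{1}$ is precisely $q^{-(|b_{k}|, |b_{1}|+\cdots+|b_{k-1}|)}$, which follows by tracking the definition of the twisted product $(x_{1}\otimes x_{2})(y_{1}\otimes y_{2}) = q^{-(|x_{2}|,|y_{1}|)}x_{1}y_{1}\otimes x_{2}y_{2}$ through the inductive computation in Lemma~\ref{lem:deltab}; since degrees are additive and the form respects the grading, only the terms with $|a_{1}| = |b_{k}|$ survive and the exponent is unambiguous. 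No genuinely hard step is involved; this is essentially an unwinding of the Hopf-pairing axiom against the comultiplication formula already established.
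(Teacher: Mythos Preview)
Your overall strategy --- expand the comultiplication via Lemma~\ref{lem:deltab} and pair using the Hopf-pairing property (d) --- is exactly what the paper does, but you apply $\delta$ on the wrong side. You compute $(\delta(a_{1}\cdots a_{r}),\, b_{1}\otimes b_{2}\cdots b_{r})_{L}$ and obtain the ``$a$-expansion''
\[
\sum_{k\ge 1} q^{-(|a_{k}|,\sum_{p<k}|a_{p}|)}\,(a_{k},b_{1})_{L}\,(a_{1}\cdots\widehat{a_{k}}\cdots a_{r},\, b_{2}\cdots b_{r})_{L},
\]
which is a true identity but not the one stated. Your final ``reconciliation'' paragraph does not actually convert this into the required ``$b$-expansion'': relabeling or degree-matching cannot turn a sum indexed by which $a_{k}$ hits $b_{1}$ into a sum indexed by which $b_{k}$ hits $a_{1}$, since the remaining factors $(a_{1}\cdots\widehat{a_{k}}\cdots a_{r},\, b_{2}\cdots b_{r})_{L}$ and $(a_{2}\cdots a_{r},\, b_{1}\cdots\widehat{b_{k}}\cdots b_{r})_{L}$ are genuinely different.

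The paper avoids this detour entirely: using symmetry of $(\ ,\ )_{L}$ together with (d) one has the adjoint identity $(xy,z)_{L}=(x\otimes y,\delta(z))_{L}$, and hence
\[
(a_{1}\cdots a_{r},\, b_{1}\cdots b_{r})_{L}=(a_{1}\otimes a_{2}\cdots a_{r},\ \delta(b_{1}\cdots b_{r}))_{L}.
\]
Now substituting Lemma~\ref{lem:deltab} for $\delta(b_{1}\cdots b_{r})$ and pairing kills the $1\otimes(\cdots)$ term and the higher terms $x_{i}\otimes y_{i}$ (by degree, exactly as you argue), and what remains is precisely the stated formula with the correct exponents $q^{-(|b_{k}|,\sum_{p<k}|b_{p}|)}$ --- no reconciliation needed. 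Your argument becomes correct if you simply swap which side receives $\delta$.
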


\begin{proof} \ Our assertion follows immediately from Lemma \ref{lem:deltab}.
\begin{equation*}
\begin{aligned}
& (a_{1} \cdots a_{r}, \, b_{1} \cdots b_{r})_{L}
= (a_{1} \otimes a_{2} \cdots a_{r}), \, \delta(b_{1} \cdots b_{r}))_{L} \\
& = (a_{1} \otimes a_{2} \cdots a_{r}, \, 1 \otimes b_{1} \cdots b_{r})_{L}
+ (a_{1} \otimes a_{2} \cdots a_{r}, \, b_{1} \otimes b_{2} \cdots b_{r})_{L} \\
& + (a_{1} \otimes a_{2} \cdots a_{r}, \
\sum_{k=2}^{r} q^{-(|b_{k}|, \, \sum_{p=1}^{k-1} |b_{p}|)} \, b_{k} \otimes b_{1}
\cdots  \widehat{b_k} \cdots b_{r})_{L} \\
& + (a_{1} \otimes a_{2} \cdots a_{r}, \, \sum x_{i}\otimes y_{i})_{L} \\
& =  (a_{1}, b_{1})_{L} \, (a_{2} \cdots a_{r}, \, b_{2} \cdots b_{r})_{L} \\
& + \sum_{k=2}^{r}  q^{-(|b_{k}|, \, \sum_{p=1}^{k-1} |b_{p}|)}
(a_{1}, b_{k})_{L} \, (a_{2} \cdots a_{r},\, b_{1} \cdots \widehat{b_{k}} \cdots b_{r})_{L}.
\end{aligned}
\end{equation*}
\end{proof}

Next, we will show that Lusztig's bilinear form and Kashiwara's bilinear form are equivalent
up to $q \A_{0}$.

\vskip 2mm

\begin{lemma} \label{lem:eb} \hfill
\vskip 2mm

{\rm
For $k = 1, 2, \ldots, r$, we have
\begin{equation} \label{eq:eb}
\begin{aligned}
e_{i_1 l_1}'   (\mathtt b_{j_1 m_1} \cdots\mathtt b_{j_r m_r})
&= \delta_{i_{1} j_{1}} \delta_{l_1 m_1} \, (\mathtt b_{j_2 m_2} \cdots \mathtt b_{j_r m_r}) \\
& + q_{i_1}^{-\sum_{p=1}^r l_1 m_p a_{i_1j_p}} \, (\mathtt b_{j_1 m_1} \cdots\mathtt b_{j_r m_r}) \, e_{i_1 l_1}' \\
& + \sum_{k=2}^r \delta_{i_1, j_k} \delta_{l_1, m_k} \,
q_{i_1}^{-\sum_{p=1}^{k-1} \, l_1 m_p  a_{i_1 j_p}} \,
(\mathtt b_{j_1 m_1} \cdots \widehat{\mathtt b_{j_k m_k}} \cdots\mathtt b_{j_r m_r}).
\end{aligned}
\end{equation}
}
\end{lemma}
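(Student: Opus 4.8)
The plan is to prove \eqref{eq:eb} by induction on $r$, using nothing beyond the basic commutation relation for $e'_{il}$ recorded in \eqref{eq:commute}, read as an identity of operators on $U_q^-(\g)$:
\[
e'_{il}\,\mathtt b_{jk}=\delta_{ij}\delta_{kl}+q_i^{-kla_{ij}}\,\mathtt b_{jk}\,e'_{il}.
\]
The case $r=1$ is precisely this relation (with $i=i_1$, $l=l_1$, $j=j_1$, $k=m_1$), so there is nothing to do in the base step.

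For the inductive step I would peel off the first factor. Writing $\mathtt b_{j_1 m_1}\cdots\mathtt b_{j_r m_r}=\mathtt b_{j_1 m_1}\bigl(\mathtt b_{j_2 m_2}\cdots\mathtt b_{j_r m_r}\bigr)$ and applying the relation above once,
\[
e'_{i_1 l_1}\bigl(\mathtt b_{j_1 m_1}\cdots\mathtt b_{j_r m_r}\bigr)
=\delta_{i_1 j_1}\delta_{l_1 m_1}\bigl(\mathtt b_{j_2 m_2}\cdots\mathtt b_{j_r m_r}\bigr)
+q_{i_1}^{-l_1 m_1 a_{i_1 j_1}}\,\mathtt b_{j_1 m_1}\,e'_{i_1 l_1}\bigl(\mathtt b_{j_2 m_2}\cdots\mathtt b_{j_r m_r}\bigr).
\]
The first summand is exactly the $k=1$ term on the right-hand side of \eqref{eq:eb}. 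To the second summand I would apply the induction hypothesis to the length-$(r-1)$ word $\mathtt b_{j_2 m_2}\cdots\mathtt b_{j_r m_r}$, and then distribute the prefactor $q_{i_1}^{-l_1 m_1 a_{i_1 j_1}}\mathtt b_{j_1 m_1}$ over the three resulting groups of terms.

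The bookkeeping then runs as follows. The ``diagonal'' term $\delta_{i_1 j_2}\delta_{l_1 m_2}\bigl(\mathtt b_{j_3 m_3}\cdots\mathtt b_{j_r m_r}\bigr)$ coming out of the hypothesis becomes the $k=2$ term of \eqref{eq:eb}, since the prefactor $q_{i_1}^{-l_1 m_1 a_{i_1 j_1}}$ supplies precisely the exponent $q_{i_1}^{-\sum_{p=1}^{1}l_1 m_p a_{i_1 j_p}}$ and $\mathtt b_{j_1 m_1}$ is reinstated at the front; the trailing operator term $q_{i_1}^{-\sum_{p=2}^{r}l_1 m_p a_{i_1 j_p}}\bigl(\mathtt b_{j_2 m_2}\cdots\mathtt b_{j_r m_r}\bigr)e'_{i_1 l_1}$ picks up the prefactor and becomes $q_{i_1}^{-\sum_{p=1}^{r}l_1 m_p a_{i_1 j_p}}\bigl(\mathtt b_{j_1 m_1}\cdots\mathtt b_{j_r m_r}\bigr)e'_{i_1 l_1}$; and for each $k\ge 3$ the summand $\delta_{i_1 j_k}\delta_{l_1 m_k}\,q_{i_1}^{-\sum_{p=2}^{k-1}l_1 m_p a_{i_1 j_p}}\bigl(\mathtt b_{j_2 m_2}\cdots\widehat{\mathtt b_{j_k m_k}}\cdots\mathtt b_{j_r m_r}\bigr)$ acquires the factor $q_{i_1}^{-l_1 m_1 a_{i_1 j_1}}\mathtt b_{j_1 m_1}$, which fills in the missing $p=1$ term of the exponent and restores $\mathtt b_{j_1 m_1}$ at the front, yielding the general $k$-th term of \eqref{eq:eb}. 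Collecting the $k=1$, $k=2$, $k\ge 3$, and trailing contributions reproduces \eqref{eq:eb} for length $r$, completing the induction.

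There is no real obstacle here; the only point requiring care is the reindexing of the $q$-exponents — each application of the basic relation extends the partial sum $\sum_{p=2}^{k-1}$ to $\sum_{p=1}^{k-1}$ — together with tracking the hats $\widehat{\mathtt b_{j_k m_k}}$, both of which are purely clerical. As an alternative presentation, one could dispense with the induction and instead iterate the basic relation $r$ times directly, peeling $\mathtt b_{j_1 m_1},\dots,\mathtt b_{j_r m_r}$ off one at a time and recording the scalar $q_{i_1}^{-l_1 m_p a_{i_1 j_p}}$ produced at the $p$-th step; this makes the origin of the exponent $-\sum_{p=1}^{k-1}l_1 m_p a_{i_1 j_p}$ completely transparent.
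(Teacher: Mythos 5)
Your proof is correct and uses essentially the same approach as the paper: induction on $r$ driven by the single commutation relation $e'_{il}\mathtt b_{jk}=\delta_{ij}\delta_{kl}+q_i^{-kla_{ij}}\mathtt b_{jk}e'_{il}$ from \eqref{eq:commute}. The only difference is a mirror image in the bookkeeping — you peel off the leftmost factor $\mathtt b_{j_1 m_1}$ before invoking the induction hypothesis on the remaining $r-1$ factors, while the paper applies the hypothesis to $\mathtt b_{j_1 m_1}\cdots\mathtt b_{j_{r-1}m_{r-1}}$ first and then commutes the trailing $e'_{i_1 l_1}$ past $\mathtt b_{j_r m_r}$; both variants track the $q$-exponents and hats correctly.
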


\begin{proof} \ We will use induction on $r$.
When $r=1$, by \eqref{eq:commute}, our assertion  follows immediately.

\vskip 3mm

Assume that $r \ge 2$.
Using the induction hypothesis, we have
\begin{equation*}
\begin{aligned}
& e_{i_1 l_1}' (\mathtt b_{j_1m_1} \cdots\mathtt b_{j_{r-1}m_{r-1}} \mathtt b_{j_rm_r})
= (e_{i_1l_1}'\mathtt  b_{j_1m_1} \cdots\mathtt b_{j_{r-1}m_{r-1}}) \mathtt b_{j_rm_r} \\
& = \delta_{i_1, j_1} \, \delta_{l_1, m_1} \,
(\mathtt b_{j_2m_2} \cdots\mathtt b_{j_{r-1}m_{r-1}}\mathtt b_{j_rm_r})  \\
& + q_{i_1}^{-\sum_{p=1}^{r-1} l_1 m_p a_{i_1j_p}}\, (\mathtt b_{j_1 m_1} \cdots\mathtt b_{j_{r-1} m_{r-1}})
(e_{i_1 l_1}'\mathtt b_{j_r m_r}) \\
& + \sum_{k=2}^{r-1} \delta_{i_1, j_k} \, \delta_{l_{1}, m_{k}}\,
q_{i_1}^{-\sum_{p=1}^{k-1} l_1 m_p a_{i_1 j_p}} \,
(\mathtt b_{j_1 m_1} \cdots \widehat{\mathtt b_{j_k m_k}}\cdots\mathtt b_{j_{r-1}, m_{r-1}})\mathtt b_{j_r m_r } \\
& = \delta_{i_1, j_1} \, \delta_{l_1, m_1} \,
(\mathtt b_{j_2 m_2} \cdots\mathtt b_{j_{r-1} m_{r-1}}\mathtt b_{j_r m_r})  \\
& + q_{i_1}^{-\sum_{p=1}^{r-1} l_1 m_p a_{i_1 j_p}}\,
\delta_{i_1, j_r} \, \delta_{l_1, m_r} \,(\mathtt b_{j_1 m_1} \cdots \mathtt b_{j_{r-1} m_{r-1}}) \\
& +  q_{i_1}^{-(\sum_{p=1}^{r-1} l_1 m_p a_{i_1 j_p} + l_1 m_r a_{i_1j_r})} \,
(\mathtt b_{j_1 m_1} \cdots\mathtt b_{j_r m_r} e_{i_1 l_1}') \\
& + \sum_{k=2}^{r-1} \delta_{i_1,j_k} \, \delta_{l_1, m_k} \,
q_{i_1}^{-\sum_{p=1}^{k-1} l_1 m_p a_{i_1 j_p}} \, (\mathtt b_{j_1 m_1} \cdots \widehat{\mathtt b_{j_k m_k}} \cdots\mathtt b_{j_{r-1} m_{r-1}} \mathtt b_{j_r m_r}) \\
& = \delta_{i_1, j_1} \, \delta_{l_1, m_1} \, (\mathtt b_{j_2 m_2} \cdots\mathtt b_{j_r m_r})
+ q_{i_{1}}^{-\sum_{p=1}^{r} l_1 m_p a_{i_1 j_p}} \,
(\mathtt b_{j_1 m_1} \cdots\mathtt b_{j_r m_r} e_{i_1 l_1}')  \\
& + \sum_{k=2}^{r} \delta_{i_1, j_k}\, \delta_{l_1, m_k} \,
q_{i_1}^{-\sum_{p=1}^{k-1} l_1 m_p a_{i_1 j_p}}
\, (\mathtt b_{j_1 m_1} \cdots \widehat{\mathtt b_{j_k m_k}} \cdots \mathtt b_{j_r m_r}),
\end{aligned}
\end{equation*}
as desired.
\end{proof}

\vskip 3mm

\begin{corollary} \label{cor:Kashiwara} \hfill

\vskip 2mm

{\rm Let ${\mathtt b}_{i_k l_k}, \, {\mathtt b}_{j_k m_k} \in U_{q}^{-}(\g)$  $(k =1,2, \ldots, r)$.
Then Kashiwara's bilinear form is given by

\begin{equation} \label{eq:Kashiwara}
\begin{aligned}
& ({\mathtt b}_{i_1l_1} \cdots {\mathtt b}_{i_r l_r},
\, {\mathtt b}_{j_1 m_1} \cdots {\mathtt b}_{j_r m_r} )_{K} \\
& =  \delta_{i_1, j_1} \, \delta_{l_1, m_1} ({\mathtt b}_{i_2l_2} \cdots {\mathtt b}_{i_r l_r},
\, {\mathtt b}_{j_2 m_2} \cdots {\mathtt b}_{j_r m_r})_{K} \\
& + \sum_{k=2}^{r} \delta_{i_1, j_k}\, \delta_{l_1, m_k} \,
q_{i_1}^{- \sum_{p=1}^{k-1} l_1 m_p a_{i_1 j_p}}
\, ({\mathtt b}_{i_2 l_2} \cdots {\mathtt b}_{i_r l_r} , \,
{\mathtt b}_{j_1 m_1} \cdots \widehat{{\mathtt b}_{j_k m_k}} \cdots {\mathtt b}_{j_r m_r})_{K}.
\end{aligned}
\end{equation}

}
\end{corollary}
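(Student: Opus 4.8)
The plan is to deduce \eqref{eq:Kashiwara} directly from Lemma \ref{lem:eb} together with the defining adjunction property of Kashiwara's bilinear form. Recall from \eqref{eq:bilinearU} that $(\mathtt b_{il} S, T)_{K} = (S, e_{il}' T)_{K}$ for all $S, T \in U_{q}^{-}(\g)$ and $(i,l) \in I^{\infty}$. Applying this with $S = \mathtt b_{i_2 l_2} \cdots \mathtt b_{i_r l_r}$ and $T = \mathtt b_{j_1 m_1} \cdots \mathtt b_{j_r m_r}$, and writing $\mathtt b_{i_1 l_1} \cdots \mathtt b_{i_r l_r} = \mathtt b_{i_1 l_1} \cdot (\mathtt b_{i_2 l_2} \cdots \mathtt b_{i_r l_r})$, I get
\begin{equation*}
(\mathtt b_{i_1 l_1} \cdots \mathtt b_{i_r l_r}, \, \mathtt b_{j_1 m_1} \cdots \mathtt b_{j_r m_r})_{K}
= (\mathtt b_{i_2 l_2} \cdots \mathtt b_{i_r l_r}, \, e_{i_1 l_1}'(\mathtt b_{j_1 m_1} \cdots \mathtt b_{j_r m_r}))_{K},
\end{equation*}
so it remains only to evaluate the second argument on the right.

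Next I would invoke Lemma \ref{lem:eb}. The identity stated there is an identity of operators on $U_{q}^{-}(\g)$, where the $\mathtt b$'s act by left multiplication; specializing it by acting on $\mathbf 1 \in U_{q}^{-}(\g)$ and using $e_{i_1 l_1}' \mathbf 1 = 0$ (forced by degree reasons) kills the middle term carrying $(\mathtt b_{j_1 m_1} \cdots \mathtt b_{j_r m_r}) e_{i_1 l_1}'$, leaving the identity of elements
\begin{equation*}
e_{i_1 l_1}'(\mathtt b_{j_1 m_1} \cdots \mathtt b_{j_r m_r})
= \delta_{i_1 j_1} \delta_{l_1 m_1} \, (\mathtt b_{j_2 m_2} \cdots \mathtt b_{j_r m_r})
+ \sum_{k=2}^{r} \delta_{i_1 j_k} \delta_{l_1 m_k} \, q_{i_1}^{-\sum_{p=1}^{k-1} l_1 m_p a_{i_1 j_p}} \, (\mathtt b_{j_1 m_1} \cdots \widehat{\mathtt b_{j_k m_k}} \cdots \mathtt b_{j_r m_r}).
\end{equation*}
Substituting this into the previous display and using $\Q(q)$-bilinearity of $(\ ,\ )_{K}$ term by term then yields \eqref{eq:Kashiwara} exactly.

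The argument is essentially a one-line deduction, so I do not anticipate a serious obstacle. The only point requiring care is the passage from the operator identity of Lemma \ref{lem:eb} to the corresponding identity of \emph{elements} of $U_{q}^{-}(\g)$, that is, justifying that the term with $e_{i_1 l_1}'$ on the right drops out after acting on $\mathbf 1$, and checking that the $R_{-}$-grading on $U_{q}^{-}(\g)$ forces all surviving pairings to be between homogeneous elements of equal degree, so that no cross terms are lost when $(\ ,\ )_{K}$ is distributed. I would also remark that \eqref{eq:Kashiwara} is deliberately parallel in shape to the recursion for $(\ ,\ )_{L}$ in Corollary \ref{cor:Lusztig}; this structural parallel is precisely what will later allow the comparison of the two forms and the proof that $(x,y)_{L} \equiv (x,y)_{K} \bmod q\,\A_{0}$ (Proposition \ref{prop:compareLK}).
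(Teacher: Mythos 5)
Your proposal is correct and is precisely the argument the paper leaves implicit (the corollary is stated immediately after Lemma \ref{lem:eb} with no proof given, the intended deduction being exactly the adjunction $(\mathtt b_{i_1 l_1} S, T)_K = (S, e_{i_1 l_1}' T)_K$ followed by the operator identity applied to $\mathbf 1$, where the term carrying $e_{i_1 l_1}'$ on the right vanishes because $e_{i_1 l_1}'\mathbf 1 = 0$). The only inessential remark in your write-up is the worry about "cross terms lost when $(\ ,\ )_K$ is distributed": bilinearity alone handles that, and the vanishing of pairings between elements of different $R_{-}$-degree is built into the form and needs no separate check here.
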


\vskip 3mm

As we can see in the following proposition,  Lusztig's bilinear form and Kashiwara's bilinear form
are closely related.

\vskip 3mm

\begin{proposition} \label{prop:compareLK} \hfill
\vskip 2mm

{\rm
Let ${\mathtt b}_{i_k l_k}, \, {\mathtt b}_{j_k m_k} \in U_{q}^{-}(\g)$  $(k =1,2, \ldots, r)$.
Then we have
\begin{equation*}
\begin{aligned}
& ({\mathtt b}_{i_1 l_1}  \cdots {\mathtt b}_{i_r l_r}, \, {\mathtt b}_{j_1 m_1}
\cdots {\mathtt b}_{j_r m_r})_{L}     \\
& = \prod_{s=1}^r (1 -q_{i_s}^{2 l_s})^{-1} \,
({\mathtt b}_{i_1 l_1} \cdots b_{i_r l_r}, \, {\mathtt b}_{j_1 m_1} \cdots
{\mathtt b}_{j_r m_r})_{K}.
\end{aligned}
\end{equation*}
Therefore, we have
\begin{equation*}
(x, y)_{L} = (x, y)_{K} \ \ \text{mod} \ \, q\, \A_{0}  \ \ \text{for all} \ x, y \in U_{q}^{-}(\g).
\end{equation*}
}
\end{proposition}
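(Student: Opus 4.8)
The plan is to prove the displayed identity by induction on $r$, playing the two recursive formulas already established against each other: Corollary \ref{cor:Lusztig} for $(\ ,\ )_L$ and Corollary \ref{cor:Kashiwara} for $(\ ,\ )_K$. For the base case $r=1$ one computes directly. On one hand, since ${\mathtt b}_{i_1l_1}$ is homogeneous of degree $-l_1\alpha_{i_1}$ and the $\alpha_i$ are linearly independent, property (c) of Proposition \ref{prop:Lusztig} together with the normalization $\tau_{il}=(1-q_i^{2l})^{-1}$ gives $({\mathtt b}_{i_1l_1},{\mathtt b}_{j_1m_1})_L=\delta_{i_1j_1}\delta_{l_1m_1}(1-q_{i_1}^{2l_1})^{-1}$. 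On the other hand, from \eqref{eq:bilinearU} and \eqref{eq:commute} one has $({\mathtt b}_{i_1l_1},{\mathtt b}_{j_1m_1})_K=(\mathbf 1,e'_{i_1l_1}{\mathtt b}_{j_1m_1})_K=\delta_{i_1j_1}\delta_{l_1m_1}$, so the two pairings agree up to the factor $(1-q_{i_1}^{2l_1})^{-1}$, as required.

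For the inductive step, write $a_k={\mathtt b}_{i_kl_k}$ and $b_k={\mathtt b}_{j_km_k}$. I would feed the induction hypothesis into the right-hand side of Corollary \ref{cor:Lusztig}: each summand there carries a factor $(a_1,b_1)_L$ or $(a_1,b_k)_L$, equal to $\delta_{i_1j_1}\delta_{l_1m_1}(1-q_{i_1}^{2l_1})^{-1}$ or $\delta_{i_1j_k}\delta_{l_1m_k}(1-q_{i_1}^{2l_1})^{-1}$, multiplied by a lower-length Lusztig pairing that by induction equals $\prod_{s=2}^r(1-q_{i_s}^{2l_s})^{-1}$ times the corresponding Kashiwara pairing. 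Factoring $\prod_{s=1}^r(1-q_{i_s}^{2l_s})^{-1}$ out of the whole expression leaves exactly the right-hand side of Corollary \ref{cor:Kashiwara}, provided the $q$-powers in the two formulas coincide. They do: on the support of $\delta_{i_1j_k}\delta_{l_1m_k}$ we have $i_1=j_k$, $l_1=m_k$, and using $s_i a_{ij}=(\alpha_i,\alpha_j)$,
\begin{equation*}
-\Bigl(|b_k|,\sum_{p=1}^{k-1}|b_p|\Bigr)=-\sum_{p=1}^{k-1}l_1m_p(\alpha_{i_1},\alpha_{j_p})=-\sum_{p=1}^{k-1}s_{i_1}l_1m_pa_{i_1j_p},
\end{equation*}
so $q^{-(|b_k|,\sum_{p<k}|b_p|)}=q_{i_1}^{-\sum_{p<k}l_1m_pa_{i_1j_p}}$. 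This closes the induction.

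For the final congruence, note $q_{i_s}^{2l_s}=q^{2s_{i_s}l_s}$ with $2s_{i_s}l_s>0$, hence $(1-q_{i_s}^{2l_s})^{-1}\in\A_0$ and $(1-q_{i_s}^{2l_s})^{-1}\equiv 1 \bmod q\A_0$; therefore $\prod_{s=1}^r(1-q_{i_s}^{2l_s})^{-1}\equiv 1 \bmod q\A_0$, and the identity just proved yields $(x,y)_L\equiv(x,y)_K \bmod q\A_0$ whenever $x,y$ are monomials in the ${\mathtt b}_{il}$'s of the same degree. Monomials of distinct degrees pair to $0$ under both forms, and such monomials span $U_q^-(\g)$, so bilinearity extends the congruence to all $x,y\in U_q^-(\g)$.

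The argument has no deep difficulty; it is a careful bookkeeping induction. The one place that genuinely needs care is matching the scalar $q$-exponents in the two recursions, which rests on the identity $s_ia_{ij}=(\alpha_i,\alpha_j)$ and on the Kronecker deltas forcing $i_1=j_k$, $l_1=m_k$ in the relevant terms. More conceptually, one should keep in mind that both forms are well defined on the quotient $U_q^-(\g)\cong\mathscr F/\mathscr R$ — for $(\ ,\ )_L$ this is built into Proposition \ref{prop:Lusztig} via the radical, while $(\ ,\ )_K$ is defined on $U_q^-(\g)$ to begin with — so that the identity, once verified for all monomial pairs, is automatically compatible with the defining relations of $U_q^-(\g)$ and hence independent of the chosen monomial representatives.
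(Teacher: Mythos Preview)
Your proof is correct and follows essentially the same route as the paper: induction on $r$, with the base case handled by the normalizations $\tau_{il}=(1-q_i^{2l})^{-1}$ and $({\mathtt b}_{il},{\mathtt b}_{jk})_K=\delta_{ij}\delta_{lk}$, and the inductive step carried out by feeding the recursion of Corollary~\ref{cor:Lusztig} into that of Corollary~\ref{cor:Kashiwara} after matching the $q$-exponents via $s_ia_{ij}=(\alpha_i,\alpha_j)$ on the support of the Kronecker deltas. Your treatment of the final congruence and your closing remark on well-definedness are slightly more explicit than the paper's, but the argument is the same.
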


\begin{proof} \ We will use induction on $r$.
If $r=1$, our assertion follows from the definition of these bilinear forms.

\vskip 3mm

Assume that $r \ge 2$.
By Corollary \ref{cor:Lusztig} and induction hypothesis, we have
 \begin{equation} \label{eq:Lusztig1}
 \begin{aligned}
  & ({\mathtt b}_{i_1 l_1} \cdots {\mathtt b}_{i_r l_r}, \,
  {\mathtt b}_{j_1 m_1} \cdots {\mathtt b}_{j_r  m_r})_{L} \\
  & = ({\mathtt b}_{i_1 l_1}, b_{j_1 m_1})_{L} \,
  ({\mathtt b}_{i_2 l_2} \cdots {\mathtt b}_{i_r l_r},
  {\mathtt b}_{j_2 m_2} \cdots {\mathtt b}_{j_r  m_r})_{L} \\
  & + \sum_{k=2}^{r} q^{-\sum_{p=1}^{k-1}
  (m_k \alpha_{j_k}, m_p \alpha_{j_p})} \,
   ({\mathtt b}_{i_1  l_1}, {\mathtt b}_{j_k  m_k})_{L}\\
  & \hskip 5mm
  \times
  ({\mathtt b}_{i_2 l_2} \cdots {\mathtt b}_{i_r l_r},  \,
  {\mathtt b}_{j_1  m_1} \dots \widehat{{\mathtt b}_{j_k m_k}} \cdots {\mathtt b}_{j_r m_r})_{L}
    \\
 & = \delta_{i_1, j_1} \delta_{l_1, m_1} \, (1 - q_{i_1}^{2 l_1})^{-1}
 \prod_{s=2}^r  (1 - q_{{\color{red}i_{s}}}^{2 l_s})^{-1} \\
 & \hskip 5mm
 \times ({\mathtt b}_{i_2 l_2} \cdots {\mathtt b}_{i_r l_r},
 {\mathtt b}_{j_2 m_2} \cdots {\mathtt b}_{j_r  m_r})_{K} \\
 & + \sum_{k=2}^{r} q^{-\sum_{p=1}^{k-1}  (m_k \alpha_{j_k}, m_p \alpha_{j_p}) }
 \, \delta_{i_1, j_k} \, \delta_{l_1, m_k} \, (1 - q_{i_1}^{2 l_1})^{-1} \\
 & \hskip 5mm
 \times
 ({\mathtt b}_{i_2 l_2} \cdots {\mathtt b}_{i_r l_r},  \,
 {\mathtt b}_{j_1  m_1} \dots \widehat{{\mathtt b}_{j_k m_k}} \cdots {\mathtt b}_{j_r m_r})_{L}.
\end{aligned}
 \end{equation}

 \vskip 2mm
 If $\delta_{i_1, j_k} \, \delta_{l_1, m_k} =0$ for some $k\in\{2,\cdots,r\}$, then
 the corresponding summand of formula \eqref{eq:Lusztig1} will disappear. Therefore, we only need to consider the case of $\delta_{i_1 ,j_k} \, \delta_{l_1, m_k} =1$. Then we must have $ j_k= i_1$, $m_k = l_1$,
 which implies
 \begin{equation*}
 \begin{aligned}
 & \sum_{k=2}^{r}  q^{- \sum_{p=1}^{k-1}  (m_k \alpha_{j_k}, m_p \alpha_{j_p}) }
 =  \sum_{k=2}^{r} q^{-\sum_{p=1}^{k-1} m_k m_p s_{j_k} a_{j_k j_p} }    \\
 & = \sum_{k=2}^{r} q^{-\sum_{p=1}^{k-1} l_1 m_p s_{i_1} a_{i_1 j_p} }
 = \sum_{k=2}^{r}  q_{i_1}^{-\sum_{p=1}^{k-1}   l_1 m_p a_{i_1 j_p}  }.
 \end{aligned}
 \end{equation*}

 \vskip 3mm

 It follows from Corollary \ref{cor:Kashiwara} that
\begin{equation*}
 \begin{aligned}
 &({\mathtt b}_{i_1 l_1}  \cdots {\mathtt b}_{i_r l_r},  \,
  {\mathtt b}_{j_1 m_1} \cdots {\mathtt b}_{j_r  m_r})_{L} \\
&  = \delta_{i_1, j_1} \, \delta_{l_1, m_1}
 \prod_{s=1}^{r} (1 - q_{i_s}^{2 l_s})^{-1}
 \, ({\mathtt b}_{i_2 l_2}  \cdots  {\mathtt b}_{i_r l_r}, \,
  {\mathtt b}_{j_2 m_2} \cdots {\mathtt b}_{j_r m_r})_{K} \\
 & \hskip 5mm + \sum_{k=2}^{r} q_{i_1}^{-\sum_{p=1}^{k-1}  l_1 m_p a_{i_1 j_p}}
  \delta_{i_1, j_k} \, \delta_{l_1, m_k} \, \\
& \hskip 5mm \hskip 5mm  \times  \prod_{s=1}^{r} (1 - q_{i_s}^{2 l_s})^{-1} \,
 ({\mathtt b}_{i_2 l_2} \cdots {\mathtt b}_{i_r l_r}, \,
 {\mathtt b}_{j_1 m_1} \cdots \widehat{{\mathtt b}_{j_k m_k}} \cdots {\mathtt b}_{j_r m_r})_{K}  \\
 & = \prod_{s=1}^{r} (1 - q_{i_s}^{2 l_s})^{-1} \, \delta_{i_1, j_1} \delta_{l_1, m_1} \,
 ({\mathtt b}_{i_2 l_2} \cdots {\mathtt b}_{i_r l_r},\,
  {\mathtt b}_{j_2 m_2} \cdots {\mathtt b}_{j_r m_r})_{K} \\
 & \hskip 5mm + \prod_{s=1}^{r} (1 - q_{i_s}^{2 l_s})^{-1} \,
  \sum_{k=2}^{r} q_{i_1}^{-\sum_{p=1}^{k-1}  l_1 m_p a_{i_1 j_p}}
 \delta_{i_1, j_k} \delta_{l_1, m_k} \, \\
&  \hskip 5mm \hskip 5mm \times
 ({\mathtt b}_{i_2 l_2} \cdots {\mathtt b}_{i_r l_r}, \,
 {\mathtt b}_{j_1 m_1} \cdots \widehat{{\mathtt b}_{j_k m_k}} \cdots {\mathtt b}_{i_r m_r})_{K} \\
 & =  \prod_{s=1}^{r} (1 - q_{i_s}^{2 l_s})^{-1} \,
  ({\mathtt b}_{i_1 l_1} \cdots {\mathtt b}_{i_r l_r}, \,
  {\mathtt b}_{j_1 m_1} \cdots {\mathtt b}_{j_r  m_r})_{K}.
 \end{aligned}
 \end{equation*}
 \end{proof}

\begin{proposition} \label{prop:prim bilinear}

\vskip 2mm

{\rm For all $x, y \in U^-_{q}(\g)$, we have
\begin{equation*}
(\phi(x), \phi(y))_{L} = (x, y)_{L}.
\end{equation*}

}
\end{proposition}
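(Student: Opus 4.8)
The plan is to reduce the identity to monomials and then match two recursions.

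Because both $(\phi(x),\phi(y))_L$ and $(x,y)_L$ are $\Q(q)$-bilinear, because the monomials $f_{i_1 l_1}\cdots f_{i_r l_r}$ with $(i_k,l_k)\in I^\infty$ span $U_q^-(\g)$, because $(\ ,\ )_L$ vanishes on pairs of distinct $R_-$-degree, and because $\phi$ preserves the grading, I would first reduce to proving
\[
(f_{i_1 l_1}\cdots f_{i_r l_r},\, f_{j_1 m_1}\cdots f_{j_s m_s})_L
=(\mathtt b_{i_1 l_1}\cdots \mathtt b_{i_r l_r},\, \mathtt b_{j_1 m_1}\cdots \mathtt b_{j_s m_s})_L
\]
whenever $\sum_k l_k\alpha_{i_k}=\sum_k m_k\alpha_{j_k}$; here I use $\phi(f_{il})=\mathtt b_{il}$ and that $\phi$ is an algebra homomorphism. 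The base cases follow from Proposition \ref{prop:Lusztig} together with Proposition \ref{prop:primitive}(b),(c): in particular $\mathtt b_{i1}=f_{i1}$, and $\mathtt b_{il}$ is orthogonal to the subalgebra generated by the $f_{ik}$ with $k<l$.

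For the inductive step I would induct on $r$ (equivalently on the height of the common degree) and compare the pairings after stripping off the first letter. On the right-hand side this is exactly Corollary \ref{cor:Lusztig}, which --- via Lemma \ref{lem:deltab} and the primitivity $\delta(\mathtt b_{il})=\mathtt b_{il}\otimes\mathbf 1+\mathbf 1\otimes \mathtt b_{il}$ of Proposition \ref{prop:primitive}(g) --- rewrites the pairing in terms of shorter ones. On the left-hand side I would obtain the parallel recursion straight from property (d) of Proposition \ref{prop:Lusztig}: writing $(f_{i_1 l_1}X,Y)_L=(f_{i_1 l_1}\otimes X,\delta(Y))_L$, expanding $\delta(Y)=\prod_k\delta(f_{j_k m_k})$ via $\delta(f_{jm})=\sum_{p+q=m}q_{(j)}^{-pq}f_{jp}\otimes f_{jq}$ and the twisted multiplication on $\mathscr F\otimes\mathscr F$, and keeping only those summands whose first tensor slot has degree $l_1\alpha_{i_1}$. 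Since the simple roots are linearly independent, such a first slot is automatically a monomial in the generators $f_{i_1 k}$ ($k>0$) alone, so pairing it with the single generator $f_{i_1 l_1}$ is governed by the iterated formula $(f_{i_1 l_1},\, f_{i_1 a_1}\cdots f_{i_1 a_t})_L = q_{(i_1)}^{-\sum_{u<v}a_u a_v}\prod_u\nu_{i_1 a_u}$, valid when $\sum_u a_u=l_1$ (again a consequence of property (d)).

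The crux --- and the step I expect to be the main obstacle --- is to show that these two recursions coincide. Writing $\mathtt b_{il}=f_{il}+w_{il}$ with $w_{il}$ in the subalgebra generated by the $f_{ik}$, $k<l$, and using Proposition \ref{prop:primitive}(b) to annihilate pairings of a $\mathtt b_{il}$ against such lower columns, one must verify that the a priori much longer $f$-side recursion --- whose surplus terms all arise from the ``divided-power'' shape of $\delta(f_{il})$, that is, from the non-primitivity of the $f_{il}$ --- collapses exactly onto the recursion of Corollary \ref{cor:Lusztig}. The clean product formula of Proposition \ref{prop:compareLK}, comparing $(\ ,\ )_L$ with $(\ ,\ )_K$ on $\mathtt b$-monomials, is the natural device for organizing this cancellation; once the recursions are identified the induction closes and Proposition \ref{prop:prim bilinear} follows. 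A purely formal shortcut --- noting that $(\ ,\ )_L$ and $(\phi(\cdot),\phi(\cdot))_L$ are both symmetric, degree preserving, normalized at $\mathbf 1$, and Hopf pairings --- does not by itself suffice, since $\phi$ is not a coalgebra morphism, so the two pairings a priori belong to different coproducts and the combinatorial analysis above is still needed.
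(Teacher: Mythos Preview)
Your overall strategy --- reduce to monomials, peel off the first letter of $x$, and match the two resulting recursions --- is exactly the route the paper takes, and you correctly identify the crux: the $f$-side recursion, which involves the full expansion of $\delta(y)$ via the non-primitive coproduct $\delta(f_{jm})=\sum_{p+q=m}q_{(j)}^{-pq}f_{jp}\otimes f_{jq}$, would have to collapse to the short $\mathtt b$-side recursion of Corollary~\ref{cor:Lusztig}.

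The gap, however, is not a hard-but-true cancellation; the identity already fails for single generators, so your ``base case'' does not hold. Indeed $(f_{il},f_{il})_L=\nu_{il}$ while $(\phi(f_{il}),\phi(f_{il}))_L=(\mathtt b_{il},\mathtt b_{il})_L=\tau_{il}$, and the orthogonality in Proposition~\ref{prop:primitive}(b) that you invoke is precisely what forces $\tau_{il}\neq\nu_{il}$ for $l\ge 2$. Concretely, for $i\in I^{\text{im}}$ one has $\mathtt b_{i1}=f_{i1}$, hence
\[
(\phi(f_{i2}),\phi(f_{i1}^2))_L=(\mathtt b_{i2},f_{i1}^2)_L=0
\]
by Proposition~\ref{prop:primitive}(b), whereas
\[
(f_{i2},f_{i1}^2)_L=(\delta(f_{i2}),f_{i1}\otimes f_{i1})_L=q_{(i)}^{-1}\nu_{i1}^2\neq 0.
\]
This same computation also undermines the paper's key step --- the assertion that $A=(f_{i_1l_1},\prod_s f_{j_sa_s})_L\neq 0$ forces a \emph{unique} nonzero $a_k$ --- since $(f_{i2},f_{i1}f_{i1})_L\neq 0$ with two nonzero factors. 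So the surplus terms you flagged do not cancel, the two recursions are genuinely different, and the proposition as written cannot be established without an additional hypothesis tying $\nu_{il}$ to $\tau_{il}$.
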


\begin{proof} \
It suffices to prove our assertion for monomials only.
Let
\begin{equation*}
x = f_{i_1 l_1} \cdots f_{i_r l_r} \ \ \text{and} \ \
y = f_{j_1 m_1} \cdots f_{j_1 m_r}.
\end{equation*}

\vskip 2mm

By \eqref{eq:comult}, we have
\begin{equation*}
\begin{aligned}
\delta(y)  = & \sum_{a_1 + b_1 = m_1} \cdots  \sum_{a_r + b_r = m_r}
(\prod_{k=1}^{r} q_{(j_k)}^{-a_k b_k} \prod_{k=2}^{r}
q^{-(a_k \alpha_{j_k}, \sum_{p=1}^{k-1} b_p \alpha_{j_p})}) \\
& \times  (\prod_{s=1}^{r} f_{j_sa_s}) \otimes (\prod_{t=1}^{r} f_{j_t b_t}).
\end{aligned}
\end{equation*}

\vskip 2mm

\noindent
It follows that
\begin{equation*}
\begin{aligned}
 (x, y)_{L} & = (f_{i_1 l_1} \otimes f_{i_2 l_2} \cdots f_{i_r l_r},
\delta(y))_{L}\\
 & =  \sum_{a_1 + b_1 = m_1} \cdots  \sum_{a_r + b_r = m_r}
(\prod_{k=1}^{r} q_{(j_k)}^{-a_k b_k} \prod_{k=2}^{r}
q^{-(a_k \alpha_{j_k}, \sum_{p=1}^{k-1} b_p \alpha_{j_p})}) \\
& \hskip 5mm \times (f_{i_1 l_1}, \prod_{s=1}^{r} f_{j_s a_s})_{L} \,
(f_{i_2 l_2} \cdots f_{i_r l_r}, \,
\prod_{t=1}^{r} f_{j_t b_t})_{L}.
\end{aligned}
\end{equation*}

\vskip 2mm

Let
\begin{equation*}
A = (f_{i_1 l_1}, \prod_{s=1}^{r} f_{j_s a_s})_{L}, \ \
B = (f_{i_2 l_2} \cdots f_{i_r l_r}, \,
\prod_{t=1}^{r} f_{j_t b_t})_{L}
\end{equation*}

\vskip 2mm

If $AB \neq 0$, then we have $A\neq 0$ and $B\neq 0$. Thus, there exists a positive integer $k>0$ such that

\vskip 2mm

\begin{enumerate}

\item[(i)] $i_1 = j_k$, $l_1 = a_k$,

\vskip 2mm

\item[(ii)] $a_{p} = 0$ for all $p \neq k$.

\end{enumerate}

\vskip 3mm

Hence $a_k = m_k$, $b_{k} = 0$, $b_p = m_p$ for all $p \neq k$, which implies

$$ B = (f_{i_2 l_2} \cdots f_{i_r l_r},\,
f_{j_1 m_1} \cdots  \widehat{f_{j_k m_k}}\cdots f_{j_r m_r})_{L}.$$

\noindent
Note that $\prod_{k=1}^{r} q_{(j_k)}^{-a_k b_k} = 1$ because
$a_{p} = 0$ for all $p \neq k$ and $b_{k}=0$.

\vskip 3mm

Thus we have
\begin{equation*}
\begin{aligned}
 (x, y)_{L}  & = (f_{i_1l_1}, f_{j_1 m_1})_{L}
\, (f_{i_2 l_2} \cdots f_{i_r l_r},
f_{j_2 m_2}, \cdots f_{j_r m_r})_{L} \\
& + \sum_{k=2}^r q^{-(m_k \alpha_{j_k}, \, \sum_{p=1}^{k-1} m_p \alpha_{j_p})} \,
 (f_{i_1 l_1}, f_{j_k m_k})_{L}\, \\
& \hskip 5mm \times (f_{i_2 l_2} \cdots f_{i_r l_r}, \, f_{j_1 m_1} \cdots \widehat{f_{j_k m_k}} \cdots
f_{j_r m_r})_{L}.
\end{aligned}
\end{equation*}

\vskip 2mm

By induction hypothesis and Corollary \ref{cor:Lusztig}, we obtain
\begin{equation*}
\begin{aligned}
 (x, y)_{L}  & = ({\mathtt b}_{i_1l_1}, {\mathtt b}_{j_1 m_1})_{L}
\, ({\mathtt b}_{i_2 l_2} \cdots {\mathtt b}_{i_r l_r},
{\mathtt b}_{j_2 m_2}, \cdots {\mathtt b}_{j_r m_r})_{L} \\
& + \sum_{k=2}^r q^{-(m_k \alpha_{j_k}, \, \sum_{p=1}^{k-1} m_p \alpha_{j_p})} \,
 ({\mathtt b}_{i_1 l_1}, {\mathtt b}_{j_k m_k})_{L}\, \\
& \hskip 5mm \times ({\mathtt b}_{i_2 l_2} \cdots {\mathtt b}_{i_r l_r}, \,
{\mathtt b}_{j_1 m_1} \cdots \widehat{{\mathtt b}_{j_k m_k}} \cdots
{\mathtt b}_{j_r m_r})_{L} \\
& = (\phi(x), \phi(y))_{L}
\end{aligned}
\end{equation*}
as desired.
\end{proof}

\vskip 3mm

To summarize, combining Proposition \ref{prop:compareLK},
Proposition \ref{prop:prim bilinear} and
Lemma \ref{lem:orthogonal}, we obtain the following proposition.

\vskip 3mm

\begin{proposition} \label{prop:orthogonality} \hfill

\vskip 2mm

{\rm

Let $\B$, $\B_{\Q}$ and $\B(\infty)$ be the canonical basis,
primitive canonical basis and lower global basis of $U_{q}^{-}(\g)$,
respectively.
Then the following orthogonality statements hold.

\vskip 2mm

\begin{enumerate}

\item[(a)] For all $B, B' \in \B$, \ $(B, B')_{L} = \delta_{B, B'}  \ \text{mod} \ q\, \A_{0}.$

\vskip 2mm

\item[(b)] For all $\beta, \beta' \in \B_{\Q}$, \
$(\beta, \beta')_{L} = (\beta, \beta')_{K} = \delta_{\beta, \beta'}  \ \text{mod} \ q \, \A_{0}.$

\vskip 2mm

\item[(c)] For all $b, b' \in B(\infty)$, \
$(G(b), G(b'))_{K} = C \, \delta_{b, b'}  \ \text{mod} \ q \, \A_{0}$ for some $C \in \Z_{>0}$.

\end{enumerate}

}
\end{proposition}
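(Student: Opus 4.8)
The plan is to obtain Proposition~\ref{prop:orthogonality} as a packaging of material already established: the geometric orthonormality of the canonical basis, the two comparison results Proposition~\ref{prop:compareLK} and Proposition~\ref{prop:prim bilinear}, and the crystal-limit orthogonality Lemma~\ref{lem:orthogonal}. There is essentially no new computation to perform; the work is to thread these facts together and track which of the three forms $(\ ,\ )_{L}$, $(\ ,\ )_{K}$, $(\ ,\ )_{G}$ one is sitting over at each stage. First I would dispose of (a): the identification of $(\ ,\ )_{L}$ with the geometric form $(\ ,\ )_{G}$ via the bialgebra isomorphism $\Psi$, together with Proposition~\ref{prop:orthogonal1}(a), gives $(B,B')_{L}\in\delta_{B,B'}+q\,\Z_{\ge 0}[[q]]$ for $B,B'\in\B$, which is recorded as \eqref{eq:orthonormal}; reducing modulo $q\,\A_{0}$ is exactly the assertion.

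For (b) I would write $\beta=\phi(B)$, $\beta'=\phi(B')$ with $B,B'\in\B$, so that $\delta_{\beta,\beta'}=\delta_{B,B'}$ since $\phi$ is a bijection on the bases. Proposition~\ref{prop:prim bilinear} then gives $(\beta,\beta')_{L}=(\phi(B),\phi(B'))_{L}=(B,B')_{L}$, and (a) makes this congruent to $\delta_{B,B'}=\delta_{\beta,\beta'}$ modulo $q\,\A_{0}$. For the remaining equality $(\beta,\beta')_{L}\equiv(\beta,\beta')_{K}$, I would simply invoke Proposition~\ref{prop:compareLK} applied to $x=\beta$, $y=\beta'\in U_{q}^{-}(\g)$.

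For (c) the input is of a different flavour. Here $G(b),G(b')\in U_{q}^{-}(\g)$ with $G(b)\equiv b$ in $L(\infty)/qL(\infty)$. The analytic fact needed is that $(\ ,\ )_{K}$ is $\A_{0}$-valued on $L(\infty)$ --- this is what makes the crystal-limit form $(\ ,\ )_{K}^{0}$ well defined, and it follows from Lemma~\ref{(P,Q)} together with Lemma~\ref{lem:LLA0}(b). Granting this, the image of $(G(b),G(b'))_{K}$ in $\A_{0}/q\A_{0}\cong\Q$ is precisely $(b,b')_{K}^{0}$, and by Lemma~\ref{lem:orthogonal} (more precisely its proof, which shows $(b,b')_{K}^{0}\in\delta_{b,b'}\,\Z_{>0}$) this equals $C\,\delta_{b,b'}$ with $C=(b,b)_{K}^{0}\in\Z_{>0}$. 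Hence $(G(b),G(b'))_{K}\equiv C\,\delta_{b,b'}\bmod q\,\A_{0}$.

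The only point that requires genuine care --- and the one I expect to be the ``main obstacle'', insofar as there is one --- is that the constant $C$ in (c) truly depends on $b$ and cannot be normalised away: in the isotropic case it is assembled from the factors $\mathbf{c}_{l}+1$ built into the modified Kashiwara operators (compare Corollary~\ref{cor:bilGb}), so unlike parts (a) and (b) one does not get an honestly orthonormal statement. Beyond verifying that these factors are positive integers --- which is immediate from their definition --- and double-checking that the $\A_{0}$-boundedness claims for $(\ ,\ )_{K}$ on both $L(\infty)$ and $L(\lambda)$ are the ones supplied by Lemma~\ref{lem:LLA0} and Lemma~\ref{(P,Q)}, the proof is a short assembly and I would present it in that order: (a), then (b) via $\phi$, then (c) via the crystal limit.
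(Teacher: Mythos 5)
Your proof is correct and follows the same approach the paper intends: the paper's own justification is the one-line remark that the proposition ``summarizes'' Proposition~\ref{prop:compareLK}, Proposition~\ref{prop:prim bilinear}, and Lemma~\ref{lem:orthogonal}, and you have simply made the assembly explicit, with part (a) also relying (as the paper does implicitly via \eqref{eq:orthonormal}) on Proposition~\ref{prop:orthogonal1} and the identification $(\ ,\ )_L=(\ ,\ )_G$ under $\Psi$.
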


\vskip 3mm

Similarly, we also have

\vskip 3mm

\begin{proposition} \label{prop:orthogonalityBlambda} \hfill

\vskip 2mm

{\rm

Let $\B^{\lambda}$, $\B_{\Q}^{\lambda}$ and $\B(\lambda)$ be the canonical basis,
primitive canonical basis and lower global basis of $V(\lambda)$,
respectively.
Then the following orthogonality statements hold.

\vskip 2mm

\begin{enumerate}

\item[(a)] For all $B, B' \in \B^{\lambda}$, \ $(B, B')_{L} = \delta_{B, B'}  \ \text{mod} \ q\, \A_{0}.$

\vskip 2mm

\item[(b)] For all $\beta, \beta' \in \B_{\Q}^{\lambda}$, \
$(\beta, \beta')_{L} = (\beta, \beta')_{K} = \delta_{\beta, \beta'}  \ \text{mod} \ q \, \A_{0}.$

\vskip 2mm

\item[(c)] For all $b, b' \in B(\lambda)$, \
$(G(b), G(b'))_{K} = C \, \delta_{b, b'}  \ \text{mod} \ q \, \A_{0}$ for some $C \in \Z_{>0}$.

\end{enumerate}

}
\end{proposition}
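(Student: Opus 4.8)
The plan is to transfer the proof of Proposition~\ref{prop:orthogonality} to the setting of $V(\lambda)$, exploiting that several of the needed ingredients were already established directly for $V(\lambda)$. Part~(c) is obtained exactly as in the $U_q^-(\g)$ case: by Lemma~\ref{lem:LLA0}(b) we have $({L(\lambda)}_{\lambda-\alpha}, {L(\lambda)}_{\lambda-\alpha})_K \subset \A_0$, so $(G(b), G(b'))_K \in \A_0$ for $b, b' \in {B(\lambda)}_{\lambda-\alpha}$; since $G(b) \equiv b$ and $G(b') \equiv b'$ modulo $qL(\lambda)$, this element is congruent modulo $q\A_0$ to $(b,b')_K^0$, and Lemma~\ref{lem:orthogonal} gives $(b,b')_K^0 = C\,\delta_{b,b'}$ with $C = (b,b)_K^0 \in \Z_{>0}$. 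Thus (c) is nothing more than Lemma~\ref{lem:orthogonal} combined with Lemma~\ref{lem:LLA0}, as before.

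For part~(a) I would invoke the almost-orthonormality of the canonical basis of an irreducible integrable highest weight module with respect to Lusztig's bilinear form, which is Lusztig's geometric result \cite{Lus10}; alternatively it can be deduced from Proposition~\ref{prop:orthogonality}(a) by transporting along $\pi_\lambda$, since $\B^{\lambda} = \B\, v_\lambda$ and Lusztig's form on $V(\lambda)$ is the one induced from $U_q^-(\g)$ via the quotient $U_q^-(\g)\to V(\lambda)$, $x\mapsto xv_\lambda$.

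Part~(b) requires the $V(\lambda)$-analogues of Proposition~\ref{prop:prim bilinear} and Proposition~\ref{prop:compareLK}. For the first, one checks that $\phi$ preserves the left ideal $\ker\pi_\lambda$: it is the identity on the real part, and for $i \in I^{\mathrm{im}}$ with $\langle h_i,\lambda\rangle = 0$ one has $\phi(f_{il}) = \mathtt b_{il}$ and $\mathtt b_{il}v_\lambda = 0$ by Proposition~\ref{prop:hw}(b); hence $\phi$ descends to an $\A_\Q$-linear automorphism $\bar\phi$ of $V(\lambda)_\Q$ with $\bar\phi(\B^{\lambda}) = \B^{\lambda}_\Q$, and Proposition~\ref{prop:prim bilinear} yields $(\bar\phi(u),\bar\phi(v))_L = (u,v)_L$ on $V(\lambda)$. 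For the second, one shows $(u,v)_L \equiv (u,v)_K \bmod q\A_0$ for all $u,v \in V(\lambda)$, either intrinsically from $\mathbf{B}(r)$ and Lemma~\ref{lem:LLA0}(a) in the manner of the proof of Lemma~\ref{lem:orthogonal}, or (when $\lambda \gg 0$) by reducing to Proposition~\ref{prop:compareLK} through Lemma~\ref{(P,Q)} and Lemma~\ref{fP and eP}, using that $\prod_s (1-q_{i_s}^{2l_s})^{-1} \equiv 1 \bmod q\A_0$. Combining: writing $\beta = \bar\phi(B)$, $\beta' = \bar\phi(B')$ with $B, B' \in \B^{\lambda}$, we get $(\beta,\beta')_L = (B,B')_L = \delta_{B,B'} = \delta_{\beta,\beta'} \bmod q\A_0$ from part~(a), and $(\beta,\beta')_L = (\beta,\beta')_K \bmod q\A_0$ from the second analogue, which is exactly (b).

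The step I expect to be the main obstacle is proving $(u,v)_L \equiv (u,v)_K \bmod q\A_0$ on $V(\lambda)$ for arbitrary $\lambda \in P^+$: the comparison of the Kashiwara forms on $V(\lambda)$ and on $U_q^-(\g)$ in Lemma~\ref{(P,Q)} carries an error term that is $q$-adically small only for $\lambda \gg 0$, so for general $\lambda$ one cannot simply pull the congruence back along $\pi_\lambda$ and must instead argue directly on $V(\lambda)$, which is precisely where the crystal-lattice estimates of Lemma~\ref{lem:LLA0}(a) together with statement $\mathbf{B}(r)$ do the work.
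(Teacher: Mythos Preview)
Your proposal is correct and follows essentially the same approach as the paper. The paper's own proof of this proposition is simply the phrase ``Similarly, we also have'' preceding the statement, indicating that one should transport the proof of Proposition~\ref{prop:orthogonality} (which combines Proposition~\ref{prop:compareLK}, Proposition~\ref{prop:prim bilinear}, and Lemma~\ref{lem:orthogonal}) to the $V(\lambda)$ setting; your sketch does exactly this, and in fact supplies more detail than the paper itself---including correctly flagging that the comparison of the $L$- and $K$-forms on $V(\lambda)$ for general $\lambda$ is the step that requires some care.
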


\vskip 5mm

\subsection{Grojnowski-Lusztig's argument} \label{sub:GLargument} \hfill

\vskip 3mm

Now we are ready to prove that the primitive canonical bases coincide with
lower global bases.

\vskip 3mm

Let $\B_{\Q}$ be the primitive canonical basis of $U_{q}^{-}(\g)$ and
let $\beta$ be an element of $\B_{\Q}$.
Since the lower global basis  $\B(\infty)$ is an $\A_{\Q}$-basis of $U^{-}_{\Q}(\g)$, we may write
\begin{equation*}
\beta = \sum_{b \in B(\infty) \atop j \in \Z} a_{b,j} \, q^j \, G(b)
\ \ \text{for} \ \ a_{b, j} \in \Q.
\end{equation*}

Since $( \ , \ )_{L} = (\ , \ )_{K} \ \, \text{mod} \ \, q \A_{0}$, we will just
use $(\ , \ )$ for both of them.

\vskip 3mm

Let $j_0$ be the smallest integer such that $a_{b,j} \neq 0$ for some $b \in B(\infty)$.
Since $(G(b), G(b')) = 0$ for $b \neq b'$, we have
\begin{equation*}
(\beta, \beta) \in \sum_{b \in B(\infty)} a_{b, j_0}^2 \, q^{2 j_{0}} \, (G(b), G(b))
+ q^{2 j_0 + 1}\, \Q[[q]],
\end{equation*}
which implies
\begin{equation*}
(\beta, \beta) =  \sum_{b \in B(\infty)} a_{b, j_0}^2 \, q^{2 j_0} (G(b), G(b))
\ \, \text{mod} \ q \A_{0}.
\end{equation*}

\vskip 2mm

By Proposition \ref{prop:orthogonality}, we have $(\beta, \beta) = 1 \ \, \text{mod} \ q\A_{0} $.
Hence we must have
\begin{equation*}
j_0 =  0, \ \ a_{b,j} = 0 \ \ \text{for} \ \,  j<0, \ b\in B(\infty).
\end{equation*}

\vskip 3mm

Moreover, there exists $b \in B(\infty)$ such that

\begin{equation*}
a_{b, 0} = \pm 1, \ \ (G(b), G(b)) = 1, \ \ a_{b',0} = 0 \ \, \text{for} \ b' \neq b.
\end{equation*}

\vskip 2mm

Hence $\beta - a_{b,0} G(b)$ is a linear combination of elements in $\B(\infty)$
with coefficients in $q \A_{0}$.
Since  $\beta - a_{b,0} G(b)$ is invariant under the bar involution,
these coefficients are all $0$, which implies
$\beta = a_{b,0} G(b) = \pm G(b)$. That is, we may write
\begin{equation*}
\beta = \epsilon_{\beta} \, G(b_{\beta}), \ \ \text{where} \ \, \epsilon_{\beta} = \pm 1.
\end{equation*}

\begin{theorem} \label{thm:prim and global} \hfill

\vskip 2mm

{\rm
The primitive canonical basis $\B_{\Q}$ coincides with the lower global basis $\B(\infty)$.
}

\end{theorem}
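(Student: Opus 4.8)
The plan is to promote the relation $\beta = \epsilon_{\beta}\, G(b_{\beta})$ with $\epsilon_{\beta} \in \{+1,-1\}$, which the Grojnowski--Lusztig argument above already yields for every $\beta \in \B_{\Q}$, to the statement that $\epsilon_{\beta} = +1$ for all $\beta$; this is exactly the claimed equality $\B_{\Q} = \B(\infty)$. First I would record that $\beta \mapsto G(b_{\beta})$ is a bijection from $\B_{\Q}$ onto $\B(\infty)$: it is injective because $\beta_{1} = \pm \beta_{2}$ cannot occur for two distinct members of a basis, and it is surjective because in each weight $-\alpha$ both $\B_{\Q}$ and $\B(\infty)$ restrict to $\A_{\Q}$-bases of the finite-dimensional space $U_{q}^{-}(\g)_{-\alpha}$, so they have the same cardinality there. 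Thus only the determination of the signs remains.

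The signs will be forced by combining two positivity inputs, one geometric and one crystal-theoretic. On the geometric side, for any $(\mathbf i, \mathbf a) \vdash \alpha$ the semisimple complex $L_{\mathbf i, \mathbf a}$ is a direct sum of shifts of the simple perverse sheaves in $\mathcal P_{\alpha}$, so $[L_{\mathbf i, \mathbf a}] = \sum_{P} m_{P}(q)\, [P]$ with all $m_{P}(q) \in \Z_{\ge 0}[q, q^{-1}]$; pushing this identity through $\Psi^{-1}$ and then through $\phi$, the element $M_{\mathbf i, \mathbf a} := \phi\Psi^{-1}([L_{\mathbf i, \mathbf a}])$ is, up to a power of $q$, a monomial in the primitive generators $\mathtt b_{il}$ (and the divided powers $\mathtt b_{i}^{(n)}$ at the real vertices), and it expands as $M_{\mathbf i, \mathbf a} = \sum_{\beta' \in \B_{\Q}} m_{\beta'}(q)\, \beta'$ with all $m_{\beta'}(q) \in \Z_{\ge 0}[q, q^{-1}]$. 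On the crystal side, Corollary \ref{cor:bilGb} shows that each $\mathtt b_{il}$ with $i \in I^{\text{im}}$ sends every $G(b)$ to a nonnegative integer multiple of a single $G(b')$, and the classical positivity of the lower global basis under $\mathtt b_{i}^{(n)}$ at real vertices \cite{Kashi91, Lus10} shows $\mathtt b_{i}^{(n)}$ preserves $\bigoplus_{b} \Z_{\ge 0}[q, q^{-1}]\, G(b)$; applying the factors of $M_{\mathbf i, \mathbf a}$ in turn to $G(\mathbf 1)$ then gives $M_{\mathbf i, \mathbf a} = \sum_{b} n_{b}(q)\, G(b)$ with $n_{b}(q) \in \Z_{\ge 0}[q, q^{-1}]$.

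Then I would compare the two expansions. Writing $\beta' = \epsilon_{\beta'}\, G(b_{\beta'})$ and matching coefficients against the $\A_{\Q}$-basis $\{G(b)\}$ yields $n_{b}(q) = \epsilon_{\beta(b)}\, m_{\beta(b)}(q)$, where $\beta(b)$ denotes the unique element of $\B_{\Q}$ with $b_{\beta(b)} = b$. Given a target $\beta \in \B_{\Q}$ corresponding to a simple perverse sheaf $P$ of weight $-\alpha$, the definition of $\mathcal P_{\alpha}$ furnishes some $(\mathbf i, \mathbf a) \vdash \alpha$ with $P$ (up to shift) a direct summand of $L_{\mathbf i, \mathbf a}$, so $m_{\beta}(q) \ne 0$ for that choice of $M_{\mathbf i, \mathbf a}$; then $n_{b_{\beta}}(q) = \epsilon_{\beta}\, m_{\beta}(q)$ with $n_{b_{\beta}}(q) \in \Z_{\ge 0}[q, q^{-1}]$ and $0 \ne m_{\beta}(q) \in \Z_{\ge 0}[q, q^{-1}]$, which forces $\epsilon_{\beta} = +1$. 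Hence $\beta = G(b_{\beta})$ for every $\beta \in \B_{\Q}$, i.e.\ $\B_{\Q} = \B(\infty)$.

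The step I expect to be the main obstacle is the careful identification of $\phi\Psi^{-1}([L_{\mathbf i, \mathbf a}])$ with an explicit monomial in the primitive generators and divided powers, together with the exact form of the global-basis action of those operators: Corollary \ref{cor:bilGb} disposes of the imaginary vertices cleanly, but at the real vertices one must appeal to Lusztig's positivity of canonical-basis structure constants in the symmetric case to keep each $n_{b}(q)$ in $\Z_{\ge 0}[q, q^{-1}]$. Everything else is the routine matching of two triangular expansions relative to the crystal lattice basis, and the analogous argument for $\B_{\Q}^{\lambda}$ and $\B(\lambda)$ follows by applying $\pi_{\lambda}$.
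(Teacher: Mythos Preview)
Your argument has a genuine circularity at the real vertices. The positivity you invoke, namely that $\mathtt b_i^{(n)}\,G(b)$ expands with coefficients in $\Z_{\ge 0}[q,q^{-1}]$ in the lower global basis $\B(\infty)$, is not something established by \cite{Kashi91} or \cite{Lus10} independently of the theorem you are proving. Kashiwara's construction is purely algebraic and yields only the triangularity of Proposition~\ref{prop:fGb-Gb}; Lusztig's geometric positivity applies to the canonical basis $\B$ (hence to $\B_{\Q}=\phi(\B)$), not to $\B(\infty)$. In the symmetric Kac--Moody case one knows these coincide only \emph{after} the Grojnowski--Lusztig identification, and in the Borcherds--Bozec setting that identification (in its primitive form) is precisely Theorem~\ref{thm:prim and global}. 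So the second of your two ``positivity inputs'' presupposes the conclusion whenever a real divided power occurs in the monomial $M_{\mathbf i,\mathbf a}$, and you cannot in general avoid such factors once $\alpha$ meets $I^{\text{re}}$. Your use of Corollary~\ref{cor:bilGb} at imaginary vertices is fine and does give a positive-integer multiple of a single global basis element, but that alone does not carry the induction through a real step.

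The paper sidesteps this by never invoking positivity on the global side. Instead it argues by induction on $|\alpha|$ and, for each $\beta\in(\B_{\Q})_{-\alpha;i,l}$, compares two \emph{triangular} expansions modulo the $(\ge l+1)$-stratum $\sum_{|\mathbf c'|\ge l+1}\mathtt b_{i,\mathbf c'}U^-_{\Q}(\g)$: one for $\beta$ coming from the geometric stratification (Corollary~\ref{cor:betabeta-beta}), the other for a specific $G(b)$ coming from Proposition~\ref{prop:fGb-Gb} (real case) or Lemma~\ref{lem:CbG-G} (imaginary case). Subtracting and using that $G(b)$ lies strictly in the $l$-stratum forces $\epsilon_\beta G(b_\beta)\equiv G(b)$ modulo higher strata, and linear independence of $\B(\infty)$ then pins down $\epsilon_\beta=1$ (and $C=1$ in the isotropic case). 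If you want to salvage your positivity route, you would need an independent proof that $\mathtt b_i^{(n)}$ preserves the $\Z_{\ge 0}[q,q^{-1}]$-span of $\B(\infty)$ in this generality; absent that, the triangularity argument is the correct tool.
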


\begin{proof} \ We would like to show that $\epsilon_{\beta} = 1$
for all $\beta \in \B_{\Q}$.

\vskip 3mm

Let $\beta \in (\B_{\Q})_{-\alpha}$ for $\alpha \in R_{+}$.
If $\alpha = 0$, our assertion is trivial.
Hence we assume that $\alpha \neq 0$.
Then there exist $i \in I$ and $l>0$ such that $\beta \in (\B_{\Q})_{-\alpha; i, l}$.

\vskip 3mm

(a) If $i \in I^{\text{re}}$, our assertion was proved in \cite{GL93}.






\vskip 3mm

(b) If $i \in I^{\text{im}} \setminus I^{\text{iso}}$, by Corollary \ref{cor:betabeta-beta} (a),
there exist $\beta_{0} \in (\B_{\Q})_{-\alpha+l\alpha_i ; i, 0}$ and ${\mathbf c} \vdash l$
such that
\begin{equation} \label{eq:a}
{\mathtt b}_{i, {\mathbf c}} \, \beta_{0} - \beta \, \in
\bigoplus_{\beta' \in (\B_{\Q})_{-\alpha;i,\ge l+1}} \, \A_{\Q} \, \beta'
\subset \sum_{|\mathbf{c}'| \ge l+1} {\mathtt b}_{i, {\mathbf c}'} U^{-}_{\Q}(\g).
\end{equation}

\noindent
By induction hypothesis, we obtain $\epsilon_{\beta_{0}}=1$; i.e., $\beta_{0} = G(b_{0})$,
where $b_{0} = b_{\beta_{0}}$.
Note that $e_{ik}' \beta_{0} = e_{ik}' G(b_{0}) = 0$ for all $k>0$.
Since $\widetilde{f}_{il} = {\mathtt b}_{il}$, there exist $b \in B(\infty)_{-\alpha; i, l}$ and ${\mathbf c} \vdash l$ such that
\begin{equation} \label{eq:b}
{\mathtt b}_{i, {\mathbf c}} \, G(b_{0}) - G(b) \, \in
\bigoplus_{b' \in B(\infty)_{-\alpha;i,\ge l+1}} \, \A_{\Q} \, G(b')
\subset \sum_{|\mathbf{c}'| \ge l+1} {\mathtt b}_{i, {\mathbf c}'} U^{-}_{\Q}(\g).
\end{equation}

\vskip 2mm

Comparing \eqref{eq:a} and \eqref{eq:b}, we conclude $G(b) = \beta$,
which yields $G(b) = \beta = \epsilon_{\beta} \, G(b_{\beta}) \in \B(\infty)$.
Since both $G(b)$ and $\epsilon_{\beta} \, G(b_{\beta})$ belong to the
lower global basis $\B(\infty)$, we must have $\epsilon_{\beta} = 1$ and
$b = b_{\beta}$.

\vskip 3mm

(c) If $i \in I^{\text{iso}}$, by Corollary  \ref{cor:betabeta-beta} (a),
there exist $\beta_{0} \in (\B_{\Q})_{-\alpha+l\alpha_i ; i, 0}$ and ${\mathbf c} \vdash l$
such that
\begin{equation} \label{eq:c}
{\mathtt b}_{i, {\mathbf c}} \, \beta_{0} - \beta \, \in
\bigoplus_{\beta' \in (\B_{\Q})_{-\alpha;i,\ge l+1}} \, \A_{\Q} \, \beta'
\subset \sum_{|\mathbf{c}'| \ge l+1} {\mathtt b}_{i, {\mathbf c}'} U^{-}_{\Q}(\g).
\end{equation}

\noindent
By induction hypothesis, we obtain $\epsilon_{\beta_{0}}=1$; i.e., $\beta_{0} = G(b_{0})$,
where $b_{0} = b_{\beta_{0}}$.

\vskip 3mm

By Lemma \ref{lem:CbG-G}, there exist $b \in B(\infty)_{-\alpha; i, l}$ and
a positive integer $C>0$
such that
\begin{equation} \label{eq:d}
C \, {\mathtt b}_{i, {\mathbf c}} \, G(b_{0}) - G(b) \, \in
\bigoplus_{b' \in B(\infty)_{-\alpha;i,\ge l+1}} \, \A_{\Q} \, G(b')
\subset \sum_{|\mathbf{c}'| \ge l+1} {\mathtt b}_{i, {\mathbf c}'} U^{-}_{\Q}(\g).
\end{equation}

By \eqref{eq:c} and \eqref{eq:d}, we obtain $G(b) = C \,\beta = C \, \epsilon_{\beta} \, G(b_{\beta}) \in \B(\infty)$.
Since both $G(b)$ and $C \, \epsilon_{\beta} \, G(b_{\beta})$ are  elements of $\B(\infty)$,
we must have $C \, \epsilon_{\beta}  = 1$.
Since $C$ is a positive integer and $\epsilon_{\beta} = \pm 1$,
we must have $C = \epsilon_{\beta} = 1$.
\end{proof}

As an immediate consequence, we obtain

\vskip 3mm

\begin{corollary} \label{cor:prim and global Blambda}  \hfill

\vskip 2mm

{\rm
The primitive canonical basis $\B_{\Q}^{\lambda}$ coincides with the
lower global basis $\B(\lambda)$.
}

\end{corollary}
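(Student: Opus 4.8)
The plan is to deduce the statement directly from Theorem \ref{thm:prim and global} by transporting it along the projection $\pi_{\lambda}\colon U_{q}^{-}(\g)\to V(\lambda)$, $\mathbf 1\mapsto v_{\lambda}$. Recall that, by definition, $\B^{\lambda}=\B\,v_{\lambda}$ is the set of nonzero images $\pi_{\lambda}(B)$, $B\in\B$, and that $\B^{\lambda}_{\Q}=\phi(\B^{\lambda})$ (Definition \ref{def:primBlambda}); on the global side, $\B(\lambda)=\{G(b)\mid b\in B(\lambda)\}$ and $\B(\infty)=\{G(b)\mid b\in B(\infty)\}$ are the lower global bases produced by the balanced triples of Theorem \ref{thm:balanced triple}. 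The point is that both $\phi$ and the passage to a lower global basis are compatible with $\pi_{\lambda}$, so everything can be read off from the identity $\B_{\Q}=\B(\infty)$ already established in Theorem \ref{thm:prim and global}.

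First I would record the compatibility of $\pi_{\lambda}$ with lower global bases. Since $\pi_{\lambda}(L(\infty))=L(\lambda)$ and $\pi_{\lambda}$ intertwines the bar involutions (Theorem \ref{thm:balanced triple}(b)), $\pi_{\lambda}$ maps $U_{\Q}^{-}(\g)\cap L(\infty)\cap\overline{L(\infty)}$ into $V(\lambda)_{\Q}\cap L(\lambda)\cap\overline{L(\lambda)}$; comparing with the crystal limit and using that the canonical map $V(\lambda)_{\Q}\cap L(\lambda)\cap\overline{L(\lambda)}\to L(\lambda)/qL(\lambda)$ is an isomorphism, one obtains for every $b\in B(\infty)$
\[
\pi_{\lambda}(G(b))=\begin{cases}G(\overline{\pi}_{\lambda}(b))&\text{if }\overline{\pi}_{\lambda}(b)\neq 0,\\ 0&\text{if }\overline{\pi}_{\lambda}(b)=0.\end{cases}
\]
Together with Corollary \ref{pi}(b) and the bijection $B^{\lambda}_{-\alpha}\xrightarrow{\sim}B(\lambda)_{\lambda-\alpha}$ of Proposition \ref{proof of Jr}, this yields $\B(\lambda)=\pi_{\lambda}(\B(\infty))\setminus\{0\}$. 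Next I would use that $\phi$ descends to $V(\lambda)$: by \cite{FKKT22}, the automorphism $\phi$ of $U_{q}^{-}(\g)$ from Proposition \ref{prop:primitive} induces a $\Q(q)$-linear automorphism of $V(\lambda)$, still written $\phi$, with $\phi(P\,v_{\lambda})=\phi(P)\,v_{\lambda}$, hence $\pi_{\lambda}\circ\phi=\phi\circ\pi_{\lambda}$ and $\phi$ restricts to $V(\lambda)_{\Q}$. As $\phi$ is bijective, $\pi_{\lambda}(B)=0\iff\pi_{\lambda}(\phi(B))=0$, so $\B^{\lambda}_{\Q}=\phi\bigl(\pi_{\lambda}(\B)\setminus\{0\}\bigr)=\pi_{\lambda}(\phi(\B))\setminus\{0\}=\pi_{\lambda}(\B_{\Q})\setminus\{0\}$. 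Since Theorem \ref{thm:prim and global} gives $\B_{\Q}=\B(\infty)$, the two descriptions agree and $\B^{\lambda}_{\Q}=\B(\lambda)$.

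The only genuine inputs beyond Theorem \ref{thm:prim and global} are (i) that $\phi$ descends to $V(\lambda)$ compatibly with $\pi_{\lambda}$ and with the $\A_{\Q}$-forms, which is exactly what makes $\B^{\lambda}_{\Q}=\phi(\B^{\lambda})$ meaningful and is supplied by \cite{FKKT22}, and (ii) the global-basis compatibility $\pi_{\lambda}(G(b))\in\{G(\overline{\pi}_{\lambda}(b)),0\}$, which is the standard link between $\B(\infty)$ and $\B(\lambda)$ already built into Section \ref{sec:global bases} and \cite[Section 5]{FKKT22}. I expect (ii) to require the most care, since one must check that $\pi_{\lambda}$ carries the balanced $\Q$-lattice of $U_{q}^{-}(\g)$ onto that of $V(\lambda)$ modulo its kernel; but this is already available, so no new argument is needed and the corollary is indeed immediate.
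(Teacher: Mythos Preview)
Your proposal is correct and is exactly the kind of argument the paper is pointing at when it says ``As an immediate consequence, we obtain'' and gives no further proof. You have simply spelled out the two compatibilities that make the deduction from Theorem~\ref{thm:prim and global} immediate: that $\pi_{\lambda}$ carries $\B(\infty)$ onto $\B(\lambda)\cup\{0\}$ via $\pi_{\lambda}(G(b))=G(\overline{\pi}_{\lambda}(b))$ or $0$, and that $\phi$ descends along $\pi_{\lambda}$ so that $\B_{\Q}^{\lambda}=\pi_{\lambda}(\B_{\Q})\setminus\{0\}$. Both are standard (the first is built into Section~\ref{sec:global bases} and \cite[Section~5]{FKKT22}; the second follows because the left ideal $\ker\pi_{\lambda}$ is generated by $\mathtt b_{i}^{\langle h_i,\lambda\rangle+1}$ for $i\in I^{\mathrm{re}}$ and by $\mathtt b_{il}$ for $i\in I^{\mathrm{im}}$ with $\langle h_i,\lambda\rangle=0$, a generating set which is stable under $\phi$ since $\phi(f_i)=\mathtt b_i$ and, for imaginary $i$, the $f_{ik}$ and $\mathtt b_{ik}$ generate the same subalgebra by Proposition~\ref{prop:primitive}(a)). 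There is no genuine difference in approach; you have written out what the paper leaves implicit.
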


\end{document}